 \newtheorem{thm}{Theorem}[section]
 \newtheorem{cor}[thm]{Corollary}
 \newtheorem{lem}[thm]{Lemma}
 \newtheorem{prop}[thm]{Proposition}
 \theoremstyle{definition}
 \theoremstyle{remark}
 \numberwithin{equation}{section}
\DeclareMathOperator{\supp}{supp}
\def\D{\mathrm D}
\let\Re\relax\let\Im\relax
\DeclareMathOperator{\Re}{Re}
\DeclareMathOperator{\Im}{Im}
\DeclareMathOperator{\diag}{diag}
\DeclareMathOperator{\ran}{range}
\DeclareMathOperator{\dist}{dist}
\DeclareMathOperator{\rank}{rank}
\def\R{\mathbb R}
\def\Rn{{{\mathbb R}^n}}
\def\va{\varphi}
\def\jp#1{{\left\langle{#1}\right\rangle}}
\begin{document}
%
%
%
%
%
%
%
%
%
\title[Asymptotic Properties of Hyperbolic PDEs]
 {Asymptotic Behaviour of Solutions to Hyperbolic Partial Differential Equations}

\author[M.~Ruzhansky]{Michael Ruzhansky}
\address{%
Department of Mathematics\\
 Imperial College London\\
 180 Queen's Gate\\
 London SW7 2AZ\\
 United Kingdom}
 \email{m.ruzhansky@imperial.ac.uk}

\author[J.~Wirth]{Jens Wirth}
\address{%
Institut f\"ur Analysis, Dynamik und Modellierung\\ FB Mathematik\\ Universit\"at Stuttgart \\Pfaffenwaldring 57\\ 70569 Stuttgart\\ Germany}
\email{jens.wirth@iadm.uni-stuttgart.de}

\thanks{
Part of the work presented here was supported by EPSRC grant EP/E062873/1. 
The first author is supported by the EPSRC Leadership Fellowship EP/G007233/1.
The second author is grateful to travel support from DAAD, grant 50022370, for visiting London in December 2010 and February 2011.
The selection of materials is based on the mini-courses taught by the first author at the CRM, Barcelona, and by the second 
author at Aalto University, Helsinki, both in 2011.
}

\subjclass{Primary 35B40; Secondary 35B45, 35L05, 35L40}

\keywords{hyperbolic partial differential equations, a priori estimates, asymptotic formulae}

\date{July 1, 2011}

\begin{abstract}
These notes provide an introduction and a survey on recent results about the 
long-time behaviour of solutions
to hyperbolic partial differential equations with time-dependent coefficients. 
Emphasis is given also to questions about the sharpness of estimates. 
\end{abstract}

\maketitle

\tableofcontents


%

\section{Introduction}
In this introductory section we want to recall some well-known estimates for the wave equation
and their relation to Fourier integrals and restriction theorems. This shall serve as motivation to 
study similar estimates for perturbations of the wave equation by lower order terms and outline strategies to be pursued.
\subsection{Energy and dispersive estimates} 
We will recall some well-known properties of solutions to the Cauchy problem for the wave equation
\begin{equation}\label{eq:1:CP-free}
   u_{tt} - \Delta u = 0,\qquad u(0,\cdot)=u_0,\quad u_t(0,\cdot)=u_1
\end{equation}
in $\mathbb R\times\mathbb R^n$. We assume for simplicity that data belong to $C_0^\infty(\mathbb R^n)$. A simple integration by parts argument allows to show that solutions possess 
a cone of dependence property and are therefore also compactly supported for all fixed times $t$.
We will not make this argument rigorous, but point out that by the same reason 
the total energy defined as
\begin{equation}
  \mathbb E(u;t) = \frac 12 \int (|\nabla u|^2 + |u_t|^2) \mathrm d x
\end{equation}
is preserved,
\begin{equation}
   \frac{\mathrm d}{\mathrm dt} \mathbb E(u;t) =\Re  \int (\overline u_t u_{tt} + \nabla \overline u\cdot \nabla u_t)\mathrm d x = \Re \int \overline u_t (u_{tt}-\Delta u)\mathrm d x =  0.
\end{equation} 

This preserved energy is spread out (uniformly) over a region increasing in time.
Dispersive estimates due to von Wahl \cite{vWahl}, Brenner  \cite{Brenner:1975}, Pecher \cite{Pecher:1976} and more generally
Strichartz estimates \cite{Strichartz:1970} describe this effect. See also Keel--Tao \cite{Keel:1998}
for the precise interrelation between Strichartz and dispersive estimates.

We denote by $L^p_r(\mathbb R^n)$ the Bessel potential space of order $r$ over $L^p(\mathbb R^n)$ defined in terms of the Fourier transform as
\begin{equation}
  L^p_r(\mathbb R^n)=\langle\D_x\rangle^{-r}L^p(\mathbb R^n) = \mathscr F^{-1} [\langle\xi\rangle^{-r} \mathscr FL^p(\mathbb R^n)] .
 \end{equation}
 Then the following statement contains the essence of the above cited papers. Here and later,
we denote $\jp{\xi}=(1+|\xi|^2)^{1/2}.$
 
\begin{thm}\label{thm:1:dispEst}
Solutions to \eqref{eq:1:CP-free} satisfy the dispersive estimates
\begin{align}
  \| u(t,\cdot)\|_{L^\infty} &\le C_r t^{-\frac{n-1}2} \left(\|u_0\|_{L^1_r} + \|u_1\|_{L^1_{r-1}}\right),\\
  \| \nabla u(t,\cdot)\|_{L^\infty} + \|u_t(t,\cdot)\|_{L^\infty} &\le C_r t^{-\frac{n-1}2} \left(\|u_0\|_{L^1_{r+1}} + \|u_1\|_{L^1_{r}}\right),
\end{align}
for $r>\frac{n+1}2$.
\end{thm}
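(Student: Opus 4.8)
The plan is to represent the solution via the Fourier transform and then estimate the resulting oscillatory integrals by a stationary phase / van der Corput argument. First I would write the solution to \eqref{eq:1:CP-free} explicitly as
\begin{equation*}
  u(t,x) = \mathscr F^{-1}\!\left[\cos(t|\xi|)\,\widehat{u_0}(\xi)\right](x) + \mathscr F^{-1}\!\left[\frac{\sin(t|\xi|)}{|\xi|}\,\widehat{u_1}(\xi)\right](x),
\end{equation*}
so that each term is a superposition of the two half-wave propagators $e^{\pm it|\xi|}$ acting on the data. It therefore suffices to bound, for a symbol $a(\xi)$ behaving like $\jp{\xi}^{-\rho}$ at infinity (with $\rho$ to be matched to the Bessel-potential orders in the statement), the kernel
\begin{equation*}
  K_t(x) = \int_{\mathbb R^n} e^{i(x\cdot\xi \pm t|\xi|)}\, a(\xi)\, \mathrm d\xi
\end{equation*}
in $L^\infty_x$, since convolution with $K_t$ maps $L^1$ to $L^\infty$ with norm $\|K_t\|_{L^\infty}$, and the Bessel potential $\jp{\D_x}^{-r}$ is exactly what converts the $L^1_r$-norms on the right-hand side into plain $L^1$-norms of the data.

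The next step is the decomposition into low and high frequencies by a dyadic partition of unity. On the low-frequency piece $|\xi|\lesssim 1$ the integral is absolutely convergent and bounded uniformly in $t$, which is harmless since we seek a decay bound for $t\ge 1$ (for bounded $t$ the estimate is trivial by energy-type considerations). On each dyadic shell $|\xi|\sim 2^k$, $k\ge 0$, I would rescale $\xi = 2^k\eta$ and apply the standard stationary-phase estimate for the phase $\eta\mapsto x\cdot\eta \pm t|\eta|$ on the unit shell: the Hessian of $|\eta|$ on the sphere has rank $n-1$ (the radial direction is degenerate, reflecting the light-cone geometry), which produces the gain $(2^k t)^{-(n-1)/2}$ from the $n-1$ nondegenerate directions, while the remaining radial integration is controlled by a one-dimensional van der Corput estimate. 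Collecting the factor $\jp{\xi}^{-\rho}\sim 2^{-k\rho}$ and the volume factor $2^{kn}$ from the rescaling, summing over $k\ge 0$ converges precisely when $\rho$ exceeds the threshold stated, namely $r > \tfrac{n+1}{2}$; one then tracks the shift in Bessel order ($r$ vs.\ $r-1$ for $u_0$ vs.\ $u_1$, and $r+1$ vs.\ $r$ after applying $\nabla$ or $\partial_t$, which contributes an extra factor $|\xi|$) to obtain the two displayed inequalities. The $t$-power comes out as $t^{-(n-1)/2}$ uniformly in the sum.

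The main obstacle is the stationary-phase analysis on the dyadic shells near the light cone: the phase $x\cdot\eta \pm t|\eta|$ is genuinely degenerate in the radial direction, so a naive application of the nondegenerate stationary phase formula fails, and one must either exploit the curvature of the sphere directly (writing the shell integral in polar coordinates and using the Fourier transform of the surface measure on $S^{n-1}$, which decays like $|\zeta|^{-(n-1)/2}$) or invoke a van der Corput lemma with the appropriate number of derivatives. Care is also needed to make the estimates uniform across all dyadic scales and to verify that the constant $C_r$ depends only on $r$ and $n$; this is where the condition $r>\tfrac{n+1}{2}$ is used in an essential way, both to sum the dyadic series and to absorb the $(n-1)/2$ lost derivatives in the stationary-phase step together with the extra $\tfrac12$ needed for summability.
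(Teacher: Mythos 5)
Your overall strategy --- writing $u$ as a superposition of the half-wave propagators $\mathrm e^{\pm\mathrm i t|\xi|}$, reducing to an $L^\infty$ bound for the kernel $K_t$, performing a dyadic decomposition, and applying stationary phase on each shell using the rank-$(n-1)$ Hessian of $|\xi|$ --- is exactly the route the paper sketches after the theorem, and your bookkeeping for the high frequencies is correct: each shell $|\xi|\sim 2^k$ contributes $2^{k(n-\frac{n-1}{2}-r)}t^{-\frac{n-1}{2}}$, and summability over $k\ge 0$ is precisely the condition $r>\frac{n+1}{2}$.

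There is, however, a genuine gap in your treatment of low frequencies. You dispose of the region $|\xi|\lesssim 1$ by noting that the integral there is absolutely convergent and "bounded uniformly in $t$, which is harmless." It is not harmless: a $t$-independent bound is strictly weaker than the claimed decay $t^{-\frac{n-1}{2}}$, and the low-frequency part genuinely contributes at exactly that rate --- indeed, if $\widehat{u_0}$ is supported in $\{|\xi|\le 1\}$ your argument as written proves no decay at all, while the theorem asserts $t^{-\frac{n-1}{2}}$ for such data. The correct cut, as the paper indicates, is at $t|\xi|\sim 1$ rather than at $|\xi|\sim 1$: the region $t|\xi|\lesssim 1$ has measure $O(t^{-n})$ and hence trivially yields the (better) bound $t^{-n}$, whereas the intermediate region $t^{-1}\lesssim|\xi|\lesssim 1$ must be handled by the same shell-by-shell stationary phase argument you use for $|\xi|\gtrsim 1$. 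The repair is routine --- extend your dyadic sum to $k<0$ with $2^k\gtrsim t^{-1}$, where the shell contribution $2^{kn}(2^kt)^{-\frac{n-1}{2}}=2^{k\frac{n+1}{2}}t^{-\frac{n-1}{2}}$ is summable over $k\le 0$ with no help from the symbol --- but it must be carried out, since this is where the decay for data with low-frequency content actually comes from. (A secondary remark: your claim that bounded times are "trivial by energy-type considerations" also does not work, since energy estimates are $L^2$-based and cannot produce an $L^1\to L^\infty$ bound; for $t\simeq 1$ one still needs the dyadic stationary-phase argument, which does go through with the same threshold $r>\frac{n+1}{2}$.)
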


Dispersive estimates are much harder to prove than energy estimates as they encode structural information about the representation of solutions. We will collect some of the crucial ingredients of their proof in this introductory section.

\subsection{Equations with constant coefficients}\label{sec:1:constCoeff} Of particular importance for us are equations 
with constant coefficients as they can be solved explicitly in terms of the Fourier transform and serve
as important model examples. We keep it as simply and sketchy as possible to not extend the exposition too much. Later we will discuss variable coefficient versions of the wave equation
\begin{equation}\label{eq:1:wave}
   u_{tt} - \Delta u = 0,
\end{equation}
the damped wave equation (or dissipative wave equation)
\begin{equation}\label{eq:1:damped-wave}
   u_{tt} - \Delta u + u_t = 0,
\end{equation}
and the Klein--Gordon equation
\begin{equation}\label{eq:1:KGeq}
   u_{tt} - \Delta u +u = 0.
\end{equation}
Natural questions to ask for these problems are:
\begin{enumerate}
\item What are reasonable energies to estimate? While for the damped wave equation
$\mathbb E(u;t)$ is a good candidate, we need to include  $u$ itself into the energy term for the Klein--Gordon equation. Then the Klein--Gordon energy is preserved, while for the damped wave
equation we obtain $\partial_t\mathbb E(u;t)\le 0$ and the (wave) energy decreases.
\item What are sharp (decay) rates for the energy? This is of interest for the damped wave equation
and the answer is by no means trivial. It heavily depends on the assumptions made for data, see e.g. the work of Ikehata \cite{Ikehata:2003b}, \cite{Ikehata:2003e}, \cite{Ikehata:2004} for some examples of this. The obvious estimate
\begin{equation}
  \mathbb E(u;t) \le \mathbb E(u;0)
\end{equation}
is sharp for the damped wave equation with arbitrary data from the energy space. On the other
hand there is the estimate of Matsumura \cite{Matsumura:1977}
\begin{equation}
   \mathbb E(u;t) \lesssim (1+t)^{-1-\frac n2} \left( \|u_0\|^2_{H^1\cap L^1} + \|u_1\|^2_{L^2\cap L^1}\right).
\end{equation}
\item How do dispersive estimates for the above models look like? Here we will just point out that the decay rate 
for the Klein--Gordon model is $t^{-n/2}$ in contrast to the rate $t^{-(n-1)/2}$ for the free
wave equation. There exist uniform estimates for solutions to the damped wave equation, which happen to
give the same decay rate of  $t^{-n/2}$.
\item Are there other asymptotic representations for solutions? As example we refer to Nishihara \cite{Nishihara:2003} for an asymptotic description of solutions to the damped wave equation in terms of solutions of an associated heat equation and some exponentially decaying free waves. 
\end{enumerate}
We will come back to some of these questions for more general hyperbolic models in the subsequent sections.
An extensive study of higher order hyperbolic equations with constant coefficients and their
dispersive estimates was done in Ruzhansky--Smith \cite{Ruzhansky:2010}, we restrict ourselves here in this informal introductory part to the wave equation \eqref{eq:1:wave}. Solutions in $C^2(\mathbb R; \mathscr S'(\mathbb R^n))$ can be studied in terms of the partial Fourier transform with respect to the $x$-variable. 
This gives the parameter-dependent ordinary differential equation
\begin{equation}
   \widehat u_{tt} + |\xi|^2 \widehat u=0
\end{equation}
with solutions explicitly given as
\begin{equation}\label{eq:1:CP-free-sol}
  \widehat u(t,\xi) = \cos(t|\xi|) \widehat u_0(\xi) + \frac{\sin(t|\xi|)}{|\xi|} \widehat u_1(\xi).
\end{equation}
Estimates in $L^2$-scale can be obtained immediately from Plancherel identity. We will
only give the following lemma stating higher order energy estimates for the free
wave equation. The proof is straightforward from \eqref{eq:1:CP-free-sol}.
\begin{lem}
The solutions to the Cauchy problem \eqref{eq:1:CP-free} satisfy the a priori estimate
\begin{equation}\label{eq:1:wave-sol-Est}
   \| u(t,\cdot)\|_{L^2} \le \|u_0\|_{L^2} + t \|u_1\|_{L^2}
\end{equation}
together with 
\begin{equation}
   \| \D_t^k\D_x^\alpha u(t,\cdot)\|_{L^2} \le C_{k,\alpha} \left(\|u_0\|_{H^{k+|\alpha|}} +  \|u_1\|_{H^{k+|\alpha|-1}}\right)
\end{equation}
for all $k\in\mathbb N_0$ and $\alpha\in\mathbb N^n_0$, $|\alpha|\ge1$.
\end{lem}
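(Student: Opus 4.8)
The plan is to work entirely on the Fourier side, exploiting the explicit representation \eqref{eq:1:CP-free-sol} together with Plancherel's theorem, so that every $L^2$-norm of a derivative of $u$ becomes a weighted $L^2$-norm of $\widehat u_0$ and $\widehat u_1$. First I would differentiate \eqref{eq:1:CP-free-sol}: applying $\D_t^k\D_x^\alpha$ brings down a factor that is, up to sign, a polynomial of degree $|\alpha|$ in $\xi$ times either $\abs\xi^k\cos(t\abs\xi)$ or $\abs\xi^k\sin(t\abs\xi)$ acting on $\widehat u_0$, and similarly a factor of order $\abs\xi^{k-1}$ in front of $\widehat u_1$ coming from the extra $1/\abs\xi$ in the second term. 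Since $\abs{\cos(t\abs\xi)}\le1$ and $\abs{\sin(t\abs\xi)}\le1$ uniformly in $t$ and $\xi$, each such multiplier is bounded by $C_{k,\alpha}\jp{\xi}^{k+\abs\alpha}$ for the $u_0$-contribution and by $C_{k,\alpha}\jp{\xi}^{k+\abs\alpha-1}$ for the $u_1$-contribution, provided $\abs\alpha\ge1$ so that $\abs\xi^{k+\abs\alpha-1}\le\jp\xi^{k+\abs\alpha-1}$ also absorbs the potentially singular factor $\abs\xi^{k-1}$ near $\xi=0$ when $k=0$. Taking $L^2$-norms in $\xi$ and invoking Plancherel then gives the second estimate directly.

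For the first estimate \eqref{eq:1:wave-sol-Est} the argument is the same but one must be slightly more careful near $\xi = 0$, since here $\alpha=0$, $k=0$ and the multiplier in front of $\widehat u_1$ is $\sin(t\abs\xi)/\abs\xi$, which is not bounded by a constant uniformly in $t$. The key elementary inequality is $\abs{\sin(t\abs\xi)}\le t\abs\xi$, whence $\abs{\sin(t\abs\xi)/\abs\xi}\le t$ for all $\xi$. Combining this with $\abs{\cos(t\abs\xi)}\le1$ and applying Plancherel to
\begin{equation}
  \widehat u(t,\xi) = \cos(t\abs\xi)\widehat u_0(\xi) + \frac{\sin(t\abs\xi)}{\abs\xi}\widehat u_1(\xi)
\end{equation}
yields $\|u(t,\cdot)\|_{L^2}\le\|\widehat u_0\|_{L^2}+t\|\widehat u_1\|_{L^2}=\|u_0\|_{L^2}+t\|u_1\|_{L^2}$ by the triangle inequality.

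There is essentially no hard part here: the only thing to watch is the behaviour of the solution multipliers near the origin $\xi=0$, where the $1/\abs\xi$ factor from the $u_1$-term must be controlled either by the $t$-dependent bound $\abs{\sin(t\abs\xi)}\le t\abs\xi$ (for the low-order estimate) or absorbed by a derivative $\D_x^\alpha$ with $\abs\alpha\ge1$ (for the higher-order estimates); this is precisely why the hypothesis $\abs\alpha\ge1$ is imposed in the second inequality. Everything else is bookkeeping of the finitely many terms produced by the Leibniz rule and of the constants $C_{k,\alpha}$, which depend only on $k$ and $\abs\alpha$ and not on $t$.
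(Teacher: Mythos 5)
Your proof is correct and follows exactly the route the paper intends: the paper gives no details beyond remarking that the lemma "is straightforward from" the Fourier representation \eqref{eq:1:CP-free-sol}, and your argument (Plancherel plus the bounds $\abs{\sin(t\abs\xi)}\le t\abs\xi$ and $\abs{\xi^\alpha}\abs\xi^{k-1}\le\jp\xi^{k+\abs\alpha-1}$ for $\abs\alpha\ge1$) is precisely that straightforward verification.
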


The proof of the dispersive estimates of Theorem~\ref{thm:1:dispEst} are based on the representation of solutions as sum of Fourier integrals
\begin{equation}
\int     \mathrm e^{\mathrm i(x\cdot\xi \pm t|\xi|)} a_{j,\pm}(t,\xi) u_j(\xi) \mathrm d\xi 
\end{equation}
with amplitudes $a_{j,\pm}(t,\xi)$ supported in $t|\xi|\gtrsim1$ and an (easy to obtain) better estimate for the remainder term localised to  $t|\xi|\lesssim1$. A dyadic decomposition of 
frequency space is used to reduce this to integrals over spherical shells in $\xi$ and they 
are treated by means of usually stationary phase estimates. 

\subsection{Stationary phase estimates}\label{sec:1:StatPhas}
 We will conclude this introductory
section by reviewing some estimates for Fourier transforms of surface carried measures
and relate them to estimates for Fourier integrals. Such estimates have a rather long history and 
go back to the original work of van der Corput \cite{vdCorput:1935}, \cite{vdCorput:1936}, Hlawka
\cite{Hlawka:1949}, \cite{Hlawka:1949b}, Randol \cite{Randol:1969} and in particular for applications in hyperbolic partial differential equations Strichartz \cite{Strichartz:1970}, \cite{Strichartz:1977} and Littman \cite{Littman:1963}. 
Later, Sugimoto \cite{Sugimoto:1994}, \cite{Sugimoto:1996}, established further decay rates depending
on the geometry of the level sets of the phase, in the analytic case, introducing notions of convex
and non-convex indices in this context.
We will rely on further extension of these by Ruzhansky \cite{Ruzhansky:2009}, \cite{Ruzhansky:2012},
allowing phases and amplitudes of limited regularity and depending on parameters, with uniform
estimates with the respect to parameter, bridging the gap between the van der Corput lemma and the
stationary phase method.

Let $\Sigma$ be a smooth closed hypersurface embedded in $\mathbb R^n$. We assume first
that $\Sigma$ encloses a convex domain of $\mathbb R^n$, which is in particular the case if the Gaussian curvature of
$\Sigma$ is non-vanishing. Let further $f\in C^\infty(\Sigma)$ be a smooth function defined on the surface $\Sigma$. In a first step we are interested in decay properties of the inverse Fourier transform
\begin{equation}\label{eq:1:FTSCM}
\check f(x) =   \int_\Sigma \mathrm e^{\mathrm i x\cdot \xi} f(\xi) \mathrm d\xi,
\end{equation}
the integral taken with respect to the surface measure induced from the ambient
Lebesgue measure. Before stating the result, we introduce the contact index $\gamma(\Sigma)$
of the surface $\Sigma$ to be the maximal order of contact between $\Sigma$ and its tangent
lines.
For a 2-plane 
$H$ containing the normal of the surface $\Sigma$ at $p\in\Sigma$ we define
 $\gamma(\Sigma; p,H)$ to be
 the order of contact between the tangent $\mathrm T_p\Sigma\cap H$
and the curve $\Sigma\cap H$ in the point $p$.
Consequently, we set 
\begin{equation}\label{eq:1:SugInd}
   \gamma(\Sigma) = \max_{p\in\Sigma} \max_{H : \mathrm N_p\Sigma\subset H } \gamma(\Sigma; p, H).
\end{equation}
Then the following result is valid.


\begin{lem}\label{lem:1:sug1}
Assume $\Sigma$ is convex. Then the estimate
\begin{equation}
  |\check f(x)| \lesssim \langle x\rangle^{-\frac{n-1}{\gamma(\Sigma)}} \| f\|_{C^k(\Sigma)}
\end{equation}
is valid for all $f\in C^k(\Sigma)$, $k\ge \frac{n-1}{\gamma(\Sigma)}+1$.
\end{lem}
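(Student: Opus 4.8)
The plan is to reduce the estimate to a one-dimensional van der Corput type bound by slicing the surface integral along directions determined by the point $x$. Write $x = |x|\,\omega$ with $\omega\in S^{n-1}$; since the bound is trivial for $|x|\lesssim1$ (the integral is bounded by $\|f\|_{C^0}$ times the surface area), we may assume $|x|$ large. The oscillatory factor $\mathrm e^{\mathrm i x\cdot\xi}$ has phase $|x|\,(\omega\cdot\xi)$, so the relevant stationary points on $\Sigma$ are the (at most two, by convexity) points where the normal $\mathrm N_p\Sigma$ is parallel to $\omega$. Away from a fixed neighbourhood of these points the phase $\xi\mapsto\omega\cdot\xi$ has non-vanishing surface gradient, and non-stationary phase (integration by parts in $\xi$ along $\Sigma$, using that $f\in C^k$) gives rapid decay $\langle x\rangle^{-N}$ for any $N$, which is better than claimed; the $C^k$ norm of $f$ absorbs the derivatives landing on the amplitude. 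So the whole contribution comes from the two caps around the stationary points.

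Near such a stationary point $p$, introduce geodesic-type coordinates on $\Sigma$ and foliate the cap by the family of 2-planes $H$ containing $\mathrm N_p\Sigma$ (equivalently, containing $\omega$). In each such plane the surface meets $H$ in a plane curve $\Sigma\cap H$, and the restriction of the phase to this curve is, by the very definition of the contact index, a function vanishing to order at most $\gamma(\Sigma;p,H)\le\gamma(\Sigma)$ relative to its tangent at $p$ — i.e. after choosing arclength parameter $s$ along the curve the phase behaves like $c\, s^{\gamma}$ plus higher order, with $\gamma\le\gamma(\Sigma)$. Applying the van der Corput lemma of order $\gamma$ in the $s$-variable yields a one-dimensional oscillatory integral bound of size $\langle x\rangle^{-1/\gamma(\Sigma)}$, uniformly in the plane $H$, with the implied constant controlled by a $C^k$ norm of $f$ with $k$ just above $1/\gamma(\Sigma)$ (the derivative count needed by van der Corput). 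Integrating this bound over the remaining $n-2$ transverse directions parametrising the cap — each of which, again by convexity and the contact-index hypothesis, contributes another factor $\langle x\rangle^{-1/\gamma(\Sigma)}$ through the same van der Corput estimate in that variable — produces the full power $\langle x\rangle^{-(n-1)/\gamma(\Sigma)}$. Summing the two caps and the rapidly decaying remainder gives the claim, with $k\ge\frac{n-1}{\gamma(\Sigma)}+1$ as the total regularity spent: $\frac{n-1}{\gamma(\Sigma)}$ derivatives to iterate van der Corput in all tangential directions plus one extra to make the non-stationary part summable.

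The main obstacle is the geometric bookkeeping in the transverse directions: it is not enough to control one plane section $\Sigma\cap H$; one must organise the cap as an $(n-1)$-parameter family of curves and show that the van der Corput estimates in the $n-2$ "spreading'' directions hold \emph{uniformly} and compound multiplicatively, which requires that the contact order does not degenerate as $H$ rotates about $\mathrm N_p\Sigma$ — this is exactly what $\gamma(\Sigma)$ being the \emph{maximum} over all such $H$ guarantees, so that each direction costs at worst $\langle x\rangle^{-1/\gamma(\Sigma)}$. Making the change of variables and the uniformity of the constants precise (so that the final estimate depends only on $\|f\|_{C^k}$ and on $\Sigma$, not on $\omega$) is the technical heart; here one invokes the parameter-dependent van der Corput and stationary phase estimates of Ruzhansky \cite{Ruzhansky:2009}, \cite{Ruzhansky:2012}, which are designed precisely to yield such uniform bounds with amplitudes and phases of limited regularity.
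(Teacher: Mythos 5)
The paper itself states this lemma without proof (it is quoted from Sugimoto \cite{Sugimoto:1994} and Ruzhansky \cite{Ruzhansky:2009}, \cite{Ruzhansky:2012}), so I can only judge your argument on its own merits. The localisation to the two stationary points of $\xi\mapsto\omega\cdot\xi$ and the non-stationary-phase treatment of the rest are fine. The gap is in the step where the one-dimensional bound is upgraded to the full power $(n-1)/\gamma$. Your foliation of the cap by the 2-planes $H\supset\mathrm N_p\Sigma$ is a polar decomposition: the cap is parametrised by a radial variable $\rho$ along each curve $\Sigma\cap H$ and an angular variable $\theta\in\mathbb S^{n-2}$ labelling the planes. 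Applying van der Corput radially gives $\langle x\rangle^{-1/\gamma}$ per slice, uniformly in $\theta$; but integrating a pointwise bound over $\theta$ then yields only $\langle x\rangle^{-1/\gamma}$ — the non-convex rate of Lemma~\ref{lem:1:sug2} — not $\langle x\rangle^{-(n-1)/\gamma}$. Your claim that each of the $n-2$ transverse directions "contributes another factor $\langle x\rangle^{-1/\gamma}$ through the same van der Corput estimate in that variable" has no mechanism behind it: once you have taken absolute values after the radial application there is no oscillation left to exploit in $\theta$, and for a rotationally symmetric example such as the sphere the phase $h(\rho\theta)=1-\sqrt{1-\rho^2}$ does not depend on $\theta$ at all, so there is literally nothing to which a second van der Corput could be applied; yet the correct decay there is $\langle x\rangle^{-(n-1)/2}$, not $\langle x\rangle^{-1/2}$.

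The extra $(n-2)/\gamma$ powers must come from one of two genuinely different sources, neither of which appears in your write-up. Either one keeps the polar Jacobian $\rho^{n-2}$, which vanishes to order $n-2$ at the stationary point, and uses a weighted one-dimensional estimate of the type $\bigl|\int_0^\infty\mathrm e^{\mathrm i\lambda\phi(\rho)}\rho^{n-2}a(\rho)\,\mathrm d\rho\bigr|\lesssim\lambda^{-(n-1)/\gamma}$ (the vanishing of the amplitude at the stationary point, not repeated oscillation in $\theta$, is what produces the gain); or one works in Cartesian coordinates on the tangent plane and invokes a genuinely multidimensional van der Corput lemma as in \cite{Ruzhansky:2009}, whose hypothesis is a lower bound on derivatives of $h$ \emph{in each coordinate direction at every point of a neighbourhood}. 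The contact index only controls derivatives along rays through $p$ evaluated at $p$ (this is the quantity $\varkappa(\Sigma,\gamma;p)$ of \eqref{eq:3:kappa-def}); transferring that to the hypothesis of the multidimensional lemma is exactly where convexity of $h$ is used (monotonicity of derivatives of convex functions), and this is the point your appeal to "$\gamma(\Sigma)$ being the maximum over all $H$" does not address — that maximum gives uniformity in the angular parameter, not the multiplicative compounding across directions. As written, the argument proves only the $\langle x\rangle^{-1/\gamma_0}$ rate.
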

%

This result is closely related to the following multiplier theorem
which we formulate in the form due to Ruzhansky \cite{Ruzhansky:2009, Ruzhansky:2012}
(see also Ruzhansky--Smith \cite{Ruzhansky:2010} and  Sugimoto \cite{Sugimoto:1994}). 
Let $\chi\in C^\infty(\mathbb R^n)$ be an excision function, i.e., we assume it is equal to $1$ for large $|\xi|$ and vanishes near the origin. We then consider the operator
\begin{equation}\label{eq:1:Tdef}
  T_{\phi,r} : u(x) \mapsto \int_{\mathbb R^n} \mathrm e^{\mathrm i(x\cdot \xi + \phi(\xi))} |\xi|^{-r} \chi(\xi) \widehat u(\xi)
   \mathrm d\xi
\end{equation}
for  a given real-valued
phase function $\phi(\xi)\in C^\infty(\mathbb R^n\setminus\{0\})$ being positively homogeneous
$\phi(\rho\xi) = \rho\phi(\xi)$, $\rho>0$. The $L^p$--$L^{p'}$-boundedness of such an operator is
related to the geometry of the Fresnel surface / level set
\begin{equation}\label{eq:1:Fresnel}
    \Sigma = \{ \xi \in \mathbb R^n : \phi(\xi)=1 \}.
\end{equation}
It is sufficient to prove the limit case for $p=1$, boundedness between dual Lebesgue spaces
follows by interpolation with the (obvious) boundedness in $L^2(\mathbb R^n)$. First we recall the definition of  Besov spaces. Let $\phi\in C_0^\infty(\mathbb R_+)$ define a Littlewood--Paley decomposition, i.e., be such that $\sum_{j\in\mathbb Z} \phi(2^{-j}\tau) =1$ for any $\tau\ne0$. Then the homogeneous Besov norm of regularity $s\ge0$ and with Lebesgue-indices $p, q\in[1,\infty]$ is defined by  
\begin{equation}\label{eq:1:homBesov}
    \| v \|_{\dot B^s_{p,q}}^q = \sum_{j\in\mathbb Z} \big( 2^{js} \| \mathscr F^{-1} \phi(2^{-j}|\xi|) \widehat v(\xi) \|_{L^p}\big)^q,
\end{equation}
while the Besov space $B^s_{p,q}(\mathbb R^n)$ contains all functions with
\begin{equation}\label{eq:1:Besov}
    \| v \|_{B^s_{p,q}}^q =  \| \mathscr F^{-1} \psi(\xi) \widehat v(\xi) \|_{L^p}^q +
    \sum_{j=1}^\infty \big( 2^{js} \| \mathscr F^{-1} \phi(2^{-j}|\xi|) \widehat v(\xi) \|_{L^p}\big)^q<\infty,
\end{equation}
where $\psi(s) = 1-\sum_{j=1}^\infty \phi(2^{-j}s)$. 
It follows $B^s_{p,2}(\mathbb R^n) = \dot B_{p,2}^s(\mathbb R^n) \cap L^p(\mathbb R^n)$
and there are continuous embeddings 
$L^p(\mathbb R^n)\hookrightarrow B^0_{p,2}(\mathbb R^n)$ for $1<p\le 2$ and similarly
$B^0_{p',2}(\mathbb R^n)\hookrightarrow L^{p'}(\mathbb R^n)$ for 
$2\le p'<\infty$. The opposite embeddings require to pay a small amount of regularity (see Runst--Sickel  \cite{RuSi:1996} for more details).

\begin{lem}\label{lem:1:sug1A}
Assume that $\Sigma$ defined by \eqref{eq:1:Fresnel} is convex.
Then $T_{\phi,r}$ is a bounded operator mapping $ \dot B^0_{1,2}(\mathbb R^n)\to L^\infty(\mathbb R^n)$
for all $r\ge n-\frac{n-1}{\gamma(\Sigma)}$.
\end{lem}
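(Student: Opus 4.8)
The plan is to reduce the claimed mapping property of $T_{\phi,r}$ to the pointwise decay estimate of Lemma~\ref{lem:1:sug1} via a Littlewood--Paley decomposition. First I would write $u = \sum_{j\ge j_0} \mathscr F^{-1}[\phi(2^{-j}|\xi|)\widehat u(\xi)]$, using that the excision function $\chi$ restricts the integral in \eqref{eq:1:Tdef} to $|\xi|\gtrsim 1$, so only finitely-many-from-below dyadic pieces contribute; thus it suffices to bound each dyadic block $T_{\phi,r}^{(j)}$ and sum. On the $j$-th block, rescale $\xi = 2^j\eta$; using the positive homogeneity $\phi(2^j\eta) = 2^j\phi(\eta)$ and the homogeneity of $|\xi|^{-r}$, the phase becomes $2^j(x\cdot\eta + \phi(\eta))$ after also rescaling $x$, and one extracts a factor $2^{j(n-r)}$ from the measure and the symbol. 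The kernel of $T_{\phi,r}^{(j)}$ is then (a rescaling of) $\int \mathrm e^{\mathrm i y\cdot\eta}|\eta|^{-r}\phi(|\eta|)\mathrm e^{\mathrm i\phi(\eta)}\,\mathrm d\eta$, an integral over an annulus $|\eta|\sim 1$ of a smooth compactly supported amplitude times the oscillation $\mathrm e^{\mathrm i\phi(\eta)}$.

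The second step is the geometric heart: I would foliate the annulus $\{|\eta|\sim1\}$ by the level sets $\{\phi = \lambda\}$ for $\lambda$ in a compact subinterval of $(0,\infty)$, each of which is a dilate of the Fresnel surface $\Sigma$ from \eqref{eq:1:Fresnel} and hence convex with the same contact index $\gamma(\Sigma)$. Writing the annulus integral in these coordinates produces, for each $\lambda$, a surface integral of exactly the form $\check f_\lambda(y + \lambda\,\text{(something)})$ appearing in \eqref{eq:1:FTSCM}, to which Lemma~\ref{lem:1:sug1} applies, yielding decay $\langle\,\cdot\,\rangle^{-(n-1)/\gamma(\Sigma)}$ uniformly in $\lambda$ since the geometry is scale-invariant and the amplitudes have bounded $C^k$ norms. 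Integrating in $\lambda$ over the compact interval costs nothing, so the dyadic kernel obeys $|K_j(y)| \lesssim 2^{j(n-r)}\langle y\rangle^{-(n-1)/\gamma(\Sigma)}$ in the rescaled variable, equivalently $\lesssim 2^{j(n-r)} \langle 2^j z\rangle^{-(n-1)/\gamma(\Sigma)}$ in the original variable $z$, hence $\|K_j\|_{L^\infty}\lesssim 2^{j(n-r)}$, which is summable over $j\ge j_0$ precisely when $r \ge n - (n-1)/\gamma(\Sigma)$ — the stated threshold (the borderline $r = n-(n-1)/\gamma(\Sigma)$ being where the $L^1$ norm of $\langle\cdot\rangle^{-(n-1)/\gamma(\Sigma)}$ scaled... is handled by the $\dot B^0_{1,2}$ norm rather than plain $L^1$).

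The third step assembles the blocks using the Besov structure. Since $T_{\phi,r}^{(j)} u$ depends only on $\mathscr F^{-1}[\phi(2^{-j}|\xi|)\widehat u]$ (up to a harmless widening of the Littlewood--Paley cutoff, using $\phi = \phi\,\widetilde\phi$ with $\widetilde\phi$ a fattened bump), Young's inequality gives $\|T_{\phi,r}^{(j)} u\|_{L^\infty} \lesssim \|K_j\|_{L^\infty}\,\|\mathscr F^{-1}[\widetilde\phi(2^{-j}|\xi|)\widehat u]\|_{L^1} \lesssim 2^{j(n-r)}\|\mathscr F^{-1}[\widetilde\phi(2^{-j}|\xi|)\widehat u]\|_{L^1}$. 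Summing over $j$ and applying Cauchy--Schwarz against the $\ell^2$ sum defining $\|u\|_{\dot B^0_{1,2}}$ — here one uses that at the endpoint $r = n-(n-1)/\gamma(\Sigma)$ the factors $2^{j(n-r)}$ are $O(1)$, so $\sum_j \|\cdot\|_{L^1} \lesssim (\sum_j \|\cdot\|_{L^1}^2)^{1/2}$ after accounting for almost-orthogonality, while for $r$ strictly larger the geometric decay in $j$ makes even the $\ell^1$ sum converge against $\sup_j$ — yields $\|T_{\phi,r}u\|_{L^\infty}\lesssim \|u\|_{\dot B^0_{1,2}}$.

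I expect the main obstacle to be the endpoint bookkeeping in this last step: turning a bound of the form "$\sum_j 2^{j(n-r)} b_j$" with $b_j = \|\mathscr F^{-1}[\widetilde\phi(2^{-j}|\xi|)\widehat u]\|_{L^1}$ into the $\dot B^0_{1,2}$ norm $(\sum_j b_j^2)^{1/2}$ exactly at $r = n-(n-1)/\gamma(\Sigma)$ requires care — naive Cauchy--Schwarz would lose a power of $j$, so one must exploit that, after the dyadic rescaling, $K_j$ and the corresponding frequency block are essentially concentrated at dual scales, giving a genuinely almost-orthogonal (Cotlar-type or $TT^*$) decomposition rather than a crude triangle-inequality sum. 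A secondary technical point is checking that Lemma~\ref{lem:1:sug1} is applied uniformly across the foliation $\{\phi=\lambda\}$: one must verify the $C^k$ norms of the induced amplitudes and the contact indices are bounded independently of $\lambda$ in the compact range, which follows from the homogeneity of $\phi$ but should be stated. The purely computational parts — the rescaling identities and the Young/Plancherel estimates — are routine and I would not belabour them.
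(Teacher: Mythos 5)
Your architecture — dyadic blocks, a kernel bound per block, resummation against the Besov norm — is the right one (it is essentially how the paper proves the more general $t$-dependent version, Theorem~\ref{thm:3:dispEsp}), but the kernel estimate in your second step does not deliver the stated threshold, and the arithmetic in your own write-up betrays this. Slicing the annulus by the level sets $\{\phi=\lambda\}$, applying Lemma~\ref{lem:1:sug1} to each slice and then integrating \emph{absolute values} in $\lambda$ gives $|K_j(z)|\lesssim 2^{j(n-r)}\langle 2^j z\rangle^{-\frac{n-1}{\gamma(\Sigma)}}$, whose supremum is attained at $z=0$ and equals $2^{j(n-r)}$; and $2^{j(n-r)}$ is bounded in $j$ only for $r\ge n$, not for $r\ge n-\frac{n-1}{\gamma(\Sigma)}$. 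The bound you actually need is $\|K_j\|_{L^\infty}\lesssim 2^{j(n-r)}\,2^{-j\frac{n-1}{\gamma(\Sigma)}}$, i.e.\ a gain in the \emph{frequency} parameter $2^j$ uniformly in $z$, as in Proposition~\ref{cor:convex} (decay in $t$, uniform in $x$). Your foliation cannot produce this because near $z=0$ the slice integrals do not oscillate at all and you have discarded the transverse oscillation $\mathrm e^{\mathrm i 2^j\lambda}$. The missing ingredient is the stationary/non-stationary splitting used in the proof of Theorem~\ref{thm:3:dispEsp}: critical points of $\eta\mapsto z\cdot\eta+\phi(\eta)$ require $z=-\nabla\phi(\eta)$, hence $|z|\gtrsim 1$ by hypothesis (3) of Theorem~\ref{THM:convexsp}; for $|z|$ below that threshold integration by parts gives $|K_j(z)|\lesssim 2^{j(n-r)}2^{-jM}$, while for $|z|\gtrsim 1$ your slicewise bound does read $2^{j(n-r)}\langle 2^jz\rangle^{-\frac{n-1}{\gamma(\Sigma)}}\lesssim 2^{j(n-r-\frac{n-1}{\gamma(\Sigma)})}$. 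Without the non-stationary region argument the claimed summability range is simply not reached.

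Even with the corrected kernel bound, your third step does not close at the endpoint $r=n-\frac{n-1}{\gamma(\Sigma)}$: there $\|K_j\|_{L^\infty}=O(1)$ uniformly in $j$, and $\sum_j\|\Delta_j u\|_{L^1}=\|u\|_{\dot B^0_{1,1}}$ is not dominated by $\|u\|_{\dot B^0_{1,2}}$. The Cotlar/$TT^*$ device you invoke is an $L^2$ phenomenon; there is no almost-orthogonality principle for summing sup-norms of frequency-disjoint pieces, and the failure is genuine rather than technical: for $\phi(\xi)=|\xi|$, $\gamma=2$, $r=\frac{n+1}{2}$, taking $\widehat u=\sum_{j\le N}2^{-j\frac{n-1}{2}}\mathrm e^{-\mathrm i|\xi|}\psi(2^{-j}|\xi|)$ makes every block satisfy $\|\Delta_j u\|_{L^1}\approx 1$ while all blocks contribute coherently to $T_{\phi,r}u(0)$, so $\|T_{\phi,r}u\|_{L^\infty}\gtrsim N$ against $\|u\|_{\dot B^0_{1,2}}\approx N^{1/2}$. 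The dyadic route therefore yields the $\ell^2$-Besov statement only for $r$ strictly above the threshold (where your Cauchy--Schwarz step is fine), and at the endpoint it yields the $\dot B^0_{1,1}\to L^\infty$ bound; if you want to prove the lemma in the form stated you will have to confront this, as no refinement of the block-by-block summation can do it.
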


The statement of this lemma immediately implies dispersive type estimates for Fourier
integral representations of the form
\begin{equation}\label{eq:1:FIO-rep}
    u(t,x) = (2\pi)^{-n} \int \mathrm e^{\mathrm i(x\cdot\xi + t\phi(\xi))} \widehat u_0(\xi) \mathrm d\xi.
\end{equation}
Indeed, by rescaling $\eta=t\xi$, $y=x/t$, we see that
\begin{equation}\label{eq:1:dispEst}
    \| u(t,\cdot) \|_{L^\infty} \lesssim t^{-\frac{n-1}{\gamma(\Sigma)}  } \| u_0 \|_{\dot B^r_{1,2}}
\end{equation} 
for $r= n-\frac{n-1}{\gamma(\Sigma)}$. The above estimate \eqref{eq:1:dispEst} is of interest
to us because \eqref{eq:1:FIO-rep} solves the Cauchy problem
\begin{equation}
   u_{tt} + \phi^2(\D_x) u = 0,\qquad u(0,\cdot) = u_0,\quad u_t(0,\cdot) = 0 
\end{equation}
generalising \eqref{eq:1:CP-free}. Here and later on we use the notation
$\phi(\D_x) : u \mapsto \mathscr F^{-1} [\phi(\xi) \widehat u(\xi)]$ for Fourier multipliers.

In general there is no reason for the Fresnel surface of a phase to be convex. Of particular 
importance for the study of hyperbolic systems arising in crystal acoustics or elasticity theory or for higher order scalar equations are generalisations of the Lemmata~\ref{lem:1:sug1} and \ref{lem:1:sug1A}
to non-convex surfaces.  To consider this situation, we define the
non-convex contact index
\begin{equation}
   \gamma_0(\Sigma) = \max_{p\in\Sigma} \min_{H : \mathrm N_p\Sigma\subset H } \gamma(\Sigma; p, H)
\end{equation}
as maximal order of minimal contact. The replacement to Lemma~\ref{lem:1:sug1} is
\begin{lem}\label{lem:1:sug2}
For general (non-convex) surfaces $\Sigma$ the estimate
\begin{equation}
  |\check f(x)| \lesssim \langle x\rangle^{-\frac{1}{\gamma_0(\Sigma)}} \| f\|_{C^1(\Sigma)}
\end{equation}
is valid for all $f\in C^1(\Sigma)$.
\end{lem}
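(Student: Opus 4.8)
The plan is to reduce the oscillatory surface integral \eqref{eq:1:FTSCM} to a single one-dimensional van der Corput estimate, carried out in the distinguished direction encoded by $\gamma_0(\Sigma)$. For $\abs x\le1$ the bound is trivial, since $\abs{\check f(x)}\le\sigma(\Sigma)\,\|f\|_{C^0(\Sigma)}$ while $\jp x$ is bounded, so I may assume $\lambda:=\abs x\ge1$ and write $x=\lambda\omega$, $\omega\in S^{n-1}$. By compactness of $\Sigma$ I would fix a finite cover by small coordinate patches $U_p$, $p\in F$, with a subordinate partition of unity, reducing matters to estimating $\int_\Sigma\mathrm e^{\mathrm i\lambda\,\omega\cdot\xi}g(\xi)\,\mathrm d\xi$ for $g$ supported in one patch $U_p$, uniformly in $\omega$. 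Choosing coordinates adapted to $p$ so that $\Sigma\cap U_p=\{(\xi',h(\xi'))\}$ with $h(0)=0$ and $\nabla h(0)=0$, this integral becomes
\begin{equation}
  I(\lambda,\omega)=\int_V\mathrm e^{\mathrm i\lambda\psi_\omega(\xi')}\,\widetilde g(\xi')\,\mathrm d\xi',\qquad \psi_\omega(\xi')=\omega'\cdot\xi'+\omega_n\,h(\xi'),
\end{equation}
with $\widetilde g\in C^1_c(V)$ and $\|\widetilde g\|_{C^1}\lesssim\|f\|_{C^1(\Sigma)}$.

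Next I would use the definition of the non-convex index: for the given $p$ there is a $2$-plane $H$ through $\mathrm N_p\Sigma$, equivalently a tangent direction $\theta_p$, with $\gamma(\Sigma;p,H)=m_p\le\gamma_0(\Sigma)$; since $h$ vanishes to order at least two at the origin, $m_p\ge2$ and $s\mapsto h(s\theta_p)$ vanishes to order exactly $m_p$ at $s=0$. Rotating so that $\theta_p$ is the first coordinate direction $\xi_1$ and shrinking $U_p$, continuity provides a uniform lower bound $\abs{\partial_{\xi_1}^{m_p}h}\ge c_p>0$ on $V$, and I may also arrange that the unit normals over $U_p$ vary by less than a chosen small amount $\delta_p$. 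Now split on $\omega$. If $\omega$ keeps distance $\ge\delta_p/2$ from all $\pm\nu(\xi)$, $\xi\in U_p$, then $\abs{\nabla\psi_\omega}\gtrsim1$ on $V$, and one integration by parts --- the only place where the $C^1$ regularity of $f$ is consumed, which is exactly the hypothesis --- gives $\abs I\lesssim\lambda^{-1}\|f\|_{C^1}$. Otherwise $\omega$ lies so close to $\pm\nu(p)=\pm e_n$ that $\abs{\omega_n}\ge1/2$; then $\abs{\partial_{\xi_1}^{m_p}\psi_\omega}=\abs{\omega_n}\,\abs{\partial_{\xi_1}^{m_p}h}\ge c_p/2$ on $V$, with $m_p\ge2$, and applying the van der Corput lemma of order $m_p$ to the inner $\xi_1$-integral (no monotonicity condition is needed for order $\ge2$) and then integrating trivially over the remaining $n-2$ tangential variables yields $\abs I\lesssim\lambda^{-1/m_p}\|\widetilde g\|_{C^1}\lesssim\lambda^{-1/m_p}\|f\|_{C^1}$.

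In both cases $\abs I\lesssim\lambda^{-1/m_p}\|f\|_{C^1}\le\lambda^{-1/\gamma_0(\Sigma)}\|f\|_{C^1}$, using $m_p\le\gamma_0(\Sigma)$ and $\lambda\ge1$; summing over the finite cover and recombining with the trivial range $\abs x\le1$ gives the lemma. I expect the main obstacle to be the uniformity hidden in the patch decomposition: the finite cover, the directions $\theta_p$ and the thresholds $\delta_p$ must be chosen so that the lower bound on $\abs{\partial_{\xi_1}^{m_p}\psi_\omega}$, respectively on $\abs{\nabla\psi_\omega}$, holds on a full neighbourhood and for all relevant $\omega$ simultaneously; this rests on compactness of $\Sigma$ and of $S^{n-1}$ together with continuity of the derivatives of $h$. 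A secondary subtlety is that a $C^1$ amplitude permits exactly one integration by parts and exactly one application of van der Corput, so the argument must never call for more regularity --- it is precisely the fact that $\gamma_0$ reflects a single good direction that makes one step enough. For the variable-coefficient applications later on one would replace the classical van der Corput lemma here by its parameter-dependent refinement due to Ruzhansky, retaining uniform constants.
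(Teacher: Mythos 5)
The paper states Lemma~\ref{lem:1:sug2} without proof, deferring to the cited works of Sugimoto and Ruzhansky, and your argument is a correct reconstruction of exactly the proof given there: a finite patch decomposition, non-stationary phase (one integration by parts) when $\omega$ stays away from $\pm\nu$, and a single one-dimensional van der Corput estimate of order $m_p\le\gamma_0(\Sigma)$ in the direction of minimal contact when $\omega$ is near the normal, with the trivial bound handling $|x|\le 1$. The points you flag as delicate --- that the lower bound $|\partial_{\xi_1}^{m_p}h|\ge c_p$ persists on a shrunken patch by continuity, that the linear part of $\psi_\omega$ drops out of derivatives of order $\ge 2$, and that a $C^1$ amplitude suffices for exactly one integration by parts and one van der Corput application --- are handled correctly, so I see no gap.
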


Again we obtain a multiplier theorem for the operator $T_{\phi,r}$ defined in
\eqref{eq:1:Tdef}.
\begin{lem}\label{lem:1:sug2A}
The operator  $T_{\phi,r}$ is bounded $ \dot B^0_{1,2}(\mathbb R^n)\to L^\infty(\mathbb R^n)$
for $r\ge n-\frac{1}{\gamma_0(\Sigma)}$.
\end{lem}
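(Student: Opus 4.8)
The plan is to mirror the proof strategy that links Lemma~\ref{lem:1:sug1} to Lemma~\ref{lem:1:sug1A}, now using the non-convex decay rate from Lemma~\ref{lem:1:sug2}. As noted in the text, it suffices to prove boundedness $\dot B^0_{1,2}(\mathbb R^n)\to L^\infty(\mathbb R^n)$ in the endpoint case and then interpolate with the trivial $L^2$-boundedness of $T_{\phi,r}$ (which comes from Plancherel, since $|\xi|^{-r}\chi(\xi)$ is a bounded multiplier). First I would perform a Littlewood--Paley decomposition of the symbol, writing $T_{\phi,r} = \sum_{j\ge0} T_j$ where $T_j$ has symbol $\mathrm e^{\mathrm i\phi(\xi)}|\xi|^{-r}\chi(\xi)\phi(2^{-j}|\xi|)$, supported in the dyadic shell $|\xi|\sim 2^j$. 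The kernel of $T_j$ is then an oscillatory integral over this shell; using homogeneity of $\phi$ and rescaling $\xi = 2^j\eta$ reduces each piece to an integral over a fixed shell $|\eta|\sim1$ against the phase $x\cdot\xi + \phi(\xi) = 2^j(2^{-j}x\cdot\eta + \phi(\eta))$, i.e. to the situation governed by the Fresnel surface $\Sigma = \{\phi=1\}$.

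Next I would foliate the shell $|\eta|\sim1$ by the level sets $\{\phi = \rho\}$ for $\rho$ in a bounded interval, each a rescaled copy of $\Sigma$, and apply Lemma~\ref{lem:1:sug2} on each leaf. This yields, for the kernel $K_j$ of $T_j$, a pointwise bound of the shape $|K_j(x)| \lesssim 2^{j(n-r)}\langle 2^j x\rangle^{-1/\gamma_0(\Sigma)}$, the factor $2^{j(n-r)}$ collecting the Jacobian of the rescaling together with the amplitude size $|\xi|^{-r}\sim 2^{-jr}$ and the $n$-dimensional measure of the shell. Taking $L^1$-norms in $x$ of each $K_j$ (so that $\|T_j v\|_{L^\infty}\lesssim \|K_j\|_{L^1}\|v\|_{L^1}$ would be too crude; instead one pairs each $T_j$ with the corresponding Littlewood--Paley block $v_j$ of the input) gives $\|T_j v\|_{L^\infty} \lesssim \|K_j\|_{L^\infty}\|v_j\|_{L^1}$ when $r$ is large enough that the $x$-integral of $\langle 2^jx\rangle^{-1/\gamma_0}$ is comparable to $2^{-jn}$ times an absolutely summable gain. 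Balancing the powers of $2^j$ one is led exactly to the condition $r \ge n - 1/\gamma_0(\Sigma)$, whereupon the dyadic pieces sum (after a Cauchy--Schwarz in $j$) to the $\ell^2$-sum $\|v\|_{\dot B^0_{1,2}}$, which is the definition \eqref{eq:1:homBesov} with $s=0$, $p=1$, $q=2$.

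The main obstacle is handling the non-convexity correctly: unlike the convex case of Lemma~\ref{lem:1:sug1A} where one has the stronger exponent $(n-1)/\gamma(\Sigma)$, here Lemma~\ref{lem:1:sug2} only provides decay in a single transversal direction — the direction realizing the minimal order of contact — so the oscillatory integral over the shell must be organized so that one integrates by parts or applies van der Corput only in that favorable direction, while merely using boundedness of the amplitude and finiteness of measure in the remaining $n-1$ directions. This is precisely why the gain degrades from $(n-1)/\gamma$ to $1/\gamma_0$, and the bookkeeping of which direction is "good" uniformly as the base point $p\in\Sigma$ varies (and uniformly in the rescaling parameter $\rho$) is the delicate point; it is exactly the content of the parameter-dependent van der Corput estimates of Ruzhansky \cite{Ruzhansky:2009, Ruzhansky:2012} that we invoke through Lemma~\ref{lem:1:sug2}. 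Once that lemma is granted, the remaining dyadic summation and interpolation are routine.
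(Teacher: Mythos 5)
Your overall architecture --- dyadic decomposition, rescaling to a fixed shell, reduction to a uniform kernel bound, then Young's inequality blockwise --- is the right one; it is exactly how the paper handles the $t$-dependent analogue in the proof of Theorem~\ref{thm:3:dispEsp} (the lemma itself is only quoted there from the literature). But the central kernel estimate you propose does not close the argument. Foliating the shell $|\xi|\sim 2^j$ by the level sets $\{\phi=\rho\}$ and applying Lemma~\ref{lem:1:sug2} on each leaf, with absolute values taken in the remaining radial integral, gives precisely your bound $|K_j(x)|\lesssim 2^{j(n-r)}\langle 2^jx\rangle^{-1/\gamma_0}$. The supremum of the right-hand side over $x$ is $2^{j(n-r)}=2^{j/\gamma_0}$ at the endpoint (attained for $|x|\lesssim 2^{-j}$), so feeding it into $\|T_jv_j\|_{L^\infty}\le\|K_j\|_{L^\infty}\|v_j\|_{L^1}$ yields nothing uniform in $j$. (Your aside that the $x$-integral of $\langle 2^jx\rangle^{-1/\gamma_0}$ should be summable is also off: that integral diverges, since $1/\gamma_0\le 1/2<n$.) What is missing is the radial oscillation. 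In $\phi$-adapted polar coordinates the full phase is $\rho(1+x\cdot\sigma)$ with $\sigma\in\Sigma$, $\rho\sim 2^j$; since $\nabla\phi$ is bounded away from zero on $\Sigma$ by Euler's identity, the gradient $x+\nabla_\xi\phi(\xi)$ is bounded below by $c\max(1,|x|)$ whenever $x$ stays away from the compact set $-\nabla\phi(\Sigma)$ --- in particular for all small $|x|$ --- and integration by parts gives $O(2^{j(n-r)}2^{-jN})$ there. Only for $|x|\approx 1$, near the stationary set, does one invoke the van der Corput/stationary phase estimate, and there $\langle 2^jx\rangle\approx 2^j$, so the decay factor one actually obtains is the $x$-independent $2^{-j/\gamma_0}$, giving $\|K_j\|_{L^\infty}\lesssim 2^{j(n-r-1/\gamma_0)}\lesssim 1$. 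This stationary/non-stationary splitting (the cut-off $\chi(x/t+\nabla_\xi\vartheta)$ in the proof of Theorem~\ref{thm:3:dispEsp}) is exactly what your purely leafwise application of Lemma~\ref{lem:1:sug2} discards.

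A secondary point: with only uniform bounds $\|T_jv_j\|_{L^\infty}\lesssim\|v_j\|_{L^1}$, summing over $j$ produces the $\ell^1$-sum $\sum_j\|v_j\|_{L^1}$, i.e.\ the $\dot B^0_{1,1}$-norm; a Cauchy--Schwarz in $j$ upgrades this to the $\ell^2$-sum only if the dyadic operator norms carry a square-summable gain, which is absent at the endpoint $r=n-1/\gamma_0$. The paper is equally terse on this step, so I do not count it against you alone, but ``after a Cauchy--Schwarz in $j$'' as written does not prove the $\dot B^0_{1,2}\to L^\infty$ statement and needs either the strict inequality $r>n-1/\gamma_0$ or a genuinely finer argument.
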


Later on in Section~\ref{sec:3:dispersive} we will generalise the above estimates to include
phase functions depending on both $t$ and $x$-variables suitable for the treatment of more
general hyperbolic equations.

\section{Equations with constant coefficients}\label{sec:cc}

Before we pursue the analysis of equations and systems with time-dependent coefficients
it is instructive to understand what happens in the case of equations with
constant coefficients. One of the very helpful observations available in this case
is that after a Fourier transform in the spatial variable $x$ we obtain an ordinary differential
equation with constant coefficients which can be solved almost explicitly
once we know its characteristics. This works well for frequencies where the
characteristics are simple. If they become multiple, the representation breaks
down and other methods are required. In the presentation of this part we
follow \cite{Ruzhansky:2010} to which we refer for the detailed arguments and
complete proofs of the material in this section.

\subsection{Formulation of the problem}

Let us consider the Cauchy
problem for a general scalar strictly hyperbolic operator with constant coefficients,
\begin{equation}\label{EQ:CP}
\left\{\begin{aligned}& \D_t^m
u+\sum_{j=1}^{m}P_{j}(\D_x)\D_t^{m-j}u+
\sum_{l=0}^{m-1}\sum_{|\alpha|+r=l}
c_{\alpha,r}\D_x^\alpha \D_t^r u=0,\; t>0,\\
&\partial_t^l u(0,x)=f_l(x)\in C_0^{\infty}(\R^n),\quad l=0,\dots,m-1,\;
x\in\R^n\,.
\end{aligned}\right.
\end{equation}
The symbol $P_j(\xi)$ of the operator $P_j(\D_x)$ is assumed to be a  
homogeneous polynomial of order $j$, and the $c_{\alpha,r}$
are (complex) constants. 
We denote by $L(\D_t,\D_x)$ and $L_m(\D_t,\D_x)$ the operator 
in \eqref{EQ:CP} and its principal
part, respectively,
\begin{equation}\label{EQ:CPh}
\begin{aligned}  L_m(\D_t,\D_x) & =\D_t^m
+\sum_{j=1}^{m}P_{j}(\D_x)\D_t^{m-j}, \\
L(\D_t,\D_x) & = L_m(\D_t,\D_x)+
\sum_{l=0}^{m-1}\sum_{|\alpha|+r=l}
c_{\alpha,r}\D_x^\alpha \D_t^r.
\end{aligned}
\end{equation}
We denote by $\va_k(\xi)$ and $\tau_k(\xi)$, $k=1,\ldots,m$,
the characteristic roots of $L_m$ and $L$, respectively. Hence, we write
\begin{equation}
L_m(\tau,\xi)=\prod_{k=1}^m (\tau-\va_k(\xi)),\qquad
L(\tau,\xi)=\prod_{k=1}^m (\tau-\tau_k(\xi)).
\end{equation}
The strict hyperbolicity of $L(\D_t,\D_x)$ means that the functions
$\va_k(\xi)$ are real and distinct, for all $\xi\not=0$. It follows that
$\va_k\in C^\infty(\R^n\backslash 0)$ and that the $\va_k$ are
positively homogeneous of order one,
$\va_k(\lambda\xi)=\lambda \va_k(\xi)$ for all
$\lambda>0$ and $\xi\not=0$.

In the general analysis it is usually desirable to 
have conditions on the lower order terms for different
rates of decay of solutions to \eqref{EQ:CP}.
However, it is more convenient to prove the results 
making assumptions on the characteristic roots. Some 
results deducing properties of characteristics from
properties of coefficients are available, and we 
refer to \cite{Ruzhansky:2010} for some details in 
this direction.

It is natural to impose a stability condition excluding 
an exponential growth of solutions in time.
Namely, we assume  that for all $\xi\in\R^n$ we have that
\begin{equation}\label{EQ:imtau>=0}
\Im\tau_k(\xi)\ge0\quad\text{for }k=1,\dots,m\,.
\end{equation}
In fact, certain microlocal
decay estimates are possible even without this condition
if the supports of the Fourier transforms of the Cauchy data
are contained in the set where condition
\eqref{EQ:imtau>=0} holds. However, this restriction is
only technical so we may assume \eqref{EQ:imtau>=0}
without great loss of generality since otherwise
no time decay of solutions can be expected, which becomes
clear from the following representation of solutions.

\subsection{Combined estimates}

By taking the Fourier transform of \eqref{EQ:CP} with respect
to $x$, solving the resulting ordinary differential equation, and taking
the inverse Fourier transform, we obtain that the
solution to the Cauchy problem \eqref{EQ:CP}
can be written in the form
\begin{equation}
u(t,x)=\sum_{j=0}^{m-1} E_j(t) f_j(x),
\end{equation}
where the propagators $E_j(t)$ are defined by
\begin{equation}\label{EQ:soln}
E_j(t)f(x)=\int_{\Rn} \mathrm e^{\mathrm i x\cdot\xi}\Big(\sum_{k=1}^m
e^{i\tau_k(\xi)t}A_j^k(t,\xi)\Big)
\widehat{f}(\xi)\,d\xi\,,
\end{equation}
with suitable amplitudes $A_j^k(t,\xi)$. 
More precisely, for each $k$ and
$j$, the functions $A_j^k(t,\xi)$ are actually independent of $t$ 
on the set of simple characteristics 
\begin{equation}
 S_k:=\{\xi\in\R^n:\tau_k(\xi)\ne
\tau_l(\xi)\quad\forall\,l\ne k\}. 
\end{equation} 
For these $A_j^k(\xi)$,
we have 
\begin{lem}\label{LEM:ordercoeff}
For $\xi\in S_k$ we have 
\begin{equation}\label{EQ:Ajkformula}
A_j^k(\xi)=
\frac
{(-1)^j\sideset{}{^{(k)}}
\sum\limits_{1\le
s_1<\dots<s_{m-j-1}\le m}
\prod\limits_{q=1}^{m-j-1}\tau_{s_q}(\xi)}
{\prod\limits_{l=1,l\ne k}^m(\tau_l(\xi)-\tau_k(\xi))}\;,
\end{equation}
where $\sum^{(k)}$ means sum over the range indicated excluding $k$.
Furthermore, for each $j=0,\dots,m-1$,
and $k=1,\dots,m$, we have that the amplitude 
$A_j^k(\xi)$ is smooth in~$S_k$ and
\begin{equation} A_j^k(\xi)=O(|\xi|^{-j}) \textrm{ as } |\xi|\to\infty.\end{equation}
\end{lem}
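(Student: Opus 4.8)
The plan is to pass to the partial Fourier transform in $x$ and then solve the resulting parameter-dependent ordinary differential equation by linear algebra. Applying $\mathscr F$ in $x$ to \eqref{EQ:CP} turns it into the equation $L(\D_t,\xi)\widehat u(t,\xi)=0$ in $t$ together with the $m$ initial conditions at $t=0$. For $\xi\in S_k$ the characteristic $\tau_k(\xi)$ is a \emph{simple} zero of $\tau\mapsto L(\tau,\xi)$, so $\mathrm e^{\mathrm i\tau_k(\xi)t}$ enters the fundamental system of the equation with a coefficient that is independent of $t$; any coalescing of the other roots $\tau_l$, $l\ne k$, only attaches polynomial-in-$t$ factors to the other exponentials, which is exactly why the $A_j^k$ are $t$-independent precisely on $S_k$. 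Taking the data that produces the propagator $E_j(t)$, namely $f_j=f$ and $f_l=0$ for $l\ne j$, writing the corresponding solution on $S_k$ as $\sum_{k'}\mathrm e^{\mathrm i\tau_{k'}(\xi)t}A_j^{k'}(\xi)$ and matching the $m$ initial conditions, one is led to the linear system
\begin{equation*}
 \sum_{k=1}^m \tau_k(\xi)^l\,A_j^k(\xi)=\delta_{jl},\qquad l=0,\dots,m-1 .
\end{equation*}

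First I would solve this system. Its coefficient matrix is the Vandermonde matrix of $\tau_1(\xi),\dots,\tau_m(\xi)$, which is invertible on $S_k$ since the factors $\tau_l(\xi)-\tau_k(\xi)$ with $l\ne k$ do not vanish there. The vector $(A_j^k(\xi))_{k}$ is then the $j$-th column of the inverse Vandermonde matrix, whose entries are, by Lagrange interpolation, the coefficients of the basis polynomials $\ell_k(\tau)=\prod_{l\ne k}(\tau-\tau_l(\xi))\big/\prod_{l\ne k}(\tau_k(\xi)-\tau_l(\xi))$; hence $A_j^k(\xi)$ is the coefficient of $\tau^j$ in $\ell_k(\tau)$. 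Expanding $\prod_{l\ne k}(\tau-\tau_l(\xi))$ in powers of $\tau$, the coefficient of $\tau^j$ equals $(-1)^{m-1-j}$ times the elementary symmetric polynomial of degree $m-j-1$ in $\{\tau_l(\xi)\}_{l\ne k}$, which is precisely the numerator of \eqref{EQ:Ajkformula}; dividing by the normalising product and collecting signs yields \eqref{EQ:Ajkformula}. Smoothness on $S_k$ is then automatic: on $S_k$ the root $\tau_k$ is simple, hence depends smoothly on $\xi$ by the implicit function theorem (and so do the remaining roots locally), while the denominator of \eqref{EQ:Ajkformula} is non-vanishing on $S_k$ by the definition of $S_k$.

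For the growth estimate $A_j^k(\xi)=O(\abs\xi^{-j})$ as $\abs\xi\to\infty$ I would compare the characteristic roots of $L$ with those of its principal part $L_m$. The coefficient of $\tau^r$ in the monic polynomial $\tau\mapsto L(\tau,\xi)$ is a polynomial in $\xi$ of degree at most $m-r$ (the term $P_{m-r}(\xi)$ from $L_m$ dominates, the lower-order coefficients $c_{\alpha,r}$ contributing only strictly lower degrees), so a crude bound on the zeros of a monic polynomial in terms of its coefficients gives $\abs{\tau_k(\xi)}\lesssim\abs{\xi}$ for large $\abs\xi$; consequently the numerator of \eqref{EQ:Ajkformula}, a sum of products of $m-j-1$ roots, is $O(\abs{\xi}^{m-j-1})$. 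For the denominator I would use strict hyperbolicity: the $\va_l$ are real, distinct and positively homogeneous of degree one, so $\abs{\va_l(\xi)-\va_k(\xi)}\ge c\abs\xi$ for some $c>0$; and after the rescaling $\tau=\abs\xi\,\sigma$, $\xi=\abs\xi\,\omega$ one has $\abs\xi^{-m}L(\abs\xi\,\sigma,\abs\xi\,\omega)\to L_m(\sigma,\omega)=\prod_k(\sigma-\va_k(\omega))$ locally uniformly, so by continuity of roots $\tau_k(\xi)/\abs\xi\to\va_k(\xi/\abs\xi)$ uniformly on the unit sphere. Hence $\abs{\tau_l(\xi)-\tau_k(\xi)}\gtrsim\abs\xi$ for $l\ne k$ and $\abs\xi$ large, the denominator is $\gtrsim\abs\xi^{m-1}$, and dividing gives $A_j^k(\xi)=O(\abs\xi^{-j})$.

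I expect the main obstacle to be exactly this last point: the uniform lower bound on the spectral gaps $\abs{\tau_l(\xi)-\tau_k(\xi)}$ for large frequencies, equivalently the statement that the lower-order perturbation shifts each characteristic root of $L_m$ by only $o(\abs\xi)$. This is a standard but slightly delicate consequence of strict hyperbolicity together with the restriction that the $c_{\alpha,r}$ appear only in terms of order at most $m-1$; the rescaling above combined with Rouch\'e's theorem (or continuity of the roots of a monic polynomial under perturbation of its coefficients) is the cleanest route. Everything else --- the Vandermonde inversion, reading off \eqref{EQ:Ajkformula}, and the upper bound $\abs{\tau_k(\xi)}\lesssim\abs\xi$ --- is elementary and uniform in $\xi$ on $S_k$.
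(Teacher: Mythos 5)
Your proposal is correct in substance and follows the same route as the source the paper defers to (\cite{Ruzhansky:2010}): Fourier transform in $x$, solve the resulting constant-coefficient ODE in $t$, identify the $A_j^k$ by matching Cauchy data via Lagrange interpolation / inversion of the Vandermonde matrix, and obtain the decay $O(|\xi|^{-j})$ from the upper bound $|\tau_k(\xi)|\lesssim\jp{\xi}$ together with the lower bound $|\tau_l(\xi)-\tau_k(\xi)|\gtrsim|\xi|$ coming from strict hyperbolicity of the principal part plus the $o(|\xi|)$ (in fact $O(1)$, cf.\ \eqref{EQ:tau-vabound}) perturbation by the lower-order terms. The sign bookkeeping in the Lagrange coefficients is right and reproduces \eqref{EQ:Ajkformula}.

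Two small points deserve attention. First, matching the data $\partial_t^l u(0,\cdot)=f_l$ literally gives $\sum_k(\mathrm i\tau_k)^lA_j^k=\delta_{jl}$ rather than $\sum_k\tau_k^lA_j^k=\delta_{jl}$; this only changes $A_j^k$ by a harmless factor $\mathrm i^{-j}$ (a matter of the $\partial_t$ versus $\D_t$ convention for the data) and does not affect the estimate, but you should state which normalisation you are using. Second, and more substantively, $\xi\in S_k$ only guarantees that $\tau_k$ is separated from the \emph{other} roots; those other roots may still coincide among themselves, so the plain Vandermonde system and the parenthetical claim that "the remaining roots depend smoothly on $\xi$" are not available on all of $S_k$. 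The clean repair is to observe that both the numerator of \eqref{EQ:Ajkformula} (an elementary symmetric function of $\{\tau_l\}_{l\ne k}$, hence a coefficient of the smooth polynomial $L(\tau,\xi)/(\tau-\tau_k(\xi))$) and the denominator ($=(-1)^{m-1}\partial_\tau L(\tau_k(\xi),\xi)$) are smooth, non-degenerate functions on $S_k$; the formula itself can then be obtained either from the residue representation $A_j^k=Q_j(\tau_k,\xi)/\partial_\tau L(\tau_k,\xi)$ of the coefficient of $\mathrm e^{\mathrm i\tau_k t}$, or by your Vandermonde computation on the open dense set where all roots are distinct followed by continuity. For the large-$|\xi|$ estimate this is moot, since all roots are pairwise distinct there by Proposition~\ref{PROP:ctyofroots}.
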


It turns out that it is sensible to divide the
considerations of how characteristic roots behave into two parts:
their behaviour for large values of $|\xi|$  and for bounded
values of $|\xi|$. The reason is that the large time behaviour
of solutions of \eqref{EQ:CP} is governed by the properties of
characteristics $\tau_k$ of the full equation. In turn, these
are similar to those of $\va_k$ for large $|\xi|$.
It turns out that  the key properties of characteristics to
consider are
\begin{itemize}
\item the behaviour for large frequencies in which case the roots
$\tau_k$ must be distinct;
\item the multiplicities of roots (this only occurs for
bounded frequencies);
\item whether roots lie on the real axis or are separated
from it;
\item how roots meet the real axis (if they do);
\item the properties of the Hessian of the roots, $\nabla_\xi^2\tau_k(\xi)$;
\item convexity-type conditions guaranteeing better decay rates.
\end{itemize}

For frequencies away from multiplicities we can 
actually establish 
estimates for the corresponding 
oscillatory integrals that contribute to
the solution in \eqref{EQ:soln}. Around multiplicities we need to 
take extra care of the structure of solutions. This
can be done by dividing the frequencies into zones each of
which will give a certain decay rate. Combined together
they yield the total decay rate for solution to
\eqref{EQ:CP}. 
We now state the summarised result as it appeared in 
\cite{Ruzhansky:2010}.

\begin{thm}\label{THM:overallmainthm}
Suppose $u=u(t,x)$ is the solution of the $m^{\text{th}}$ order
linear, constant coefficient, strictly hyperbolic
Cauchy problem~\eqref{EQ:CP}. Denote the
characteristic roots of the operator by
$\tau_1(\xi),\dots,\tau_m(\xi)$, and assume that
$\Im\tau_k(\xi)\geq 0$ for all $k=1,\ldots,m$, and all
$\xi\in\Rn$.
We introduce two functions, $K^{(\text{l})}(t)$ and
$K^{(\text{b})}(t)$, which take values as follows\textup{:}
\begin{enumerate}
\item[I.]\label{RESULT:mainthmlargexi}
Consider the behaviour of~each characteristic root,
$\tau_k(\xi)$, in the region $|\xi|\ge M$,
where~$M$ is large enough. The following
table gives values for the function $K_k^{(\text{l})}(t)$
corresponding to possible properties of~$\tau_k(\xi)$\textup{;} if
$\tau_k(\xi)$ satisfies more than one, then take
$K_k^{(\text{l})}(t)$ to be function that decays the slowest as
$t\to\infty$.

\begin{center}\begin{upshape}
\begin{tabular}[4]{|c|c|c|}\hline
Location of $\tau_k(\xi)$ & Additional Property &
$K_k^{\text(l)}(t)$\\
\hline\hline%
away from real axis && $e^{-\delta t}$, some $\delta>0$\\\hline
&$\det\nabla_\xi^2\tau_k(\xi)\ne0$ &
$t^{-\frac{n}{2}(\frac{1}{p}-\frac{1}{q})}$\\
on real axis &${\rank}\nabla_\xi^2\tau_k(\xi)=n-1$ &
$t^{-\frac{n-1}{2}(\frac{1}{p}-\frac{1}{q})}$\\
& convexity condition $\gamma$ &
$t^{-\frac{n-1}{\gamma}(\frac{1}{p}-\frac{1}{q})}$\\ & no
convexity condition, $\gamma_0$ &
$t^{-\frac{1}{\gamma_0}}$\\\hline 
\end{tabular}\end{upshape}
\end{center}
Then take $K^{(\text{l})}(t)=\max_{k=1\,\dots,n}K_k^{\text(l)}(t)$.

\item[II.]\label{RESULT:mainthmbddxi}
Consider the behaviour of the characteristic roots in the bounded
region~$|\xi|\le M$\textup{;} again, take
$K^{(\text{b})}(t)$ to be the maximum \textup{(}slowest
decaying\textup{)} function for which there are roots satisfying the
conditions in the following table\textup{:}

\begin{center}\begin{upshape}
\begin{tabular}[4]{|c|c|c|}\hline
Location of Root(s)& Properties & $K^{(\text{b})}(t)$\\
\hline\hline%
away from axis & no multiplicities &$e^{-\delta t}$, some $\delta>0$\\
&$L$ roots coinciding & $t^L e^{-\delta t}$\\
\hline on axis,&$\det\nabla_\xi^2\tau_k(\xi)\ne0$ &
$t^{-\frac{n}{2}(\frac{1}{p}-\frac{1}{q})}$\\
no multiplicities
$^\ast$ 
& convexity condition $\gamma$ &
$t^{-\frac{n-1}{\gamma}(\frac{1}{p}-\frac{1}{q})}$\\
& no convexity condition, $\gamma_0$ &
$t^{-\frac{1}{\gamma_0}(\frac{1}{p}-\frac{1}{q})}$\\
\hline on axis,&$L$ roots coincide&  \\
multiplicities$^\ast, ^{**}$&on set of codimension $\ell$&$t^{L-1-\ell}$\\
\hline meeting axis & $L$ roots coincide&\\
with finite order $s$& on set of codimension $\ell$&
$t^{L-1-\frac{\ell}{s}(\frac{1}{p}-\frac{1}{q})}$\\
\hline
\end{tabular}\end{upshape}
\end{center}
\begin{small}$^\ast$ 
These two cases of roots lying on the real axis
require some additional regularity assumptions; see 
corresponding microlocal statements for details. \\
\noindent
$^{**}$ This is the $L^1-L^\infty$
rate in a shrinking region; see
\cite{Ruzhansky:2010} for the details.
\end{small}
\end{enumerate}
\medskip
Then, with
$K(t)=\max\big(K^{\text{(b)}}(t),K^{\text{(l)}}(t)\big)$,
the following estimate holds\textup{:}
\begin{equation}
\| \partial_x^\alpha\partial_t^r u(t,\cdot)\|_{L^q}\le C_{\alpha,r} K(t)
\sum_{l=0}^{m-1}\|f_l\|_{L_{N_p-l}^p}\,,
\end{equation}
where $1\le p\le2$, $pq=p+q$, and
$N_p=N_p(\alpha,r)$ is a constant depending upon~$p,\alpha$ and~$r$.
\end{thm}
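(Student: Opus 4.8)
The plan is to reduce the full operator $L(\D_t,\D_x)$ to its system of characteristic roots via the Fourier transform and the representation \eqref{EQ:soln}, and then to estimate each of the $m$ oscillatory pieces $\mathrm e^{\mathrm i\tau_k(\xi)t}A_j^k(t,\xi)$ separately, splitting frequency space into the large-frequency region $|\xi|\ge M$ and the bounded region $|\xi|\le M$. For the large-frequency part, strict hyperbolicity of $L_m$ forces the $\tau_k$ to be simple and asymptotically close (modulo lower-order corrections) to the homogeneous roots $\va_k$; here Lemma~\ref{LEM:ordercoeff} gives that $A_j^k(\xi)=O(|\xi|^{-j})$, so after a dyadic Littlewood--Paley decomposition each contribution is of Fourier-integral type with phase $x\cdot\xi+t\tau_k(\xi)$. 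I would then invoke the stationary-phase/multiplier machinery of Section~\ref{sec:1:StatPhas} --- Lemmata~\ref{lem:1:sug1A} and \ref{lem:1:sug2A}, after rescaling $\eta=t\xi$ as in \eqref{eq:1:dispEst} --- applied to the Fresnel surface $\{\xi:\tau_k(\xi)=1\}$ (or more precisely to its perturbation including lower-order terms, which is still admissible since only the geometry at large frequency matters). The decay exponent is dictated by whichever geometric condition $\tau_k$ satisfies: full-rank Hessian gives $t^{-\frac n2(\frac1p-\frac1q)}$, rank $n-1$ gives $t^{-\frac{n-1}2(\frac1p-\frac1q)}$, a convexity index $\gamma$ gives $t^{-\frac{n-1}\gamma(\frac1p-\frac1q)}$, and the non-convex index $\gamma_0$ gives $t^{-1/\gamma_0}$; taking the slowest over all $k$ and all satisfied conditions yields $K^{(\mathrm l)}(t)$. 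The loss of $N_p-l$ derivatives on $f_l$ absorbs the $|\xi|^{-l}$-type decay of the amplitudes together with the $\dot B$-to-$L^p$ embeddings recalled before Lemma~\ref{lem:1:sug1A}, plus the extra derivatives $\partial_x^\alpha\partial_t^r$ being estimated.

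For the bounded-frequency region the roots $\tau_k$ of the \emph{full} operator can behave quite differently: they may be genuinely complex (bounded away from the real axis), giving exponential decay $\mathrm e^{-\delta t}$, which degrades to $t^L\mathrm e^{-\delta t}$ when $L$ such roots coincide because the resolvent develops a Jordan block and one picks up polynomial factors from the matrix exponential. When roots sit on the real axis without multiplicity, the same stationary-phase estimates as above apply, but now on a \emph{compact} frequency set so there is no frequency-localisation loss and the exponents are $t^{-\frac n2(\frac1p-\frac1q)}$, $t^{-\frac{n-1}\gamma(\frac1p-\frac1q)}$, $t^{-\frac1{\gamma_0}(\frac1p-\frac1q)}$. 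The genuinely delicate cases are multiplicities \emph{on} the real axis and roots \emph{meeting} the axis with finite order $s$: here I would diagonalise (or block-diagonalise) the first-order system near the multiplicity set, track the codimension $\ell$ of that set, and estimate the resulting normal-form oscillatory integrals --- the $t^{L-1-\ell}$ and $t^{L-1-\frac\ell s(\frac1p-\frac1q)}$ rates come from counting the order of the Jordan block ($L-1$), the integration in the $\ell$ transversal directions, and, in the finite-order case, a van der Corput estimate of order $s$ in the direction where the root touches the axis. One then sets $K^{(\mathrm b)}(t)$ to be the slowest of these, and $K(t)=\max(K^{(\mathrm b)}(t),K^{(\mathrm l)}(t))$.

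The main obstacle is the bounded-frequency analysis near multiplicities: away from the multiplicity set the representation \eqref{EQ:soln} with $t$-independent amplitudes holds, but exactly where roots collide it breaks down, and one must replace it by a careful zone decomposition --- a boundary layer around the multiplicity set whose width shrinks in $t$ at a rate tuned to the order of contact --- and a normal-form reduction of the system in each zone, then patch the estimates together so that the contributions from the shrinking zone and from its complement both obey the claimed rate. This is precisely where the asterisked caveats in the statement live (extra regularity of the roots, and the $L^1$--$L^\infty$ rate being attained only in a shrinking region). Once the two regimes are assembled, summing the finitely many $k$-contributions and the Littlewood--Paley pieces, and collecting the derivative losses into $N_p(\alpha,r)$, gives the stated estimate; the detailed verification of each table entry is carried out in \cite{Ruzhansky:2010}, to which I would refer for the routine but lengthy computations.
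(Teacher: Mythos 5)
Your proposal is correct and follows essentially the same route as the paper: the Fourier-integral representation \eqref{EQ:soln} with amplitudes controlled by Lemma~\ref{LEM:ordercoeff}, the split into $|\xi|\ge M$ (where Proposition~\ref{PROP:perturbationresults} makes the roots simple symbols and Theorems~\ref{THM:convexsp}--\ref{THM:noncovexargument-sp} and Corollary~\ref{COR:realaxis} give the tabulated rates) and $|\xi|\le M$ (where multiplicities are handled by the shrinking $\epsilon=1/t$ neighbourhood as in \eqref{EQ:around}), with the detailed verification deferred to \cite{Ruzhansky:2010} exactly as the paper itself does. The only cosmetic caveat is that for the non-homogeneous roots $\tau_k$ one cannot literally rescale $\eta=t\xi$ as in \eqref{eq:1:dispEst}; one uses the parameter-dependent estimates of Theorem~\ref{THM:convexsp} on the level sets $\Sigma_\lambda(\tau_k)$ instead, which you acknowledge.
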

Let us now briefly explain how to understand these tables.
Since the decay rates do depend on the behaviour of characteristic
roots in different regions, in Theorem 
\ref{THM:overallmainthm} 
we single out properties which determine
the final decay rate. Since the same characteristic root, 
say $\tau_k$,
may exhibit different properties in different regions, we look
at the corresponding rates 
$K^{\text{(b)}}(t),K^{\text{(l)}}(t)$ under each possible condition
and then take the slowest one for the final answer. 
The value of the Sobolev index $N_p=N_p(\alpha,r)$ 
depends on the regions
as well, and it can be found from the relevant microlocal statements 
that exist for each region.

In conditions of Part I of the theorem, it can be
shown by perturbation arguments that only three
cases are possible for large $\xi$, namely, the characteristic root
may be uniformly separated from the real axis, it may lie on the
axis, or it may converge to the real axis at infinity. If,
for example, the
root lies on the axis and, in addition, it satisfies the convexity
condition with index $\gamma$, we get the corresponding decay rate
$K^{\text{(l)}}(t)=t^{-\frac{n-1}{\gamma}(\frac{1}{p}-\frac{1}{q})}$.
Indices $\gamma$ and $\gamma_0$ in the tables are defined
as the maximum of the corresponding indices $\gamma(\Sigma_\lambda)$
and $\gamma_0(\Sigma_\lambda)$, respectively, where 
$\Sigma_\lambda=\{\xi:\tau_k(\xi)=\lambda\}$, over all $k$ and over
all $\lambda$, for which $\xi$ lies in the corresponding region.
The indices $\gamma(\Sigma_\lambda)$
and $\gamma_0(\Sigma_\lambda)$ are those defined in
Section \ref{sec:1:StatPhas}.

The statement in Part II is more involved since we may have multiple
roots intersecting on rather irregular sets. 
The number $L$ of coinciding roots corresponds to the number of
roots which actually contribute to the loss of regularity.
For example, operator $(\partial_t^2-\Delta_x)(\partial_t^2-2\Delta_x)$
would have $L=2$ for both pairs of roots 
$\pm|\xi|$ and $\pm\sqrt{2}|\xi|$,
intersecting at the
origin.  Meeting the axis with finite order $s$ 
means that we have the estimate 
\begin{equation}\label{EQ:disttau}
\dist(\xi,Z_k)^s\leq c|\Im\tau_k(\xi)|
\end{equation}
for all the intersecting roots, where $Z_k=\{\xi: \Im\tau_k(\xi)=0\}.$
In Part II of Theorem \ref{THM:overallmainthm}, 
the condition that 
$L$ roots meet the axis with finite order $s$ on a set of codimension
$\ell$ means that all these estimates hold
and that there is a ($C^1$) set $\mathcal M$ 
of codimension $\ell$ such that
$Z_k\subset \mathcal M$ for all corresponding $k$. 

In Part II of the theorem, condition $^{**}$ is formulated in
the region of the size decreasing with time:
if we have $L$ multiple roots 
which coincide on the real axis on a set $\mathcal M$ 
of codimension $\ell$,
we have an estimate  
\begin{equation}\label{EQ:around}
|u(t,x)|\leq C(1+t)^{L-1-\ell}
\sum_{l=0}^{m-1}\|f_l\|_{L^1},
\end{equation}
if we cut off the Fourier transforms of the Cauchy data
to the $\epsilon$-neighbourhood $\mathcal M^\epsilon$ of $\mathcal M$
with $\epsilon=1/t$. 
Here we may  relax the definition of the intersection
above and say that if $L$ roots coincide
in a set $\mathcal M$, then 
they coincide on a set of codimension $\ell$
if the measure of the $\epsilon$-neighbourhood $\mathcal M^\epsilon$ 
of $\mathcal M$
satisfies $|\mathcal M^\epsilon|\leq C\epsilon^{\ell}$ 
for small $\epsilon>0$;
here $\mathcal M^\epsilon=\{\xi\in\R^n: 
\dist (\xi,\mathcal M)\leq\epsilon\}.$

We can then combine this with the remaining cases outside
of this neighbourhood, where it is possible to establish
decay by different arguments. In particular, this is the case
of homogeneous equations with roots intersecting at the
origin. However, one sometimes needs to introduce special
norms to handle $L^2$-estimates around the multiplicities.
We leave this outside the scope of this review, and refer
to \cite{Ruzhansky:2010} for further details,
as well as for microlocal improvements of some of the estimates
under certain more refined assumptions.

\subsection{Properties of hyperbolic polynomials}
\label{SEC:hp}

Here we collect some properties of hyperbolic polynomials.
We start with the property of general polynomials that 
roots have bounds in terms of the coefficients.

\begin{lem}\label{LEM:auxbdd}
Consider the polynomial over $\mathbb C$ with complex coefficients
\begin{equation}
z^m+c_1z^{m-1}+\dots+c_{m-1}z+c_m=\prod_{k=1}^m(z-z_k).
\end{equation}
If there exists $M>0$ such that $|c_j|\le M^j$ for each
$j=1,\dots,m$, then $|z_k|\le2M$ for all $k=1,\dots,m$.
On the other hand, if there exists $N>0$ such that 
$|c_j|\le N$
for each $j=1,\dots,m$, then 
$|z_k|\le\max\{2,2N\}$. 
\end{lem}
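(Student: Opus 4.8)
The plan is to prove both bounds directly from the relation between the roots and the coefficients, using the elementary symmetric functions. Recall that $c_j = (-1)^j e_j(z_1,\dots,z_m)$, where $e_j$ is the $j$-th elementary symmetric polynomial in the roots. The key observation is that if $z_k$ is a root of largest modulus, say $R = \max_k |z_k|$, and if $R$ is large, then the leading term $z_k^m$ in the polynomial equation $0 = z_k^m + c_1 z_k^{m-1} + \dots + c_m$ must be compensated by the lower order terms, and the lower order terms are controlled by the coefficient bounds.

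First I would handle the first claim. Suppose $|c_j| \le M^j$ for all $j$ and let $z$ be any root with $|z| > 2M$; I want a contradiction. Dividing the polynomial identity $z^m = -(c_1 z^{m-1} + \dots + c_m)$ by $z^m$ gives $1 = -\sum_{j=1}^m c_j z^{-j}$, hence
\begin{equation}
1 \le \sum_{j=1}^m |c_j|\,|z|^{-j} \le \sum_{j=1}^m M^j |z|^{-j} = \sum_{j=1}^m \p{\frac{M}{|z|}}^j.
\end{equation}
Writing $\theta = M/|z| < 1/2$, the right-hand side is bounded by the geometric series $\sum_{j=1}^\infty \theta^j = \theta/(1-\theta) < (1/2)/(1/2) = 1$, which contradicts the inequality $1 \le \sum_j \theta^j$. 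Hence every root satisfies $|z| \le 2M$.

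For the second claim I would argue in the same style but split into cases. If $N \ge 1$, then $|c_j| \le N \le N^j$ for all $j \ge 1$, so the first part applies with $M = N$ and gives $|z_k| \le 2N = \max\{2, 2N\}$. If $N < 1$, then $|c_j| \le N < 1 \le 1^j$, so the first part applies with $M = 1$ and gives $|z_k| \le 2 = \max\{2, 2N\}$. Either way the stated bound holds. I expect no serious obstacle here; the only point requiring a little care is the strict-versus-non-strict inequality in the geometric series estimate (one wants $\theta < 1/2$ rather than $\theta \le 1/2$ so that the comparison is strict), which is exactly why the bound $2M$ rather than $M$ appears, and similarly for the explicit appearance of the constant $2$ in the second statement.
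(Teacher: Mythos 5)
Your proof is correct, and it is the standard argument for this root bound (the paper itself omits the proof, deferring to Ruzhansky--Smith, where essentially this geometric-series comparison is used). Both the contradiction via $1\le\sum_{j=1}^m(M/|z|)^j<1$ for $|z|>2M$ and the reduction of the second claim to the first with $M=\max\{1,N\}$ are sound.
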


Now we give some properties of the characteristic roots.
\begin{prop}\label{PROP:ctyofroots}
Let $L=L(\D_t,\D_x)$ be a linear $m^{\text{th}}$ order constant
coefficient
differential operator in $\D_t$
with coefficients that are pseudo-differential operators
in~$x$, with symbol 
\begin{equation}\label{EQ:symbolL}
L(\tau,\xi)=\tau^m+\sum_{j=1}^m P_j(\xi)\tau^{m-j}+
\sum_{j=1}^m a_j(\xi)\tau^{m-j},
\end{equation}
where $P_j(\lambda\xi)=\lambda^j P_j(\xi)$ for all 
$\lambda\gg 1$, $|\xi|\gg 1$, and $a_j\in S^{j-\epsilon}$,
for some $\epsilon>0$.

Then each of the
characteristic roots of $L$, denoted
$\tau_1(\xi),\dots,\tau_m(\xi)$, is continuous in
$\R^n$.
Furthermore, for each
$k=1,\dots,m$, the characteristic root $\tau_k(\xi)$ is
smooth away from multiplicities, and
analytic if the operator $L(\D_t,\D_x)$ is differential.
If operator $L(\D_t,\D_x)$ is strictly hyperbolic, then
there exists a constant $M$ such that, 
if $|\xi|\geq M$
then the characteristic roots 
$\tau_1(\xi),\dots,\tau_m(\xi)$ of $L$
are pairwise distinct.
\end{prop}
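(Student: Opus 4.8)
The plan is to establish the three assertions in turn. \emph{Continuity.} For each fixed $\xi$ the symbol $L(\tau,\xi)$ is a monic polynomial of degree $m$ in $\tau$ whose coefficients $P_j(\xi)+a_j(\xi)$ depend continuously on $\xi$, the $P_j$ being polynomial and $a_j\in S^{j-\epsilon}\subset C^\infty(\R^n)$. I would then invoke the classical fact that the roots of a monic polynomial, counted with multiplicity, depend continuously on the coefficients (for instance via Rouché's theorem), which lets one fix a globally continuous enumeration $\tau_1(\xi),\dots,\tau_m(\xi)$. At points where roots coincide this enumeration is in general only continuous; promoting it to a smooth one there is the content of the next step.

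\emph{Regularity off the multiplicities.} Fix $\xi_0$ with $\tau_0:=\tau_k(\xi_0)$ a simple root of $L(\cdot,\xi_0)$. Differentiating $L(\tau,\xi)=\prod_{l=1}^m(\tau-\tau_l(\xi))$ in $\tau$ and evaluating at $\tau_0$ leaves only one surviving term, so
\begin{equation*}
\partial_\tau L(\tau_0,\xi_0)=\prod_{l\ne k}\big(\tau_0-\tau_l(\xi_0)\big)\ne0.
\end{equation*}
Since $P_j,a_j\in C^\infty$, the symbol $L(\tau,\xi)$ is $C^\infty$ in $(\tau,\xi)$, so the implicit function theorem yields a unique $C^\infty$ solution $\tau=\tau(\xi)$ of $L(\tau,\xi)=0$ near $(\tau_0,\xi_0)$; by continuity it coincides with $\tau_k$, hence $\tau_k$ is $C^\infty$ near $\xi_0$. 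If $L(\D_t,\D_x)$ is differential, the $a_j$ are polynomials, $L(\tau,\xi)$ is a polynomial and in particular real-analytic, and the analytic implicit function theorem upgrades the conclusion to real-analyticity of $\tau_k$ on the complement of the multiplicity set.

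\emph{Distinctness for $|\xi|\ge M$.} Assume now strict hyperbolicity, so the roots $\va_1(\xi),\dots,\va_m(\xi)$ of $L_m(\cdot,\xi)$ are real, pairwise distinct for $\xi\ne0$, and homogeneous of degree one; by continuity of the $\va_k$ and compactness of the sphere, $\delta_0:=\min_{|\omega|=1}\min_{k\ne l}|\va_k(\omega)-\va_l(\omega)|>0$. For $\xi\ne0$ write $\omega=\xi/|\xi|$ and substitute $\tau=|\xi|\sigma$; taking $|\xi|$ large enough that each $P_j$ is homogeneous and using $|a_j(\xi)|\lesssim\langle\xi\rangle^{j-\epsilon}$, one gets
\begin{equation*}
|\xi|^{-m}L(|\xi|\sigma,\xi)=\sigma^m+\sum_{j=1}^m\big(P_j(\omega)+|\xi|^{-j}a_j(\xi)\big)\sigma^{m-j}=:L_m(\sigma,\omega)+R(\sigma,\xi),
\end{equation*}
where $|\xi|^{-j}a_j(\xi)=O(|\xi|^{-\epsilon})\to0$ uniformly as $|\xi|\to\infty$. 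These coefficients are uniformly bounded for $|\xi|\ge1$, so Lemma~\ref{LEM:auxbdd} confines all roots $\sigma_k(\xi)$ of the left-hand side to a fixed disc $|\sigma|\le R_0$. I would then apply Rouché's theorem on the circles of radius $\delta_0/4$ centred at the $\va_k(\omega)$: since these roots of $L_m(\cdot,\omega)$ are simple, $|L_m(\cdot,\omega)|$ is bounded below on each such circle by a constant uniform in $\omega$ (compactness), and once $|\xi|\ge M$ the remainder $R(\cdot,\xi)$ is smaller than this constant there; hence each of these $m$ disjoint discs contains exactly one $\sigma_k(\xi)$. Consequently the $\sigma_k(\xi)$ are pairwise separated by at least $\delta_0/2$, and multiplying back by $|\xi|$ gives $|\tau_k(\xi)-\tau_l(\xi)|\ge\tfrac{\delta_0}{2}|\xi|$ for $k\ne l$. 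The step I expect to be the main obstacle is exactly this rescaling argument: one must make the smallness of $R$ and the lower bound on $|L_m(\cdot,\omega)|$ \emph{uniform} in the angular variable $\omega\in S^{n-1}$ and in all large $|\xi|$ simultaneously, so that a single threshold $M$ works, with the continuity of roots and Lemma~\ref{LEM:auxbdd} keeping the perturbed roots inside the prescribed discs; everything else reduces to the implicit function theorem.
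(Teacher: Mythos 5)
Your argument is correct, but for the substantive part of the statement (pairwise distinctness of the $\tau_k$ for $|\xi|\ge M$) it follows a genuinely different route from the paper. The paper treats the first two assertions as "simple" (exactly the continuity-of-roots and implicit-function-theorem facts you cite) and then proves distinctness via the \emph{discriminant}: writing $\Delta_L(\lambda\xi)$ in terms of the coefficients $P_j(\xi)+a_j(\lambda\xi)/\lambda^j$ using the homogeneity (degree $2m-2$) and quasi-homogeneity of the discriminant, noting that strict hyperbolicity gives $\Delta_{L_m}(\xi)\ne0$ for $\xi\ne0$, and concluding by continuity of $\Delta$ in its arguments that $\Delta_L(\lambda\xi)\ne0$ for $|\xi|=1$ and $\lambda$ large, since $a_j(\lambda\xi)/\lambda^j=O(\lambda^{-\epsilon})\to0$. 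You instead rescale $\tau=|\xi|\sigma$ and run a uniform Rouch\'e argument around the simple roots $\va_k(\omega)$ of the principal symbol. Both are perturbation arguments exploiting the same smallness $|\xi|^{-j}a_j(\xi)=O(|\xi|^{-\epsilon})$, and both require the same (mild) care about uniformity over $\omega\in\mathbb S^{n-1}$, which compactness supplies. The discriminant route is shorter because it reduces the question to the non-vanishing of a single continuous scalar; your route is slightly longer but yields more, namely the quantitative separation $|\tau_k(\xi)-\tau_l(\xi)|\ge\tfrac{\delta_0}{2}|\xi|$ and the localisation of each $\tau_k$ near a specific $\va_k$ --- information the paper has to extract separately afterwards (it is essentially part (1) of Proposition~\ref{PROP:perturbationresults} and the separation used implicitly in Lemma~\ref{LEM:ordercoeff}). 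One small point of care common to both proofs: the hypothesis $P_j(\lambda\xi)=\lambda^j P_j(\xi)$ is only assumed for $\lambda\gg1$ and $|\xi|\gg1$, so when you evaluate $P_j$ at $\omega=\xi/|\xi|$ you should, as the paper implicitly does, interpret $P_j(\omega)$ as $\lambda^{-j}P_j(\lambda\omega)$ for large $\lambda$; you acknowledge this ("taking $|\xi|$ large enough that each $P_j$ is homogeneous"), which is the same level of rigour as the original.
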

\begin{proof} 
The first part of Proposition \ref{PROP:ctyofroots}
is simple.
For the second part we follow \cite{Ruzhansky:2010}
and use the notation and results from Chapter 12 of
\cite{gelf+kapr+zele94} concerning the discriminant $\Delta_p$ of the
polynomial $p(x)=p_mx^m+\dots+p_1x+p_0$,
\begin{equation}
\Delta_p\equiv\Delta(p_0,\dots,p_m):=
(-1)^{\frac{m(m-1)}{2}}p_m^{2m-2}\prod_{i<j}(x_i-x_j)^2\,,
\end{equation}
where the $x_j$ ($j=1,\dots,m$) are the roots of $p(x)$. We note that
$\Delta_p$ is a continuous function of the coefficients $p_0,\dots,p_m$
of $p(x)$ and it is a homogeneous function of degree $2m-2$ in them.
In addition, it satisfies the quasi-homogeneity property:
\begin{equation}
\Delta(p_0,\lambda
p_1,\lambda^2p_2,\dots,\lambda^mp_m)=\lambda^{m(m-1)}\Delta(p_0,\dots,p_m).
\end{equation}
Clearly, $\Delta_p=0$ if and only if $p(x)$ has a double root.
We now write $L(\tau,\xi)$ in the form
\begin{equation}
L(\tau,\xi)= L_m(\tau,\xi)+a_{1}(\xi)\tau^{m-1}+a_{2}(\xi)\tau^{m-2}
+\dots +a_{m-1}(\xi)\tau+a_m(\xi),
\end{equation}
where
\begin{equation}
L_m(\tau,\xi)=\tau^m+\sum_{j=1}^mP_j(\xi)\tau^{m-j}
\end{equation}
is the principal part of $L(\tau,\xi)$. Note that the $P_j(\xi)$ are
homogeneous of degree~$j$ and the $a_j(\xi)$ are
symbols of degree $< j$. By the homogeneity and
quasi-homogeneity properties of $\Delta_L$, we have, for $\lambda\ne0$,
\begin{align*}
\Delta_L(\lambda\xi)&=\Delta(P_m(\lambda\xi)+a_m(\lambda\xi),
\dots,P_{1}(\lambda\xi)+a_{1}(\lambda\xi),1)\\
=&\;\Delta(\lambda^m[P_m(\xi)+\tfrac{a_m(\lambda\xi)}{\lambda^m}],
\dots,\lambda[P_1(\xi)+\tfrac{a_{1}(\lambda\xi)}{\lambda}],1)\\
=&\;\lambda^{m(2m-2)}\Delta(P_m(\xi)+\tfrac{a_m(\lambda\xi)}{\lambda^m},
\dots,\lambda^{-(m-1)}[P_1(\xi)+\tfrac{a_{1}
(\lambda\xi)}{\lambda}],\lambda^{-m})\\
&\qquad\qquad\qquad\qquad\text{(using that $\Delta$ is homogenous of
degree
$2m-2$)}\\
=&\;\lambda^{m(m-1)}\Delta(P_m(\xi)+\tfrac{a_m(\lambda\xi)}{\lambda^m},
\dots,P_1(\xi)+\tfrac{a_{1}(\lambda\xi)}{\lambda},1)\\
&\qquad\qquad\qquad\qquad\qquad\qquad\qquad\qquad\qquad\text{ (by
quasi-homogeneity)}.
\end{align*}
Now, since $L$ is strictly hyperbolic, the characteristic roots
$\varphi_1(\xi),\dots,\varphi_m(\xi)$ of $L_m$ are pairwise distinct
for $\xi\ne0$, so
\begin{equation} \Delta_{L_m}(\xi)=\Delta(P_m(\xi),\dots,P_1(\xi),1)\ne0\text{
for }\xi\ne0.\end{equation}
Since the discriminant is continuous in each
argument, there exists $\delta>0$ such that if
$|\tfrac{a_j(\lambda\xi)}{\lambda^{j}}|<\delta$ for all $j=1,\dots,m$
then
\begin{equation} |\Delta(P_m(\xi)+\tfrac{a_m(\lambda\xi)}{\lambda^m}\,,
\dots,P_1(\xi)+\tfrac{a_{1}(\lambda\xi)}{\lambda}\,,1)|
\not=0,\end{equation} and hence the
roots of the associated polynomial are pairwise distinct. 
Since $a_j$ are symbols of order $<j$, 
it follows that for $|\xi|=1$ and large $\lambda$, we have
$\Delta_L(\lambda\xi)\not= 0$, which means that 
the characteristic roots of $L$ are
pairwise distinct for $|\lambda\xi|\geq M$ for some constant
$M>0$.
\end{proof}

Therefore, we see that for large frequencies, the characteristic roots of the
full symbol are smooth. In fact, for large frequencies again, we can
regard the lower order terms of the operators as a perturbation of
its principal part. Using this, we can show that the
roots actually have a good symbolic behaviour for large frequencies
with a certain dependence of the order of the symbol on the
order of the lower order terms. 

We will use the standard notation for
the symbol class $S^\mu$ of all amplitudes $a=a(x,\xi)\in C^\infty(\Rn\times\Rn)$
satisfying
\begin{equation}
|\partial_x^\beta \partial_\xi^\alpha a(x,\xi)|\leq
C_{\alpha\beta} \langle\xi\rangle^{\mu-|\alpha|},
\end{equation}
for all multi-indices $\alpha,\beta$ and all $x,\xi\in\Rn$. Here,
as usual, $ \langle\xi\rangle=\sqrt{1+|\xi|^2}.$

\begin{prop}\label{PROP:perturbationresults} 
Let $L=L(\D_t,\D_x)$ 
be a hyperbolic operator of the form
\begin{equation} L(\D_t,\D_x)=\D_t^m+\sum_{j=1}^m P_j(\D_x) \D_t^{m-j}+
\sum_{j=1}^m\sum_{|\alpha|+m-j=K} c_{\alpha,j}(\D_x) \D_t^{m-j},\end{equation}
where $P_j(\lambda\xi)=\lambda^j P_j(\xi)$ for 
$\lambda\gg 1$, $|\xi|\gg 1$, and $c_{\alpha,j}\in S^{|\alpha|}.$
Here 
$0\le K\le m-1$ is the maximum order of the lower order
terms of $L$.
Let $\tau_1(\xi),\dots,\tau_m(\xi)$ denote its
characteristic roots. Then
there exists a constant $C>0$ such that
\begin{equation}\label{EQ:tauisoderxi}
|\tau_k(\xi)|\le C\langle{\xi}\rangle\quad\text{for all }\xi\in\R^n, \, k=1,\ldots,m.
\end{equation}
Suppose in addition that $L$ is \emph{strictly}
hyperbolic, and denote the roots of the principal part
$L_m(\tau,\xi)$ by $\varphi_1(\xi),\dots,\varphi_m(\xi)$.
Then the following holds:
\begin{enumerate}
\item\label{LEM:tau-vabounds} 
For each
$\tau_k(\xi)$, $k=1,\dots,m$, there exists a
corresponding root of the principal symbol $\va_k(\xi)$
\textup{(}possibly after reordering\textup{)} such that
\begin{equation}\label{EQ:tau-vaboundkthorder}
|\tau_k(\xi)-\va_k(\xi)|\le C\langle{\xi}\rangle^{K+1-m} \quad\text{for
all }\xi\in\R^n\,.
\end{equation}

\item\label{PROP:boundsonderivsoftau} There exists $M>0$
such that, for each characteristic root of~$L$ and for each
multi-index $\alpha$, we can find constants $C=C_{k,\alpha}>0$
such that
\begin{equation}\label{EQ:rootisassymbol}
|\partial^\alpha_\xi\tau_k(\xi)|\le
C|\xi|^{1-|\alpha|}\,\quad\text{for all }|\xi|\ge M\,,
\end{equation}

\item \label{PROP:derivoftau-derivofphi} There exists $M>0$ such
that, for each $\tau_k(\xi)$ a corresponding root of the
principal symbol~$\va_k(\xi)$ can be found \textup{(}possibly after
reordering\textup{)} which satisfies, for each multi-index
$\alpha$ and $k=1,\dots,m$,
\begin{equation}\label{EQ:derivsoftau-phiforfewerlot}
|\partial^\alpha_\xi\tau_k(\xi)-\partial_\xi^\alpha\va_k(\xi)| \le
C|\xi|^{K+1-m-|\alpha|}\quad\text{for all }|\xi|\ge M
\end{equation}
\end{enumerate}
\end{prop}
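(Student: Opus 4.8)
The plan is to prove the four estimates in the order stated: \eqref{EQ:tauisoderxi} comes straight from Lemma~\ref{LEM:auxbdd}, \eqref{EQ:tau-vaboundkthorder} from a rescaling combined with a quantitative Rouch\'e argument, and \eqref{EQ:rootisassymbol}, \eqref{EQ:derivsoftau-phiforfewerlot} from implicit differentiation of the characteristic equation, the decisive input being a lower bound for $\partial_\tau L$ along the roots. Throughout, write $R(\tau,\xi):=L(\tau,\xi)-L_m(\tau,\xi)=\sum_{j=1}^m a_j(\xi)\tau^{m-j}$ for the contribution of the lower order terms, so each $a_j$ is a symbol of order $\le K+j-m\le j-1$.

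\emph{The bound \eqref{EQ:tauisoderxi}.} The numbers $\tau_1(\xi),\dots,\tau_m(\xi)$ are the roots of $\tau\mapsto\tau^m+\sum_{j=1}^m\big(P_j(\xi)+a_j(\xi)\big)\tau^{m-j}$. Since $P_j$ is homogeneous of degree $j$ and $a_j$ has order at most $j$, the coefficient of $\tau^{m-j}$ is bounded in modulus by $C\langle\xi\rangle^{j}\le\big(C'\langle\xi\rangle\big)^{j}$ for some constant $C'\ge1$ independent of $\xi$ and of $j\in\{1,\dots,m\}$, so the first part of Lemma~\ref{LEM:auxbdd} with $M=C'\langle\xi\rangle$ yields $|\tau_k(\xi)|\le 2C'\langle\xi\rangle$ for every $\xi$ and every $k$.

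\emph{The bound \eqref{EQ:tau-vaboundkthorder}.} For $|\xi|\le M$ this is immediate: by the previous step and homogeneity of $\va_k$ the left-hand side is $\le|\tau_k(\xi)|+|\va_k(\xi)|\le C$, while $\langle\xi\rangle^{K+1-m}$ is bounded below because $K+1-m\le0$, so any pairing works. For $|\xi|\ge M$ I would set $\xi=\rho\omega$ with $\rho=|\xi|$, $\omega\in S^{n-1}$, and $\sigma=\tau/\rho$, so that
\begin{equation*}
\rho^{-m}L(\rho\sigma,\rho\omega)=L_m(\sigma,\omega)+\sum_{j=1}^m\rho^{-j}a_j(\rho\omega)\,\sigma^{m-j},
\end{equation*}
where the last sum is $O(\rho^{K-m})$ uniformly for $\omega\in S^{n-1}$ and $\sigma$ in a fixed bounded set, since $|\rho^{-j}a_j(\rho\omega)|\le C\rho^{-j}\langle\rho\omega\rangle^{K+j-m}\le C\rho^{K-m}$ for $\rho\ge1$. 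Strict hyperbolicity gives $|\va_l(\omega)-\va_k(\omega)|\ge c>0$ on the compact sphere, so for $\rho$ large a Rouch\'e argument on small circles about the $\va_k(\omega)$ produces for each $k$ a unique root $\sigma_k(\rho\omega)$ of $\rho^{-m}L(\rho\,\cdot\,,\rho\omega)$ in $\{|\sigma-\va_k(\omega)|<\delta\}$; substituting it back into $L_m(\sigma_k,\omega)=-\sum_j\rho^{-j}a_j(\rho\omega)\sigma_k^{m-j}$ and using $\prod_{l\ne k}|\sigma_k-\va_l(\omega)|\ge c'>0$ sharpens this to $|\sigma_k(\rho\omega)-\va_k(\omega)|\le C\rho^{K-m}$, which after multiplication by $\rho$ is \eqref{EQ:tau-vaboundkthorder} on $|\xi|\ge M$. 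As there are no multiplicities on $\{|\xi|\ge M\}$ (Proposition~\ref{PROP:ctyofroots}), this labelling extends continuously and can be matched with the one chosen for $|\xi|\le M$.

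\emph{The derivative bounds \eqref{EQ:rootisassymbol} and \eqref{EQ:derivsoftau-phiforfewerlot}.} On $|\xi|\ge M$ every $\tau_k$ is simple, hence smooth, and the first thing to record is the lower bound $|\partial_\tau L(\tau_k(\xi),\xi)|\ge c|\xi|^{m-1}$: writing $\partial_\tau L_m(\tau,\xi)=\sum_l\prod_{l'\ne l}(\tau-\va_{l'}(\xi))$ and evaluating at $\tau=\tau_k(\xi)$, the term $l=k$ has modulus $\ge c|\xi|^{m-1}$ by \eqref{EQ:tau-vaboundkthorder} and the $c|\xi|$-separation, while every other term and $\partial_\tau R(\tau_k(\xi),\xi)$ is $O(|\xi|^{K-1})=o(|\xi|^{m-1})$. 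Granting this, \eqref{EQ:rootisassymbol} follows by induction on $|\alpha|$ from $L(\tau_k(\xi),\xi)\equiv0$: applying $\partial_\xi^\alpha$ and isolating the top-order term $\partial_\tau L(\tau_k,\xi)\,\partial_\xi^\alpha\tau_k$, every remaining summand is a product of a factor $\partial_\tau^a\partial_\xi^{\beta_0}L(\tau_k(\xi),\xi)=O(|\xi|^{m-a-|\beta_0|})$ (since $L$ is a polynomial in $\tau$ with coefficients in $S^{j}$ and $|\tau_k|\le C|\xi|$) with factors $\partial_\xi^{\beta_i}\tau_k=O(|\xi|^{1-|\beta_i|})$, $1\le|\beta_i|<|\alpha|$, $|\beta_0|+\sum_{i\ge1}|\beta_i|=|\alpha|$, hence is $O(|\xi|^{m-|\alpha|})$; division by $\partial_\tau L\gtrsim|\xi|^{m-1}$ gives $\partial_\xi^\alpha\tau_k=O(|\xi|^{1-|\alpha|})$. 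For \eqref{EQ:derivsoftau-phiforfewerlot} the crude bound $\partial_\xi^\alpha(\tau_k-\va_k)=O(|\xi|^{1-|\alpha|})$ is too weak, so I would use the cancellation in the identity $\prod_l(\tau_k(\xi)-\va_l(\xi))+R(\tau_k(\xi),\xi)=L(\tau_k(\xi),\xi)=0$: factoring off the $l=k$ factor,
\begin{equation*}
\tau_k(\xi)-\va_k(\xi)=-\frac{R(\tau_k(\xi),\xi)}{\prod_{l\ne k}\big(\tau_k(\xi)-\va_l(\xi)\big)},\qquad|\xi|\ge M.
\end{equation*}
On $|\xi|\ge M$ the numerator lies in $S^{K}$, being the composition of the joint symbol $R(\tau,\xi)$ of order $K$ (on $|\tau|\le C|\xi|$) with $\tau=\tau_k(\xi)\in S^1$ by \eqref{EQ:rootisassymbol}; the denominator is a product of $m-1$ symbols $\tau_k-\va_l\in S^1$, so lies in $S^{m-1}$, and it is elliptic there, of modulus $\ge c|\xi|^{m-1}$, so its reciprocal lies in $S^{-(m-1)}$; hence the quotient lies in $S^{K+1-m}$, which is \eqref{EQ:derivsoftau-phiforfewerlot}. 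The step I expect to require the most care is this last one: one must check that $R(\tau_k(\cdot),\cdot)$ and $\prod_{l\ne k}(\tau_k-\va_l)$ obey symbol estimates with constants uniform in $k$, that the ellipticity of the denominator holds throughout $|\xi|\ge M$, and that passing to the reciprocal stays within the symbol calculus --- all of which hinge on \eqref{EQ:tau-vaboundkthorder} being small compared with $|\xi|$, which is why the estimates must be proved in this order.
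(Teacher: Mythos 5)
Your proposal is correct and follows exactly the route the paper indicates: estimate \eqref{EQ:tauisoderxi} from Lemma~\ref{LEM:auxbdd}, and the symbolic bounds (1)--(3) by perturbative arguments treating the lower order terms as a perturbation of the principal part for large frequencies (the paper itself only sketches this, deferring details to \cite{Ruzhansky:2010}). Your filled-in details --- the rescaled Rouch\'e localisation, the lower bound $|\partial_\tau L(\tau_k(\xi),\xi)|\gtrsim|\xi|^{m-1}$ feeding an induction on $|\alpha|$, and the quotient identity for $\tau_k-\va_k$ --- are sound and correctly ordered.
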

The estimate \eqref{EQ:tauisoderxi}
follows immediately from Lemma \ref{LEM:auxbdd}.
The symbolic behaviour can be shown by perturbative arguments.
In particular, since we always have $K\leq m-1$, we get
the following special case of Proposition 
\ref{PROP:perturbationresults} which we formulate for its
own sake:

\begin{cor}\label{PROP:perturbationresults-cor}
Let $L(\D_t,\D_x)$ be a strictly hyperbolic operator as in
Proposition \ref{PROP:perturbationresults}. 
Then we have
\begin{equation}\label{EQ:tau-vabound}
|\tau_k(\xi)-\va_k(\xi)|\le C\quad\text{for all }\xi\in\R^n\,,
\end{equation}
and there exists constants $M>0$ and  $C>0$ such that we have
\begin{equation}\label{EQ:gradtauisbounded}
|\nabla\tau_k(\xi)|\le C\quad\text{for all }|\xi|\ge M\,,
\end{equation}
and
\begin{equation}\label{EQ:derivsoftauandphisymbols}
|\partial^\alpha_\xi\tau_k(\xi)-\partial_\xi^\alpha\va_k(\xi)| \le
C|\xi|^{-|\alpha|}\quad\text{for all }|\xi|\ge M\,,
\end{equation}
for each multi-index $\alpha$ and $k=1,\dots,m$.
\end{cor}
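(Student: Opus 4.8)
The plan is to derive Corollary~\ref{PROP:perturbationresults-cor} directly from Proposition~\ref{PROP:perturbationresults} by exploiting the bound $K\le m-1$, which makes every exponent in \eqref{EQ:tau-vaboundkthorder}, \eqref{EQ:rootisassymbol} and \eqref{EQ:derivsoftau-phiforfewerlot} at most the one claimed here. First I would recall that the operator in the Corollary is exactly of the form treated in Proposition~\ref{PROP:perturbationresults}, so all three conclusions of that proposition apply verbatim; the task is then purely bookkeeping with exponents of $\langle\xi\rangle$ and $|\xi|$.

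For \eqref{EQ:tau-vabound}: by part~\eqref{LEM:tau-vabounds} of Proposition~\ref{PROP:perturbationresults} we have $|\tau_k(\xi)-\va_k(\xi)|\le C\langle\xi\rangle^{K+1-m}$ for all $\xi$. Since $0\le K\le m-1$, the exponent $K+1-m$ is $\le 0$, hence $\langle\xi\rangle^{K+1-m}\le 1$ for all $\xi\in\R^n$ (using $\langle\xi\rangle\ge1$), which gives $|\tau_k(\xi)-\va_k(\xi)|\le C$ everywhere. For \eqref{EQ:gradtauisbounded}: apply \eqref{EQ:rootisassymbol} with $|\alpha|=1$ to get $|\nabla\tau_k(\xi)|\le C|\xi|^{0}=C$ for $|\xi|\ge M$. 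For \eqref{EQ:derivsoftauandphisymbols}: apply \eqref{EQ:derivsoftau-phiforfewerlot}, which gives $|\partial_\xi^\alpha\tau_k(\xi)-\partial_\xi^\alpha\va_k(\xi)|\le C|\xi|^{K+1-m-|\alpha|}$ for $|\xi|\ge M$; since $K+1-m\le 0$ and $|\xi|\ge M$ with $M$ large (so $|\xi|\ge1$), we have $|\xi|^{K+1-m-|\alpha|}\le |\xi|^{-|\alpha|}$, yielding the stated bound. Each of these is a one-line consequence of monotonicity of $t\mapsto t^s$ for the appropriate sign of $s$.

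The only point requiring a remark rather than a computation is why the operator in the Corollary really is a strictly hyperbolic operator of the precise shape assumed in Proposition~\ref{PROP:perturbationresults}: the lower order terms are grouped by $|\alpha|+m-j=K$ with $c_{\alpha,j}\in S^{|\alpha|}$, so the maximal order of a lower order term is indeed $K\le m-1$, and strict hyperbolicity is assumed as a hypothesis of the Corollary. Granting this, there is no obstacle at all — the Corollary is a specialisation obtained by bounding the decaying factors $\langle\xi\rangle^{K+1-m}$ and $|\xi|^{K+1-m}$ by constants. If one wanted a self-contained argument not invoking the full Proposition, the main work would instead be the perturbative analysis establishing \eqref{EQ:rootisassymbol} and \eqref{EQ:derivsoftau-phiforfewerlot} in the first place (writing $\tau_k=\va_k+O(|\xi|^{K+1-m})$ via the implicit function theorem applied to $L(\tau,\xi)=0$ near the simple root $\va_k$, then differentiating), but since Proposition~\ref{PROP:perturbationresults} is already available this step may be cited.
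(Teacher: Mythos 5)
Your proposal is correct and is exactly the argument the paper intends: the text introduces the Corollary with the remark that ``since we always have $K\leq m-1$'' it is a special case of Proposition~\ref{PROP:perturbationresults}, i.e.\ one simply bounds $\langle\xi\rangle^{K+1-m}\le 1$ and $|\xi|^{K+1-m-|\alpha|}\le|\xi|^{-|\alpha|}$ (for $|\xi|\ge M\ge1$) and reads off the three estimates, with \eqref{EQ:gradtauisbounded} coming from \eqref{EQ:rootisassymbol} at $|\alpha|=1$. Nothing further is needed.
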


\subsection{Estimates for oscillatory integrals}

Theorem \ref{THM:overallmainthm} is composed of a number of statements
each giving a corresponding time decay rate in a corresponding frequency
region. For the precise formulations and proofs of these statements we
refer to \cite{Ruzhansky:2010}. However, anticipating the estimates for the
oscillatory integrals appearing in the analysis of systems with time-dependent
coefficients in subsequent sections, we give now two estimates corresponding to
large frequencies in which case the properties of hyperbolic polynomials in
Section \ref{SEC:hp} can be used.

We start with the case where all the level sets
\begin{equation}
\Sigma_\lambda(\tau)=\{\xi\in\Rn: \tau(\xi)=\lambda\}
\end{equation}
of the phase function $\tau:\Rn\to\R$, if non-empty,
are convex in the sense that the set enclosed by $\Sigma_\lambda$ is convex.
For simplicity of the formulation, we agree that the
empty set is also convex. 

We also note that if $\tau$ is
a characteristic of the operator in \eqref{EQ:CP}, we have to
estimate the oscillatory integrals appearing in
\eqref{EQ:soln}. These are dealt with in Theorem \ref{THM:convexsp}
below. Proposition \ref{PROP:perturbationresults} assures us that the
function $\tau$ satisfies the assumptions of Theorem \ref{THM:convexsp}
for large frequencies $\xi$.

\begin{thm}\label{THM:convexsp}
Let $\tau:\R^n\to\R$ be such that
$\Sigma_\lambda(\tau)$ are convex for all $\lambda>0$.
Let $\chi\in C^\infty(\Rn)$, and
assume that on its support $\supp\chi$ we have the following
properties:
\begin{enumerate}
\item\label{HYP:realtauisasymbol} for all multi-indices $\alpha$ there
exist constants $C_\alpha>0$ such that
\begin{equation}
|\partial_\xi^\alpha\tau(\xi)|\le
C_\alpha\langle{\xi}\rangle^{1-|\alpha|};
\end{equation}
\item\label{HYP:realtauboundedbelow} there exist constants $M,C>0$
such that for all $|\xi|\ge M$ we have $|\tau(\xi)|\ge
C|\xi|$\textup{;}
\item\label{HYP:realderivativeoftaunonzero} there exists a constant
$C>0$ such that $|\partial_\omega\tau(\lambda\omega)|\ge C$ for all $\omega\in
\mathbb S^{n-1}$, $\lambda>0$\textup{;} in particular,
$|\nabla\tau(\xi)|\ge C$ for all
$\xi\in\R^n\setminus\{0\}$\textup{;}
\item \label{HYP:reallimitofSi_la} there exists a constant $R_1>0$
such that, for all $\lambda>0$,
\begin{equation}
\lambda^{-1} \Sigma_\lambda(\tau)
\subset B_{R_1}(0)\,.\end{equation}
\end{enumerate}
Set $\gamma:=\sup_{\lambda>0}\gamma(\Sigma_\lambda(\tau))$ and assume
this is finite. Let 
$a_j=a_j(\xi)\in S^{-j}$ be a symbol of order
$-j$  on $\R^n$. 
Then for all $t\geq 0$ we have the estimate
\begin{equation}\label{EQ:resultforconvonaxis-res}
\left\| \int_{\R^n} \mathrm e^{\mathrm i(x\cdot\xi+\tau(\xi)t)}
a_j(\xi)\chi(\xi)\widehat{f}(\xi)\,d\xi \right\|_{L^q}\le
C(1+t)^{-\frac{n-1}{\gamma}(\frac{1}{p}-\frac{1}{q})}
\|f\|_{L^p_{N_{p,j,t}}}\,,
\end{equation}
where $pq=p+q$, $1<p\le 2$, 
and the Sobolev order satisfies
$N_{p,j,t}\ge n(\frac1p-\frac1q)-j$ for
$0\leq t<1$, and
$N_{p,j,t}\ge(n-\frac{n-1}{\gamma})
(\frac1p-\frac1q)-j$ for
$t\geq 1$.
\end{thm}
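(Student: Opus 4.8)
The plan is to reduce the estimate \eqref{EQ:resultforconvonaxis-res} to the multiplier statement of Lemma~\ref{lem:1:sug1A} combined with the time-rescaling argument already sketched around \eqref{eq:1:dispEst}, but carried out carefully enough to track the symbol order $-j$ and the inhomogeneous frequency cut-off $\chi$. The starting point is the observation that, modulo the harmless contribution near the origin (handled by the $L^p$--$L^q$ boundedness of the operator with symbol $a_j(\xi)\chi(\xi)$ localised to $|\xi|\lesssim M$, which follows from Hausdorff--Young and Sobolev embedding and only costs $n(\tfrac1p-\tfrac1q)$ derivatives, giving the $0\le t<1$ part of the claim directly), we may assume $\chi$ is supported in $|\xi|\ge M$ with $M$ large. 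There hypotheses \ref{HYP:realtauisasymbol}--\ref{HYP:reallimitofSi_la} put us exactly in the regime where $\tau$ behaves like a homogeneous phase of degree one with convex, uniformly non-degenerate level sets, so $\gamma=\sup_\lambda\gamma(\Sigma_\lambda(\tau))<\infty$ is the relevant contact index.

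First I would dispose of the $L^2$ endpoint: by Plancherel the operator $f\mapsto\int e^{i(x\cdot\xi+t\tau(\xi))}a_j(\xi)\chi(\xi)\widehat f(\xi)\,d\xi$ is bounded $H^{s}\to L^2$ uniformly in $t$ whenever $s\ge -j$, since $|a_j(\xi)\chi(\xi)|\lesssim\langle\xi\rangle^{-j}$ and the phase is real (so \ref{EQ:imtau>=0}-type positivity is not even needed, only reality). This is the $p=q=2$ case of \eqref{EQ:resultforconvonaxis-res}. Then the general $1<p\le 2$, $pq=p+q$ case will follow by interpolating this $L^2$ bound with the $L^1\to L^\infty$ bound. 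So the core of the proof is the $L^1\to L^\infty$ estimate with the asserted decay rate $(1+t)^{-(n-1)/\gamma}$ and Sobolev loss $(n-\tfrac{n-1}{\gamma})-j$ for $t\ge1$.

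For the $L^1\to L^\infty$ bound the key step is a dyadic (Littlewood--Paley) decomposition of the frequency support of $\chi$ into pieces $\phi(2^{-\ell}|\xi|)$, $\ell\gtrsim\log M$. On each dyadic shell I rescale $\xi=2^\ell\eta$, $y=2^\ell x$, $s=2^\ell t$, exploiting hypothesis \ref{HYP:realtauisasymbol} to write $\tau(2^\ell\eta)=2^\ell\tau_\ell(\eta)$ with $\tau_\ell$ uniformly (in $\ell$) smooth with convex level sets of uniformly bounded contact index $\le\gamma$ — here hypotheses \ref{HYP:realtauboundedbelow}, \ref{HYP:realderivativeoftaunonzero}, \ref{HYP:reallimitofSi_la} guarantee the rescaled phases stay in a fixed compact family satisfying the geometric hypotheses of Lemma~\ref{lem:1:sug1}/\ref{lem:1:sug1A} with uniform constants. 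The symbol $a_j(2^\ell\eta)\phi(|\eta|)$ has size $\lesssim 2^{-\ell j}$ and all $\eta$-derivatives bounded uniformly in $\ell$. Applying the stationary-phase / van der Corput estimate of Lemma~\ref{lem:1:sug1} (or equivalently the scaled form of Lemma~\ref{lem:1:sug1A}) to the kernel of the $\ell$-th piece gives a bound $\lesssim 2^{-\ell j}\,2^{\ell n}\,\langle 2^\ell t\rangle^{-(n-1)/\gamma}$ on its $L^1\to L^\infty$ norm; summing a geometric-type series in $\ell$ against the Besov/Sobolev weight $2^{\ell(n-(n-1)/\gamma-j)}$ reproduces the factor $t^{-(n-1)/\gamma}$ for $t\ge1$ at the stated regularity $N_{p,\infty,t}=n-\tfrac{n-1}{\gamma}-j$ (and, since $B^s_{1,2}\hookleftarrow L^1_s$ with $\ell^2$--$\ell^1$ loss absorbed into an $\epsilon$ of regularity, one may phrase the loss in the $L^p$-Bessel scale as in the statement).

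The main obstacle I anticipate is the \emph{uniformity in the dyadic parameter $\ell$} of the stationary-phase estimate: the level sets $\Sigma_\lambda(\tau)$ are only assumed convex with finite contact index, not with non-vanishing Gaussian curvature, so Lemma~\ref{lem:1:sug1} must be applied to the whole scaled family $\{\tau_\ell\}$ at once and one must check that the constant in that lemma depends only on a finite number of $C^k$-seminorms of the phase and on $\gamma$, all of which are controlled uniformly in $\ell$ by hypotheses \ref{HYP:realtauisasymbol}--\ref{HYP:reallimitofSi_la}. This is precisely the point where the parameter-dependent refinements of Ruzhansky \cite{Ruzhansky:2009, Ruzhansky:2012} are needed rather than the classical curvature-based estimates, and it is where the proof genuinely uses that $\gamma$ is the \emph{supremum} of the contact indices. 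The remaining bookkeeping — splitting off $0\le t<1$, interpolating with $L^2$, and converting between homogeneous Besov and inhomogeneous Bessel-potential norms — is routine.
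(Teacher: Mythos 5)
Your proposal follows essentially the same route as the paper: the paper's own (very brief) argument is exactly "interpolate the trivial $L^2$--$L^2$ bound with an $L^1$--$L^\infty$ bound, the latter coming from the kernel estimate of Proposition~\ref{cor:convex}", and your dyadic decomposition with rescaling to a uniform family of compact convex surfaces, controlled by the parameter-dependent van der Corput lemma of \cite{Ruzhansky:2009}, is precisely how that kernel estimate is established in \cite{Ruzhansky:2010} and how the analogous Theorem~\ref{thm:3:dispEsp} is proved later in these notes. The only cosmetic slip is the conflation of the low-frequency reduction with the $0\le t<1$ regime (the compactly supported piece still needs the $t^{-(n-1)/\gamma}$ decay for $t\ge1$, which your framework of course supplies via the kernel estimate with $a\in C_0^\infty$), so the argument is correct as outlined.
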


Estimate \eqref{EQ:resultforconvonaxis-res} 
follows by interpolation from the $L^2-L^2$ estimate 
(easy part) combined
with an $L^1-L^\infty$ estimate (harder part). 
The estimate for large times follows from
the $L^\infty$-estimate for the kernel of
the integral operator in \eqref{EQ:resultforconvonaxis-res}.
In fact, for the kernel we have the following 

\begin{prop}\label{cor:convex}
Under the conditions of Theorem \ref{THM:convexsp}
with $\chi\equiv 1$ and with the assumption that $a\in C_0^\infty(\Rn)$
the estimate 
\begin{equation}\label{EQ:sugimotothmest0}
\left| \int_{\R^n}\mathrm e^{\mathrm i(x\cdot\xi+ \tau(\xi)t)}a(\xi)
\,d\xi \right| \le C(1+t)^{-\frac{n-1}{\gamma}}\,
\end{equation}
follows for all $x\in\Rn$ and all $t\ge0$.
\end{prop}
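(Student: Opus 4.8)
The plan is to split the kernel according to whether the phase $\Phi(\xi)=x\cdot\xi+t\tau(\xi)$ is stationary: in the region where it is not, integration by parts gives decay faster than any power of $1+t$; in the complementary, cone-like region we use the coarea formula to reduce to inverse Fourier transforms of densities carried by the convex level sets $\Sigma_\lambda(\tau)$ and apply Lemma~\ref{lem:1:sug1}.

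For $0\le t\le 1$ the integral is bounded by $\|a\|_{L^1}$ while $(1+t)^{-(n-1)/\gamma}$ stays bounded below, so the estimate is trivial; assume $t\ge1$ from now on. By hypotheses \eqref{HYP:realtauisasymbol} and \eqref{HYP:realderivativeoftaunonzero} of Theorem~\ref{THM:convexsp} there are constants $0<c_0\le C_0$ with $c_0\le|\nabla\tau(\xi)|\le C_0$ for $\xi\ne0$, and $|\partial_\xi^\alpha\tau(\xi)|\le C_\alpha$ on $\supp a$ for each $|\alpha|\ge1$. If $|x|\le\tfrac12 c_0 t$ or $|x|\ge 2C_0 t$, then on $\supp a$ one has $|\nabla_\xi\Phi(\xi)|=|x+t\nabla\tau(\xi)|\gtrsim|x|+t$, and repeated integration by parts with $L=\tfrac{1}{\mathrm i}\,|\nabla\Phi|^{-2}\,\nabla\Phi\cdot\nabla_\xi$ --- each application of the transpose $L^{\ast}$ producing a factor controlled by $(|x|+t)^{-1}$, the $\xi$-derivatives of $t\nabla\tau$ being absorbed through $t\le|x|+t$ --- gives $|I(t,x)|\le C_N(1+|x|+t)^{-N}$ for every $N$; since $t\ge1$ this is $\le C(1+t)^{-(n-1)/\gamma}$.

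It remains to handle $\tfrac12 c_0 t\le|x|\le 2C_0 t$. As $|\nabla\tau|\ge c_0>0$, the coarea formula $d\xi=|\nabla\tau(\xi)|^{-1}\,d\sigma_{\Sigma_\lambda}(\xi)\,d\lambda$ applies and
\[
I(t,x)=\int_J\mathrm e^{\mathrm i t\lambda}\,F(\lambda,x)\,d\lambda,\qquad F(\lambda,x)=\int_{\Sigma_\lambda(\tau)}\mathrm e^{\mathrm i x\cdot\xi}\,\frac{a(\xi)}{|\nabla\tau(\xi)|}\,d\sigma_{\Sigma_\lambda}(\xi),
\]
where $J$ is a bounded interval of positive numbers containing $\tau(\supp a)$ (bounded by \eqref{HYP:realtauisasymbol} with $|\alpha|=0$) and each $\Sigma_\lambda(\tau)$, $\lambda\in J$, is a closed hypersurface (bounded by \eqref{HYP:realtauboundedbelow} and \eqref{HYP:reallimitofSi_la}, smooth by \eqref{HYP:realderivativeoftaunonzero}) bounding a convex body by assumption. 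For fixed $\lambda$ the density $a/|\nabla\tau|$ restricts to a compactly supported $C^k$ function on $\Sigma_\lambda$ (indeed $a$ vanishes to infinite order on $\partial(\supp a)$), so Lemma~\ref{lem:1:sug1} yields $|F(\lambda,x)|\le C_\lambda\langle x\rangle^{-(n-1)/\gamma(\Sigma_\lambda)}\le C_\lambda\langle x\rangle^{-(n-1)/\gamma}$, using $\gamma(\Sigma_\lambda)\le\gamma$ and $\langle x\rangle\ge1$. Granting $C_\ast:=\sup_{\lambda\in J}C_\lambda<\infty$, we get $|I(t,x)|\le C_\ast|J|\,\langle x\rangle^{-(n-1)/\gamma}$; since $|x|\ge\tfrac12 c_0 t$ and $t\ge1$ force $\langle x\rangle\gtrsim 1+t$, the claimed estimate follows, which together with the two previous cases proves the proposition.

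\textbf{The main obstacle} is establishing the uniformity $C_\ast<\infty$. The constant in Lemma~\ref{lem:1:sug1} depends on the $C^k$-geometry of the surface (its diameter, and --- through the contact index --- its second fundamental form), and controlling it for the whole family $\{\Sigma_\lambda\}_{\lambda\in J}$ is exactly what the four hypotheses of Theorem~\ref{THM:convexsp} are designed for: the normalisation $\lambda^{-1}\Sigma_\lambda\subset B_{R_1}(0)$ of \eqref{HYP:reallimitofSi_la} bounds the diameter of the rescaled surfaces; the symbol bounds \eqref{HYP:realtauisasymbol} together with $|\nabla\tau|\ge c_0$ from \eqref{HYP:realderivativeoftaunonzero} give uniform $C^k$-control of the defining functions, hence of the surfaces and of the densities $a/|\nabla\tau|$; and the contact index is uniformly bounded by the standing assumption $\gamma=\sup_{\lambda>0}\gamma(\Sigma_\lambda)<\infty$. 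Since $J$ is compact and, by the implicit function theorem (using $\nabla\tau\ne0$), the family $\{\Sigma_\lambda\cap\supp a\}_{\lambda\in J}$ varies smoothly, these bounds assemble into a single constant $C_\ast$, completing the argument.
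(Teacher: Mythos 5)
Your proof is correct in outline but takes a genuinely different route from the one behind the paper. Proposition~\ref{cor:convex} is quoted from \cite{Ruzhansky:2010}, and the method the paper actually displays (in the proof of the analogous Theorem~\ref{thm:3:dispEsp}) is a direct stationary-phase argument: cut off the non-stationary set with a factor $\chi(x/t+\nabla_\xi\tau(\xi))$, localise the stationary part to narrow cones, parameterise the level set as a graph $(y,h(y))$, pass to the adapted polar coordinates $\xi=(ry,rh(y))$, and apply the multidimensional van der Corput lemma of \cite{Ruzhansky:2009} to the $y$-integral. You instead split on the size of $|x|/t$ (which does the same job, since stationary points force $|x|\approx t$) and, in the stationary regime, use the coarea formula to reduce to Fourier transforms of surface-carried densities and invoke Lemma~\ref{lem:1:sug1}. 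That is essentially the Littman--Sugimoto strategy; it is conceptually cleaner, but it concentrates all the analytic work into the single point you flag, namely the uniformity in $\lambda$ of the constant in Lemma~\ref{lem:1:sug1}, whereas the van der Corput route produces a uniform constant directly from the uniform graph bounds.

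On that uniformity your argument is thin in exactly one place. The compactness reasoning (``$J$ compact, surfaces vary smoothly'') is fine for $\lambda$ ranging over a set on which the $\Sigma_\lambda$ are uniformly non-degenerate, but it does not cover the behaviour forced by hypothesis (4): if $\Sigma_\lambda\ne\emptyset$ for small $\lambda>0$, then $\Sigma_\lambda\subset B_{\lambda R_1}(0)$ collapses to the origin as $\lambda\to0^+$, the family does not extend to $\lambda=0$ as a family of hypersurfaces, and continuity of $\lambda\mapsto C_\lambda$ cannot be invoked there. The repair is to actually use the rescaling that hypothesis (4) provides: write
\begin{equation*}
  F(\lambda,x)=\lambda^{n-1}\int_{\lambda^{-1}\Sigma_\lambda}\mathrm e^{\mathrm i\lambda x\cdot\eta}\,\frac{a(\lambda\eta)}{|\nabla\tau(\lambda\eta)|}\,d\sigma(\eta),
\end{equation*}
apply Lemma~\ref{lem:1:sug1} to the normalised surfaces $\lambda^{-1}\Sigma_\lambda\subset B_{R_1}(0)$, which have uniformly bounded geometry, and observe that the prefactor $\lambda^{n-1}$ combined with the decay in $\langle\lambda x\rangle$ still yields $|F(\lambda,x)|\lesssim\langle x\rangle^{-(n-1)/\gamma}$ uniformly for $\lambda$ in a bounded set. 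More generally, the quantitative meaning of ``uniform constants over a family'' is exactly what the paper later formalises through the functionals $\varkappa(\Sigma,\gamma)$ in \eqref{eq:3:kappa-def}--\eqref{eq:3:kappa-unif}: knowing $\gamma=\sup_{\lambda}\gamma(\Sigma_\lambda)<\infty$ alone is not sufficient, one needs $\inf_\lambda\varkappa(\lambda^{-1}\Sigma_\lambda,\gamma)>0$, which here does follow from the stated hypotheses. With that step made precise, your argument is complete.
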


We now give an analogue of Theorem \ref{THM:convexsp}
without the convexity assumption on the level sets of $\tau$.
In this case we can only assure a weaker, one-dimensional
decay rate.

\begin{thm}\label{THM:noncovexargument-sp}
Let $\tau:\R^n\to\R$ be a smooth function.
Let $\chi\in C^\infty(\Rn)$, and on its support
$\supp\chi$, assume the following:
\begin{enumerate}
\item\label{HYP:nonconcplxtauisasymbol} for all multi-indices $\alpha$
there exist constants $C_\alpha>0$ such that
\begin{equation}
|\partial_\xi^\alpha\tau(\xi)|\le C_\alpha \langle{\xi}\rangle^{1-|\alpha|};
\end{equation}
\item\label{HYP:nonconcplxtauboundedbelow} there exist constants
$M,C>0$ such that for all $|\xi|\ge M$ we have
$|\tau(\xi)|\ge C|\xi |$\textup{;}
\item\label{HYP:nonconcplxderivativeoftaunonzero} there exists a
constant $C>0$ such that 
$|\partial_\omega\tau(\lambda\omega)|\ge C$ for
all $\omega\in \mathbb S^{n-1}$ and $\lambda>0$\textup{;}
\item\label{HYP:nonconcplxlimitofSi_la} there exists a constant
$R_1>0$ such that  for all $\lambda>0$ we have
\begin{equation}
\lambda^{-1} \Sigma_\lambda(\tau) \subset
B_{R_1}(0)\,.
\end{equation}
\end{enumerate}
Set $\gamma_0:=\sup_{\lambda>0}\gamma_0(\Sigma_\lambda(\tau))$ and assume it is
finite. 
Let $a_j=a_j(\xi)\in S^{-j}$ be a symbol of order
$-j$ on $\R^n$. 
Then for all $t\geq 0$ we have the estimate
\begin{equation}\label{EQ:resultfornonconvonaxis-res}
\left\| \int_{\R^n} \mathrm e^{\mathrm i(x\cdot\xi+\tau(\xi)t)}
a_j(\xi)\chi(\xi)\widehat{f}(\xi)\,d\xi\right\|_{L^q}\le
C(1+t)^{-\frac{1}{\gamma_0}(\frac{1}{p}-\frac{1}{q})}
\|f\|_{L^p_{N_{p,j,t}}}\,,
\end{equation}
where $pq=p+q$, $1<p\le 2$, 
and the Sobolev order satisfies
$N_{p,j,t}\ge n(\frac1p-\frac1q)-j$ for
$0\leq t<1$, and
$N_{p,j,t}\ge(n-\frac{1}{\gamma_0})
(\frac1p-\frac1q)-j$ for
$t\geq 1$.
\end{thm}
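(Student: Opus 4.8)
The plan is to mirror the proof scheme that the excerpt already outlined for Theorem~\ref{THM:convexsp}, replacing every appeal to the convex contact index $\gamma$ and Lemma~\ref{lem:1:sug1}/Lemma~\ref{lem:1:sug1A} by the non-convex analogues, Lemma~\ref{lem:1:sug2} and Lemma~\ref{lem:1:sug2A}. Concretely, the estimate \eqref{EQ:resultfornonconvonaxis-res} should follow by interpolation between the trivial $L^2$--$L^2$ bound and an $L^1$--$L^\infty$ bound for the operator
\begin{equation*}
T_t f(x) = \int_{\R^n} \mathrm e^{\mathrm i(x\cdot\xi + \tau(\xi)t)} a_j(\xi)\chi(\xi)\widehat f(\xi)\,d\xi,
\end{equation*}
where the $L^2$ bound is immediate from Plancherel (using only that $a_j\chi$ is bounded, which costs $j$ derivatives, i.e.\ the $N_{p,j,t}\ge n(\tfrac1p-\tfrac1q)-j$ part for small $t$), and the $L^1$--$L^\infty$ bound is the kernel estimate that supplies the time decay rate $(1+t)^{-1/\gamma_0}$.

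First I would handle small times $0\le t<1$: here there is no decay to extract and the claimed estimate is just Sobolev embedding, i.e.\ $\|T_tf\|_{L^q}\lesssim \|f\|_{L^p_{N}}$ with $N\ge n(\tfrac1p-\tfrac1q)-j$, obtained from the $H^s\hookrightarrow L^q$ type embedding after absorbing $a_j\in S^{-j}$; the phase $\mathrm e^{\mathrm i\tau(\xi)t}$ is harmless since $|\tau(\xi)t|\lesssim \langle\xi\rangle$ and $t\tau$ together with its derivatives stays bounded on any bounded frequency piece, while on the high-frequency piece $\chi$ localises and hypothesis~\eqref{HYP:nonconcplxtauisasymbol} controls the derivatives. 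Then for $t\ge1$ the real work is the kernel bound
\begin{equation*}
\left| \int_{\R^n} \mathrm e^{\mathrm i(x\cdot\xi + \tau(\xi)t)} a(\xi)\,d\xi \right| \lesssim (1+t)^{-1/\gamma_0},
\end{equation*}
for $a\in C_0^\infty$, which is the non-convex counterpart of Proposition~\ref{cor:convex}. I would prove it by a dyadic (Littlewood--Paley) decomposition of frequency space: on each shell $|\xi|\sim 2^\ell$ rescale $\xi = 2^\ell\eta$, $s = 2^\ell t$, so that the integral becomes a surface-type oscillatory integral whose phase, after using homogeneity-up-to-symbol of $\tau$ (hypotheses \eqref{HYP:nonconcplxtauisasymbol}--\eqref{HYP:nonconcplxlimitofSi_la}), has level sets comparable to the fixed family $\Sigma_\lambda(\tau)$; applying Lemma~\ref{lem:1:sug2} (or the stationary-phase estimates of Ruzhansky referenced in Section~\ref{sec:1:StatPhas}, whose uniform-in-parameter form is exactly what is needed here) gives decay $\langle s\rangle^{-1/\gamma_0}$ on that shell, and summing the geometric series in $\ell$ (the low frequencies being trivially bounded, contributing no growth) yields the claim with the loss of $n-\tfrac{1}{\gamma_0}$ derivatives reflected in the Sobolev index for $t\ge1$. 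Combining $j$ derivatives from $a_j\in S^{-j}$ with this loss gives precisely $N_{p,j,t}\ge(n-\tfrac{1}{\gamma_0})(\tfrac1p-\tfrac1q)-j$.

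The main obstacle is the kernel estimate for $t\ge1$: one must verify that the rescaled phases genuinely satisfy the hypotheses of the non-convex van der Corput / stationary phase lemma \emph{uniformly} across all dyadic scales $\ell$ and all relevant values of the rescaled time, since $\tau$ is only homogeneous modulo lower-order symbol corrections, not exactly homogeneous. Hypotheses \eqref{HYP:nonconcplxtauisasymbol}--\eqref{HYP:nonconcplxlimitofSi_la} are designed precisely so that $\nabla\tau$ is bounded above and below, the level sets stay in a fixed ball after rescaling, and the contact index $\gamma_0(\Sigma_\lambda(\tau))$ is bounded by $\gamma_0$, so the verification is largely bookkeeping --- but it is the step where care is required, and it is where one invokes the parameter-dependent refinements of the stationary phase method due to Ruzhansky \cite{Ruzhansky:2009,Ruzhansky:2012}. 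Finally, interpolating the resulting $L^1$--$L^\infty$ bound (decay $(1+t)^{-1/\gamma_0}$, loss $(n-\tfrac1{\gamma_0})-j$ derivatives) with the $L^2$--$L^2$ bound (no decay, loss $-j$ derivatives, i.e.\ a gain when $j>0$) via the Riesz--Thorin theorem produces \eqref{EQ:resultfornonconvonaxis-res} with the stated $pq=p+q$, $1<p\le2$, and Sobolev index; one should remark that, as in Lemma~\ref{lem:1:sug1A}, the endpoint is most cleanly phrased in terms of the Besov space $\dot B^0_{1,2}$, and passing to $L^p$ costs the usual arbitrarily small amount of extra regularity absorbed into $N_{p,j,t}$.
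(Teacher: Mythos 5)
Your proposal follows the same route the paper takes: interpolation between the trivial $L^2$--$L^2$ bound and an $L^1$--$L^\infty$ bound, with the whole weight carried by the kernel estimate of Proposition~\ref{cor:nonconvex}, itself obtained by dyadic decomposition, rescaling, and the non-convex stationary-phase/van der Corput estimates of Lemma~\ref{lem:1:sug2} and \cite{Ruzhansky:2009}. The paper only sketches this (deferring details to \cite{Ruzhansky:2010}), and your outline, including the small-time Sobolev-embedding case and the bookkeeping of the Sobolev orders, is a correct filling-in of that same scheme.
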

 As in the convex
case, the main thing is to show the following kernel estimate.

\begin{prop}\label{cor:nonconvex}
Under the conditions of Theorem \ref{THM:noncovexargument-sp}
with $\chi\equiv 1$ and the assumption
that $a\in C_0^\infty(\Rn)$ the estimate
\begin{equation}
\left| \int_{\R^n} \mathrm e^{\mathrm i(x\cdot\xi+ \tau(\xi)t)}a(\xi)
\,d\xi\right| \le C(1+t)^{-\frac{1}{\gamma_0}}\,
\end{equation}
holds true for all $x\in\Rn$ and $t\ge0$.
\end{prop}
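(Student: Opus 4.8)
The plan is to reduce the $n$‑dimensional oscillatory integral to a one–parameter family of Fourier transforms of surface–carried measures on the level sets $\Sigma_\lambda(\tau)$, apply the non‑convex van der Corput estimate of Lemma~\ref{lem:1:sug2} to each of them, and combine this with an elementary non‑stationary phase argument in the region where the phase has no critical points. Write $\Phi(\xi)=x\cdot\xi+t\tau(\xi)$ and $I(t,x)=\int_{\R^n}\mathrm e^{\mathrm i\Phi(\xi)}a(\xi)\,\mathrm d\xi$. Since $a\in C_0^\infty(\R^n)$ we have $|I(t,x)|\le\|a\|_{L^1}$, so the estimate is trivial for $0\le t\le1$ and it suffices to treat $t\ge1$. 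The third hypothesis of Theorem~\ref{THM:noncovexargument-sp} gives a lower bound $|\nabla\tau(\xi)|\ge c_0>0$ (valid at $\xi=0$ as well, by continuity), while the first hypothesis with $|\alpha|=1$ gives an upper bound $|\nabla\tau(\xi)|\le c_1$. Fix $\epsilon:=c_0/2$ and split into $|x|\le\epsilon t$ and $|x|\ge\epsilon t$.

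In the region $|x|\le\epsilon t$ the phase is non‑stationary on $\supp a$:
\[
|\nabla\Phi(\xi)|=|x+t\nabla\tau(\xi)|\ge t\,|\nabla\tau(\xi)|-|x|\ge\tfrac{c_0}{2}\,t .
\]
I would then integrate by parts repeatedly with the operator $L=\tfrac1{\mathrm i}|\nabla\Phi|^{-2}\,\nabla\Phi\cdot\nabla_\xi$, which satisfies $L\mathrm e^{\mathrm i\Phi}=\mathrm e^{\mathrm i\Phi}$; using that on $\supp a$ every derivative of $\Phi$ of order $\ge1$ is $O(t)$ (from the symbol bounds on $\tau$ and $|x|\lesssim t$) while $|\nabla\Phi|\gtrsim t$, each application of the transpose operator gains a factor $t^{-1}$, so $|I(t,x)|\le C_N\,t^{-N}$ for every $N$, uniformly in $x$. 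This is more than enough.

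For $|x|\ge\epsilon t$ I would use the coarea formula with respect to $\tau$ (legitimate since $\nabla\tau$ never vanishes):
\[
I(t,x)=\int_{\R}\mathrm e^{\mathrm i t\lambda}\,h(\lambda,x)\,\mathrm d\lambda,\qquad
h(\lambda,x)=\int_{\Sigma_\lambda(\tau)}\frac{\mathrm e^{\mathrm i x\cdot\xi}\,a(\xi)}{|\nabla\tau(\xi)|}\,\mathrm d\sigma_\lambda(\xi),
\]
with $\mathrm d\sigma_\lambda$ the induced surface measure. The density $a/|\nabla\tau|$ is smooth and supported in the fixed compact set $\supp a$, and $h(\lambda,\cdot)$ vanishes for $\lambda$ outside the compact interval $\tau(\supp a)$. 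For each such $\lambda$, $h(\lambda,x)$ is, up to a constant, the inverse Fourier transform of a $C^\infty$ density on the hypersurface $\Sigma_\lambda(\tau)$, so Lemma~\ref{lem:1:sug2} yields
\[
|h(\lambda,x)|\le C_\lambda\,\langle x\rangle^{-1/\gamma_0(\Sigma_\lambda(\tau))}\,\big\|a/|\nabla\tau|\big\|_{C^1(\Sigma_\lambda(\tau))}\le C\,\langle x\rangle^{-1/\gamma_0},
\]
where the last step uses $\gamma_0(\Sigma_\lambda(\tau))\le\gamma_0$, the uniform bound on $\big\|a/|\nabla\tau|\big\|_{C^1(\Sigma_\lambda)}$ coming from $|\nabla\tau|\ge c_0$ and the symbol bounds on $\supp a$, and the uniformity in $\lambda$ of the constant in Lemma~\ref{lem:1:sug2}. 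Integrating over the bounded $\lambda$‑interval gives $|I(t,x)|\le C\,\langle x\rangle^{-1/\gamma_0}$; since $|x|\ge\epsilon t$ and $t\ge1$ force $\langle x\rangle\gtrsim1+t$, this is $\lesssim(1+t)^{-1/\gamma_0}$. Combining the two regions proves the proposition.

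The hard part is the last point: getting Lemma~\ref{lem:1:sug2} \emph{uniformly} over the family $\{\Sigma_\lambda(\tau)\}_{\lambda\in\tau(\supp a)}$. Two technical issues arise. First, $\Sigma_\lambda(\tau)$ need not be closed, so one must work with the compactly supported density $a/|\nabla\tau|$ on the piece of $\Sigma_\lambda$ meeting $\supp a$ (equivalently, truncate near $\supp a$ and complete to a closed hypersurface, whose contact index on the support of the density is still $\le\gamma_0$). Second, one needs the constant in the van der Corput–Sugimoto estimate to depend only on finitely many derivative bounds and on uniform bounds for the relevant curvature‑type quantities; these hold locally uniformly as $\lambda$ ranges over the compact interval $\tau(\supp a)$, because $\Sigma_\lambda$ depends smoothly on $\lambda$ (implicit function theorem, $\nabla\tau\neq0$) and stays in a fixed compact set. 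This is exactly the parameter‑dependent refinement of the stationary‑phase/van der Corput estimates due to Ruzhansky \cite{Ruzhansky:2009,Ruzhansky:2012}, and its applicability here is what makes the argument go through; see also \cite{Ruzhansky:2010}.
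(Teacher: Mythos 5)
Your argument is correct, and it is essentially the intended one: the paper states Proposition~\ref{cor:nonconvex} without proof, deferring to \cite{Ruzhansky:2010}, and the splitting there is exactly yours — non-stationary phase for $|x|\lesssim t$, and for $|x|\gtrsim t$ the coarea decomposition $I=\int e^{\mathrm i t\lambda}h(\lambda,x)\,d\lambda$ over the level sets $\Sigma_\lambda(\tau)$ combined with the non-convex surface-measure estimate of Lemma~\ref{lem:1:sug2}. The one piece of genuine content you defer to the literature — uniformity in $\lambda$ of the constant in Lemma~\ref{lem:1:sug2} over the compact family $\{\Sigma_\lambda\}_{\lambda\in\tau(\supp a)}$ — is indeed what the parameter-dependent van der Corput estimates of \cite{Ruzhansky:2009,Ruzhansky:2012} provide, and you correctly identify it as the crux rather than glossing over it.
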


As a corollary and an example of these theorems, 
we get the following possibilities
of decay for parts of solutions with characteristic roots on the axis.
We can use a cut-off function $\chi$ to microlocalise
around points with different qualitative behaviour
(hence we also do not have to worry about Sobolev orders).
Combining the above theorems with what we can obtain by the
stationary phase method, we get

\begin{cor}\label{COR:realaxis}
Let $\Omega\subset\Rn$ be an open set and let
$\tau:\Omega\to\R$ be a smooth real-valued function. 
Let $\chi\in C_0^\infty(\Omega).$
Let us make the following choices of $K(t)$, depending on
which of the following conditions are satisfied on $\supp\chi$.
\begin{itemize}
\item[{\rm (1)}] If $\det\nabla^2\tau(\xi)\not=0$ for all 
$\xi\in\Omega$, we set 
$K(t)=(1+t)^{-\frac{n}{2}(\frac1p-\frac1q)}.$
\item[{\rm (2)}] If $\rank\nabla^2 \tau(\xi)=n-1$ for all 
$\xi\in\Omega$, we set 
$K(t)=(1+t)^{-\frac{n-1}{2}(\frac1p-\frac1q)}.$
\item[{\rm (3)}] If $\tau$ satisfies the convexity condition
with index $\gamma$, we set 
$K(t)=(1+t)^{-\frac{n-1}{\gamma}(\frac1p-\frac1q)}.$
\item[{\rm (4)}] If $\tau$ does not satisfy the convexity
condition but has non-convex index $\gamma_0$, we set 
$K(t)=(1+t)^{-\frac{1}{\gamma_0}(\frac1p-\frac1q)}.$
\end{itemize}
Assume in each case that other assumptions of the corresponding
Theorems {\rm \ref{THM:convexsp}--\ref{THM:noncovexargument-sp}}
in cases {\rm (3)--(4)}
are satisfied, as well as assumptions in \cite{Ruzhansky:2010}
for cases {\rm (1)--(2)}. Let $1\leq p\leq 2$, $pq=p+q$.
Then for all $t\geq 0$ we have
\begin{equation}\label{EQ:realaxis}
\left|\left|\int_\Rn \mathrm e^{\mathrm i(x\cdot\xi+\tau(\xi)t)} a(\xi) \chi(\xi) 
\widehat{f}(\xi) d\xi
\right|\right|_{L^q}
\leq C K(t)||f||_{L^p}.
\end{equation}
\end{cor}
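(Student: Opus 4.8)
The plan is to obtain \eqref{EQ:realaxis}, in each of the four cases, by combining the trivial $L^2$–bound with an $L^1$–$L^\infty$ kernel estimate and interpolating; the whole point of the hypothesis $\chi\in C_0^\infty(\Omega)$ is that $a\chi$ is then a \emph{compactly supported} smooth amplitude, so that no Sobolev correction $N_{p,j,t}$ is needed and the growth hypotheses at infinity appearing in Theorems \ref{THM:convexsp}--\ref{THM:noncovexargument-sp} become vacuous.

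First I would write the operator in \eqref{EQ:realaxis} as the Fourier multiplier $T_tf(x)=\int_\Rn e^{ix\cdot\xi}\,e^{it\tau(\xi)}a(\xi)\chi(\xi)\,\widehat f(\xi)\,d\xi$. Plancherel's theorem together with $|e^{it\tau(\xi)}a(\xi)\chi(\xi)|\le\|a\chi\|_{L^\infty}$ gives $\|T_tf\|_{L^2}\le C\|f\|_{L^2}$, uniformly in $t\ge0$. The operator $T_t$ is convolution with the kernel $k_t(z)=(2\pi)^{-n}\int_\Rn e^{iz\cdot\xi}a(\xi)\chi(\xi)e^{it\tau(\xi)}\,d\xi$, so that $\|T_tf\|_{L^\infty}\le\|k_t\|_{L^\infty}\|f\|_{L^1}$. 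For $0\le t<1$ the integral defining $k_t$ runs over the fixed compact set $\supp\chi$ with bounded integrand, whence $\|k_t\|_{L^\infty}\le C$; this range of $t$ is then disposed of by the interpolation of the last step, since $K(t)$ is comparable to a constant there.

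The substantive part is the bound $\|k_t\|_{L^\infty}\le C(1+t)^{-\sigma}$ for $t\ge1$, with $\sigma=\tfrac n2,\ \tfrac{n-1}2,\ \tfrac{n-1}\gamma,\ \tfrac1{\gamma_0}$ in cases (1)--(4). In cases (3) and (4) this is exactly Proposition \ref{cor:convex} and Proposition \ref{cor:nonconvex}, applied with $a\chi$ in place of $a$: the hypotheses of Theorems \ref{THM:convexsp}--\ref{THM:noncovexargument-sp} concerning $|\tau(\xi)|\gtrsim|\xi|$ and $\lambda^{-1}\Sigma_\lambda(\tau)\subset B_{R_1}(0)$ for large $\xi$ may be arranged by extending/modifying $\tau$ outside $\supp\chi$, which leaves $k_t$ unchanged, while the remaining ones (non-vanishing of $\nabla\tau$, finiteness of $\gamma$, resp.\ $\gamma_0$, and convexity of the level sets) are assumed. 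In cases (1) and (2) the bound on $k_t$ is the classical stationary-phase estimate for the phase $z\cdot\xi+t\tau(\xi)$ on the compact $\xi$–set $\supp\chi$: repeated integration by parts handles the region where $|z/t+\nabla\tau(\xi)|$ is bounded below and yields decay faster than any power of $t$, and near the critical set the condition $\det\nabla^2\tau\ne0$ (resp.\ $\rank\nabla^2\tau=n-1$, under the supplementary hypotheses of \cite{Ruzhansky:2010}) produces $t^{-n/2}$ (resp.\ $t^{-(n-1)/2}$), uniformly in $z$; for these two cases we refer to the corresponding statements of \cite{Ruzhansky:2010}.

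Finally, interpolating $\|T_t\|_{L^2\to L^2}\le C$ with $\|T_t\|_{L^1\to L^\infty}\le C(1+t)^{-\sigma}$ via the Riesz--Thorin theorem at the point $1/p=(1-\theta)/2+\theta$, $1/q=(1-\theta)/2$ (so that $\theta=1/p-1/q$ and $\tfrac1p+\tfrac1q=1$, i.e.\ $pq=p+q$) gives $\|T_tf\|_{L^q}\le C(1+t)^{-\sigma(1/p-1/q)}\|f\|_{L^p}$ for $1\le p\le2$, which is precisely $K(t)$ in each of the four cases. The genuine obstacle is the kernel bound of the third paragraph, and it is already packaged in Propositions \ref{cor:convex}--\ref{cor:nonconvex} and in \cite{Ruzhansky:2010}; the only point requiring care within this corollary is the bookkeeping that a compactly supported $\chi$ legitimately discards the global and Sobolev hypotheses, and that the indices $\gamma$ and $\gamma_0$ are to be read off from the level sets $\Sigma_\lambda(\tau)$ as seen by $\supp\chi$ only.
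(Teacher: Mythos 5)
Your proposal is correct and follows essentially the same route as the paper: the corollary is obtained by interpolating the trivial $L^2$ bound with the $L^1$--$L^\infty$ kernel bounds of Propositions~\ref{cor:convex} and \ref{cor:nonconvex} in cases (3)--(4) and with the classical stationary phase estimates of \cite{Ruzhansky:2010} in cases (1)--(2), the compact support of $\chi$ being exactly what removes the Sobolev correction $N_{p,j,t}$. Your bookkeeping of the Riesz--Thorin exponents and of the role of $\supp\chi$ matches the paper's (unwritten) argument.
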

We note that no derivatives appear in the $L^p$--norm of $f$
because the support of $\chi$ is bounded. In general, there
are different ways to ensure the convexity condition for 
$\tau$, some of the criteria given in \cite{Ruzhansky:2010}.

\section{Some interesting model cases}
We now look at equations with time-dependent coefficients.
In this section we will review two scale invariant model cases, which can be treated by means
of special functions. They both highlight a structural change in the behaviour of solutions when
lower order terms become effective. This change is a true variable coefficient phenomenon, it can not arise for equations with constant coefficients as treated in Section~\ref{sec:cc}.
\subsection{Scale invariant weak dissipation} We follow the treatment in \cite{Wirth:2003} and consider the wave model
\begin{equation}\label{eq:CP-weakdiss}
  u_{tt} - \Delta u + \frac{2\mu}{t} u_t =0,\qquad u(1,\cdot)=u_0,\quad u_t(1,\cdot)=u_1 
\end{equation}
for data $u_0,u_1\in\mathscr S'(\mathbb R^n)$.  By partial Fourier transform the above problem 
reduces to the ordinary differential equation
\begin{equation}\label{eq:CP-weakdiss-trans}
 \widehat u_{tt} + |\xi|^2 \widehat u + \frac{2\mu}{t} \widehat u_t = 0
\end{equation}
parameterised by $|\xi|^2$. We will proceed in two steps, first we construct a fundamental system
of solutions to this equation and after this use it to provide a Fourier multiplier representation to solutions of \eqref{eq:CP-weakdiss}. 

\subsubsection{Reduction to special functions}
Equation \eqref{eq:CP-weakdiss-trans} can be reduced to Bessel's differential equation. 
Let $\rho=\frac12-\mu$. If we look for particular solutions of the form 
$\widehat u(t,\xi) =(t|\xi|)^\rho v( t|\xi| )$, a short calculation reduces \eqref{eq:CP-weakdiss-trans}
to 
\begin{equation}
  \tau^2  v'' +  \tau v' +(\tau^2-\rho^2) v=0, 
\end{equation}
which is Bessel's differential equation of order $\rho$. For large values of $\tau$ we will use the
fundamental system of solutions given by the Hankel functions $\mathcal H^{\pm}_{\rho}$.
In order to get a precise description of its solutions for small $\tau$, we have to distinguish between integral $\rho$ and non-integral $\rho$. In the first case,
solutions can be represented by the fundamental system $\mathcal J_\rho$, $\mathcal Y_\rho$,
while in the latter we use $\mathcal J_{\pm\rho}$. 

\begin{lem}
Any solution to \eqref{eq:CP-weakdiss-trans} can be represented in the form
\begin{align}
\widehat u(t,\xi) &= C_+(\xi) (t|\xi|)^\rho \mathcal H_\rho^+(t|\xi|) +  C_-(\xi) (t|\xi|)^\rho \mathcal H_\rho^-(t|\xi|),\\
&= A(\xi) (t|\xi|)^\rho \mathcal J_{-\rho}(t|\xi|) + B(\xi) (t|\xi|)^\rho \mathcal J_{\rho} (t|\xi|) ,\qquad &&\rho\not\in\mathbb Z, \label{eq:2:SolRep1}\\
&=\tilde A(\xi) (t|\xi|)^\rho \mathcal J_{-\rho}(t|\xi|) + \tilde B(\xi) (t|\xi|)^\rho \mathcal Y_{-\rho} (t|\xi|) ,\qquad && \rho\in\mathbb Z,
\end{align}
with coefficients depending linearly on the Cauchy data
\begin{align} 
   C_\pm(\xi) = C_{\pm,0}(\xi)\widehat u_0(\xi) + C_{\pm,1}(\xi)\widehat u_1(\xi)
\end{align}
and similarly for $A(\xi)$ and $B(\xi)$.
\end{lem}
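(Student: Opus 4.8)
The plan is to exploit the substitution already introduced in the text, $\widehat u(t,\xi) = (t|\xi|)^\rho v(t|\xi|)$ with $\rho = \tfrac12 - \mu$ and $\tau = t|\xi|$, which reduces \eqref{eq:CP-weakdiss-trans} to Bessel's equation $\tau^2 v'' + \tau v' + (\tau^2 - \rho^2)v = 0$. Since Bessel's equation is a linear second-order ODE with a regular singular point at $\tau = 0$ and an irregular one at $\tau = \infty$, its solution space is two-dimensional, and any pair of linearly independent solutions forms a fundamental system. First I would invoke the standard theory of Bessel functions to record the three fundamental systems relevant here: $\{\mathcal H_\rho^+, \mathcal H_\rho^-\}$, valid for all $\rho$ and well suited to the large-$\tau$ regime since the Hankel functions have clean asymptotic expansions at infinity; $\{\mathcal J_{-\rho}, \mathcal J_\rho\}$, which is a fundamental system precisely when $\rho \notin \mathbb Z$ (their Wronskian is a nonzero multiple of $\sin(\pi\rho)/\tau$, hence nonvanishing exactly off the integers); and $\{\mathcal J_{-\rho}, \mathcal Y_{-\rho}\}$ (equivalently $\{\mathcal J_\rho, \mathcal Y_\rho\}$), whose Wronskian is a nonzero multiple of $1/\tau$ for every $\rho$, so it works in the integer case. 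Transporting each fundamental system back through the relation $v \mapsto (t|\xi|)^\rho v(t|\xi|)$ yields the three claimed representations of $\widehat u(t,\xi)$, with $|\xi|$-dependent coefficients $C_\pm(\xi)$, $A(\xi), B(\xi)$, and $\tilde A(\xi), \tilde B(\xi)$ that are constant in $t$ (they are the "constants of integration" for the $\tau$-ODE, and $|\xi|$ enters only as a fixed parameter of that ODE).

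Next I would establish the linear dependence of the coefficients on the Cauchy data. Fix $\xi \neq 0$. Evaluating any one of the representations and its $t$-derivative at $t = 1$ gives a $2\times 2$ linear system
\begin{equation}
\begin{pmatrix} \widehat u(1,\xi) \\ \widehat u_t(1,\xi) \end{pmatrix}
= M(\xi) \begin{pmatrix} C_+(\xi) \\ C_-(\xi) \end{pmatrix},
\end{equation}
where the entries of $M(\xi)$ are built from $(|\xi|)^\rho \mathcal H_\rho^\pm(|\xi|)$ and the $t$-derivatives of $(t|\xi|)^\rho \mathcal H_\rho^\pm(t|\xi|)$ at $t=1$. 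The determinant of $M(\xi)$ is, up to a nonzero factor coming from the prefactor $(t|\xi|)^\rho$ and the chain rule, the Wronskian of the chosen fundamental system evaluated at $\tau = |\xi|$, hence nonzero for $\xi \neq 0$. Therefore $M(\xi)$ is invertible and
\begin{equation}
\begin{pmatrix} C_+(\xi) \\ C_-(\xi) \end{pmatrix} = M(\xi)^{-1} \begin{pmatrix} \widehat u_0(\xi) \\ \widehat u_1(\xi) \end{pmatrix},
\end{equation}
which exhibits $C_\pm(\xi) = C_{\pm,0}(\xi)\widehat u_0(\xi) + C_{\pm,1}(\xi)\widehat u_1(\xi)$ with the multipliers $C_{\pm,j}(\xi)$ read off as the entries of $M(\xi)^{-1}$. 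The same argument with the corresponding Wronskian applies verbatim to the $\{\mathcal J_{\pm\rho}\}$ system for $\rho \notin \mathbb Z$ and to the $\{\mathcal J_{-\rho}, \mathcal Y_{-\rho}\}$ system for $\rho \in \mathbb Z$, giving the analogous formulas for $A(\xi), B(\xi)$ and $\tilde A(\xi), \tilde B(\xi)$.

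The only genuinely delicate point is bookkeeping rather than analysis: one must verify that the prefactor $(t|\xi|)^\rho$ and the change of variables $\tau = t|\xi|$ do not spoil the nonvanishing of the relevant Wronskian determinant at $t = 1$ — this amounts to noting that differentiating $(t|\xi|)^\rho v(t|\xi|)$ in $t$ produces a lower-triangular-plus-scaling transformation of the vector $(v, v')$ whose determinant is a nonzero power of $|\xi|$, so the $t$-Wronskian of the transported pair is $|\xi|^{2\rho+1}$ times (a nonzero constant times) the Bessel Wronskian at $\tau = |\xi|$. I do not expect any real obstacle here; the statement is essentially a repackaging of classical ODE facts, and the proof is complete once the three Wronskians are cited and the $t=1$ linear system is inverted.
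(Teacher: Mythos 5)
Your proposal is correct and follows exactly the route the paper intends: the substitution $\widehat u=(t|\xi|)^\rho v(t|\xi|)$ reduces the ODE to Bessel's equation, the three classical fundamental systems give the three representations, and the coefficients are obtained by inverting the $2\times2$ system at $t=1$ using the standard Bessel Wronskians (the paper itself omits these details and refers to the original work of Wirth). Your Wronskian bookkeeping, including the factor $|\xi|^{2\rho+1}$ relating the $t$-Wronskian to the $\tau$-Wronskian, is accurate.
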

Explicit expressions for $C_{\pm,j}(\xi)$, $A_j(\xi)$ and $B_j(\xi)$ can be obtained from these formulas and known expressions for derivatives and Wronskians of Bessel functions. We will not
go into these details here and refer to the reader to the original paper \cite{Wirth:2003}. Of greater importance for us are consequences about structural properties of solutions. They are based
on elementary properties of the Bessel functions. We collect some of them in the following proposition. They are taken from the treatise of Watson, \cite[Sections 3.52, 10.6 and 7.2]{Watson:1922}.
\begin{prop}
\begin{enumerate}
\item The functions $\mathcal H_\rho^\pm(\tau)$ possess an asymptotic expansion
\begin{equation}
  \mathcal H_\rho^\pm(\tau) \sim \mathrm e^{\pm\mathrm i \tau}\sum_{j=0}^\infty a_j^\pm \tau^{-\frac12-j}
\end{equation}
as $\tau\to\infty$ which can be differentiated term by term;
\item the function $\tau^{-\rho}\mathcal J_\rho(\tau)$ is entire and non-vanishing in $\tau=0$;
\item we have
\begin{equation}
  \mathcal Y_n(\tau) = \frac2\pi \mathcal J_n(\tau) \log \tau + \mathcal A_n(\tau)
\end{equation}
with $\tau^n\mathcal A_n(\tau)$ entire and non-vanishing in $\tau=0$.
\end{enumerate}
\end{prop}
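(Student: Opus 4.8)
The proposition gathers three classical facts about Bessel and Hankel functions. My plan is to settle the two elementary ones, (2) and (3), directly from the defining power series, and to obtain (1) --- the only item with genuine content --- from the contour-integral representation together with the method of steepest descent; throughout I would point to the relevant passages of Watson \cite{Watson:1922}.

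For (2), start from the everywhere convergent series
\begin{equation*}
  \mathcal J_\rho(\tau)=\left(\frac\tau2\right)^{\rho}\sum_{k=0}^{\infty}\frac{(-1)^k}{k!\,\Gamma(k+\rho+1)}\left(\frac\tau2\right)^{2k}.
\end{equation*}
Dividing by $\tau^{\rho}$ leaves $2^{-\rho}$ times a power series in $\tau^2$ of infinite radius of convergence, hence an entire function of $\tau$, whose value at $\tau=0$ is $2^{-\rho}/\Gamma(\rho+1)\neq0$. Nothing here is delicate.

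For (3), I would invoke Hankel's expansion of the integer-order Bessel function of the second kind (Watson, \S3.52): $\mathcal Y_n(\tau)$ equals $\tfrac{2}{\pi}\mathcal J_n(\tau)\log(\tau/2)$ plus a finite linear combination of the negative powers $(\tau/2)^{2k-n}$, $0\le k\le n-1$, plus $\tau^n$ times an entire even power series. Writing $\log(\tau/2)=\log\tau-\log2$ and collecting everything except $\tfrac{2}{\pi}\mathcal J_n(\tau)\log\tau$ into $\mathcal A_n(\tau)$, one multiplies through by $\tau^n$: for $n\ge1$ the finite sum becomes a polynomial in $\tau^2$ whose constant term (coming from $k=0$) is $-\tfrac{(n-1)!\,2^{n}}{\pi}\neq0$, while the remaining contributions are entire and vanish at the origin; for $n=0$ one is left with $\tfrac{2}{\pi}(\gamma-\log2)\mathcal J_0(\tau)$ plus an entire series vanishing at $0$, whose value at $\tau=0$ is $\tfrac{2}{\pi}(\gamma-\log2)\neq0$. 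In either case $\tau^n\mathcal A_n(\tau)$ is entire and non-zero at $\tau=0$.

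For (1), the plan is to start from the Hankel (Sommerfeld-type) contour-integral representation of $\mathcal H^{\pm}_\rho$ and apply the method of steepest descent --- equivalently Watson's lemma after the standard change of variable --- producing the full asymptotic series $\mathcal H^{\pm}_\rho(\tau)\sim\mathrm e^{\pm\mathrm i\tau}\sum_{j\ge0}a_j^{\pm}\tau^{-1/2-j}$ with explicitly computable coefficients $a_j^{\pm}$ (Watson, \S7.2 and \S10.6). Term-by-term differentiability I would derive from the recurrence $\frac{\mathrm d}{\mathrm d\tau}\bigl(\tau^{-\rho}\mathcal H^{\pm}_\rho(\tau)\bigr)=-\tau^{-\rho}\mathcal H^{\pm}_{\rho+1}(\tau)$: the right-hand side is again a Hankel function and hence carries an asymptotic expansion of the same shape, and the standard theorem on differentiation of asymptotic expansions then identifies it with the formal derivative of the series for $\tau^{-\rho}\mathcal H^{\pm}_\rho$; an induction covers all higher derivatives. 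The one step that genuinely needs care --- and the part I would single out as the main obstacle --- is the uniform control of the remainder after finitely many terms, on which both the expansion and its term-by-term differentiation rest; since this is entirely classical, in the write-up I would simply quote the cited results of Watson.
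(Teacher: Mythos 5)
Your proposal is correct and follows essentially the same route as the paper, which in fact offers no proof at all: the proposition is explicitly stated to be ``taken from the treatise of Watson, Sections 3.52, 10.6 and 7.2'' --- precisely the passages you invoke. Your elementary verifications of (2) and (3) from the power series, and your treatment of (1) via the contour-integral representation together with the recurrence relation to justify term-by-term differentiation, are all standard and sound.
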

If we want to derive properties of solutions, we have to distinguish between large
$t|\xi|$ and small $t|\xi|$. This distinction will later on lead to the introduction of zones and 
will also play a crucial r\^ole in the definition of symbol classes and in the more general consideration of hyperbolic systems.
\begin{figure}\label{fig1}
\input{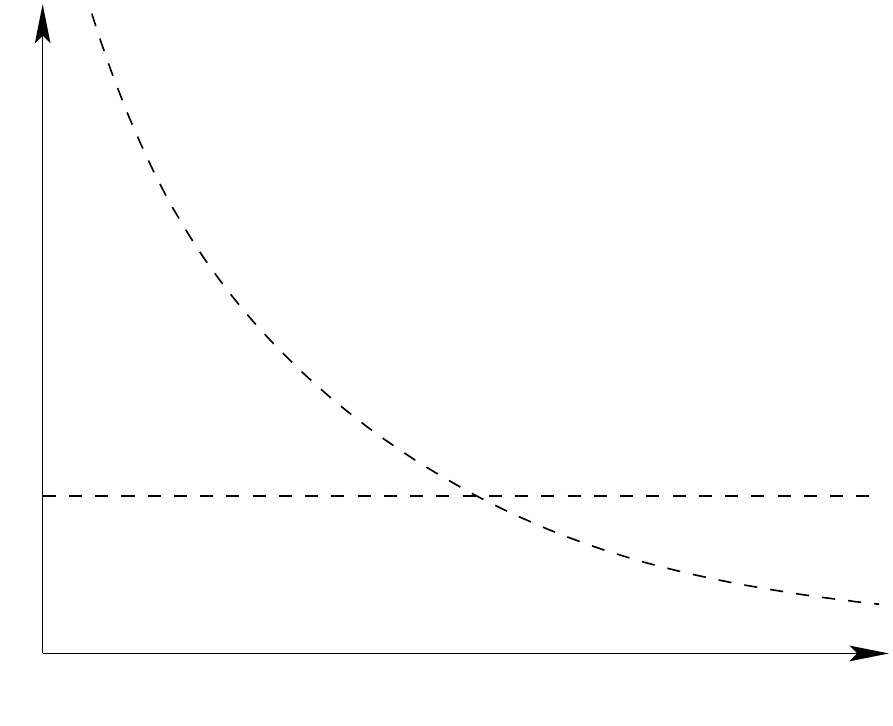_t}
\caption{Decomposition of the phase space into zones}
\end{figure}

\subsubsection{High frequency asymptotics}\label{sec:2:HF}
The asymptotic expansion for Hankel functions immediately yield that
\begin{equation}\label{eq:2:HF-exp}
   \widehat u(t,\xi) \sim C_\pm(\xi) \mathrm e^{\pm\mathrm i t|\xi|} \left( a_0^\pm t^{-\mu} |\xi|^{-\mu} + \mathrm{l.o.t.}\right)
\end{equation}
as $t|\xi|\to\infty$. A more precise look at the terms $C_\pm(\xi)$ gives a representation 
\begin{equation}
   C_{\pm,j}(\xi) \approx |\xi|^{\mu-j}+\mathrm{l.o.t.}, \qquad  j=0,1,
\end{equation}
such that if $|\xi|\gtrsim1$ and $t|\xi|\gtrsim1$ both can be combined to the following (very rough) description. The appearing Fourier multipliers have a uniform decay rate $t^{-\mu}$ which 
corresponds to the high frequency energy estimate of the following lemma.
\begin{lem}\label{lem:2:HFE}
Assume $0\not\in\supp\widehat u_0$ and $0\not\in\supp\widehat u_1$. Then the solutions to
the weakly damped wave equation \eqref{eq:CP-weakdiss} satisfy
\begin{equation}
 \| u(t,x) \|_{L^2} \lesssim t^{-\mu} \left( \|u_0\|_{L^2} + \|u_1\|_{H^{-1}}\right)
\end{equation}
and the energy estimate
\begin{equation}
  \mathbb E(u;t) \lesssim t^{-2\mu} \mathbb E(u;0).
\end{equation}
\end{lem}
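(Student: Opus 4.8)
The plan is to use the support hypotheses to confine the whole problem to the hyperbolic zone $\mathcal Z_{\rm hyp}=\{t|\xi|\gtrsim1\}$, where the Hankel representation from the preceding representation lemma, together with the asymptotic expansion $\mathcal H_\rho^{\pm}(\tau)\sim e^{\pm i\tau}\sum_j a_j^{\pm}\tau^{-1/2-j}$ of the Bessel-properties proposition, controls everything. Since $0\notin\supp\widehat u_0\cup\supp\widehat u_1$ there is $c>0$ with $|\xi|\ge c$ throughout the relevant frequency set, so for $t\ge1$ one always has $t|\xi|\ge c$; the integer/non-integer distinction for $\rho=\tfrac12-\mu$ plays no role since $\tau=t|\xi|$ never approaches $0$.

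First I would split $\{(t,\xi):t\ge1,\ |\xi|\ge c\}$ into the part $\{t|\xi|\ge R\}$, with $R$ the constant beyond which the Hankel asymptotics are quantitatively valid, and the complementary part $\{t|\xi|\le R\}$. On the latter both $t$ and $|\xi|$ range over compact sets: from $t|\xi|\le R$ and $t\ge1$ one gets $|\xi|\le R$, and from $t|\xi|\le R$ and $|\xi|\ge c$ one gets $t\le R/c$. The fundamental solution of \eqref{eq:CP-weakdiss-trans} and its $t$-derivative, being continuous in $(t,\xi)$ and bounded away from $\xi=0$, are then bounded on this compact set, and the factor $t^{-\mu}$ is bounded above and below on the compact $t$-interval, so the asserted estimates hold trivially on this piece after an application of Plancherel.

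On $\{t|\xi|\ge R\}$ I would insert the Hankel representation
\[ \widehat u(t,\xi)=\sum_{\pm}\bigl(C_{\pm,0}(\xi)\widehat u_0(\xi)+C_{\pm,1}(\xi)\widehat u_1(\xi)\bigr)\,(t|\xi|)^{\rho}\mathcal H_\rho^{\pm}(t|\xi|),\qquad \rho=\tfrac12-\mu, \]
and use that the above expansion may be differentiated term by term to obtain, uniformly for $\tau=t|\xi|\ge R$, the bounds $|(t|\xi|)^{\rho}\mathcal H_\rho^{\pm}(t|\xi|)|\lesssim(t|\xi|)^{-\mu}$ and $|\partial_t[(t|\xi|)^{\rho}\mathcal H_\rho^{\pm}(t|\xi|)]|\lesssim|\xi|\,(t|\xi|)^{-\mu}$. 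Combining these with the symbol bounds $|C_{\pm,j}(\xi)|\lesssim|\xi|^{\mu-j}$ recalled in Section~\ref{sec:2:HF} (cf.\ \eqref{eq:2:HF-exp}) yields the pointwise estimates $|\widehat u(t,\xi)|\lesssim t^{-\mu}\bigl(|\widehat u_0(\xi)|+|\xi|^{-1}|\widehat u_1(\xi)|\bigr)$ and $|\xi|\,|\widehat u(t,\xi)|+|\widehat u_t(t,\xi)|\lesssim t^{-\mu}\bigl(|\xi|\,|\widehat u_0(\xi)|+|\widehat u_1(\xi)|\bigr)$ on the hyperbolic zone. Then Plancherel turns the first estimate into $\|u(t,\cdot)\|_{L^2}\lesssim t^{-\mu}\bigl(\|u_0\|_{L^2}+\||\xi|^{-1}\widehat u_1\|_{L^2}\bigr)$, and the homogeneous norm $\||\xi|^{-1}\widehat u_1\|_{L^2}$ is $\lesssim\|u_1\|_{H^{-1}}$ because $\supp\widehat u_1$ avoids the origin; the second estimate gives $\|\nabla u(t,\cdot)\|_{L^2}+\|u_t(t,\cdot)\|_{L^2}\lesssim t^{-\mu}\bigl(\|\nabla u_0\|_{L^2}+\|u_1\|_{L^2}\bigr)$, whence $\mathbb E(u;t)\lesssim t^{-2\mu}\mathbb E(u;0)$. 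Adding the trivial contribution of the compact region finishes the argument.

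The only genuinely delicate point is the uniformity in $\xi$ of the estimates for $(t|\xi|)^{\rho}\mathcal H_\rho^{\pm}(t|\xi|)$ and its time derivative: one must check that the remainder terms in the Hankel expansion, which enter through $\tau=t|\xi|$, are dominated by the leading term uniformly for $\tau\ge R$. This is exactly why the region $\{t|\xi|\le R\}$ is split off and treated by compactness rather than attempting to run the asymptotic expansion down to the boundary of the zone; everything else is bookkeeping with Plancherel and the already established symbolic bounds for the coefficients $C_{\pm,j}$.
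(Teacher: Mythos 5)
Your proposal is correct and follows essentially the same route the paper (and its source \cite{Wirth:2003}) takes: the Hankel asymptotics combined with the symbol bounds $|C_{\pm,j}(\xi)|\lesssim|\xi|^{\mu-j}$ give the pointwise multiplier decay $t^{-\mu}$ on $t|\xi|\gtrsim1$, Plancherel converts this to the $L^2$ and energy estimates, and the support assumption reduces the remaining region to a compact set where everything is bounded. The only refinement you add is making the compactness step and the $\dot H^{-1}$ versus $H^{-1}$ comparison explicit, which is exactly where the paper's remark about the constants depending on $\dist(0,\supp\widehat u_j)$ comes from.
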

The constants in these estimates do in general depend on the distance of $0$ to the Fourier support of the data. 

We can be slightly more precise than the above lemma.  
If we multiply $\widehat u(t,\xi)$ by $t^\mu$ the main term reduces to a representation of a free wave while lower order terms decay at least as $t^{-1}$. This gives a 
description of the large-time asymptotic behaviour of weakly damped waves in terms
of free waves, see also \cite{Wirth:2005}.
 
\begin{lem}\label{lem:2:HFE2}
Assume $0\not\in\supp\widehat u_0$ and $0\not\in\supp\widehat u_1$. Then there exists a free 
wave $w$, i.e., a solution to the free wave equation 
\begin{equation}
   w_{tt}-\Delta w = 0,\qquad w(1,\cdot) = w_0,\quad w_t(1,\cdot)=w_1
\end{equation} 
to appropriate data, such that solutions to the weakly damped wave equation 
\eqref{eq:CP-weakdiss} satisfy 
\begin{equation}\label{eq:2:HFE2-scatt} 
   \| t^\mu u(t,\cdot) - w(t,\cdot )\|_{L^2} \lesssim t^{-1} \left(  \|u_0\|_{L^2} + \|u_1\|_{H^{-1}}\right)
\end{equation}
The operator assigning the data $w_0$, $w_1$ to $u_0$ and $u_1$ is bounded on 
$L^2\times H^{-1}$.
\end{lem}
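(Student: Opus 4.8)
The plan is to isolate from the Hankel representation $\widehat u(t,\xi)=\sum_{\pm}C_\pm(\xi)(t|\xi|)^\rho\mathcal H_\rho^\pm(t|\xi|)$, $\rho=\tfrac12-\mu$, the ``free-wave part'' of $\widehat u(t,\xi)$ and to show that after multiplication by $t^\mu$ the remainder decays one extra power of $t$. Since $c_0:=\dist(0,\supp\widehat u_0\cup\supp\widehat u_1)>0$, on the relevant set we always have $\tau:=t|\xi|\ge c_0>0$ and $\langle\xi\rangle\approx|\xi|$, so it suffices to use the large-argument asymptotics of the Hankel functions with a remainder bound valid on all of $[c_0,\infty)$. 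First I would write, using $\mathcal H_\rho^\pm(\tau)\sim\mathrm e^{\pm\mathrm i\tau}\sum_j a_j^\pm\tau^{-1/2-j}$ and $\rho-\tfrac12=-\mu$,
\begin{equation*}
 (t|\xi|)^\rho\mathcal H_\rho^\pm(t|\xi|)=\mathrm e^{\pm\mathrm i t|\xi|}\bigl(a_0^\pm(t|\xi|)^{-\mu}+\tilde r^\pm(t|\xi|)\bigr),\qquad \tilde r^\pm(\tau):=\tau^{\rho}\mathrm e^{\mp\mathrm i\tau}\mathcal H_\rho^\pm(\tau)-a_0^\pm\tau^{-\mu},
\end{equation*}
where $|\tilde r^\pm(\tau)|\lesssim\tau^{-\mu-1}$ for $\tau\ge c_0$ --- the decay $\tau^{-\mu-1}$ being that of the next term of the (Poincar\'e-type) expansion, while continuity of $\mathcal H_\rho^\pm$ disposes of the compact range $[c_0,M]$ (the constant then depends on $c_0$). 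Multiplying by $t^\mu$ gives $t^\mu(t|\xi|)^\rho\mathcal H_\rho^\pm(t|\xi|)=\mathrm e^{\pm\mathrm i t|\xi|}\bigl(a_0^\pm|\xi|^{-\mu}+t^\mu\tilde r^\pm(t|\xi|)\bigr)$ with $|t^\mu\tilde r^\pm(t|\xi|)|\lesssim t^{-1}|\xi|^{-\mu-1}$.

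Next I would define the candidate free wave $w$ by
\begin{equation*}
 \widehat w(t,\xi):=\sum_{\pm}a_0^\pm\,C_\pm(\xi)\,|\xi|^{-\mu}\,\mathrm e^{\pm\mathrm i t|\xi|},
\end{equation*}
which solves $w_{tt}-\Delta w=0$ with Cauchy data at $t=1$ given by $\widehat w_0(\xi)=\sum_\pm a_0^\pm C_\pm(\xi)|\xi|^{-\mu}\mathrm e^{\pm\mathrm i|\xi|}$ and $\widehat w_1(\xi)=\mathrm i|\xi|\bigl(a_0^+C_+(\xi)|\xi|^{-\mu}\mathrm e^{\mathrm i|\xi|}-a_0^-C_-(\xi)|\xi|^{-\mu}\mathrm e^{-\mathrm i|\xi|}\bigr)$, so that $\widehat w(t,\xi)=\cos((t-1)|\xi|)\widehat w_0(\xi)+\tfrac{\sin((t-1)|\xi|)}{|\xi|}\widehat w_1(\xi)$. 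Using $C_\pm=C_{\pm,0}\widehat u_0+C_{\pm,1}\widehat u_1$ together with the bounds $|C_{\pm,j}(\xi)|\lesssim|\xi|^{\mu-j}$ on $\{|\xi|\ge c_0\}$ (from the high-frequency description in Section~\ref{sec:2:HF} plus continuity of the coefficients for $|\xi|>0$), one reads off the four Fourier multipliers carrying $(\widehat u_0,\widehat u_1)$ to $(\widehat w_0,\widehat w_1)$: they are bounded in modulus respectively by $|\xi|^0$ and $\langle\xi\rangle^{-1}$ (in the formula for $w_0$) and by $\langle\xi\rangle$ and $|\xi|^0$ (in the formula for $w_1$), so by Plancherel the map $(u_0,u_1)\mapsto(w_0,w_1)$ is bounded on $L^2\times H^{-1}$.

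Finally I would subtract. By the first step the leading terms cancel identically, leaving
\begin{equation*}
 t^\mu\widehat u(t,\xi)-\widehat w(t,\xi)=\sum_{\pm}C_\pm(\xi)\,\mathrm e^{\pm\mathrm i t|\xi|}\,t^\mu\tilde r^\pm(t|\xi|),
\end{equation*}
whence $|t^\mu\widehat u(t,\xi)-\widehat w(t,\xi)|\lesssim t^{-1}\sum_\pm|C_\pm(\xi)|\,|\xi|^{-\mu-1}\lesssim t^{-1}\bigl(|\xi|^{-1}|\widehat u_0(\xi)|+|\xi|^{-2}|\widehat u_1(\xi)|\bigr)$. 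On $\{|\xi|\ge c_0\}$ this is $\lesssim t^{-1}\bigl(|\widehat u_0(\xi)|+\langle\xi\rangle^{-1}|\widehat u_1(\xi)|\bigr)$, and Plancherel gives $\|t^\mu u(t,\cdot)-w(t,\cdot)\|_{L^2}\lesssim t^{-1}(\|u_0\|_{L^2}+\|u_1\|_{H^{-1}})$ for all $t\ge1$, which is \eqref{eq:2:HFE2-scatt}.

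There is no serious obstacle once the two imported inputs are in hand: the Hankel asymptotics with a remainder bound uniform down to $\tau=c_0$ (not just as $\tau\to\infty$), and the symbolic size $|C_{\pm,j}(\xi)|\lesssim|\xi|^{\mu-j}$ of the coefficients; the remainder of the argument is bookkeeping. The one point that must be respected is that the factor $|\xi|^{-\mu}$ built into $w$ is exactly what cancels the $|\xi|^{\mu}$ in $C_{\pm,0}$, keeping $w_0\in L^2$ (and one power better in the $u_1$-slot); losing track of these weights is the easiest way to get the spaces wrong, and it is precisely this balance that makes the decomposition close on $L^2\times H^{-1}$ rather than on a more regular space.
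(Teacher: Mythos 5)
Your proof is correct and takes essentially the same route as the paper: the free wave is defined by the leading terms of the Hankel asymptotics, i.e.\ the data are fixed by the linear system $\widehat w_0\mp|\xi|^{-1}\widehat w_1=2a_0^\pm|\xi|^{-\mu}C_\pm(\xi)$, so that the main terms of \eqref{eq:2:HF-exp} cancel and the remainder gains one power of $(t|\xi|)^{-1}$. You have only made explicit what the paper's sketch leaves implicit, namely the uniformity of the Hankel remainder bound down to $\tau=c_0$ and the weight bookkeeping showing the data map is bounded on $L^2\times H^{-1}$.
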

\begin{proof} The proof is based on the elementary expression of $\widehat w(t,\xi)$ in terms of the inital data,
\begin{equation}
 2 \widehat w(t,\xi) = \mathrm e^{\mathrm i t|\xi|} \big(   \widehat w_0(\xi) - |\xi|^{-1} \widehat w_1(\xi)\big)
 +\mathrm e^{-\mathrm it|\xi|}\big(   \widehat w_0(\xi) + |\xi|^{-1} \widehat w_1(\xi) \big) .
\end{equation}
Therefore, we relate the initial data $w_0$ and $w_1$ to $u_0$ and $u_1$ by the system of linear equations
\begin{equation}\label{eq:2:2.16}
   \widehat w_0(\xi) \mp |\xi|^{-1} \widehat w_1(\xi)  =   2 |\xi|^{-\mu} \big(C_{\pm,0}(\xi) \widehat u_0(\xi) + C_{\pm,1}(\xi)\widehat u_1(\xi)\big). 
\end{equation}
When forming now the difference \eqref{eq:2:HFE2-scatt} this cancels the main terms of \eqref{eq:2:HF-exp} and we are left with terms decaying like $t^{-1}$ or faster. Note that \eqref{eq:2:2.16} determines the data $w_0$ and $w_1$ and yields the desired boundedness property. 
\end{proof}

\subsubsection{Low frequency asymptotics} The situation for low frequencies is completely different. 
We restrict ourselves to the case of non-integral $\rho$ (and except for $\mu=\frac12$ this will not alter any estimates we provide here). Solutions are represented by \eqref{eq:2:SolRep1}.
Crucial point is that we get {\em no decay} in time for the multiplier 
$(t|\xi|)^\rho \mathcal J_{-\rho}(t|\xi|)$, while the behaviour of
$(t|\xi|)^\rho \mathcal J_{\rho}(t|\xi|)$ depends on whether $\rho>0$ or $\rho<0$, i.e., whether $\mu<\frac12$ or $\mu>\frac12$. In the first case, estimates of solutions are in general increasing in time (similar to \eqref{eq:1:wave-sol-Est}). In the second case or when estimating higher derivatives of solutions $(t|\xi|)^\rho \mathcal J_{-\rho}(t|\xi|)$ will become the dominant part. 
Then any form of 
decay in time has to come from the behaviour of $A(\xi)$ near $\xi=0$. If $A(\xi)$ vanishes to
order $k$, we get an estimate by $t^{-k}$ uniform in $t|\xi|\lesssim1$ and if $k$ is not too large also
uniform in $|\xi|\lesssim1$. This can be used to deduce the following kind of higher order energy
estimates for solutions to \eqref{eq:CP-weakdiss}. 
\begin{lem} 
Let $\mu>\frac12$ and $k\in\mathbb N$ satisfy  $k\le\mu$. Then the following higher order
energy estimate 
\begin{equation}
  \| \D_x^\alpha u(t,\cdot) \|_{L^2} \lesssim t^{-k} \left( \|u_0\|_{H^k} + \|u_1\|_{H^{k-1}}\right)\qquad |\alpha|=k,
\end{equation}
is satisfied by any solution of \eqref{eq:CP-weakdiss}.
\end{lem}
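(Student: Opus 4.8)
The plan is to work entirely on the Fourier side, using the representation \eqref{eq:2:SolRep1} together with the low-frequency and high-frequency analysis already set up in the preceding subsections. The quantity to estimate is $\||\xi|^{|\alpha|}\widehat u(t,\xi)\|_{L^2_\xi}$ with $|\alpha|=k$, which by Plancherel equals $\|\D_x^\alpha u(t,\cdot)\|_{L^2}$. I would split $\mathbb R^n_\xi$ into the hyperbolic zone $\mathcal Z_{\mathrm{hyp}}=\{t|\xi|\gtrsim1\}$ and the pseudodifferential zone $\mathcal Z_{\mathrm{pd}}=\{t|\xi|\lesssim1\}$, handle each separately, and add the two contributions. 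Throughout I use that $\mu>\tfrac12$, so $\rho=\tfrac12-\mu<0$, which is what makes $(t|\xi|)^\rho\mathcal J_{-\rho}(t|\xi|)$ the dominant multiplier near the origin.

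In the hyperbolic zone I would invoke the high-frequency asymptotics of Section~\ref{sec:2:HF}: the Hankel expansion gives $\widehat u(t,\xi)\sim C_\pm(\xi)\mathrm e^{\pm\mathrm it|\xi|}(a_0^\pm (t|\xi|)^{-\mu}+\mathrm{l.o.t.})$, with $C_{\pm,j}(\xi)\approx|\xi|^{\mu-j}+\mathrm{l.o.t.}$ Multiplying by $|\xi|^k$ and using $t|\xi|\gtrsim1$, the leading term is bounded by $|\xi|^k t^{-\mu}|\xi|^{-\mu}\cdot|\xi|^{\mu}|\widehat u_0|=t^{-\mu}|\xi|^k|\widehat u_0|\le t^{-\mu}\langle\xi\rangle^k|\widehat u_0|$ (using $t|\xi|\gtrsim 1$ to absorb the extra power of $t$ when converting $t^{-\mu}$ to $t^{-k}$: since $k\le\mu$ and $t|\xi|\gtrsim1$ we have $t^{-\mu}=(t|\xi|)^{-\mu}|\xi|^{\mu}\lesssim |\xi|^{\mu}t^{-k}$, which gives the claimed $t^{-k}$ after the $L^2$ bound against $\|u_0\|_{H^k}$), and similarly the $\widehat u_1$-term is controlled by $t^{-\mu}\langle\xi\rangle^{k-1}|\widehat u_1|\lesssim t^{-k}\langle\xi\rangle^{k-1}|\widehat u_1|$. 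Squaring and integrating in $\xi$ over $\mathcal Z_{\mathrm{hyp}}$ and using Plancherel yields the bound $t^{-k}(\|u_0\|_{H^k}+\|u_1\|_{H^{k-1}})$ for this piece. One must be a little careful that the error terms in the Hankel expansion gain an extra $(t|\xi|)^{-1}$ and hence are even better, and that the constants are uniform across the zone.

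In the pseudodifferential zone I would use \eqref{eq:2:SolRep1}: $\widehat u(t,\xi)=A(\xi)(t|\xi|)^\rho\mathcal J_{-\rho}(t|\xi|)+B(\xi)(t|\xi|)^\rho\mathcal J_\rho(t|\xi|)$. Since $\tau^{\pm\rho}\mathcal J_{\pm\rho}(\tau)$ is entire and non-vanishing at $\tau=0$, we have $(t|\xi|)^\rho\mathcal J_{-\rho}(t|\xi|)=(t|\xi|)^{2\rho}\cdot[(t|\xi|)^{-\rho}\mathcal J_{-\rho}(t|\xi|)]$, where the bracket is bounded on $\mathcal Z_{\mathrm{pd}}$, so this multiplier is $O((t|\xi|)^{2\rho})$, i.e. $O((t|\xi|)^{1-2\mu})$, which for $\mu>\tfrac12$ blows up as $\xi\to0$ and is the term forcing us to extract vanishing from the coefficient. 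The multiplier $(t|\xi|)^\rho\mathcal J_\rho(t|\xi|)$ is $O(1)$ on $\mathcal Z_{\mathrm{pd}}$. As the text notes, decay must come from the behaviour of $A(\xi)$ near $\xi=0$: one checks from the (known) explicit formulae for $A_j(\xi)$, tied to Wronskians of Bessel functions, that $A(\xi)$ vanishes to a sufficient order — precisely, $|\xi|^{|\alpha|}A(\xi)(t|\xi|)^{2\rho}$ is bounded by $t^{-k}$ uniformly for $t|\xi|\lesssim1$ whenever $k\le\mu$ — so that $|\xi|^k|\widehat u(t,\xi)|\lesssim t^{-k}(|\widehat u_0(\xi)|+|\xi|^{-1}|\widehat u_1(\xi)|)$ on the zone, up to a harmless $|\xi|^{-1}$ from the $\widehat u_1$-contribution which is absorbed into the $H^{k-1}$-norm of $u_1$. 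Integrating the square over $\{|\xi|\lesssim1\}\cap\mathcal Z_{\mathrm{pd}}$ and applying Plancherel gives the $\mathcal Z_{\mathrm{pd}}$-contribution bounded by $t^{-k}(\|u_0\|_{H^k}+\|u_1\|_{H^{k-1}})$, and adding the two zones completes the proof.

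I expect the genuine obstacle to be the bookkeeping of the coefficient $A(\xi)$ near $\xi=0$: one needs the precise vanishing order of $A_0(\xi)$ and $A_1(\xi)$ — equivalently, a sharp description of the connection coefficients between the Hankel basis and the $\mathcal J_{\pm\rho}$ basis — to see that exactly $t^{-k}$ (and no worse) emerges under the hypothesis $k\le\mu$, and to ensure all constants are uniform across $|\xi|\lesssim1$; this is where the restriction $k\le\mu$ is actually consumed. The zone splitting, the Hankel asymptotics in $\mathcal Z_{\mathrm{hyp}}$, and the Plancherel argument are routine by comparison; the delicate point, as the paper itself signals, is that ``$k$ is not too large'' is needed for the estimate to remain uniform also in $|\xi|\lesssim1$ and not merely in $t|\xi|\lesssim1$.
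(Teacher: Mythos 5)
Your overall architecture (Plancherel, the splitting into $\mathcal Z_{\rm hyp}$ and $\mathcal Z_{\rm pd}$, Hankel asymptotics for $t|\xi|\gtrsim1$, power counting against $|\xi|^k$ for $t|\xi|\lesssim1$) is the same as the paper's, but your analysis of $\mathcal Z_{\rm pd}$ rests on a concrete error: you have interchanged the small-argument behaviour of the two Bessel multipliers. Since $\tau^{-\nu}\mathcal J_\nu(\tau)$ is entire and non-vanishing at $\tau=0$, applying this with $\nu=-\rho$ shows that $(t|\xi|)^\rho\mathcal J_{-\rho}(t|\xi|)$ tends to a \emph{nonzero constant} as $t|\xi|\to0$ --- this is exactly the ``no decay in time'' multiplier the paper singles out in the paragraph before the lemma --- whereas $(t|\xi|)^\rho\mathcal J_{\rho}(t|\xi|)=(t|\xi|)^{2\rho}\bigl[(t|\xi|)^{-\rho}\mathcal J_\rho(t|\xi|)\bigr]\approx(t|\xi|)^{1-2\mu}$ is the singular one, whose singularity at $\xi=0$ is absorbed by $B(\xi)$ and which contributes the \emph{additional} decay factor $t^{1-2\mu}$ mentioned in the paper's proof. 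Your claim that the $\mathcal J_{-\rho}$-term blows up like $(t|\xi|)^{1-2\mu}$ and therefore forces a delicate vanishing order of $A(\xi)$, ``where the restriction $k\le\mu$ is actually consumed,'' is consequently misdirected: on $\mathcal Z_{\rm pd}$ the dominant multiplier is $O(1)$, only boundedness of the coefficients (matching the $H^k$ and $H^{k-1}$ norms of the data) is needed, and the factor $t^{-k}$ comes purely from $|\xi|^k\lesssim t^{-k}$ on that zone --- the hypothesis $k\le\mu$ plays no role there.

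That hypothesis is consumed where the paper says it is: in the intermediate region $t|\xi|\gtrsim1$, $|\xi|\lesssim1$, via $|\xi|^k(t|\xi|)^{-\mu}=t^{-k}(t|\xi|)^{k-\mu}\lesssim t^{-k}$, and trivially for $|\xi|\gtrsim1$ via $t^{-\mu}\le t^{-k}$ for $t\ge1$. Your hyperbolic-zone computation essentially contains this, though the substitution $t^{-\mu}=(t|\xi|)^{-\mu}|\xi|^{\mu}\lesssim|\xi|^{\mu}t^{-k}$ should read $\le t^{-k}|\xi|^{\mu-k}$, and for large $|\xi|$ one should simply use $t^{-\mu}\le t^{-k}$ rather than this substitution, since $|\xi|^{\mu}$ is not dominated by $\langle\xi\rangle^{k}$ when $\mu>k$. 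Once the two multipliers are restored to their correct roles, your argument coincides with the paper's.
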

\begin{proof}[Sketch of proof] We will omit some of the details here, for the full argument we refer to \cite[Sect. 3.1]{Wirth:2003}. As mentioned above, the decay rate for high frequencies is $t^{-\mu}$, while the decay for small frequencies has to be related to a certain zero-behaviour of the coefficient $A(\xi)$ in $\xi=0$. This is done by taking $H^k$-norms on the right-hand side and estimating just homogeneous $\dot H^k$-norms on the left-hand side. This gives an additional factor of $|\xi|^k$ for small $|\xi|$ and in turn a uniform estimate by $t^{-k}$ within the zone $t|\xi|\lesssim 1$. Note, that the second term in \eqref{eq:2:SolRep1}  decays faster, as there is an additional $t^{1-2\mu}$ factor.

It remains to discuss the intermediate part where $t|\xi|$ is large and $|\xi|$ remains bounded.  Here we will estimate the multiplier by $|\xi|^k  (t|\xi|)^{-\mu}\lesssim t^{-k}$, which is true
whenever $k\le \mu$.  
\end{proof}

We remark that the proof from \cite{Wirth:2003} also yields a similar statement involving both $t$ and $x$-derivatives of solutions. We decided to omit this, because then more detailed arguments involving recursing formulae for Bessel functions would have been needed here.

\subsubsection{Notions of sharpness} There are different ways to measure the sharpness
of a priori estimates. We will mention two of them and explain their importance related to the 
further considerations. If we are given an energy estimate of the form
\begin{equation}
  \mathbb E(u;t) \lesssim  f(t)
\end{equation}
it should always be equipped with a class of data. A first question should be: Can we find
data from this class, such that the energy really does behave in this way? This is clearly the case of 
the high frequency energy estimate of Lemma~\ref{lem:2:HFE}. In fact, for all data with 
frequency support away from zero this happens. If $\mu<1$ then this happens even for all data
from $H^1\times L^2$, see \cite{Wirth:2005}.

On the other hand, if we cannot find such data we may ask whether we can improve the estimate 
to $\mathbb E(u;t)\lesssim g(t)$ for some $g(t)=o(f(t))$. If this can be done, the estimate clearly 
was not sharp. However, if we do not find data with the prescribed rate but also can not improve
the rate, a different kind of sharpness appears. This is seen in the low frequency asymptotics. 
For any $\mu>1$ the estimate
\begin{equation}
  \|\nabla u(t,\cdot)\|_{L^2} + \|u_t(t,\cdot)\|_{L^2} \lesssim t^{-1} \left( \|u_0\|_{H^1} + \|u_1\|_{L^2} \right)
\end{equation}
is of this form.

\subsection{Scale invariant mass terms} This situation was studied recently by Del Santo--Kinoshita--Reissig \cite{DelSKR:2007} and in the PhD thesis of
C.~B\"ohme, \cite{Boehme:2011}, following along similar lines to the treatment in the previous section. We will only sketch the major differences and the conclusions to be drawn from them.

The model under consideration is the Cauchy problem
\begin{equation}
   u_{tt} - \Delta u + \frac{\kappa^2}{4 t^2} u = 0,\qquad u(1,\cdot)=u_0,\quad u_t(1,\cdot)=u_1
\end{equation}
for a Klein--Gordon equation with time-dependent mass. Again we assume data to belong
to $\mathscr S'(\mathbb R^n)$ and we reduce the problem by a partial Fourier transform to
the ordinary differential equation
\begin{equation}\label{eq:2:KG-mod-FT}
   \widehat u_{tt} + |\xi|^2 \widehat u + \frac{\kappa^2}{4 t^2}\widehat u=0.
\end{equation}
This differential equation can be related to Kummer's confluent hypergeometric equation.
Let $2\rho = 1+\sqrt{1-\kappa^2}$. If we look for particular solutions of the special form
\begin{equation}
   \widehat u(t,\xi) = \mathrm e^{\mathrm i t|\xi|}  (t|\xi|)^\rho v(t|\xi|) 
\end{equation}
we obtain with the substitution $\tau=2\mathrm it|\xi|$ 
\begin{equation}
   \tau v'' + (2\rho-\tau) v' + \tau v = 0.
\end{equation}
Solutions to this equation are given by confluent hypergeometric functions. A system of linearly
independent solutions is given in terms of Kummer's functions by
\begin{equation}
   \Theta_0(\rho,2\rho; 2 \mathrm i t|\xi|) , \qquad \Theta_0(1-\rho,2-2\rho; -2 \mathrm i t|\xi|)
\end{equation}  
for 
\begin{equation}
   \Theta_0(\alpha,\beta; \tau) = \begin{cases} \Phi(\alpha,\beta;\tau) ,\qquad &\beta\not\in\mathbb Z,\\
   \Psi(\alpha,\beta;\tau) ,\qquad & \beta \in\mathbb Z.\end{cases}
\end{equation}
In combination with known statements about these functions we can again
describe the Fourier multipliers appearing in the representation of solutions. They behave different
in different zones of the phase space, the decomposition is the one depicted in Figure~\ref{fig1}. Furthermore, we have to distinguish between small values of $\kappa$ and large values.

\medskip
\paragraph{The case $\kappa\le1$} In this case the parameter $\rho\in[1/2,1)$ is real and a solutions to 
\eqref{eq:2:KG-mod-FT} show the following behaviour. If $t|\xi|\lesssim 1$, the main terms 
of the fundamental solution behave like $t^\rho$ and $t^{1-\rho}$ (and with improvement of one order for any $t$-derivative or multiplication by $|\xi|$), except for $\kappa=1$ and $\rho=1/2$
where an additional $\log$-term appears. The large-frequency behaviour on 
$t|\xi|\gtrsim 1$ is described again by the oscillatory terms $\exp(\pm \mathrm it\xi)$ similar to
the high-frequency expansion \eqref{eq:2:HF-exp} for the scale invariant weak dissipation case.

\medskip
\paragraph{The case $\kappa>1$} Now the parameter $\rho$ is complex with real part equal to $1/2$. When $t|\xi|\lesssim 1$, solutions  to \eqref{eq:2:KG-mod-FT} grow like $t^{1/2}$ but also exhibit an oscillatory behaviour like $\exp(\pm \mathrm i \sqrt{\kappa^2-1} \log t)$. For $t|\xi|\gtrsim1$ the solutions are bounded and oscillating. The high-frequency expansion will have a changed phase.

\section{Time-dependent hyperbolic systems}\label{sec:3}

In this section we will provide a diagonalisation based approach to obtain 
the high-frequency asymptotic properties of the representation of solutions for
more general uniformly strictly hyperbolic systems. The exposition is based 
on ideas from the authors' paper \cite{RW:2011}. 

One immediate complication compared to the constant coefficients case in
Section \ref{sec:cc} is that there is no simple formula similar to
\eqref{EQ:Ajkformula} for amplitudes of the solutions. Thus, the first 
task is to construct suitable substitutes for \eqref{EQ:Ajkformula}, which can
be done by different methods depending on the properties of the coefficients
of the equation. In the presence of the lower order terms of the equation,
an hierarchy has to be introduced in order to fall into the construction scheme
for the amplitudes. Thus, in Section \ref{sec:cc} any lower order terms could
have been allowed, but here, we need to impose decay conditions, for them
to fall into the required symbolic hierarchy.

\subsection{Motivating examples}
The motivation to consider systems in this framework is two-fold. On the one hand,
the treatment of wave models and more general higher order hyperbolic
equations naturally leads to a reformulation as pseudo-differential hyperbolic systems.
This was key ingredient for Reissig--Smith \cite{RS:2005} to treat wave equations with bounded
time-dependent propagation speed. In \cite{Wirth:2006}, \cite{Wirth:2007}, Wirth
considered time-dependent dissipation terms and discussed their influence on energy
and dispersive type estimates for their solutions. The treatment of the non-effective case fits to 
the considerations presented here. 
Equations with homogeneous symbols and time-dependent coefficients appear naturally
also in the analysis of the Kirchhoff equations. Higher order equations of Kirchhoff type
were discussed by Matsuyama and Ruzhansky in \cite{MR:2009}.

On the other hand, systems are of interest on their own.
Recently, D'Abbico--Lucente--Taglialatela \cite{dALT:2009} studied general time-dependent
strictly hyperbolic differential systems of the form
\begin{equation}\label{eq:3:diffHypSystem}
    \D_t U = \sum_{j=1}^n A_j(t)\D_{x_j} U+B(t)U ,\qquad U(0,\cdot)=U_0,
\end{equation}
where as usual $\D = -\mathrm i\partial$ and the matrices $A_j(t), B(t)\in\mathbb C^{d\times d}$
satisfy natural symbolic and structural assumptions.
Besides guaranteeing a form of uniform strict hyperbolicity up to $t=\infty$ it is of importance to
control the amount of oscillations in coefficients. In this setting this is usually done by assuming
\begin{equation}
   A_j(t) \in \mathcal T_\nu\{0\},\qquad B(t) \in \mathcal T_\nu\{1\},
\end{equation}
where
\begin{equation}
  \mathcal T_\nu\{\ell\} = \left\{ f \in C^\infty(\mathbb R_+) :   |\D_t^k f(t)| \le C_k \left(\frac{(\log(e+t))^\nu}{1+t} \right)^{\ell+k} \right\}
\end{equation}
and $\nu\in[0,1]$. We will concentrate on the case $\nu=0$ here; this simplifies the consideration
without omitting most of the main ideas.\footnote{We omit the index $\nu$ if $\nu=0$.} 
Key assumption will always be that the characteristic roots,
i.e. the solutions $\lambda_j(t,\xi)$, $j=1,\ldots, d$ of the polynomial equation
\begin{equation}\label{eq:3:diffHypSystemRoots}
    \det \left(\lambda(t,\xi) - \sum_{j=1}^n A_j(t)\xi_j\right) = 0
\end{equation}
are real and uniformly separated from each other for $t\ge0$ and $|\xi|=1$.

The approach also applies to higher order scalar hyperbolic equations with purely time-dependent
coefficients of the form
\begin{equation}\label{eq:3:higher-order-eq}
   \D_t^m u + \sum_{j=0}^{m-1} \sum_{|\alpha| \le m-j} a_{j,\alpha}(t) \D_t^j \D_x^\alpha u = 0,
   \qquad \D_t^j u(0,\cdot)=u_j,\; j=0,1,\ldots, m-1,
\end{equation}
where
\begin{equation}
  a_{j,\alpha}(t) \in\mathcal T_\nu\{m-j-|\alpha|\},
\end{equation}
provided the roots of the homogeneous polynomial 
\begin{equation}
    \lambda^m + \sum_{j=0}^{m-1} \sum_{|\alpha|=m-j} a_{j,\alpha}(t) \lambda^j \xi^\alpha = 0
\end{equation}
are real and again uniformly separated for $t\ge0$ and $|\xi|=1$.

Note, that our approach uses a high regularity of coefficients (resulting in sharp conditions on the decay behaviour). There is an alternative approach due to Matsuyama--Ruzhansky \cite{MR:2010} based on an asymptotic integration argument applicable to homogeneous higher order equations of the form \eqref{eq:3:higher-order-eq} with $a_{j,\alpha}'(t)    \in L^1(\mathbb R_+)$
for $j+|\alpha|=m$ and $a_{j,\alpha}(t)=0$ otherwise. This yields uniform bounds on the energy of solutions as well as it allows for the derivation of dispersive estimates. Main difference is that
one has to use a higher regularity of the data to compensate the low regularity of coefficients in order to obtain sharp dispersive decay rates, see Section \ref{AI}.

\subsection{Symbol classes}
The following calculations are based on a decomposition of the phase space into 
different zones. They correspond to the distinction between small and large frequencies
used in the previous section, see Figure \ref{fig1}. We denote
\begin{equation}
   \mathcal Z_{\rm hyp} = \{ (t,\xi) : (1+t)|\xi|\ge 1 \}
\end{equation}
as the hyperbolic zone and set $\mathcal Z_{\rm pd} = (\mathbb R_+\times\mathbb R^n)\setminus \mathcal Z_{\rm hyp}$. This decomposition was inspired by the treatment of Yagdjian, \cite{Yagdjian:1997}. We denote the common boundary between both regions by $t_\xi$ and use the notation
$\chi_{\rm hyp}(t,\xi) = \chi((1+t)|\xi|)$ for a smooth excision function supported inside $\mathcal Z_{\rm hyp}$ and being equal to one for $(1+t)|\xi|\ge2$.

We denote by $\mathcal S\{m_1,m_2\}$ the set of all time-dependent Fourier multipliers $a(t,\xi) \in C^\infty(\mathbb R_+\times\mathbb R^n)$ satisfying the symbol estimates
\begin{equation}
  | \D_t^k \D_\xi^\alpha a(t,\xi) | \le  C_{k,\alpha} \left(\max\left\{|\xi|, \frac1{1+t}\right\}\right)^{m_1-|\alpha|} \left(\frac1{1+t}\right)^{m_2+k}
\end{equation}
for all $k$ and multi-indices $\alpha$. The set $\mathcal S\{m_1,m_2\}$ possesses a natural Fr\'echet space
structure. Furthermore, the classes behave well under forming products, taking derivatives and 
certain integrations. The excision function $\chi_{\rm hyp}$ belongs to $\mathcal S\{0,0\}$.

\begin{prop}\label{prop:3:SymbCalc}
\begin{enumerate}
\item $\mathcal S\{m_1-k,m_2+k'\} \subseteq \mathcal S\{m_1,m_2\}$ whenever $k'\ge k$;
\item $\mathcal S\{m_1,m_2\} \cdot \mathcal S\{m_1',m_2'\} \subseteq \mathcal S\{m_1+m_1',m_2+m_2'\}$;
\item $\D_t^k\D_\xi^\alpha \mathcal S\{m_1,m_2\} \subseteq \mathcal S\{m_1-|\alpha|,m_2+k\}$.
\item If $a(t,\xi) \in \mathcal S\{-1,2\}$ satisfies $\supp a(t,\xi) \subset \mathcal Z_{\rm hyp}$, then 
\begin{equation}
  b(t,\xi) =   \int_t^\infty a(\tau,\xi)\mathrm d\tau, \qquad \xi\ne0
\end{equation}
satisfies $\chi_{\rm hyp}(t,\xi) b(t,\xi) \in \mathcal S\{-1,1\}$.
\end{enumerate}
\end{prop}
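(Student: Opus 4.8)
The plan is to dispose of (1)--(3) by directly unwinding the definition of the classes $\mathcal S\{m_1,m_2\}$, and then to concentrate on (4), which carries all the content. For (1), since $\max\{|\xi|,(1+t)^{-1}\}\ge(1+t)^{-1}$ one has, for $0\le k\le k'$, the elementary bound $(\max\{|\xi|,(1+t)^{-1}\})^{-k}(1+t)^{-k'}\le(1+t)^{k-k'}\le1$, and absorbing these factors turns a defining inequality for $\mathcal S\{m_1-k,m_2+k'\}$ into one for $\mathcal S\{m_1,m_2\}$. For (2) one applies the Leibniz rule $\D_t^k\D_\xi^\alpha(ab)=\sum\binom{k}{j}\binom{\alpha}{\beta}(\D_t^j\D_\xi^\beta a)(\D_t^{k-j}\D_\xi^{\alpha-\beta}b)$ and multiplies the two symbol bounds; the weights $\max\{|\xi|,(1+t)^{-1}\}$ and $(1+t)^{-1}$ simply add in the exponents, giving $\mathcal S\{m_1+m_1',m_2+m_2'\}$. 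Item (3) is immediate: applying $\D_t^k\D_\xi^\alpha$ to the defining inequalities shifts the first order down by $|\alpha|$ and the second up by $k$.

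The heart is (4). First I would observe that on $\supp a\subset\mathcal Z_{\rm hyp}$ one has $(1+\tau)|\xi|\ge1$, hence $\max\{|\xi|,(1+\tau)^{-1}\}=|\xi|$ there; consequently $a\in\mathcal S\{-1,2\}$ yields the clean bound $|\D_\xi^\alpha a(\tau,\xi)|\le C_\alpha|\xi|^{-1-|\alpha|}(1+\tau)^{-2}$ valid for all $\xi\ne0$ (with $a$ vanishing where this would fail). Since $(1+\tau)^{-2}$ is integrable on $[t,\infty)$, the integral defining $b$ converges absolutely, differentiation under the integral sign in $\xi$ is justified for $\xi\ne0$ by dominated convergence, and integrating the bound in $\tau$ gives $|\D_\xi^\alpha b(t,\xi)|\le C_\alpha|\xi|^{-1-|\alpha|}(1+t)^{-1}$. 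This is exactly where the hypothesis $m_2=2$ (rather than $m_2=1$) enters, and why the integration costs one power of $(1+t)^{-1}$, landing us in second order $1$ rather than $2$.

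For the $t$-derivatives I would \emph{not} differentiate under the integral but use the fundamental theorem of calculus: $\partial_t b(t,\xi)=-a(t,\xi)$, so for $k\ge1$, $\D_t^k\D_\xi^\alpha b=\mathrm i\,\D_t^{k-1}\D_\xi^\alpha a$, which by (3) lies in $\mathcal S\{-1-|\alpha|,k+1\}$ and is again supported in $\mathcal Z_{\rm hyp}$, hence is bounded by $C_{k,\alpha}|\xi|^{-1-|\alpha|}(1+t)^{-(k+1)}$ there. Together with the previous paragraph this shows $|\D_t^k\D_\xi^\alpha b(t,\xi)|\le C_{k,\alpha}|\xi|^{-1-|\alpha|}(1+t)^{-(k+1)}$ throughout $\mathcal Z_{\rm hyp}$. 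Finally, multiplying by $\chi_{\rm hyp}\in\mathcal S\{0,0\}$ (whose derivatives are all supported in $\mathcal Z_{\rm hyp}$) and expanding by Leibniz, every term is bounded on $\supp\chi_{\rm hyp}\subset\mathcal Z_{\rm hyp}$, where $\max\{|\xi|,(1+t)^{-1}\}=|\xi|$, by
\[
  |\D_t^k\D_\xi^\alpha(\chi_{\rm hyp}b)|\le C_{k,\alpha}\,(\max\{|\xi|,(1+t)^{-1}\})^{-1-|\alpha|}(1+t)^{-(k+1)},
\]
i.e.\ $\chi_{\rm hyp}b\in\mathcal S\{-1,1\}$. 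The one genuine, if modest, point of care is precisely why the cut-off cannot be dropped: the natural estimates for $b$ are phrased in terms of $|\xi|^{-1}$, which agrees with the class weight $(\max\{|\xi|,(1+t)^{-1}\})^{-1}$ only inside $\mathcal Z_{\rm hyp}$, so one must localize to $\mathcal Z_{\rm hyp}$ before the conclusion holds. Keeping the supports of $a$, of the estimates for $b$, and of the derivatives of $\chi_{\rm hyp}$ aligned throughout the Leibniz expansion is the place where the argument has to be written carefully.
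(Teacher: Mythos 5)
Your proposal is correct and follows essentially the same route as the paper's proof: items (1)--(3) are dispatched directly from the definitions, and for (4) one observes that on $\mathcal Z_{\rm hyp}$ the weight $\max\{|\xi|,(1+t)^{-1}\}$ equals $|\xi|$, integrates the resulting bound $|\D_\xi^\alpha a|\lesssim|\xi|^{-1-|\alpha|}(1+\tau)^{-2}$ in $\tau$ to gain exactly one power of $(1+t)^{-1}$, handles $t$-derivatives via $\partial_t b=-a$, and then invokes the support and symbol properties of $\chi_{\rm hyp}$. Your write-up merely supplies the details the paper leaves implicit (dominated convergence for $\D_\xi^\alpha$, the Leibniz expansion with the cut-off), so no further comment is needed.
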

\begin{proof}
We only explain how to prove the last statement, the others are obvious. From the 
symbol estimates we get for $t\ge t_\xi$
\begin{align}
   |b(t,\xi)| \le C \int_{t}^\infty \frac{1}{(1+\tau)^2 |\xi|} \mathrm d\tau = \frac{C}{(1+t)|\xi|}
\end{align}
and the uniform bound by $C$ for $t\le t_\xi$. As derivatives with respect to $t$ clearly
satisfy the right estimates, we are left with $\xi$-derivatives. For them we obtain in 
a similar fashion
\begin{align*}
   |\D_\xi^\alpha b(t,\xi)| \le C \int_{t}^\infty \frac{1}{(1+\tau)^2 |\xi|^{1+|\alpha|}} \mathrm d\tau 
   = \frac{C}{(1+t)|\xi|^{1+|\alpha|}}
\end{align*}
whenever $t\ge t_\xi$. Combining this with support properties of $\chi_{\rm hyp}(t,\xi)$ and
its symbol estimates the desired statement follows.
\end{proof}

Of particular importance for us will be the embedding $\mathcal S\{m_1-k,m_2+k\}\hookrightarrow
\mathcal S\{m_1,m_2\}$.  It is customary to denote the residual class of this hierarchy by
\begin{equation}
   \mathcal H\{\ell\} = \bigcap_{m_1+m_2=\ell} \mathcal S\{m_1,m_2\} .
\end{equation} 
We will also need a certain homogeneous version of these classes allowing for
singluraties in $\xi=0$. We write $\mathcal S_*\{m_1,m_2\}$ to denote 
the class of all functions $a(t,\xi) \in C^\infty (\mathbb R_+\times (\mathbb R^n\setminus\{0\}))$
satisfying 
\begin{equation}
  | \D_t^k \D_\xi^\alpha a(t,\xi) | \le  C_{k,\alpha} |\xi|^{m_1-|\alpha|} \left(\frac1{1+t}\right)^{m_2+k}
\end{equation}
uniform in $(1+t)|\xi|\ge c_0$ for some constant $c_0>0$.

\subsection{Uniformly strictly hyperbolic systems}
We collect our main assumptions now. We use the notation $\mathcal S\{m_1,m_2\}$ also for
matrix-valued multipliers and denote operators corresponding to such multipliers
by $a(t,\D_x)$, i.e., we define
\begin{equation}
   a(t,\D_x) f = \mathscr F^{-1} [ a(t,\xi) \widehat f (\xi)].
\end{equation}
Then we consider the Cauchy problem 
\begin{equation}\label{eq:3:CP}
   \D_t U = A(t,\D_x) U,\qquad U(0,\cdot) = U_0 \in\mathscr S'(\mathbb R^n; \mathbb C^d)
\end{equation}
for a $d\times d$ matrix $A(t,\xi)\in \mathcal S\{1,0\}$ satisfying suitable conditions. 
\begin{description}
\item[(A1)]  There exists a positively $\xi$-homogeneous matrix function $A_1(t,\xi)\in \mathcal S_*\{1,0\}$ satisfying 
 $A_1(t,\rho\xi) = \rho A_1(t,\xi)$, $\rho>0$, 
 such that
 \begin{equation}
     A(t,\xi)-A_1(t,\xi)\in \mathcal S_*\{0,1\}.
 \end{equation}
\item[(A2)] The eigenvalues $\lambda_j(t,\xi)$ of $A_1(t,\xi)$ are real and uniformly distinct
  in the sense that
  \begin{equation}
     \inf_{t, \xi\not=0} \frac{|\lambda_i(t,\xi) -\lambda_j(t,\xi)|}{|\xi|} > 0
  \end{equation}
  for all $i\ne j$.
\end{description}
We call \eqref{eq:3:CP} uniformly strictly hyperbolic if both of these assumptions are satisfied. The 
symbolic calculus of Proposition~\ref{prop:3:SymbCalc} allows to draw several conclusions. Statements and proofs are taken from \cite{RW:2011}.

\begin{lem}\label{lem:3:SymbEstDiag1}
\begin{enumerate}
\item
The eigenvalues $\lambda_j(t,\xi)$ of $A_1(t,\xi)$ satisfy the symbol estimates
\begin{equation}
  \lambda_j(t,\xi) \in \mathcal S_*\{1,0\}
\end{equation}
together with
\begin{equation}
 \big( \lambda_i(t,\xi) - \lambda_j(t,\xi)\big)^{-1} \in \mathcal S_*\{-1,0\}.
\end{equation}
\item 
The  spectral projection $P_j(t,\xi)$ associated to the eigenvalue $\lambda_j(t,\xi)$ satisfies $P_j(t,\xi)\in \mathcal S_*\{0,0\}$. 
\item
There exists an invertible matrix $M(t,\xi)\in \mathcal S_*\{0,0\}$ with inverse satisfying
$M^{-1}(t,\xi)\in \mathcal S_*\{0,0\}$ such that
\begin{equation}
  A_1(t,\xi) M(t,\xi)= M(t,\xi) \mathcal D(t,\xi)
\end{equation}
holds true for $\mathcal D(t,\xi) = \diag(\lambda_1(t,\xi),\ldots,\lambda_d(t,\xi))$ the diagonal matrix
with entries $\lambda_j(t,\xi)$.
\end{enumerate}
\end{lem}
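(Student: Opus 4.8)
The plan is to reduce every assertion to the unit sphere $\{|\xi|=1\}$ by positive homogeneity, to run standard perturbation theory for simple eigenvalues there, and then to transport the resulting bounds back to all of $\R^n\setminus\{0\}$ by homogeneity; the algebraic bookkeeping is then handled by the symbol calculus of Proposition~\ref{prop:3:SymbCalc}, whose proof carries over verbatim to the homogeneous classes $\mathcal S_*\{\cdot,\cdot\}$. The two facts I would isolate first are: (i) by (A1) and homogeneity, $A_1(t,\omega)$ and all its $(t,\omega)$-derivatives are bounded uniformly in $t\ge0$ on $|\omega|=1$, with the $k$-th order $t$-derivatives decaying like $(1+t)^{-k}$; and (ii) by (A2) the eigenvalues are real and pairwise separated uniformly in $t$ and in $\omega$.

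For part (1), since the eigenvalues of $A_1(t,\omega)$ are simple, each $\lambda_j(t,\omega)$ depends smoothly on $(t,\omega)$, and its derivatives are given by the usual resolvent formulae in terms of derivatives of $A_1(t,\omega)$ and of the resolvent along a small contour enclosing $\lambda_j(t,\omega)$ only. The uniform gap (ii) lets this contour be chosen with a $t$-independent radius, so that $|\D_t^k\D_\omega^\alpha\lambda_j(t,\omega)|\lesssim(1+t)^{-k}$ uniformly; writing $\lambda_j(t,\xi)=|\xi|\,\lambda_j(t,\xi/|\xi|)$ and differentiating then yields $\lambda_j\in\mathcal S_*\{1,0\}$. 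For the reciprocals of the gaps I would use that $\lambda_i-\lambda_j\in\mathcal S_*\{1,0\}$ with $|\lambda_i(t,\xi)-\lambda_j(t,\xi)|\gtrsim|\xi|$ by (A2), together with the standard observation that the reciprocal of a symbol in $\mathcal S_*\{1,0\}$ bounded below by a constant times $|\xi|$ lies in $\mathcal S_*\{-1,0\}$ — proved by induction on the order of the derivative applied to $(\lambda_i-\lambda_j)(\lambda_i-\lambda_j)^{-1}\equiv1$ via the Leibniz rule.

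Part (2) is then essentially free: because $A_1(t,\xi)$ has $d$ distinct eigenvalues, the spectral projection is given by the Lagrange interpolation formula
\[
  P_j(t,\xi)=\prod_{k\ne j}\frac{A_1(t,\xi)-\lambda_k(t,\xi)\,I}{\lambda_j(t,\xi)-\lambda_k(t,\xi)},
\]
a finite product of matrix symbols in $\mathcal S_*\{1,0\}$ and scalar symbols in $\mathcal S_*\{-1,0\}$, hence in $\mathcal S_*\{0,0\}$ by the product rule of Proposition~\ref{prop:3:SymbCalc}.

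For part (3) I would build $M(t,\xi)$ columnwise, choosing as its $j$-th column a nowhere-vanishing smooth section $m_j(t,\xi)$ of the line $\ran P_j(t,\xi)$; locally one may take $m_j=P_j(t,\xi)v_j$ for a fixed vector $v_j$, so each entry of $M$ is a component of $P_jv_j$ and $M\in\mathcal S_*\{0,0\}$ follows from part (2). Since the ranges of the $P_j$ are complementary — with a uniform transversality inherited from the spectral gap (A2) — one gets $|\det M(t,\xi)|\gtrsim1$, whence $(\det M)^{-1}\in\mathcal S_*\{0,0\}$ by the same reciprocal argument as above, and $M^{-1}=(\det M)^{-1}\operatorname{adj}M\in\mathcal S_*\{0,0\}$ since $\mathcal S_*\{0,0\}$ is an algebra and the adjugate is polynomial in the entries of $M$; the intertwining $A_1M=M\mathcal D$ holds by construction. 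I expect the genuine difficulty here — as opposed to the routine symbol bookkeeping — to be the passage from these local choices to a single \emph{globally} defined smooth eigenframe $m_1,\dots,m_d$ on $\R_+\times(\R^n\setminus\{0\})$ carrying the uniform lower bound on $|\det M|$; this is the step to be carried out with care, as in \cite{RW:2011}.
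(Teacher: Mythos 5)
Your proposal is correct, and parts (2) and (3) follow the paper's strategy closely, but part (1) takes a genuinely different route. The paper obtains the $t$-derivative bounds on $\lambda_j$ purely algebraically: it differentiates the characteristic polynomial $\sum_k I_{d-k}(t,\xi)\lambda_j^k=0$ implicitly and uses Vieta's formula to identify the coefficient $\sum_k kI_{d-k}\lambda_j^{k-1}$ with $\prod_{i\ne j}(\lambda_j-\lambda_i)$, which (A2) bounds away from zero; iterating gives all orders, and homogeneity handles the $\xi$-derivatives. You instead run contour-integral perturbation theory for the resolvent on the unit sphere with a gap-uniform contour and transport back by homogeneity — equally valid, arguably more standard, at the cost of needing uniform resolvent bounds that the paper's algebraic route avoids. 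In part (3) the constructions of $M$ agree (columns $\nu_j\in\ran P_j$), but the inverses differ: you invert via $(\det M)^{-1}\operatorname{adj}M$, which requires establishing the uniform lower bound $\abs{\det M}\gtrsim1$ from the spectral separation, whereas the paper sidesteps the determinant entirely by writing $M^{-1}$ explicitly through the symmetriser $H=\sum_j P_j^*P_j$, setting $(M^{-1}\zeta)_j=(\nu_j,H\zeta)=(\nu_j,P_j\zeta)$, so that $M^{-1}\in\mathcal S_*\{0,0\}$ is read off from part (2). The global-eigenframe subtlety you flag at the end is genuine, but the paper treats it at exactly the same level of care (eigenvectors are "smoothly chosen" and only locally expressed as $P_j\zeta/\|P_j\zeta\|$); since the symbol estimates are pointwise, locally uniform choices already yield all the bounds claimed, so this does not constitute a gap relative to the paper's own argument.
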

\begin{proof}
1.  The properties of the characteristic roots follow from the spectral estimate
$ |\lambda_j(t,\xi) | \le \| A_1(t,\xi)\|$ together with the symbol properties of the coefficients 
of the characteristic polynomial and the uniform strict hyperbolicity. Indeed, differentiating
the characteristic polynomial 
\begin{equation}
   0 = \sum_{k=0}^d I_{d-k}(t,\xi) \big(\lambda_j(t,\xi)\big)^{k} ,\qquad I_k(t,\xi) \in \mathcal S_*\{k,0\}
\end{equation}
yields the linear equation 
\begin{equation}
   \D_t \lambda_j(t,\xi) \sum_{k=1}^d k I_{d-k}(t,\xi) \big(\lambda_j(t,\xi)\big)^{k-1}
   =  -   \sum_{k=0}^d \big(\D_t I_{d-k}(t,\xi)\big) \big(\lambda_j(t,\xi)\big)^{k}
\end{equation} 
for the $t$-derivatives. The assumption of uniform strict hyperbolicity (A2) is equivalent to
a uniform bound for the inverse of the sum on the left-hand side, indeed it follows from Vieta's formula that
\begin{equation}
  \sum_{k=1}^d k I_{d-k}(t,\xi) \big(\lambda_j(t,\xi)\big)^{k-1} 
  = \prod_{i\ne j} \big(\lambda_j(t,\xi) - \lambda_i(t,\xi)\big)
\end{equation}
holds true for any fixed $j$. This yields the desired estimate for $\D_t \lambda_j(t,\xi)$. Iterating this procedure gives
expressions for higher time-derivatives in terms of lower ones and implies corresponding estimates. 
On the other hand, smoothness alone together with the homogeneity implies the symbolic estimates with respect to $\xi$.

Finally, the symbolic estimates for the inverse of the difference follow from differentating the identity $\big(\lambda_i(t,\xi) - \lambda_j(t,\xi)\big)^{-1}\big(\lambda_i(t,\xi) - \lambda_j(t,\xi)\big)=1$.

2. The eigenprojections can be expressed in terms of the characteristic roots as
\begin{equation}
P_j(t,\xi) = \prod_{i\ne j} \frac{A_1(t,\xi) - \lambda_i(t,\xi)}{\lambda_j(t,\xi)- \lambda_i(t,\xi)},
\end{equation}
such that the symbol estimates for $P_j(t,\xi)$ follow directly from the symbolic calculus of Proposition~\ref{prop:3:SymbCalc}.

3. A symmetriser of the matrix $A_1(t,\xi)$, i.e., a matrix $H(t,\xi)$ such that $H(t,\xi)A_1(t,\xi)$ is self-adjoint, is given by
\begin{equation}
   H(t,\xi)= \sum_{j=1}^d P_j^*(t,\xi) P_j(t,\xi)  
\end{equation}
and therefore satisfies $H(t,\xi)\in \mathcal S_*\{0,0\}$. We can express the inverse of the diagonaliser in terms of this symmetriser. Let $\nu_j(t,\xi)$ be a (smoothly chosen) unit vector
from the one-dimensional $j$th eigenspace $\ran P_j(t,\xi)$ and $M^{-1}(t,\xi)\zeta$ for any fixed $\zeta\in\mathbb C^d$ be the vector 
with the inner products $(\nu_j , P_j(t,\xi) \zeta) = (\nu_j , H(t,\xi) \zeta)$ as entries. 

Since $\nu_j(t,\xi)$ is unique up to sign locally in $t$ and $\xi$, it is expressible as 
$P_j(t,\xi) \zeta  / \|P_j(t,\xi)\zeta\|$ for some fixed vector $\zeta\in\mathbb C^d$ chosen away from the complement of the eigenspace. Then differentiating this expression directly implies 
$\nu_j(t,\xi)\in \mathcal S_*\{0,0\}$. This implies $M^{-1}(t,\xi)\in \mathcal S_*\{0,0\}$ by the above definition of $M^{-1}(t,\xi)$. Furthermore, $M(t,\xi)$ has the vectors $\nu_j(t,\xi)$ as  columns and again $M (t,\xi)\in\mathcal S_*\{0,0\}$ follows.
\end{proof}

\subsection{Diagonalisation}\label{sec:3:diag}
As the next step we want to diagonalise the system \eqref{eq:3:CP} within the $\mathcal S\{\cdot,\cdot\}$ hierarchy modulo remainders from $\mathcal H\{1\}$ and terms 
supported within $\mathcal Z_{\rm pd}$. 

\subsubsection{Initial step} We first diagonalise the main part $A_1(t,\xi)$ using the matrix
family $M(t,\xi)\in \mathcal S_*\{0,0\}$. We denote
\begin{equation}
   V^{(0)}(t,\xi) = M^{-1}(t,\xi) \widehat U(t,\xi),
\end{equation}
within $\mathcal Z_{\rm hyp}$, so that a short calculation yields
\begin{equation}\label{eq:3:CP-diag1}
   \D_t V^{(0)} = \left (  \mathcal D(t,\xi) + R_0(t,\xi)  \right) V^{(0)} 
\end{equation}
with 
\begin{align*}
  R_0(t,\xi) &=  M^{-1}(t,\xi) (A(t,\xi) - A_1(t,\xi) M(t,\xi) \\&\quad + (\D_t M^{-1}(t,\xi)) M(t,\xi) 
   \\& \in  \mathcal S_*\{0,1\}.
\end{align*}
We will use \eqref{eq:3:CP-diag1} as starting point for a further (sequence) of transformations
applied to the system.

\subsubsection{The diagonalisation hierarchy} We will construct matrices
$$N^{(k)}(t,\xi)\in\mathcal S_*\{-k,k\} \textrm{ and } F^{(k)}(t,\xi)\in \mathcal S_*\{-k,k+1\}$$
such that with
\begin{equation}
   N_K(t,\xi) = \mathrm I + \sum_{k=1}^K N^{(k)}(t,\xi) ,\qquad
   F_{K-1}(t,\xi) = \sum_{k=0}^{K-1} F^{(k)}(t,\xi)
\end{equation}  
the operator identity
\begin{multline}\label{eq:3:Op-Ident}
 B_K(t,\xi) =  \big( \D_t -\mathcal D (t,\xi) - R_0(t,\xi) \big) N_K(t,\xi) \\ - N_K(t,\xi)\big( \D_t -\mathcal D(t,\xi) - F_{K-1}(t,\xi)\big) \in \mathcal S_*\{ -K,K+1\}
\end{multline}
is valid. The construction is done recursively. The identity \eqref{eq:3:Op-Ident} will yield
commutator equations for the matrices $N^{(k)}(t,\xi)$ and the matrices $F^{(k)}(t,\xi)$ are
determined by their solvability condition. The strict hyperbolicity of the system is crucial for this construction.

First step, $K=1$. We collect all terms from \eqref{eq:3:Op-Ident} 
which do not a priori belong to $\mathcal S_*\{-1,2\}$ and require their sum to vanish. 
This yields the condition
\begin{equation}
   [\mathcal D(t,\xi), N^{(1)}(t,\xi) ] = - R_0(t,\xi) + F^{(0)}(t,\xi). 
\end{equation}
Because  $\mathcal D(t,\xi)$ is diagonal, the diagonal entries of the commutator vanish
and we must have
\begin{equation}
   F^{(0)}(t,\xi) = \diag  R_0(t,\xi)
\end{equation}
for solvability. Furthermore, the strict hyperbolicity assumption (A2) implies
that the solution is unique up to diagonal matrices and the entries of the matrix 
$N^{(1)}(t,\xi)$ must be given by
\begin{equation}
    \big(N^{(1)}(t,\xi)\big)_{i,j} =  \frac{\big(R_0(t,\xi)\big)_{i,j}}{\lambda_i(t,\xi)-\lambda_j(t,\xi)},
    \qquad i\ne j.
\end{equation}
The diagonal entries will be set as $  \big(N^{(1)}(t,\xi)\big)_{i,i} = 0$. It is evident
that $F^{(0)}(t,\xi) \in \mathcal S_*\{0,1\}$, while $N^{(1)}(t,\xi)\in\mathcal S_*\{-1,1\}$
follows from Lemma~\ref{lem:3:SymbEstDiag1}.

Recursion $k\mapsto k+1$. If we assume that all terms up to order $k$ are already constructed
and satisfy the symbolic inequalities then the conditions for the next terms follow from the 
requirement
\begin{equation}
    B_{k+1}(t,\xi) - B_k(t,\xi) \in \mathcal S_*\{-k-1,k+2\},
\end{equation}
with $B_k$ as in \eqref{eq:3:Op-Ident}.
Indeed, collecting again just the terms which do not a priori belong to this symbol class and setting their sum to zero yields the commutator equation
\begin{equation}
   [\mathcal D(t,\xi) , N^{(k+1)}(t,\xi)] = B_{k}(t,\xi) + F^{(k)}(t,\xi).
\end{equation}
The solution is again given by
\begin{equation}
    \big(N^{(k+1)}(t,\xi)\big)_{i,j} = - \frac{\big(B_k(t,\xi)\big)_{i,j}}{\lambda_i(t,\xi)-\lambda_j(t,\xi)},
    \qquad i\ne j,
\end{equation}
provided we set
\begin{equation}
   F^{(k)}(t,\xi) =- \diag B_k(t,\xi).
\end{equation}
The diagonal entries will again be fixed as $\big(N^{(k+1)}(t,\xi)\big)_{i,i} =0$. Clearly, the diagonal
terms satisfy 
$F^{(k)}(t,\xi)\in \mathcal S_*\{-k,k+1\}$ as consequence of the assumption
$B_k(t,\xi)\in\mathcal S_*\{-k,k+1\}$, and
 $N^{(k+1)}(t,\xi)\in  \mathcal S_*\{-k-1,k+1\}$
again follows from Lemma~\ref{lem:3:SymbEstDiag1}. Furthermore, this choice
of matrices implies $B_{k+1}(t,\xi)\in\mathcal S_*\{-k-1,k+2\}$ and the recursion
step is completed.

\subsubsection{Zone constants and invertibility} If we consider only the part of the phase
space defined by $(1+t)|\xi|\ge c$ for large $c$,  the matrix-norm of 
$\mathrm I - N_k(t,\xi) \in \mathcal S_*\{-1,1\}$ is of size $\mathcal O(c^{-1})$ and choosing
$c$ large enough implies invertibility of the matrix family $N_k(t,\xi)$ with
$N_k^{-1}(t,\xi)\in \mathcal S_*\{0,0\}$. Hence, the problem \eqref{eq:3:CP-diag1} is
equivalent to considering
\begin{equation}\label{eq:3:CP-diagk}
 \D_t V^{(k)} = \big(\mathcal D(t,\xi) + F_{k-1}(t,\xi) +R_k(t,\xi)\big) V^{(k)}
\end{equation}
for $V^{(k)}(t,\xi) = N_k^{-1}(t,\xi) V^{(0)}(t,\xi)$ and with remainder term
\begin{equation}
   R_k(t,\xi) = - N_k^{-1}(t,\xi) B_k(t,\xi) \in \mathcal S_*\{-k,k+1\}
\end{equation}
within the smaller hyperbolic zone $\mathcal Z_{\rm hyp}(c_k)=\{(t,\xi) : (1+t)|\xi|\ge c_k\}$.

We collect the main result in the following lemma.

\begin{lem}\label{lem:3:diagLem}
 Let $k\in\mathbb N$, $k\ge 1$. Then there exist matrices
$N_k(t,\xi) \in \mathcal S_*\{0,0\}$, diagonal matrices 
$F_{k-1}(t,\xi) \in\mathcal S_*\{0,1\}$ and a remainder
$R_k\in\mathcal S_*\{-k,k+1\}$, such that
\begin{multline}
   \big(\D_t -\mathcal D(t,\xi) - R_0(t,\xi)\big) N_k(t,\xi) \\ = N_k(t,\xi) \big(\D_t -\mathcal D(t,\xi) -
   F_{k-1}(t,\xi) - R_k(t,\xi) \big)
\end{multline}
holds true within $Z_{\rm hyp}(c_k)$ for sufficiently large zone constant depending on $k$.
The matrices $N_k(t,\xi)$ are uniformly invertible within $\mathcal Z_{\rm hyp}(c_k)$ with $N_k^{-1}(t,\xi)\in\mathcal S_*\{0,0\}$.
\end{lem}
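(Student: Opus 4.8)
The plan is to repackage the recursive construction of the preceding subsections into the single statement, proceeding by induction on $k$. For the base case $k=1$ I would start from the diagonalised system \eqref{eq:3:CP-diag1} and collect in the operator identity \eqref{eq:3:Op-Ident} all terms that do not a priori lie in $\mathcal S_*\{-1,2\}$; this produces the commutator equation $[\mathcal D(t,\xi),N^{(1)}(t,\xi)]=-R_0(t,\xi)+F^{(0)}(t,\xi)$. Since $\mathcal D$ is diagonal, the diagonal part of the commutator vanishes, so solvability forces $F^{(0)}=\diag R_0$; the off-diagonal entries of $N^{(1)}$ are then uniquely determined by division by the eigenvalue gaps $\lambda_i-\lambda_j$, and the diagonal entries are set to zero. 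The orders $F^{(0)}\in\mathcal S_*\{0,1\}$ and $N^{(1)}\in\mathcal S_*\{-1,1\}$ follow from the product and differentiation rules of Proposition~\ref{prop:3:SymbCalc} together with $(\lambda_i-\lambda_j)^{-1}\in\mathcal S_*\{-1,0\}$ from Lemma~\ref{lem:3:SymbEstDiag1}.

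For the inductive step $k\mapsto k+1$ I would assume $N^{(1)},\dots,N^{(k)}$ and $F^{(0)},\dots,F^{(k-1)}$ already constructed with the claimed orders, so that $B_k(t,\xi)\in\mathcal S_*\{-k,k+1\}$, and repeat the same procedure: imposing $B_{k+1}-B_k\in\mathcal S_*\{-k-1,k+2\}$ yields $[\mathcal D,N^{(k+1)}]=B_k+F^{(k)}$, solved by $F^{(k)}=-\diag B_k\in\mathcal S_*\{-k,k+1\}$ and off-diagonal $(N^{(k+1)})_{i,j}=-(B_k)_{i,j}/(\lambda_i-\lambda_j)\in\mathcal S_*\{-k-1,k+1\}$, with zero diagonal. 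One then verifies that with these choices $B_{k+1}\in\mathcal S_*\{-k-1,k+2\}$, which closes the recursion. Setting $N_k=\mathrm I+\sum_{j=1}^kN^{(j)}$ and $F_{k-1}=\sum_{j=0}^{k-1}F^{(j)}$ gives $N_k\in\mathcal S_*\{0,0\}$ (with $N_k-\mathrm I\in\mathcal S_*\{-1,1\}$) and $F_{k-1}\in\mathcal S_*\{0,1\}$.

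It remains to establish invertibility and read off the remainder. Since $N_k-\mathrm I\in\mathcal S_*\{-1,1\}$, its operator norm is bounded by $C/((1+t)|\xi|)$, hence by $C/c_k$ on $\mathcal Z_{\rm hyp}(c_k)=\{(1+t)|\xi|\ge c_k\}$; choosing $c_k$ large enough (depending on $k$ through the constant produced by the $k$-fold recursion) makes this less than $1/2$, so $N_k$ is invertible there via the Neumann series $N_k^{-1}=\sum_{\ell\ge0}(\mathrm I-N_k)^\ell$, and differentiating this series while using that $\mathcal S_*\{0,0\}$ is a (filtered) algebra gives $N_k^{-1}\in\mathcal S_*\{0,0\}$. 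Setting $R_k=-N_k^{-1}B_k\in\mathcal S_*\{-k,k+1\}$ and substituting $V^{(k)}=N_k^{-1}V^{(0)}$ into \eqref{eq:3:CP-diag1} then yields \eqref{eq:3:CP-diagk}, which is exactly the assertion of the lemma.

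I expect the main obstacle to be the bookkeeping in the inductive step: checking that, after the choices of $N^{(k+1)}$ and $F^{(k)}$, the residual $B_{k+1}$ genuinely gains one order in each filtration index, i.e.\ lands in $\mathcal S_*\{-k-1,k+2\}$. This requires expanding \eqref{eq:3:Op-Ident}, tracking which products of $\mathcal D$, $R_0$, the $N^{(j)}$ and the $F^{(j)}$ contribute, and applying repeatedly the embeddings and product rules of Proposition~\ref{prop:3:SymbCalc}; in particular one must use that a $\D_t$ derivative costs one unit of the second index to control the term $\D_t N^{(k+1)}$. The one genuinely structural input, already in hand, is the division by the eigenvalue gaps, which is precisely where uniform strict hyperbolicity (A2) is used and the reason the whole scheme is well defined.
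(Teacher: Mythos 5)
Your proposal is correct and follows essentially the same route as the paper: the lemma is exactly the summary of the recursive construction (commutator equations solved via the eigenvalue gaps, $F^{(k)}=-\diag B_k$ for solvability, zero diagonal for $N^{(k+1)}$, Neumann-series invertibility of $N_k$ on $\mathcal Z_{\rm hyp}(c_k)$ for $c_k$ large, and $R_k=-N_k^{-1}B_k$). The only point to be careful about — which you correctly flag — is verifying that $B_{k+1}$ genuinely lands in $\mathcal S_*\{-k-1,k+2\}$, which follows from the product and differentiation rules of Proposition~\ref{prop:3:SymbCalc}.
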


\subsection{Solving the diagonalised system} \label{sec:3:SolvDiag}
It remains to asymptotically solve \eqref{eq:3:CP-diagk} as $(1+t)|\xi|\to\infty$. This will be done
in two main steps. We always assume that we are indeed working inside the hyperbolic
zone $\mathcal Z_{\rm hyp}(c_k)$ for sufficiently large constant $c_k$.

\subsubsection{Treating the diagonal terms} The fundamental matrix of the diagonal part
of \eqref{eq:3:CP-diagk}, i.e., the matrix-valued solution $\mathcal E_k(t,s,\xi)$ of 
\begin{equation}
   \D_t     \mathcal E_k(t,s,\xi)  = \big(\mathcal D(t,\xi) + F_{k-1}(t,\xi) \big)\mathcal E_k(t,s,\xi),
        \qquad \mathcal E_k(s,s,\xi) = \mathrm I \in \mathbb C^{d\times d},
\end{equation}
is just given by integration and taking exponentials
\begin{equation}\label{eq:3:Ek-def}
  \mathcal E_k(t,s,\xi)  = \exp \left( \mathrm i \int_s^t  \big(\mathcal D(\tau,\xi) + F_{k-1}(\tau,\xi) \big)
  \mathrm d\tau \right).  
\end{equation}
Here we essentially used that diagonal matrices commute with each other. 
By assumption, the entries of
$\mathcal D(t,\xi)$ are real and yield a unitary matrix after exponentiating. Similarly, 
$F^{(j)}(t,\xi)\in \mathcal S_*\{-j,j+1\}$ is integrable over the hyperbolic zone
for all $j\ge 1$. This yields that the actual large time asymptotic behaviour of $\mathcal E_k(t,s,\xi)$
is encoded in the term $F^{(0)}(t,\xi)$ and we can show that
\begin{equation}
    | \mathcal E_k(t,s,\xi) | \approx \exp\left(- \int_s^t \Im F^{(0)}(\tau,\xi) \mathrm d\tau\right)   
\end{equation}
holds true for all individual diagonal entries of $\mathcal E_k(t,s,\xi)$ as two-sided estimate
with constants depending on $k$ but not on $s$, $t$ and $\xi$. From 
$F^{(0)}(t,\xi)\in \mathcal S_*\{0,1\}$ we can only conclude polynomial bounds on the right hand
side, thus there exists exponents $K_1$ and $K_2$ such that for $t>s$ the estimate
\begin{equation}\label{eq:3:Ek-bound}
   \left(\frac{1+s}{1+t}\right)^{K_1} \lesssim \| \mathcal E_k(t,s,\xi) \| \lesssim    \left(\frac{1+t}{1+s}\right)^{K_2} 
\end{equation}
is valid within $\mathcal Z_{\rm hyp}(c_k)$. The exponents are independent of $k$ and a similar estimate
is true for $t<s$ where the exponents are switched.

\subsubsection{Generalised energy conservation} We will speak of generalised energy conservation
for the system \eqref{eq:3:CP}, if its fundamental solution (given as solution to)
\begin{equation}\label{eq:3:fundSol}
   \D_t \mathcal E(t,s,\xi) = A(t,\xi) \mathcal E(t,s,\xi) ,\qquad \mathcal E(s,s,\xi) = \mathrm I\in\mathbb C^{d\times d}
\end{equation}
satisfies 
\begin{equation}
    \| \mathcal E(t,s,\xi) \| \lesssim 1
\end{equation}
uniformly in $(t,\xi), (s,\xi)\in \mathcal Z_{\rm hyp}(c)$ for some $c$ (regardless of their order). The 
generalised energy conservation property can be fully characterised by the term $F^{(0)}(t,\xi)
=\diag R_0(t,\xi)$
constructed within the diagonalisation procedure. 
\begin{thm}\label{them:3:GECL}
 The system \eqref{eq:3:CP} has the generalised energy conservation property if and only if
 \begin{equation}\label{eq:3:GECL-cond}
     \sup_{(t,\xi), (s,\xi)\in \mathcal Z_{\rm hyp}(c)} \left\| \int_s^t \Im F^{(0)}(\tau,\xi)
     \mathrm d\tau \right\| <\infty
 \end{equation} 
 holds true for some zone constant $c$.
\end{thm}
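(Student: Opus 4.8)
The plan is to pass through the diagonalised system of Lemma~\ref{lem:3:diagLem}, since the fundamental solution $\mathcal E(t,s,\xi)$ of \eqref{eq:3:fundSol} is related, within $\mathcal Z_{\rm hyp}(c_k)$, to the fundamental solution of \eqref{eq:3:CP-diagk} by conjugation with the uniformly invertible multipliers $M(t,\xi)$ and $N_k(t,\xi)$ (together with their inverses), all of which lie in $\mathcal S_*\{0,0\}$ and hence are uniformly bounded on $\mathcal Z_{\rm hyp}(c_k)$. Consequently $\|\mathcal E(t,s,\xi)\|\lesssim1$ is equivalent to the corresponding uniform bound for the fundamental solution of \eqref{eq:3:CP-diagk}. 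So it suffices to prove the theorem for $V^{(k)}$ with $k$ chosen large enough that the remainder $R_k\in\mathcal S_*\{-k,k+1\}$ is integrable over the hyperbolic zone, say $k\ge2$; the finitely many frequencies and short initial time interval not covered by $\mathcal Z_{\rm hyp}(c_k)$ only contribute a bounded factor, as the coefficients there are bounded.

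First I would split off the remainder $R_k$ by the usual Duhamel/Gronwall device: writing the fundamental solution of \eqref{eq:3:CP-diagk} as $\mathcal E_k(t,s,\xi)$ (the diagonal part, given explicitly by \eqref{eq:3:Ek-def}) perturbed multiplicatively by a factor controlled through $\int_s^t\|\mathcal E_k(t,\sigma,\xi)R_k(\sigma,\xi)\mathcal E_k(\sigma,s,\xi)^{-1}\|\,\mathrm d\sigma$, and using the two-sided bound \eqref{eq:3:Ek-bound} to see that the conjugated remainder is bounded by $C(1+\sigma)^{-k}(1+\sigma)^{K_1+K_2}$ which is integrable for $k$ large; this shows $\|\mathcal E(t,s,\xi)\|\approx\|\mathcal E_k(t,s,\xi)\|$ up to uniform constants. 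Then I would analyse $\mathcal E_k(t,s,\xi)$ itself: it is diagonal, its $j$th entry has modulus $\exp\bigl(-\int_s^t\Im\bigl(\lambda_j(\tau,\xi)+(F_{k-1}(\tau,\xi))_{jj}\bigr)\mathrm d\tau\bigr)$; the $\lambda_j$ are real by (A2), and $F^{(j)}\in\mathcal S_*\{-j,j+1\}$ for $j\ge1$ is integrable over $\mathcal Z_{\rm hyp}$, so all terms beyond $F^{(0)}$ contribute a bounded factor. This reduces the whole question to the size of $\exp\bigl(-\int_s^t\Im F^{(0)}(\tau,\xi)\,\mathrm d\tau\bigr)$, uniformly over $(t,\xi),(s,\xi)\in\mathcal Z_{\rm hyp}(c)$.

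At this point the equivalence is essentially a tautology of the exponential: $\sup\bigl\|\int_s^t\Im F^{(0)}\,\mathrm d\tau\bigr\|<\infty$ over all ordered pairs in the zone is exactly the statement that $\exp\bigl(-\int_s^t\Im F^{(0)}\bigr)$ and its reciprocal are uniformly bounded, which by the reductions above is exactly generalised energy conservation. For the converse direction one observes that if the supremum in \eqref{eq:3:GECL-cond} is infinite, one can choose sequences $(t_\ell,\xi_\ell),(s_\ell,\xi_\ell)$ along which $\int_{s_\ell}^{t_\ell}\Im F^{(0)}(\tau,\xi_\ell)\,\mathrm d\tau$ is unbounded, and the lower bound in the two-sided estimate for $\mathcal E_k$ (combined with the uniform invertibility of $M$ and $N_k$) forces $\|\mathcal E(t_\ell,s_\ell,\xi_\ell)\|\to\infty$, contradicting the defining bound.

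The main obstacle is the first reduction — controlling the contribution of the remainder $R_k$ and of the higher-order diagonal terms $F^{(j)}$, $j\ge1$ — because one must verify that the polynomial loss from \eqref{eq:3:Ek-bound} is genuinely beaten by the decay gained from $\mathcal S_*\{-k,k+1\}$ for a \emph{fixed} large $k$, uniformly in $\xi$ down to the zone boundary $(1+t)|\xi|=c_k$; near that boundary the available decay in $(1+t)$ is only compensating the large value of $c_k$, so the argument has to be phrased in terms of the zone variable $(1+t)|\xi|$ rather than $(1+t)$ alone. Everything else is either the symbolic calculus of Proposition~\ref{prop:3:SymbCalc} or the elementary exponential bookkeeping above.
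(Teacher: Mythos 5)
Your proposal is correct and follows essentially the same route as the paper's own sketch: reduction to the diagonalised system via the uniformly invertible multipliers $M$ and $N_k$, a Duhamel/Neumann-series perturbation argument showing that for $k\ge K_1+K_2$ the remainder $R_k\in\mathcal S_*\{-k,k+1\}$ only contributes a bounded multiplicative factor against the polynomial bounds \eqref{eq:3:Ek-bound}, and the identification $|\mathcal E_k(t,s,\xi)|\approx\exp\bigl(-\int_s^t\Im F^{(0)}(\tau,\xi)\,\mathrm d\tau\bigr)$ after absorbing the integrable terms $F^{(j)}$, $j\ge1$. The only cosmetic difference is that the paper phrases the perturbation step via a backward (scattering-type) Duhamel representation integrating to $t=\infty$, whereas you use the forward one; both yield the same two-sided comparison, and you correctly flag the uniformity issue near the zone boundary.
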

\begin{proof}[Sketch of proof]
Since the matrices $M(t,\xi)\in\mathcal S_*\{0,0\}$ and $N_k(t,\xi)\in\mathcal S_*\{0,0\}$ 
have uniformly bounded inverses $M^{-1}(t,\xi), N_k^{-1}(t,\xi) \in \mathcal S_*\{0,0\}$, the
generalised energy conservation property of system \eqref{eq:3:CP} is equivalent to that of
system \eqref{eq:3:CP-diagk}. We exploit this fact in combination with the polynomial bounds
\eqref{eq:3:Ek-bound} which are also valid (with the same exponents) for the fundamental matrix 
$\widetilde {\mathcal E}_k(t,s,\xi)$ of  \eqref{eq:3:CP-diagk}. 

The main idea now is, that when choosing $k$ large enough, the polynomial decay of the remainder term $R_k(t,\xi)$  compensates the increasing behaviour and we can use the Duhamel representation
\begin{equation}\label{eq:3:GECL-IntEq}
    \widetilde{\mathcal E}_k(t,s,\xi) = \mathcal E_k(t,s,\xi)\mathcal Z_k(s,\xi)
        - \mathrm i \int_t^\infty \mathcal E_k(t,\theta,\xi) R_k(\theta,\xi) \widetilde{\mathcal E}_k(\theta,s,\xi) \mathrm d\theta 
\end{equation}
for $k\ge K_1+K_2$ with 
\begin{equation}
   \mathcal Z_k(s,\xi) = \mathrm I + \mathrm i \int_s^\infty \mathcal E_k(s,\theta,\xi)  R_k(\theta,\xi) \widetilde{\mathcal E}_k(\theta,s,\xi) \mathrm d\theta
\end{equation}
as integral equation relating $\widetilde{\mathcal E}_k(t,s,\xi)$ to $\mathcal E_k(t,s,\xi)$. Indeed,
\begin{multline}
  \|\mathcal Z_k(s,\xi)-\mathrm I \| \lesssim  |\xi|^{-k} \int_t^\infty \left(\frac{1+\theta}{1+s}\right)^{K_1} 
  \frac1{(1+\theta)^{k+1}} \left(\frac{1+\theta}{1+s}\right)^{K_2} \mathrm d\theta \\
  \approx \big( (1+s) |\xi|\big)^{-k} 
\end{multline}  
and similar for 
\begin{multline}
\left\| \int_t^\infty \mathcal E_k(t,\theta,\xi) R_k(\theta,\xi) \widetilde{\mathcal E}_k(\theta,s,\xi) \mathrm d\theta\right\|
\lesssim |\xi|^{-k} (1+t)^{K_1-k}(1+s)^{-K_2}.
\end{multline} 
Therefore choosing $k$ and $s$ large implies that the difference $\mathcal E_k(t,s,\xi)-
\widetilde{\mathcal E}_k(t,s,\xi)$ can be estimated in terms of $(1+t)^{K_2} (1+s)^{-K_2-k}|\xi|^{-k}$
 (coming from $\mathcal Z_k$) and $ (1+t)^{K_1-k}(1+s)^{-K_2}|\xi|^{-k}$ (coming from the integral) and we can treat $\widetilde{\mathcal E}_k(t,s,\xi)$
as small perturbation of $\mathcal E_k(t,s,\xi)$.

The statement now follows from the observation that \eqref{eq:3:GECL-cond} is equivalent to
a uniform bound on both $\mathcal E_k(t,s,\xi)$ and its inverse.
\end{proof}

It should be clear from the statement of the previous theorem, but we will 
in particular point out that the integral condition \eqref{eq:3:GECL-cond} is independent
of the choice of the diagonaliser $M(t,\xi)$ of $A_1(t,\xi)$ as long as it satisfies
the symbol conditions $M(t,\xi) , M^{-1}(t,\xi)\in\mathcal S_*\{0,0\}$. Condition 
\eqref{eq:3:GECL-cond} does depend on the 
lower order terms $A(t,\xi)-A_1(t,\xi)$ modulo $\mathcal S_*\{-1,2\}$ and on the 
time-evolution of the system of eigenspaces of $A_1(t,\xi)$. 

We will give an example. If we consider the damped wave equation 
\begin{equation}\label{eq:3:damped}
    u_{tt} - \Delta u + 2b(t) u_t = 0,
\end{equation}
it can be rewritten as a system of first order in $(|\xi|\widehat u, \D_t \widehat u)^\top$
with coefficient matrix 
\begin{equation}
  A(t,\xi) = \begin{pmatrix}
        0 & |\xi| \\ |\xi| &2 \mathrm i b(t) 
  \end{pmatrix} \in \mathcal S_*\{1,0\}.
\end{equation}
Applying the diagonalisation scheme of Section~\ref{sec:3:diag} to this symbol yields terms $\mathcal D(t,\xi) = \diag(|\xi|,-|\xi|)\in\mathcal S_*\{1,0\}$ and $F_0(t,\xi) = F^{(0)}(t,\xi)
 = \mathrm i b(t) \mathrm I \in \mathcal S_*\{0,1\}$. Hence, the generalised energy conservation
 property holds for \eqref{eq:3:damped} if and only if the integral
\begin{equation}
    \int_0^t b(\tau)\mathrm d\tau
\end{equation} 
 remains bounded. In case that $b(t)\ge0$, this just means $b\in L^1(\mathbb R_+)$
 and solutions are even asymptotically free in this case. If the above integral is not uniformly
 bounded, we obtain the two-sided estimate
\begin{equation}
   |\mathcal E_k (t,s,\xi) | \approx \exp \left(\int_s^t b(\tau)\mathrm d\tau\right).
\end{equation}
This is consistent with the observations of Section~\ref{sec:2:HF} for the scale invariant model case. The equation \eqref{eq:3:damped}
was studied by the second author in \cite{Wirth:2006} under similar assumptions.

\subsubsection{Perturbation series arguments} For the following we assume that
\eqref{eq:3:GECL-cond} is valid such that in particular
\begin{equation}
    \| \mathcal E_k(t,s,\xi) \| \approx 1
\end{equation}
for all $k\ge 1$ uniformly on $(t,\xi),(s,\xi)\in\mathcal Z_{\rm hyp}(c_k)$. Then we can follow
a simpler argument to construct $\mathcal E(t,s,\xi)$ explicitly.

We make the ansatz 
\begin{equation}\label{eq:3:fund-sol-diag}
    \mathcal E_k(t,s,\xi) \mathcal Q_k(t,s,\xi)
\end{equation}
for the fundamental solution of the diagonalised system $\D_t - \mathcal D - F_{k-1} - R_k$
with a so far unkown matrix function $\mathcal Q_k(t,s,\xi)$. A short calculation shows
that the function $\mathcal Q_k(t,s,\xi)$ solves
\begin{equation}\label{eq:3:Qk-eq}
  \D_t   \mathcal Q_k(t,s,\xi) = \mathcal R_k(t,s,\xi) \mathcal Q_k(t,s,\xi) ,\qquad \mathcal Q_k(s,s,\xi)=\mathrm I,
\end{equation}
with
\begin{equation}\label{eq:3:calRk-def}
   \mathcal R_k(t,s,\xi) = \mathcal E_k(s,t,\xi) R_k(t,\xi) \mathcal E_k(t,s,\xi).
\end{equation}
We can represent the solution to \eqref{eq:3:Qk-eq} in terms of the Peano--Baker series
\begin{multline}\label{eq:3:Qk-series}
   \mathcal Q_k(t,s,\xi) = \mathrm I + \sum_{\ell=1}^\infty \mathrm i^\ell \int_s^t 
   \mathcal R_k(t_1,s,\xi) \int_s^{t_1} \mathcal R_k(t_2,s,\xi) \\\cdots \int_s^{t_{\ell-1}} \mathcal R_k(t_\ell,s,\xi)\mathrm dt_\ell \cdots \mathrm d t_2\mathrm dt_1.
\end{multline}
If $k\ge1$, the symbol estimate $R_k(t,\xi)\in\mathcal S_*\{-k,k+1\}$ allows to show the
boundedness of $\mathcal Q_k(t,s,\xi)$. Indeed, using $\|\mathcal R_k(t,s,\xi)\| \approx \|R_k(t,\xi)\|$ uniform in $s$ we obtain
\begin{equation}
   \| \mathcal Q_k(t,s,\xi) \| \le \exp\left( \int_s^t \| \mathcal R_k(\tau,s,\xi)\|\mathrm d\tau\right)
   \lesssim \exp(C c^{-k})
\end{equation}
uniformly in $\mathcal Z_{\rm hyp}(c)$.

We will derive one more consequence from the series representation. The uniform integrability
of $\mathcal R_k(t,s,\xi)$ over $\mathcal Z_{\rm hyp}(c_k)$ implies that the limit
\begin{equation}
    \lim_{t\to\infty} \mathcal Q_k(t,s,\xi) = \mathcal Q_k(\infty,s,\xi)
\end{equation}
exists locally uniform in $(s,\xi)\in \mathcal Z_{\rm hyp}(c_k)$. This is most easily seen by checking the Cauchy criterion or reducing it to the estimate
\begin{multline}
\| \mathcal Q_k(\infty,s,\xi) - \mathcal Q_k(t,s,\xi) \|    \lesssim
  \int_t^\infty \| R_k(\tau,\xi) \| \|\mathcal Q_k(\tau,s,\xi)\| \mathrm d\tau \\ \lesssim \int_t^\infty \| R_k(\tau,\xi) \| \mathrm d\tau. 
\end{multline}
The matrices $\mathcal Q_k(t,s,\xi)$ as well
as  $\mathcal Q_k(\infty,s,\xi)$ are invertible as the uniform lower bound on their determinant
\begin{multline}
    \det \mathcal Q_k(t,s,\xi) = \exp\left(\mathrm i\int_s^t \mathrm{trace}\, 
    \mathcal R_k(\tau,s,\xi)\mathrm d\tau \right)\\ \ge \exp \left( -d \int_s^t \| R_k(\tau,\xi) \| \mathrm d\tau \right) \ge C>0
\end{multline}
shows.

In the sequel we will also need estimates for $\xi$-derivatives of $\mathcal Q_k(t,s,\xi)$,
which can be obtained by differentiating the above representations provided $k$ is sufficiently large. There are some subtleties involved.
If we consider $\xi$-derivatives of $\mathcal E_k(t,s,\xi)$ then logarithmic terms might appear 
from integrals of $\xi$-derivatives of $F^{(0)}(t,\xi)\in\mathcal S_*\{0,1\}$, i.e., we get
\begin{equation}
     \| \D_\xi^\alpha \mathcal E_k(t,t_\xi,\xi) \| \lesssim (1+t)^{|\alpha|} \big(\log(\mathrm e+t)\big)^{|\alpha|}
\end{equation}
for $t_\xi$ solving $(1+t_\xi)|\xi|=c_k$ (or being equal to $0$ if this solution becomes negative). This implies that derivatives of $\mathcal R_k(t,s,\xi)$
defined in \eqref{eq:3:calRk-def} satisfy weaker estimates than the symbol estimates of
$R_k(t,\xi)$. Based on the definition of the hyperbolic zone and the above estimate we obtain
\begin{equation}
   \|\D_\xi^\alpha \mathcal R_k(t,t_\xi,\xi) \| \lesssim |\xi|^{-1-|\alpha|} 
        (1+t)^{-2} \big(\log(\mathrm e+t)\big)^{|\alpha|} 
\end{equation}
for $|\alpha|\le (k-1)/2$ and uniform in $\mathcal Z_{\rm hyp}(c_k)$. Differentiating the series 
representation \eqref{eq:3:Qk-series} term by term and using this estimate yields 
\begin{equation}\label{eq:3:der-est} 
   \| \D_\xi^\alpha \mathcal Q_k(t,t_\xi,\xi) \| \lesssim |\xi|^{-|\alpha|} \big(\log(\mathrm e+t)\big)^{|\alpha|}
\end{equation}
uniform in $\mathcal Z_{\rm hyp}(c_k)$ and with constants depending on $k$.

\subsection{Examples and resulting representations of solutions}
We will discuss some examples in more detail in order to show how to apply the previously developed theory. We will restrict our attention to homogeneous problems as in this case
the estimates within $\mathcal Z_{\rm pd}$ are easily obtained. In fact, assuming that 
$A(t,\xi)$ is positively homogeneous of order one
in $\xi$ yields $\|A(t,\xi) \| \le C|\xi|$ uniform in $t$ and, therefore, also directly the estimate
\begin{equation}\label{eq:3:Zpd-est-1}
   \|\mathcal E(t,0,\xi) \| \le \exp \left ( \int_0^t   \|A(\tau,\xi)\| \mathrm d\tau \right) \le
  \exp (C t |\xi|) \lesssim 1 ,\qquad (t,\xi)\in\mathcal Z_{\rm pd},
\end{equation}
for the fundamental solution defined by \eqref{eq:3:fundSol}. Furthermore, differentiating the differential equation \eqref{eq:3:fundSol} implies that derivatives satisfy
\begin{equation}\label{eq:3:Zpd-est}
\| \D_\xi^\alpha \mathcal E(t_\xi,0,\xi) \| \le C_\alpha |\xi|^{-|\alpha|} .
\end{equation}
In the general case, the treatment for small frequencies
needs more care and is often reduced to solving Volterra equations in suitable weighted 
spaces, see \cite{Wirth:2006}, \cite{HW:2009} or \cite{dAR:2011}. 

\subsubsection{Symmetric hyperbolic systems} First, we consider symmetric hyperbolic differential systems of the form \eqref{eq:3:diffHypSystem}, 
\begin{equation}\label{eq:3:expl0}
   \D_t U = \sum_{j=1}^n A_j(t) \D_{x_j} U,\qquad U(0,\cdot) = U_0, 
\end{equation}
for self-adjoint matrices $A_j(t) \in \mathcal T\{0\}$. By partial Fourier
transform this is related to $\D_t \widehat U = A(t,\xi) \widehat U$ with
\begin{equation}\label{eq:3:diffHypSystemA}
   A(t,\xi) = \sum_{j=1}^n A_j(t)\xi_j \in \mathcal S_*\{1,0\}.
\end{equation}
We assume that the roots of \eqref{eq:3:diffHypSystemRoots} satisfy
$$|\lambda_i(t,\xi) - \lambda_j(t,\xi) |\ge C|\xi|$$ 
uniformly in $t$ and $i\ne j$ for some constant $C>0$. Then we can follow the scheme of Sections~\ref{sec:3:diag} and \ref{sec:3:SolvDiag} to explicitly determine leading terms of the representation of solutions. 

As \eqref{eq:3:diffHypSystemA} defines a self-adjoint matrix, we choose the diagonaliser $M(t,\xi)$ as unitary matrix depending smoothly on $t$ and $\xi$. This gives $M(t,\xi) \in\mathcal S_*\{0,0\}$.
Furthermore, denoting the columns of the matrix $M(t,\xi)$ as $\nu_j(t,\xi)$, $j=1,\ldots ,d$,
we can express the first matrices constructed in the diagonalisation scheme as
$\mathcal D(t,\xi) = \diag ( \lambda_1(t,\xi) ,\ldots ,\lambda_d(t,\xi))$ and 
\begin{equation}
   \big( F^{(0)}(t,\xi)\big)_{jj} = -\mathrm i  \overline{\partial_t \nu_j(t,\xi)} \cdot \nu_j(t,\xi)
   = -\frac{\mathrm i}2 \partial_t  \|\nu_j(t,\xi)\|^2 = 0.
\end{equation}
The next diagonal terms starting with $F^{(1)}(t,\xi)$ are in general non-zero such that
$F_{k-1}(t,\xi)\in\mathcal S_*\{-1,2\}$ in place of the weaker result from Lemma~\ref{lem:3:diagLem}. In consequence, the estimates  \eqref{eq:3:der-est} are valid without the logarithmic terms.

Combining the representation \eqref{eq:3:fund-sol-diag} with the representation \eqref{eq:3:Ek-def} and the estimates  \eqref{eq:3:der-est}  and using the symbol properties of the diagonaliser $M(t,\xi)$ and $N_k(t,\xi)$ together with estimate \eqref{eq:3:Zpd-est} we obtain structural information about the representation of solutions. Note first that for any $t\ge t_\xi$, i.e., within the
hyperbolic zone $\mathcal Z_{\rm hyp}(c_k)$, the representation 
\begin{multline}
   \mathcal E(t,0,\xi) = M^{-1}(t,\xi) N_k^{-1}(t,\xi) \mathcal E_k(t,t_\xi,\xi) \mathcal Q_k(t,t_\xi,\xi)\\
   N_k(t_\xi,\xi) M(t_\xi,\xi) \mathcal E(t_\xi,0,\xi)
\end{multline}
holds true. Regrouping the expressions a bit, we obtain the following theorem. Note that the precise zone constants depend on the number of derivatives of $B_j(t,\xi)$ we have to estimate.

\begin{thm}\label{thm:3:sol-rep-systems}
Any solution to the system \eqref{eq:3:expl0} is representable in the form
\begin{equation}\label{eq:3:SolRep}
   \widehat U(t,\xi) = \sum_{j=1}^d \mathrm e^{\mathrm i t \vartheta_j(t,\xi)} B_j(t,\xi) \widehat U_0(\xi) + \mathcal E_{\rm pd} (t,\xi) \widehat U_0(\xi),
\end{equation}
where $\mathcal E_{\rm pd}(t,\xi)$ is uniformly bounded and supported within the zone $\mathcal Z_{\rm pd}(2c_k)$, the phase function is defined in terms of the homogeneous characteristic roots
$\lambda_j(t,\xi)$ via
\begin{equation}
   \vartheta_j(t,\xi) = \frac1t \int_0^t \lambda_j(\tau,\xi)\mathrm \mathrm d\tau
\end{equation}
and the amplitudes $B_j(t,\xi)$ are supported within $\mathcal Z_{\rm hyp}(c_k)$ and satisfy the 
symbolic estimates
\begin{equation}
    \| \D_\xi^\alpha B_j(t,\xi) \| \le C_\alpha |\xi|^{-|\alpha|}
\end{equation}
for all $|\alpha|\le (k-1)/2$.
\end{thm}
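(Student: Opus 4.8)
The plan is to split the fundamental solution $\mathcal E(t,0,\xi)$ of \eqref{eq:3:fundSol} — so that $\widehat U(t,\xi)=\mathcal E(t,0,\xi)\widehat U_0(\xi)$ — into its behaviour on $\mathcal Z_{\rm pd}$ and on $\mathcal Z_{\rm hyp}$, and to read off the amplitudes from the explicit composition produced by the diagonalisation of Sections~\ref{sec:3:diag}--\ref{sec:3:SolvDiag}. I would fix $k\ge1$ (the bound $|\alpha|\le(k-1)/2$ will emerge from the number of derivatives one can control), take $c_k$ to be the zone constant of Lemma~\ref{lem:3:diagLem}, and choose a cut-off $\tilde\chi_{\rm hyp}\in\mathcal S\{0,0\}$ equal to $1$ on $\{(1+t)|\xi|\ge 2c_k\}$ and supported in $\mathcal Z_{\rm hyp}(c_k)$. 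Set
\begin{equation*}
  \mathcal E_{\rm pd}(t,\xi):=\bigl(1-\tilde\chi_{\rm hyp}(t,\xi)\bigr)\mathcal E(t,0,\xi),
\end{equation*}
which is supported in $\mathcal Z_{\rm pd}(2c_k)$ and, by the Gronwall bound $\|\mathcal E(t,0,\xi)\|\le\exp(Ct|\xi|)$ underlying \eqref{eq:3:Zpd-est-1} together with $t|\xi|\le 2c_k$ on $\supp(1-\tilde\chi_{\rm hyp})$, uniformly bounded.

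It remains to treat $\tilde\chi_{\rm hyp}(t,\xi)\mathcal E(t,0,\xi)$ on $\mathcal Z_{\rm hyp}(c_k)$, where I would insert the composition
\begin{multline*}
  \mathcal E(t,0,\xi)=M^{-1}(t,\xi)N_k^{-1}(t,\xi)\,\mathcal E_k(t,t_\xi,\xi)\,\mathcal Q_k(t,t_\xi,\xi)\\
  \times N_k(t_\xi,\xi)M(t_\xi,\xi)\,\mathcal E(t_\xi,0,\xi)
\end{multline*}
established above and pull the scalar oscillations out of the diagonal factor $\mathcal E_k(t,t_\xi,\xi)$. By \eqref{eq:3:Ek-def} its $j$-th diagonal entry equals $\exp\bigl(\mathrm i\int_{t_\xi}^t(\lambda_j(\tau,\xi)+(F_{k-1}(\tau,\xi))_{jj})\,\mathrm d\tau\bigr)$; since the system is self-adjoint one has $(F^{(0)})_{jj}=0$, so $(F_{k-1})_{jj}=\sum_{l\ge1}(F^{(l)})_{jj}$ with each $F^{(l)}\in\mathcal S_*\{-l,l+1\}$ integrable over $\mathcal Z_{\rm hyp}(c_k)$. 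Writing $e_j$ for the $j$-th coordinate projector and recalling $t\vartheta_j(t,\xi)=\int_0^t\lambda_j(\tau,\xi)\,\mathrm d\tau$, this lets me factor $\mathcal E_k(t,t_\xi,\xi)=\sum_j\mathrm e^{\mathrm i t\vartheta_j(t,\xi)}d_j(t,\xi)e_j$ with
\begin{equation*}
  d_j(t,\xi):=\exp\Bigl(-\mathrm i\int_0^{t_\xi}\lambda_j(\tau,\xi)\,\mathrm d\tau+\mathrm i\int_{t_\xi}^t(F_{k-1}(\tau,\xi))_{jj}\,\mathrm d\tau\Bigr),
\end{equation*}
which is bounded above and below on $\mathcal Z_{\rm hyp}(c_k)$ because $\int_0^{t_\xi}|\lambda_j(\tau,\xi)|\,\mathrm d\tau\le C|\xi|t_\xi\le Cc_k$ and the second integral is uniformly small for $c_k$ large. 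Substituting back then gives the asserted identity with
\begin{equation*}
  B_j(t,\xi):=\tilde\chi_{\rm hyp}(t,\xi)\,M^{-1}(t,\xi)N_k^{-1}(t,\xi)\,d_j(t,\xi)\,e_j\,\mathcal Q_k(t,t_\xi,\xi)\,N_k(t_\xi,\xi)M(t_\xi,\xi)\,\mathcal E(t_\xi,0,\xi),
\end{equation*}
which is supported in $\mathcal Z_{\rm hyp}(c_k)$.

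The remaining task is to establish $\|\D_\xi^\alpha B_j(t,\xi)\|\le C_\alpha|\xi|^{-|\alpha|}$ for $|\alpha|\le(k-1)/2$ by a Leibniz rule over the factors of $B_j$. The factors $M^{\pm1}(t,\xi),N_k^{\pm1}(t,\xi)\in\mathcal S_*\{0,0\}$ contribute $|\xi|^{-|\alpha|}$ at once; for the factors taken on the zone boundary I would differentiate through $t_\xi=t_\xi(\xi)$ using $|\D_\xi^\beta t_\xi|\lesssim|\xi|^{-1-|\beta|}$ and $\D_tM,\D_tN_k\in\mathcal S_*\{0,1\}$ (so e.g. $\D_\xi[M(t_\xi,\xi)]=(\D_tM)(t_\xi,\xi)\,\D_\xi t_\xi+(\D_\xi M)(t_\xi,\xi)$ is of size $|\xi|^{-1}$), again obtaining $|\xi|^{-|\alpha|}$; $\mathcal E(t_\xi,0,\xi)$ contributes $|\xi|^{-|\alpha|}$ by \eqref{eq:3:Zpd-est}; $\mathcal Q_k(t,t_\xi,\xi)$ contributes $|\xi|^{-|\alpha|}$ by \eqref{eq:3:der-est}, which in the self-adjoint case carries no logarithmic loss since $F_{k-1}\in\mathcal S_*\{-1,2\}$; and $d_j(t,\xi)$ contributes $|\xi|^{-|\alpha|}$ because every $\xi$-differentiation of its exponent gains a factor $|\xi|^{-1}$ — for the first integral from the endpoint $t_\xi\sim|\xi|^{-1}$ and the homogeneity of $\partial_\xi^\beta\lambda_j$, for the second from $F^{(l)}\in\mathcal S_*\{-l,l+1\}$ and integration over the hyperbolic zone. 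Finally the representation holds on all of $\mathbb R_+\times\mathbb R^n$: on $\mathcal Z_{\rm pd}(2c_k)$ every $B_j$ vanishes and $\mathcal E_{\rm pd}=\mathcal E$, on $\{(1+t)|\xi|\ge 2c_k\}$ one has $\mathcal E_{\rm pd}=0$ and $\sum_j\mathrm e^{\mathrm i t\vartheta_j}B_j=\mathcal E$, and on the overlap the two contributions are glued by $\tilde\chi_{\rm hyp}+(1-\tilde\chi_{\rm hyp})=1$.

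I expect the main obstacle to be precisely this last symbol bookkeeping: keeping track of the $\xi$-dependence concealed in the zone boundary $t_\xi=t_\xi(\xi)$ when differentiating the factors $M(t_\xi,\xi),N_k(t_\xi,\xi),\mathcal Q_k(t,t_\xi,\xi),\mathcal E(t_\xi,0,\xi)$, and confirming that the logarithmic derivative losses that $\mathcal Q_k$ carries in general drop out here. The latter is the structural point where self-adjointness of the $A_j(t)$ enters, through $\diag R_0=F^{(0)}$ having vanishing diagonal (equivalently, through generalised energy conservation, Theorem~\ref{them:3:GECL}); the rest is a direct assembly of statements already in place.
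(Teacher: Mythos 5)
Your proposal is correct and follows exactly the route the paper indicates for this theorem (which it states without detailed proof): cut off with a variant of $\chi_{\rm hyp}$, insert the composition $\mathcal E=M^{-1}N_k^{-1}\mathcal E_k\mathcal Q_kN_kM\,\mathcal E(t_\xi,0,\xi)$ on the hyperbolic zone, extract the phases from the diagonal factor using $F^{(0)}=0$, and assemble the symbol estimates from \eqref{eq:3:der-est}, \eqref{eq:3:Zpd-est} and the $\mathcal S_*\{0,0\}$ bounds, with the $\xi$-dependence of $t_\xi$ tracked as you do. The bookkeeping for the boundary terms and the absence of logarithmic losses in the self-adjoint case are handled correctly, so nothing essential is missing.
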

 
We omit an explicit proof as it is just a combination of the above mentioned estimates and representations. We only point out that one uses a variant of $\chi_{\mathrm hyp}(t,\xi)$ to cut off the zone $\mathcal Z_{\rm pd}(c_k)$ and to decompose the representation smoothly into parts.
 
\subsubsection{Second order equations} We consider first homogeneous second order equations
of the form
\begin{equation}\label{eq:3:expl1}
    \partial_t^2 u - \sum_{i,j=1}^n a_{ij}(t) \partial_{x_i} \partial_{x_j} u = 0,\qquad u(0,\cdot) = u_0,\quad u_t(0,\cdot)=u_1,
\end{equation}
for $a_{ij}(t)\in\mathcal T\{0\}$. The assumption of uniform strict hyperbolicity is equivalent to assuming that
\begin{equation}
   \sum_{i,j=1}^n a_{ij}(t) \xi_i\xi_j \ge C |\xi|^2
\end{equation}
for some constant $C>0$. Let $a(t,\xi)$ denote the (positive) square root of the left-hand side of this inequality. Then $a(t,\xi)\in\mathcal S_*\{1,0\}$ and if we 
denote $\widehat U = (a(t,\xi) \widehat u, \D_t \widehat u)^\top$, then equation \eqref{eq:3:expl1}
is reduced to the first order system
\begin{equation}\label{eq:3:3.78}
   \D_t \widehat U = A(t,\xi) \widehat U = \begin{pmatrix} \frac{\D_t a(t,\xi) }{a(t,\xi)} & a(t,\xi) \\ a(t,\xi) & 0 \end{pmatrix} \widehat U,\qquad \widehat U(0,\cdot) = \widehat U_0.
\end{equation}
The above equation and this system were treated by Reissig \cite{Reissig:2004} and in a similar setting Reissig--Yagdjian \cite{RY:2000}. 

Clearly $A(t,\xi)\in\mathcal S_*\{1,0\}$. Furthermore, a diagonaliser of the homogeneous principal part is given by 
\begin{equation} M = 
\begin{pmatrix} 1& 1\\  -1 & 1 \end{pmatrix}, \qquad M^{-1} = \frac12 \begin{pmatrix} 1 & 1  \\ -1 & 1 \end{pmatrix},
\end{equation}
so that $\mathcal D(t,\xi) = \diag\big(a(t,\xi), -a (t,\xi)\big)$ and 
$F^{(0)}(t,\xi) = \frac{\D_t a(t,\xi) }{2a(t,\xi)}  \mathrm I$. The matrix $\mathcal E_0(t,s,\xi)$ can be calculated explicitly as
\begin{multline}
   \mathcal E_0(t,s,\xi) = \exp\left( \mathrm i \int_s^t \big(\mathcal D(\tau,\xi) + F^{(0)}(\tau,\xi)\big) \mathrm d\tau \right)\\ =
   \frac{\sqrt{a(t,\xi)}}{\sqrt{a(s,\xi)}}
     \diag\left(\exp\left(\pm \mathrm i\int_s^t a(\tau,\xi) \mathrm d\tau\right)\right).
\end{multline}
As can be seen clearly from this representation, the exponential of the primitive of $F^{(0)}(t,\xi)$ is
a symbol from $\mathcal S_*\{0,0\}$ and the logarithmic terms appearing in 
\eqref{eq:3:der-est} do not occur. Furthermore, estimate \eqref{eq:3:Zpd-est-1}
applies for the fundamental solution associated to $(\widehat u, \D_t \widehat u)^\top$
and by equivalence of norms also to the one of \eqref{eq:3:3.78}.   Hence, we obtain a representation of solutions reminiscent to that of Theorem~\ref{thm:3:sol-rep-systems}.

\begin{thm}\label{thm:3:3.6}
Any solution to \eqref{eq:3:expl1} can be written as 
\begin{equation}\label{eq:3:sol-rep-expl2}
   \widehat u(t,\xi) = \sum_{j=0,1}  \left(  \sum_{\pm} \mathrm e^{\pm \mathrm i t \vartheta(t,\xi) } b_{j,\pm,\rm hyp}(t,\xi) +   b_{j,\rm pd}(t,\xi)\right ) \widehat u_j (\xi)
\end{equation}
with $b_{j,\rm pd} (t,\xi)$ uniformly bounded and supported within $\mathcal Z_{\rm pd}(2c_k)$, the phase function given in terms of $a(t,\xi)$ as
\begin{equation} 
  \vartheta(t,\xi) = \frac1t \int_0^t a(\tau,\xi) \mathrm d\tau
\end{equation}
and amplitudes $b_{j,\pm,\rm hyp}(t,\xi)$ supported within $\mathcal Z_{\rm hyp}(c_k)$ and satisfy
\begin{equation}
   |\D_\xi^\alpha b_{j,\pm,\rm hyp}(t,\xi) | \le C_\alpha |\xi|^{-|\alpha|-j}
\end{equation}
for $|\alpha|\le (k-1)/2$.
\end{thm}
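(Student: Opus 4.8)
The plan is to obtain \eqref{eq:3:sol-rep-expl2} as a straightforward assembly of the first order reduction \eqref{eq:3:3.78}, the diagonalisation of Lemma~\ref{lem:3:diagLem}, the explicit form of $\mathcal E_0(t,s,\xi)$ computed just before the theorem, the Peano--Baker bounds for $\mathcal Q_k(t,s,\xi)$, and the phase space splitting into $\mathcal Z_{\rm hyp}(c_k)$ and $\mathcal Z_{\rm pd}(c_k)$. First I would fix $k$, choose the zone constant $c_k$ large enough that Lemma~\ref{lem:3:diagLem} applies and the estimates \eqref{eq:3:der-est} hold for $|\alpha|\le(k-1)/2$, and take $\chi_{\rm hyp}(t,\xi)$ supported in $\mathcal Z_{\rm hyp}(c_k)$ and equal to $1$ on $\mathcal Z_{\rm hyp}(2c_k)$. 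The fundamental solution $\mathcal E(t,0,\xi)$ of \eqref{eq:3:3.78} then splits as $(1-\chi_{\rm hyp})\mathcal E+\chi_{\rm hyp}\mathcal E$. On the support of $1-\chi_{\rm hyp}$, where $(1+t)|\xi|\lesssim1$, the $\mathcal Z_{\rm pd}$-estimates \eqref{eq:3:Zpd-est-1}--\eqref{eq:3:Zpd-est} apply to \eqref{eq:3:3.78}, as noted in the text (using the homogeneity of the principal part and the uniform two-sided bound $a(t,\xi)\approx|\xi|$), and give a uniformly bounded matrix whose $\xi$-derivatives up to order $\lfloor(k-1)/2\rfloor$ gain the corresponding powers of $|\xi|^{-1}$. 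Reading off its two columns, rewriting the datum as $\widehat U_0=(a(0,\xi)\widehat u_0,\D_t\widehat u(0,\xi))^\top$, and dividing the first component of $\widehat U$ by $a(t,\xi)$ to return to $\widehat u$ — which costs one order in the $j=1$ column and, via the compensating factor $a(0,\xi)/a(t,\xi)$, none in the $j=0$ column — produces the terms $b_{j,\rm pd}(t,\xi)$ with the asserted support and boundedness.

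For the hyperbolic part, on $t\ge t_\xi$ I would combine Lemma~\ref{lem:3:diagLem} with the ansatz \eqref{eq:3:fund-sol-diag} to write
\begin{multline*}
  \chi_{\rm hyp}(t,\xi)\mathcal E(t,0,\xi)
  = \chi_{\rm hyp}(t,\xi)\,M^{-1}(t,\xi)N_k^{-1}(t,\xi)\,\mathcal E_k(t,t_\xi,\xi)\\
  \times\mathcal Q_k(t,t_\xi,\xi)\,N_k(t_\xi,\xi)M(t_\xi,\xi)\,\mathcal E(t_\xi,0,\xi),
\end{multline*}
and then insert the explicit form of $\mathcal E_k$: its $F^{(0)}$-part equals $\sqrt{a(t,\xi)/a(t_\xi,\xi)}\,\diag\big(\exp(\pm\mathrm i\int_{t_\xi}^t a(\tau,\xi)\,\mathrm d\tau)\big)$, while the remaining diagonal terms $F^{(j)}$, $j\ge1$, are integrable over $\mathcal Z_{\rm hyp}$ and contribute only an $\mathcal S_*\{0,0\}$-factor. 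Since $\sqrt{a(t,\xi)/a(t_\xi,\xi)}\in\mathcal S_*\{0,0\}$, all oscillation sits in $\exp(\pm\mathrm i\int_{t_\xi}^t a\,\mathrm d\tau)$; I would replace the lower limit $t_\xi$ by $0$ at the cost of the factor $\exp(\mp\mathrm i\int_0^{t_\xi}a\,\mathrm d\tau)$, which is a symbol of order $0$ because $\int_0^{t_\xi}a\lesssim t_\xi|\xi|\lesssim1$ and $t_\xi\approx c_k|\xi|^{-1}$, thereby producing the phase $t\vartheta(t,\xi)=\int_0^t a(\tau,\xi)\,\mathrm d\tau$. Everything remaining — $M^{\pm1}$, $N_k^{\pm1}$, $\mathcal Q_k(t,t_\xi,\xi)$, $\sqrt{a(t,\xi)/a(t_\xi,\xi)}$, the bounded matching factor $N_k(t_\xi,\xi)M(t_\xi,\xi)\mathcal E(t_\xi,0,\xi)$, $\chi_{\rm hyp}$, and the extra $a(0,\xi)/a(t,\xi)$ resp. $1/a(t,\xi)$ from passing back to $\widehat u$ — is a finite product of $\mathcal S_*\{0,0\}$-symbols times $|\xi|^{-j}$, and by \eqref{eq:3:der-est}, which here holds \emph{without} the logarithmic terms precisely because $\exp(\mathrm i\int F^{(0)})=\sqrt{a(t,\xi)/a(s,\xi)}\in\mathcal S_*\{0,0\}$, it satisfies $|\D_\xi^\alpha(\cdot)|\le C_\alpha|\xi|^{-|\alpha|-j}$ for $|\alpha|\le(k-1)/2$. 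Collecting the two columns $j=0,1$ and the two signs $\pm$ gives the amplitudes $b_{j,\pm,\rm hyp}(t,\xi)$, supported in $\mathcal Z_{\rm hyp}(c_k)$, and a variant of $\chi_{\rm hyp}$ decomposes the whole representation smoothly into these pieces.

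The hard part is not any single estimate — the substantive work was already done in Lemma~\ref{lem:3:diagLem} and in Section~\ref{sec:3:SolvDiag} — but the bookkeeping across the interface $t=t_\xi$: one must check that the matching factor $N_k(t_\xi,\xi)M(t_\xi,\xi)\mathcal E(t_\xi,0,\xi)$ and the change of integration limit are symbols of order $0$ uniformly in $\xi$, and, above all, that the logarithmic loss appearing in \eqref{eq:3:der-est} in the general situation is genuinely absent here. The latter is exactly the place where the special scalar structure $F^{(0)}(t,\xi)=\tfrac{\D_t a(t,\xi)}{2a(t,\xi)}\mathrm I$ of the second order equation is used, through the identity $\exp(\mathrm i\int F^{(0)})=\sqrt{a(t,\xi)/a(s,\xi)}\in\mathcal S_*\{0,0\}$ noted before the theorem. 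Granting these routine verifications, the theorem is the assembly above, with the zone constants $c_k$ fixed at the end in terms of $\lfloor(k-1)/2\rfloor$.
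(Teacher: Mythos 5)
Your proposal is correct and follows exactly the route the paper intends: the paper explicitly omits the proof of this theorem (and of the analogous Theorem~\ref{thm:3:sol-rep-systems}) as ``just a combination of the above mentioned estimates and representations,'' and your assembly --- the first order reduction \eqref{eq:3:3.78}, Lemma~\ref{lem:3:diagLem}, the explicit $\mathcal E_0$, the $\mathcal Q_k$ bounds without logarithmic loss, the zone splitting via a variant of $\chi_{\rm hyp}$, and the order count $|\xi|^{-j}$ from passing back to $\widehat u$ --- is precisely that combination. The points you single out as needing care (the matching factor at $t=t_\xi$, the shift of the lower integration limit, the absence of the logarithm thanks to $\exp(\mathrm i\int F^{(0)})=\sqrt{a(t,\xi)/a(s,\xi)}\in\mathcal S_*\{0,0\}$) are exactly the right ones and are handled correctly.
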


\subsection{Dispersive estimates}\label{sec:3:dispersive}
In order to derive dispersive estimates from the results obtained so far,
we have to provide a suitable parameter dependent version of the appearing oscillatory
integrals. Such parameter dependent versions were developed in the setting of
equations of constant coefficients with lower order terms in \cite{Ruzhansky:2010}
based on the multi-dimensional van der Corput lemma in \cite{Ruzhansky:2009}.
For the presentation below we follow \cite{RW:2011} and \cite{RW:2011b}. 

\subsubsection{Contact indices for families of surfaces} In order to formulate estimates in a uniform way over a family of surfaces, we need some more notation generalising the treatment of Section~\ref{sec:1:StatPhas}. Let again $\Sigma$ be any closed smooth hypersurface in $\mathbb R^n$ and let $p\in\Sigma$ be any point. We translate and rotate the hypersurface in such a way that it can be parameterised as
\begin{equation}
   \{ (y, h(y)) :  y\in\Omega \}
\end{equation}
in a neighbourhood of the point $p$ for a suitable open set $\Omega\subset\mathbb R^{n-1}$. 
We start by considering the convex situation. 2-planes $H$ containing the normal $N_p\Sigma$ are determined by directions $\omega\in\mathbb S^{n-2}$ in $\Omega\subset\mathbb R^{n-1}$ and thus the 
contact orders $\gamma(\Sigma;p,H)$ correspond to the vanishing order of the function
$\rho \mapsto h(\rho\omega)$ at $\rho=0$. We need to measure this vanishing
in a more quantitative way and define for some given parameter $\gamma\in\mathbb N_{\ge2}$
\begin{equation}\label{eq:3:kappa-def}
   \varkappa(\Sigma,\gamma; p) = \inf_{|\omega|=1} \sum_{j=2}^{\gamma} 
   \left| \partial_\rho^j h(\rho\omega)|_{\rho=0} \right|.       
\end{equation}
From the definition of the contact index $\gamma(\Sigma)$ in \eqref{eq:1:SugInd}, it immediately follows 
that $\varkappa(\Sigma,{\gamma(\Sigma)};p)>0$ for all $p\in\Sigma$. Because the
function $p\mapsto\varkappa(\Sigma,\gamma;p)$ is continuous and $\Sigma$ closed by assumption, 
the minimum
\begin{equation}\label{eq:3:kappa-def2}
    \varkappa(\Sigma, \gamma) = \min_{p\in\Sigma} \varkappa(\Sigma,\gamma ; p)
\end{equation}
exists and is also strictly positive for $\gamma=\gamma(\Sigma)$.

If we find a uniform lower bound on $\varkappa(\Sigma_\lambda,\gamma)$ for a family of 
convex surfaces $\Sigma_\lambda$ depending on parameters $\lambda\in\Upsilon$ for some number $\gamma\ge2$ then we say that this family satisfies a uniformity condition. We further define the uniform contact index
\begin{equation}\label{eq:3:kappa-unif}
\gamma_{\rm unif} (\{\Sigma_\lambda : \lambda\in\Upsilon \}) = 
\min\{ \gamma\in\mathbb N_{\ge2} :  \inf_{\lambda\in\Upsilon} \varkappa(\Sigma_\lambda,\gamma) > 0 \}.
\end{equation}
For real parameters $t\in\mathbb R_+$ (which can be thought of as time for now) it makes sense to consider a weaker notion of asymptotic contact index and we define  
\begin{equation}\label{eq:3:kappa-as}
\gamma_{\rm as} (\{\Sigma_t : t\ge t_0 \}) = 
\min\{ \gamma\in\mathbb N_{\ge2} :  \liminf_{t\to\infty} \varkappa(\Sigma_t,\gamma) > 0 \}.
\end{equation}
In the particular case that $\Sigma_\lambda$ is a family of spheres of different radii $r_\lambda$, the constant $\varkappa(\Sigma_\lambda,2)\approx r_\lambda^{-1}$ is a measure of the curvature
of the sphere and we have $\gamma_{\rm unif}(\{\Sigma_\lambda\})=2$ if and only if $r_\lambda$ stays bounded. If on the other hand $r_\lambda$ is unbounded, the uniformity condition is not satisfied and no uniform contact order exists.

For non-convex surfaces we need to define a similar uniform non-convex contact index. We only explain the differences to the above formulae. First, we replace \eqref{eq:3:kappa-def}
by
\begin{equation}
  \varkappa_0(\Sigma,\gamma; p) =\sup_{|\omega|=1} \sum_{j=2}^{\gamma} 
   \left| \partial_\rho^j h(\rho\omega)|_{\rho=0} \right|.
\end{equation}
and then set $\varkappa_0(\Sigma,\gamma)=\min_{p\in\Sigma} \varkappa_0(\Sigma,\gamma;p)$.
Again $\varkappa_0(\Sigma,\gamma_0(\Sigma))>0$ and we use the analogues to
\eqref{eq:3:kappa-unif} and \eqref{eq:3:kappa-as} to define uniform and asymptotic contact orders.

If a family $\Sigma_t$ satisfies the uniformity condition, then the constants in the estimates
of Lemmata ~\ref{lem:1:sug1} and \ref{lem:1:sug2} are uniform over the family of surfaces. 

\subsubsection{Estimates for $t$-dependent Fourier integrals} Now we are in a position to discuss
Fourier integrals appearing in the representations \eqref{eq:3:SolRep} and \eqref{eq:3:sol-rep-expl2}, or, more generally, representations obtained by the construction of Section~\ref{sec:3:SolvDiag}. We omit the possibly occurring logarithmic terms here; they can be included easily  with a slight change in the
decay rates.
 
We consider a $t$-dependent family of operators
\begin{equation}
T_t :   u_0 \mapsto \int_{\R^n} \mathrm e^{\mathrm i (x\cdot\xi + t\vartheta(t,\xi) ) } a(t,\xi) \widehat u_0(\xi) \mathrm d\xi
\end{equation}
for a real-valued homogeneous phase function $\vartheta(t,\xi)\in C^\infty(\mathbb R_+\times (\mathbb R^n\setminus\{0\}))$ satisfying $\vartheta(t,\rho\xi) = \rho\vartheta(t,\xi)$ for $\rho>0$ and 
\begin{equation}\label{eq:3:phase-bound}
  C^{-1} |\xi| \le \vartheta(t,\xi) \le C|\xi|,\qquad |\D_\xi^\alpha \vartheta(t,\xi) | \le C_\alpha |\xi|^{1-|\alpha|},
\end{equation}
for some constants $C>0$, $C_\alpha$ and all $t\ge t_0$, and an amplitude $a(t,\xi)$ supported within $\mathcal Z_{\rm hyp}(c)$ and satisfying the symbol estimates
\begin{equation}\label{eq:3:amp-est}
   |\D_\xi^\alpha a(t,\xi)| \le C_\alpha |\xi|^{-|\alpha|}
\end{equation}
for a certain (finite) number of derivatives. Associated to the phase function we consider the family of slowness or Fresnel surfaces 
 \begin{equation}
    \Sigma_t = \{ \xi\in \mathbb R^n : \vartheta(t,\xi) =1\}.
 \end{equation}
Then the following theorem can be obtained by the method of \cite{RW:2011}.
\begin{thm}\label{thm:3:dispEsp}
Assume the family $\Sigma_t$ is convex for $t\ge t_0$, satisfies the uniformity condition and that $\gamma=\gamma_{\rm as}(\{\Sigma_t\})$ is its asymptotic contact index. Then the estimate
\begin{equation}
   \| T_t \|_{B^r_{1,2} \to L^\infty} \le C t^{-\frac{n-1}{\gamma} }
\end{equation}
holds true for $r=n-\frac{n-1}{\gamma}$.
\end{thm}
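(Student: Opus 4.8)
The plan is to read the statement as the $t$-parametrised analogue of the convex stationary phase bound of Lemma~\ref{lem:1:sug1A} together with the rescaling argument behind~\eqref{eq:1:dispEst}, the only genuinely new feature being that every constant must be controlled uniformly over the family $\{\Sigma_t\}_{t\ge t_0}$. First I would decompose $\widehat{u_0}$ by a Littlewood--Paley partition $1=\sum_j\phi(2^{-j}|\xi|)$ and write $T_t u_0=\sum_j T_t^{(j)}u_{0,j}$, where $u_{0,j}=\mathscr F^{-1}[\phi(2^{-j}|\xi|)\widehat{u_0}]$ and $T_t^{(j)}$ carries a suitably fattened dyadic localisation of the amplitude $a(t,\xi)$; denote by $K_j(t,\cdot)$ the convolution kernel of $T_t^{(j)}$. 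Since $a(t,\cdot)$ is supported in $\mathcal Z_{\mathrm{hyp}}(c)$, only scales with $2^j\gtrsim(1+t)^{-1}$ occur, and for those with $2^jt\lesssim1$ (finitely many for each $t$) the trivial bound $\|K_j(t,\cdot)\|_{L^\infty}\le\int_{|\xi|\approx2^j}|a(t,\xi)|\,\mathrm d\xi\lesssim2^{jn}\lesssim2^{jr}t^{-\frac{n-1}{\gamma}}$ already suffices. For the remaining scales I would rescale $\xi=2^j\eta$ and use the homogeneity $\vartheta(t,\rho\xi)=\rho\vartheta(t,\xi)$ to obtain
\begin{equation*}
   K_j(t,x)=2^{jn}\int_{\R^n}\mathrm e^{\mathrm i s_j\left(\frac{x}{t}\cdot\eta+\vartheta(t,\eta)\right)} b_j(t,\eta)\,\mathrm d\eta,\qquad s_j:=2^jt\gtrsim1,
\end{equation*}
where by~\eqref{eq:3:amp-est} the amplitude $b_j(t,\eta)=a(t,2^j\eta)\tilde\phi(|\eta|)$ is supported in $|\eta|\approx1$ with $C^N$-norms bounded uniformly in $j$ and $t$. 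This reduces everything to the uniform estimate
\begin{equation*}
   \sup_{y\in\R^n}\left|\int_{\R^n}\mathrm e^{\mathrm i s(y\cdot\eta+\vartheta(t,\eta))} b(t,\eta)\,\mathrm d\eta\right|\lesssim s^{-\frac{n-1}{\gamma}},\qquad s\ge1,
\end{equation*}
with constant independent of $t$, for amplitudes supported uniformly in $|\eta|\approx1$.

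This uniform stationary phase estimate is the heart of the argument, and is where all the hypotheses enter. Following Section~\ref{sec:1:StatPhas}, one splits with a partition of unity: on the non-stationary region, where $y+\nabla_\eta\vartheta(t,\eta)\ne0$ on $\supp b$, repeated integration by parts gives $O(s^{-N})$, and the gain is uniform in $t$ thanks to the structural bounds~\eqref{eq:3:phase-bound}; near a stationary point $\eta_0$, where $y=-\nabla_\eta\vartheta(t,\eta_0)$ lies on the ray surface dual to $\Sigma_t$, one passes to coordinates $\eta=\rho\theta$ with $\theta\in\Sigma_t$ and $\rho=\vartheta(t,\eta)$, carries out the one-dimensional $\rho$-integration, and is left with an integral over $\Sigma_t$ of the type~\eqref{eq:1:FTSCM}, to which Lemma~\ref{lem:1:sug1} applies; this is precisely the parameter-dependent, uniform-in-$t$ form of the kernel bound Proposition~\ref{cor:convex}. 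Used with the (possibly non-optimal) index $\gamma$, Lemma~\ref{lem:1:sug1} yields the decay $s^{-\frac{n-1}{\gamma}}$ with a constant depending only on a positive lower bound for $\varkappa(\Sigma_t,\gamma)$ and on~\eqref{eq:3:phase-bound}; the $\liminf$ in the definition~\eqref{eq:3:kappa-as} of $\gamma=\gamma_{\mathrm{as}}$ supplies exactly such a lower bound, uniform for large $t$, while the uniformity condition~\eqref{eq:3:kappa-unif} covers the complementary range of bounded $t$, and the separating partition of unity can itself be chosen uniformly in $t$ by~\eqref{eq:3:phase-bound} and~\eqref{eq:3:kappa-def2}. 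I expect this uniformity — in effect the parameter-dependent van der Corput and stationary phase estimates of Ruzhansky quoted before Lemma~\ref{lem:1:sug1} — to be the main obstacle; the case of bounded $s$ is trivial.

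Combining the two regimes gives $\|K_j(t,\cdot)\|_{L^\infty}\lesssim2^{jn}\min\{1,s_j^{-\frac{n-1}{\gamma}}\}\le2^{jr} t^{-\frac{n-1}{\gamma}}$ with $r=n-\frac{n-1}{\gamma}$, and hence
\begin{equation*}
   \|T_t u_0\|_{L^\infty}\le\sum_j\|K_j(t,\cdot)\|_{L^\infty}\,\|u_{0,j}\|_{L^1}\lesssim t^{-\frac{n-1}{\gamma}}\,\|u_0\|_{B^r_{1,1}}.
\end{equation*}
Since $B^r_{1,1}\subset B^r_{1,2}$, this is slightly weaker than the claim; the sharpening to $B^r_{1,2}$ is obtained exactly as in the proof of Lemma~\ref{lem:1:sug1A}, by interpolating block by block the oscillatory integral bound with the trivial estimate $\|T_t^{(j)}\|_{L^2\to L^2}\lesssim1$ (Plancherel and~\eqref{eq:3:amp-est}) and then reassembling via Littlewood--Paley theory, the $\ell^2$-summation being legitimate because the outputs $T_t^{(j)}u_{0,j}$ have frequency supports in disjoint dyadic annuli.
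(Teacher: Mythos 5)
Your proposal is correct and follows essentially the same route as the paper's proof: a dyadic (Littlewood--Paley) reduction to uniform kernel bounds, the trivial estimate in the regime $2^jt\lesssim 1$, and for $2^jt\gtrsim 1$ a stationary/non-stationary splitting in which the stationary contribution is controlled by a parameter-dependent Sugimoto/van der Corput estimate over $\Sigma_t$ (the paper parameterises $\Sigma_t$ as a graph $(y,h_t(y))$ and applies the multidimensional van der Corput lemma directly to the $y$-integral with large parameter $2^jx_n\approx 2^jt$, which is equivalent to your polar-coordinate reduction), with uniformity of constants supplied exactly as you say by the uniformity condition and the asymptotic contact index. The only soft spot — passing from the $\ell^1$-sum of dyadic kernel bounds to the $B^r_{1,2}$ norm — is treated no more explicitly in the paper (which simply invokes ``H\"older inequality''), so your flagged interpolation step matches the level of rigour of the original.
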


We remark that only finitely many derivatives of the phase (meaning also finite smoothness of $\Sigma_t$) and of the symbol $a(t,\xi)$ are needed to prove this statement.

\begin{proof} Using the definition of the Besov spaces from \eqref{eq:1:Besov} together with H\"older inequality we see that it suffices to prove
\begin{equation}\label{eq:3:3.92}
    \| T_t \circ \phi(2^{-j} |\D|) u_0 \|_{L^\infty} \lesssim 2^{j (n-\frac{n-1}\gamma)} t^{-\frac{n-1}\gamma} \| u_0\|_{L^1},
\end{equation}
i.e., corresponding bounds for the operator with amplitude $a(t,\xi) \phi(2^{-j}|\xi|)$
taking into account the desired Besov regularity. 
The uniform bound \eqref{eq:3:phase-bound} on the phase allows us to find a function $\psi\in C_0^\infty(\mathbb R_+)$ satisfying $\phi(|\xi|) \psi( \vartheta(t,\xi) ) = \phi(|\xi|)$. 
We denote by
\begin{equation} \label{eq:3:I-int}
   I_j(t,x) = \int \mathrm e^{\mathrm i(x\cdot \xi+ t\vartheta(t,\xi))} a(t,\xi) \psi(2^{-j} \vartheta(t,\xi)) \mathrm d\xi
\end{equation}
the (smooth!) convolution kernel of an operator related to \eqref{eq:3:3.92} and it is 
sufficient to prove the uniform bound
\begin{equation}\label{eq:3:3.93}
   \sup_{x\in\mathbb R^n}  |I_j(t,x)| \lesssim 2^{j (n-\frac{n-1}\gamma)} t^{-\frac{n-1}\gamma},
   \qquad j\ge 1,
\end{equation}
in combination with a related low-frequency estimate.
 The bound in $I_j(t,x)$ can be achieved by the 
stationary phase method in combination with the multi-dimensional van der Corput lemma
due to \cite{Ruzhansky:2009} . 

Stationary points of the phase are solutions to $x+t\nabla_\xi \vartheta(t,\xi) =0$. We use a cut-off function $\chi\in C_0^\infty(\mathbb R^n)$ with $\chi(x)=1$ for small $|x|$ and decompose the 
integral \eqref{eq:3:I-int} into two terms
\begin{align}
I_j^{(1)}(t,x) &= \int \mathrm e^{\mathrm i(x\cdot \xi+ t\vartheta(t,\xi))} \chi(x/t + \nabla_\xi\vartheta(t,\xi)) a(t,\xi) \psi(2^{-j} \vartheta(t,\xi))\mathrm d\xi, \\
I_j^{(2)}(t,x) &= \int \mathrm e^{\mathrm i(x\cdot \xi+ t\vartheta(t,\xi))} (1-\chi(x/t + \nabla_\xi\vartheta(t,\xi)))a(t,\xi) \psi(2^{-j} \vartheta(t,\xi)) \mathrm d\xi.
\end{align} 
Based on $|x+t\nabla_\xi\vartheta(t,\xi) |\gtrsim t$, the second integral can be treated by 
integration by parts giving 
\begin{equation}
| I_j^{(2)}(t,\xi) | \le C_N t^{-N} 2^{j(n-N)}
\end{equation}
uniform in $j$ and $t$ and for any number $N$.\footnote{Of course, $N$ depends on the number of derivatives we can estimate by \eqref{eq:3:amp-est}. The  number that we need is $\lceil (n-1)/\gamma\rceil$.} For the first integral we use the structure of the level sets $\Sigma_t$
and restrict consideration to large values of $t$. We localise the integral into narrow cones, by translation and rotation we can assume that $\xi$ is within a sufficiently small conical neighbourhood of $(0,\ldots,0,1)^\top$ and $\Sigma_t$ can be parameterised as
$\{(y, h_t(y)) : y\in\ U\}$ with $h$ vanishing to at least second order in $0$. The function
$\nabla h_t : U \to \nabla h_t (U) \subset \mathbb R^{n-1}$ is a homeomorphism and $h_t$ is 
concave as $\Sigma_t$ was assumed to be convex. Furthermore, we have
\begin{equation}\label{eq:3:ht-bound}
    | \D_y^\alpha h_t(y)| \le C_\alpha,\qquad t\ge t_0,
\end{equation}
uniform on $U$ if $U$ is small enough. Indeed, as $\vartheta(t,y,h_t(y))=1$ we obtain by differentiation 
$\nabla_y \vartheta + \partial_{\xi_n} \vartheta \nabla h_t = 0$ and by Euler's identity
$\partial_{\xi_n}\vartheta(t,e_n)=\vartheta(t, e_n)$. Therefore, the bound $|\nabla_y\vartheta(t,y,h)|\lesssim 1$ implies the bound on $\nabla h_t(y)$ uniform in $t$. Higher order derivatives follow in analogy.

Associated to $\Sigma_t$ we have the Gauss map
\begin{equation}
   G : \Sigma_t \ni \xi \mapsto \frac{\nabla_\xi\vartheta(t,\xi)}{|\nabla_\xi\vartheta(t,\xi)|}\in\mathbb S^{n-1}
\end{equation}
and for given $x=(x',x_n)$ near $-t \nabla_\xi \vartheta(t,e_n)$ we define
$z_t\in U$ by $G(z_t, h_t(z_t))= -x / |x|$. 
Making the change of variables $\xi=(r y,r h_t(y))$ with $\vartheta(t,\xi)=r\approx 2^j$, we get that the localised part of $I_j^{(1)}(t,x)$ equals
\begin{multline}
 \int_0^\infty \int_U \mathrm e^{\mathrm i r( x'\cdot y + x_n h_t(y) + t)}
   a(t,r y, r h_t(y)) \psi(2^{-j}r) \tilde \chi(t,x,y) \kappa(t,r,y) \mathrm dy\mathrm dr\\
  = 2^{jn} \int_0^\infty \int_U \mathrm e^{\mathrm i 2^j r( x'\cdot y + x_n h_t(y) + t)}
   \tilde a_j(t,r,y)\tilde\chi(t,x,y) \mathrm dy \mathrm dr
\end{multline}
with $\tilde\chi(t,x,y) = \chi(x/t+\nabla_\xi \vartheta(t,y,h_t(y))$,  $\kappa(t,r,y) = | \partial \xi / \partial (r,y)|$ the Jacobi determinant of the $t$-dependent change of variables, and
\begin{equation}
 \tilde a_j(t,r,y) = \psi(r)  a(t, 2^j r y, 2^j r h_t(y)) \kappa(2^j r,y )
\end{equation}
satisfying uniform bounds of the form
\begin{equation}
  | \D_y^\alpha \tilde a(t,r,y) | \le C_\alpha. 
\end{equation}
Here we used the symbol estimates of $a$ in combination with \eqref{eq:3:ht-bound} and also
that the latter imply uniform bounds on $y$-derivatives of the Jacobian. Note, that $r$-derivatives
of the Jacobian have symbolic behaviour by homogeneity. The stationary phase estimate \cite[Theorem 2.1]{Ruzhansky:2009} is applied to the integral over $U$, which gives
\begin{equation}
\left|  \int_U \mathrm e^{\mathrm i 2^j r( x'\cdot y + x_n h_t(y) + t)}
   \tilde a_j(t,r,y)\tilde\chi(t,x,y) \mathrm dy \right| \le C | 2^j x_n |^{-\frac{n-1}\gamma}
\end{equation}
with a constant $C$ uniform in the remaining variables. Due to the localisation we have
that  $|x|\approx |x_n|\approx t$ and therefore we obtain the desired bound \eqref{eq:3:3.92}.

The remaining low-frequency estimate follows in an analogous manner, omitting the dyadic decomposition and using the relations $t|\xi|\gtrsim 1$ combined with the symbolic estimates
and the idea that uniformly bounded Fourier multipliers supported within $\mathcal Z_{\rm pd}$
always lead to $t^{-n}$ decay rates (which are much faster).  
\end{proof}

If the convexity assumption is dropped, decay rates get much worse but on the other hand we also need less regularity in the estimate. We only formulate the theorem.
\begin{thm}\label{thm:3:dispEsp-2}
Assume the family $\Sigma_t$ satisfies the uniformity condition and that $\gamma_0=\gamma_{0,\rm as}(\{\Sigma_t\})$ is its non-convex asymptotic contact index. Then the estimate
\begin{equation}
   \| T_t \|_{B^r_{1,2} \to L^\infty} \le C t^{-\frac{1}{\gamma_0} }
\end{equation}
holds true for $r=n-\frac{1}{\gamma}$.
\end{thm}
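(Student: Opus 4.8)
The plan is to repeat the proof of Theorem~\ref{thm:3:dispEsp} essentially verbatim, replacing only at the very last step the convex stationary phase estimate by the non-convex, one-dimensional van der Corput estimate underlying Lemma~\ref{lem:1:sug2}. Writing $r=n-\frac1{\gamma_0}$, the Littlewood--Paley characterisation \eqref{eq:1:Besov} of $B^r_{1,2}$ together with H\"older's inequality reduces the assertion to the dyadic bound
\begin{equation}
   \| T_t\circ\phi(2^{-j}|\D|) u_0\|_{L^\infty} \lesssim 2^{j(n-\frac1{\gamma_0})} t^{-\frac1{\gamma_0}} \| u_0\|_{L^1},\qquad j\ge1,
\end{equation}
together with a low-frequency counterpart, and hence, since the operator on the left has the smooth convolution kernel $I_j(t,x)$ of \eqref{eq:3:I-int} with $\psi$ chosen as there, to the uniform kernel estimate $\sup_{x}|I_j(t,x)|\lesssim 2^{j(n-\frac1{\gamma_0})} t^{-\frac1{\gamma_0}}$.

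For the kernel estimate I would decompose $I_j(t,x)=I_j^{(1)}(t,x)+I_j^{(2)}(t,x)$ using the cut-off $\chi(x/t+\nabla_\xi\vartheta(t,\xi))$ exactly as in the proof of Theorem~\ref{thm:3:dispEsp}. On $\supp(1-\chi)$ one has $|x+t\nabla_\xi\vartheta(t,\xi)|\gtrsim t$, so repeated integration by parts and the symbol estimates \eqref{eq:3:amp-est} give $|I_j^{(2)}(t,x)|\le C_N t^{-N} 2^{j(n-N)}$ for any $N$; taking $N=\lceil 1/\gamma_0\rceil$ makes this no worse than the target. For the stationary part I would localise into narrow cones, rotate so that $\xi$ lies near $(0,\dots,0,1)$, parameterise $\Sigma_t=\{(y,h_t(y)):y\in U\}$ with $h_t$ vanishing to at least second order at $0$ and satisfying the uniform bounds \eqref{eq:3:ht-bound}, and perform the change of variables $\xi=(ry,rh_t(y))$, $r=\vartheta(t,\xi)\approx 2^j$. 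Just as for the inequality leading to \eqref{eq:3:3.92} this reduces matters to estimating
\begin{equation}
   2^{jn}\int_0^\infty\int_U \mathrm e^{\mathrm i 2^j r(x'\cdot y+x_n h_t(y)+t)} \tilde a_j(t,r,y)\tilde\chi(t,x,y)\,\mathrm dy\,\mathrm dr ,
\end{equation}
with the amplitude obeying uniform bounds $|\D_y^\alpha \tilde a_j|\le C_\alpha$ by \eqref{eq:3:amp-est}, \eqref{eq:3:ht-bound} and homogeneity.

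At this point, in place of the convex stationary phase lemma I would invoke the non-convex multi-dimensional van der Corput estimate of \cite{Ruzhansky:2009} for the inner integral over $U$: by construction of the non-convex contact index $\gamma_0(\Sigma_t)$ and of the quantity $\varkappa_0(\Sigma_t,\gamma_0)$ introduced in the previous subsection, which controls the worst (slowest-vanishing) one-dimensional direction of contact, this gives
\begin{equation}
   \left| \int_U \mathrm e^{\mathrm i 2^j r(x'\cdot y+x_n h_t(y)+t)}\tilde a_j(t,r,y)\tilde\chi(t,x,y)\,\mathrm dy\right| \le C\,|2^j x_n|^{-\frac1{\gamma_0}}
\end{equation}
with $C$ uniform in the remaining variables. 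Since $|x|\approx|x_n|\approx t$ on the support of $I_j^{(1)}$, integrating in the (compactly supported) variable $r$ and collecting powers of $2^j$ produces exactly the claimed dyadic bound, while the low-frequency estimate follows as in Theorem~\ref{thm:3:dispEsp} from the fact that uniformly bounded multipliers supported in $\mathcal Z_{\rm pd}$ contribute the much faster rate $t^{-n}$.

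The only genuinely new point compared with Theorem~\ref{thm:3:dispEsp}, and the step I expect to require the most care, is the \emph{uniformity in the parameter $t$} of the van der Corput constant; this is exactly what the uniformity hypothesis on $\{\Sigma_t\}$ provides. With $\gamma_0=\gamma_{0,\rm as}(\{\Sigma_t\})$ as in \eqref{eq:3:kappa-as} one has $\liminf_{t\to\infty}\varkappa_0(\Sigma_t,\gamma_0)>0$, so that for all large $t$ the graphs $h_t$ share a uniform lower bound on their $\gamma_0$-th order of minimal contact and the van der Corput constant inherits this bound; the remaining bounded range of $t$ is dealt with by a compactness argument affecting only the implicit constant. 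Note also that, in contrast to the convex case, only a bounded number of $y$-derivatives of $h_t$ and of the amplitude enters, so finite smoothness of the level sets $\Sigma_t$ already suffices.
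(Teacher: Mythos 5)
Your proof is correct and is exactly the argument the paper intends: the paper explicitly omits the proof of this theorem (``We only formulate the theorem''), the expected argument being a repetition of the proof of Theorem~\ref{thm:3:dispEsp} with the convex stationary phase estimate replaced by the non-convex directional van der Corput bound $|2^j x_n|^{-1/\gamma_0}$ controlled by $\varkappa_0$ and the uniformity condition, which is precisely what you do. You also correctly read the Besov regularity as $r=n-\frac{1}{\gamma_0}$, fixing the apparent misprint $r=n-\frac{1}{\gamma}$ in the statement.
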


\subsubsection{Extensions to fully variable setting} We conclude Section~\ref{sec:3} with some more
general estimates.  They apply to global Fourier integral operators  of a very particular structure at 
infinity and are essentially based on the observation that the variables $t$ and $x$ appear parametric in dispersive estimates. They also fit to the earlier observation that $t\xi$ is a natural co-variable when describing solutions to free wave equations and equations with weak dissipation.

We consider the operator
\begin{equation}
    T_t : u_0 \mapsto \int_{\R^n} \mathrm e^{\mathrm i (x\cdot \xi + t \vartheta(t,x,\xi))} a(t,x,\xi) \widehat u_0(\xi)\mathrm d\xi
\end{equation}
for a homogeneous real phase function $\vartheta(t,x,\xi) \in C^\infty (\mathbb R_+ \times \mathbb R^n \times (\mathbb R^n\setminus\{0\}))$ with $\vartheta(t,x,\rho\xi) = \rho \vartheta(t,x,\xi)$  for $\rho>0$ and
\begin{equation}
   C^{-1} |\xi| \le \vartheta (t,x,\xi) \le C|\xi|,\qquad |\D_\xi^\alpha \vartheta(t,x,\xi)| \le |\xi|^{1-|\alpha|},
\end{equation}
 and the amplitude $a(t,x,\xi)$ supported in $\{ (t,x,\xi) : (1+t)|\xi| \ge c\}$ for some constant $c$ 
 and subject to 
 \begin{equation}
   |\D_\xi^\alpha a(t,x,\xi) | \le C_\alpha |\xi|^{-|\alpha|}.
 \end{equation}
A major difference to the previous considerations is that we now have a family of surfaces parameterised by $t$ and $x$
\begin{equation}
  \Sigma_{t,x} = \{ \xi\in\mathbb R^n : \vartheta(t,x,\xi) =1\}
\end{equation}
and we have to use a contact index defined asymptotic with respect to $t$ and uniform in $x$.
Let therefore $\gamma=\gamma_{\rm as,unif}(\{\Sigma_{t,x}\})$ with
\begin{equation}\label{eq:3:as-unif-CI}
  \gamma_{\rm as,unif} (\{\Sigma_{t,x}\})  = \min\{\gamma\in\mathbb N_{\ge 2} :
  \liminf_{t\to\infty} \inf_{x\in\mathbb R^n} \varkappa(\Sigma_{t,x},\gamma)>0\}
\end{equation}
and we assume that all the surfaces are convex. Then the proof of Theorem~\ref{thm:3:dispEsp}
carries over and gives
\begin{equation}
  \| T_t u_0\|_{L^\infty} \lesssim t^{-\frac{n-1}\gamma} \|u_0\|_{B^r_{1,2}}
\end{equation}
for $r\ge n-\frac{n-1}\gamma$. A similar replacement works for the non-convex situation with
the weaker decay rate $t^{-1/\gamma_0}$ for the non-convex asymptotic uniform contact index
defined in analogy to \eqref{eq:3:as-unif-CI}.

\subsection{An alternative low-regularity approach: asymptotic integration}\label{AI}
We will conclude Section~\ref{sec:3} with a short review and reformulation of recent results due to Matsuyama--Ruzhansky \cite{MR:2010}, which allow for very low regularity of the coefficients. The approach originates in asymptotic integration arguments due to Wintner \cite{Wintner:1947}. The price to pay is that results are not optimal with respect to regularity and they do also not reach the critical threshold for lower order terms.  

We will formulate them for hyperbolic systems as it is in the context of the
present exposition. The case of scalar equations has been treated in
\cite{MR:2010} while the case of systems can be found in
\cite{MR:2011}. To be precise, we consider the Cauchy problem
\begin{equation}\label{eq:CP:3:5}
   \D_t U = A(t,\D) U, \qquad U(0,\cdot)=U_0,
\end{equation}
for initial data $U_0\in L^2(\R^n;\mathbb C^d)$ and with symbol split as $A(t,\xi) = A_1(t,\xi) + A_0(t,\xi)$
into a homogeneous strictly hyperbolic part
\begin{equation}
   A_1(t,\rho\xi) = \rho A_1(t,\xi) ,\qquad \forall \rho>0,
\end{equation}
having real eigenvalues $\mathrm{spec}\, A_1(t,\xi) = \{ \lambda_1(t,\xi),\ldots ,\lambda_d(t,\xi) \} \subset\R$ 
uniformly separated
\begin{equation}
   \inf_{t,\xi\not=0} \frac{|\lambda_i(t,\xi)-\lambda_j(t,\xi)|}{|\xi|} >0
\end{equation}
for all $i\ne j$. Difference to the previous considerations is that we do {\em not} assume smoothness with respect to $t$, instead we assume that the symbol is of bounded variation and satisfies
\begin{equation}\label{eq:3.4:cond}
  |\xi|^{-1}  \partial_t A_1(\cdot,\xi) \in L^1(\R_+; \mathbb C^{d\times d})  , \qquad A_0(\cdot,\xi) \in L^1(\R_+; \mathbb C^{d\times d}) 
\end{equation}
uniform in $\xi\not=0$. 

These assumptions have some almost immediate consequences.
\begin{lem}
\begin{enumerate}
\item $|\xi|^{-1} \partial_t \lambda_j(t,\xi) \in L^1(\R_+)$ uniform in $\xi\not=0$.
\item There exists a matrix $N(t,\xi)$ diagonalising $A_1(t,\xi)$
\begin{equation}\label{eq:3.4:diag}
  A_1(t,\xi)N(t,\xi) = N(t,\xi) \diag\big(\lambda_1(t,\xi),\dots,\lambda_d(t,\xi)\big),\qquad \xi\ne0,
\end{equation}
with $\|N(t,\xi)\|, \|N^{-1}(t,\xi)\|$ uniformly bounded and $\partial_t N(t,\xi) \in L^1(\R_+)$ uniform in $\xi\ne0$.
\end{enumerate}
\end{lem}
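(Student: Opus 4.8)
The plan is to reduce both statements to the unit sphere $\mathbb S^{n-1}$ by $\xi$-homogeneity and then run the standard analytic perturbation theory for simple eigenvalues, taking care that every constant is uniform in $\xi$. Since $A_1(t,\rho\xi)=\rho A_1(t,\xi)$ for $\rho>0$ we have $A_1(t,\xi)=|\xi|\,A_1(t,\omega)$ with $\omega=\xi/|\xi|$, hence $\lambda_j(t,\xi)=|\xi|\,\lambda_j(t,\omega)$, $|\xi|^{-1}\partial_tA_1(t,\xi)=\partial_tA_1(t,\omega)$ and $|\xi|^{-1}\partial_t\lambda_j(t,\xi)=\partial_t\lambda_j(t,\omega)$; thus \eqref{eq:3.4:cond} says precisely that $\|\partial_tA_1(\cdot,\omega)\|\in L^1(\mathbb R_+)$ uniformly in $\omega$, and it suffices to bound $\int_0^\infty|\partial_t\lambda_j(t,\omega)|\,dt$ uniformly in $\omega$. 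Likewise $N$ will be taken homogeneous of degree $0$ in $\xi$, so all claims about $N$ reduce to their restriction to $\mathbb S^{n-1}$.

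For part \textup{(1)} I would use the Riesz-projection representation. Let $c_0>0$ be the uniform spectral gap from the hypothesis $\inf_{t,\xi\neq0}|\lambda_i(t,\xi)-\lambda_j(t,\xi)|/|\xi|>0$ ($i\neq j$). Choosing the contour $\Gamma_j(t,\omega)$ to be the circle of radius $c_0/3$ about $\lambda_j(t,\omega)$, the resolvent $(\zeta-A_1(t,\omega))^{-1}$ is bounded on $\Gamma_j$ by a constant depending only on $c_0$ and $\sup_t\|A_1(t,\omega)\|$ (the latter finite since $A_1(\cdot,\omega)$ is of bounded variation). Hence $P_j(t,\omega)=\frac1{2\pi i}\oint_{\Gamma_j}(\zeta-A_1(t,\omega))^{-1}\,d\zeta$ is uniformly bounded, and the first-order perturbation identity $\partial_t\lambda_j=\mathrm{trace}\,(P_j\,\partial_tA_1)$ gives $|\partial_t\lambda_j(t,\omega)|\le C\,\|\partial_tA_1(t,\omega)\|$ with $C$ independent of $t$ and $\omega$. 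Integrating in $t$ and using the reduced form of \eqref{eq:3.4:cond} yields (1). Since $A_1$ is only of bounded variation in $t$, one reads $\partial_t$ as the a.e.\ derivative together with the statement that the total variation is finite uniformly in $\omega$, and uses the perturbation identities in their integrated Stieltjes form.

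For part \textup{(2)} I would build $N$ from the eigenprojections. Differentiating the Riesz integral under the integral sign gives $\partial_tP_j=\frac1{2\pi i}\oint_{\Gamma_j}(\zeta-A_1)^{-1}(\partial_tA_1)(\zeta-A_1)^{-1}\,d\zeta$, so that $\|\partial_tP_j(t,\omega)\|\le C\,\|\partial_tA_1(t,\omega)\|\in L^1(\mathbb R_+)$ uniformly. Each $P_j$ is rank one; writing $P_j=r_j\ell_j^{*}$ with $\|r_j\|=1$ gives $\|\ell_j\|=\|P_j\|\le C$, so taking $N(t,\xi)$ to have columns $r_1(t,\omega),\dots,r_d(t,\omega)$ makes $N^{-1}$ have rows $\ell_1^{*},\dots,\ell_d^{*}$, whence $\|N\|,\|N^{-1}\|\le C$ uniformly and \eqref{eq:3.4:diag} holds by construction. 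For the $L^1$-derivative bound I would, locally in $\omega$, fix a vector $e$ with $P_j(t,\omega)e\neq0$ on the chart and set $r_j=P_je/\|P_je\|$; differentiating and using $\|\partial_tP_j\|\lesssim\|\partial_tA_1\|$ gives $\|\partial_tr_j(t,\omega)\|\le C\,\|\partial_tA_1(t,\omega)\|$, hence $\partial_tN(\cdot,\omega)\in L^1(\mathbb R_+)$ uniformly; patching the finitely many charts on $\mathbb S^{n-1}$ with a partition of unity does not affect $t$-integrability. Alternatively one can avoid the patching by defining $N$ as the solution of the Kato transport equation $\partial_tN=\big(\sum_j(\partial_tP_j)P_j\big)N$ with a bounded invertible initial datum, whose coefficient lies in $L^1$ in $t$ uniformly in $\omega$.

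The main obstacle I expect is essentially bookkeeping: ensuring every constant is uniform in $\xi$, which the homogeneous reduction to the compact sphere together with the uniform gap $c_0$ takes care of, and justifying the perturbation formulae under merely bounded-variation (rather than $C^1$) dependence on $t$, which forces the integrated Stieltjes formulation throughout. No analytic input beyond the Riesz-projection calculus is needed.
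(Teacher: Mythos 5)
Your argument is essentially the paper's own (the paper only sketches it, deferring to Mizohata and to the construction of the diagonaliser in Lemma~\ref{lem:3:SymbEstDiag1}): reduce to the sphere by homogeneity, express the eigenprojections $P_j$ by the Riesz integral over contours kept uniformly away from the spectrum thanks to the gap $c_0$ and the uniform bound on $\|A_1(t,\omega)\|$, and build $N$ from normalised vectors spanning $\ran P_j$. The only real difference is the order: the paper obtains (1) by differentiating \eqref{eq:3.4:diag} once $N$ is built, whereas you prove (1) directly from the trace identity $\partial_t\lambda_j=\mathrm{trace}(P_j\,\partial_tA_1)$; both are fine.

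Two small points in your part (2). First, for a \emph{fixed} $e$ the condition $P_j(t,\omega)e\neq0$ need not hold for all $t\in\R_+$ with a uniform lower bound on $\|P_j(t,\omega)e\|$ — bounded total variation of $P_j$ does not prevent $\ker P_j(t,\omega)$ from sweeping past $e$ as $t\to\infty$ — so the chart-based definition $r_j=P_je/\|P_je\|$ is only local in $t$ as well as in $\omega$ (the paper's own Lemma~\ref{lem:3:SymbEstDiag1} has the same caveat). Second, patching diagonalisers with a partition of unity in $\omega$ does not in general yield a diagonaliser: on an overlap the two unit eigenvectors differ by a unimodular factor and their convex combination can degenerate. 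Neither issue is serious: the lemma demands no regularity of $N$ in $\xi$, so a measurable chart-by-chart choice suffices, and your alternative via the Kato transport equation $\partial_tN=\bigl(\sum_j(\partial_tP_j)P_j\bigr)N$ (whose coefficient is in $L^1(\R_+)$ uniformly, giving uniform bounds on $N$, $N^{-1}$ by Gronwall and the intertwining $P_k(t)N(t)=N(t)P_k(0)$) handles everything cleanly; I would make that the primary construction rather than the fallback.
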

\begin{proof}[Sketch of proof]
The second statement is analogous to \cite[Prop. 6.4]{Mizohata:1973} and also to Lemma~\ref{lem:3:SymbEstDiag1}.3, while the first statement follows
from differentiating \eqref{eq:3.4:diag} with respect to $t$.
\end{proof}

We can use $N(t,\xi)$ to transform the system into a diagonal dominated form. Considering $V(t,\xi) = N^{-1}(t,\xi) \widehat U(t,\xi)$ we obtain the
equivalent system
\begin{equation}
   \D_t V = \big( \mathcal D(t,\xi) + R(t,\xi) \big) V,
\end{equation}
with $\mathcal D(t,\xi) = \diag\big(\lambda_1(t,\xi),\dots,\lambda_d(t,\xi)\big)$ and 
\begin{equation}
  R(t,\xi) = N^{-1}(t,\xi) A_0(t,\xi) N(t,\xi) + \big(\D_t N^{-1}(t,\xi)\big) N(t,\xi).
\end{equation}
By assumption on $A_0(t,\xi)$ and construction of $N(t,\xi)$ it follows that $R(\cdot,\xi)\in L^1(\R_+)$ uniform in $\xi$ and we can solve the resulting system
by an argument similar to Section~\ref{sec:3:SolvDiag}. The only difference is that we obtain just uniform results with respect to $\xi$ and have no information about the behaviour of derivatives with respect to $\xi$.

\begin{thm}
Solutions to \eqref{eq:CP:3:5} can be written as
\begin{equation}   
   \widehat U(t,\xi) = \sum_{j=1}^d \mathrm e^{\mathrm i t\vartheta_j(t, \xi) } B_j(t,\xi) \widehat U_0(\xi)
\end{equation}
with phase functions given in terms of the characteristic roots $\lambda_j(t,\xi)$,
\begin{equation}
   \vartheta_j(t,\xi) = \frac1t \int_0^t \lambda_j(\theta,\xi) \mathrm d\theta,
\end{equation}
and amplitudes $B_j(t,\xi) \in L^\infty(\R_+\times\R^n)$ satisfying
\begin{equation}
   B_j(t,\xi) = {\boldsymbol\alpha}_j(\xi) + {\boldsymbol\varepsilon}_j(t,\xi),\qquad \lim_{t\to\infty} {\boldsymbol\varepsilon}_j(t,\xi)= 0
\end{equation}
locally uniform in $\xi\ne0$.
\end{thm}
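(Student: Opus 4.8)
The plan is to follow the Wintner--Levinson asymptotic integration argument, building on the diagonalisation already carried out before the theorem. After the substitution $V(t,\xi)=N^{-1}(t,\xi)\widehat U(t,\xi)$ one has the system $\D_t V=\big(\mathcal D(t,\xi)+R(t,\xi)\big)V$ with $\mathcal D(t,\xi)=\diag\big(\lambda_1(t,\xi),\dots,\lambda_d(t,\xi)\big)$ and $R(\cdot,\xi)\in L^1(\R_+)$ uniformly in $\xi\ne0$. First I would factor out the oscillatory diagonal part by setting
\begin{equation}
   E_0(t,\xi)=\exp\left(\mathrm i\int_0^t\mathcal D(\theta,\xi)\,\mathrm d\theta\right)=\diag\left(\mathrm e^{\mathrm i t\vartheta_j(t,\xi)}\right),\qquad\vartheta_j(t,\xi)=\frac1t\int_0^t\lambda_j(\theta,\xi)\,\mathrm d\theta,
\end{equation}
which is unitary because the $\lambda_j(t,\xi)$ are real and which solves $\D_t E_0=\mathcal D E_0$, $E_0(0,\xi)=\mathrm I$. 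Writing $W(t,\xi)=E_0^{-1}(t,\xi)V(t,\xi)$, a one-line computation gives $\D_t W=\widetilde R(t,\xi)W$ with $\widetilde R(t,\xi)=E_0^{-1}(t,\xi)R(t,\xi)E_0(t,\xi)$; since $E_0$ is unitary we still have $\widetilde R(\cdot,\xi)\in L^1(\R_+)$ with the same uniform $L^1$-bound in $\xi\ne0$.

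Next I would solve $\D_t W=\widetilde R W$, $W(0,\xi)=N^{-1}(0,\xi)\widehat U_0(\xi)$, exactly as in Section~\ref{sec:3:SolvDiag}: the fundamental matrix $\mathcal W(t,0,\xi)$ is given by the Peano--Baker series, converging absolutely with $\|\mathcal W(t,0,\xi)\|\le\exp\big(\|\widetilde R(\cdot,\xi)\|_{L^1}\big)$ uniformly in $t$ and $\xi\ne0$, and $\mathcal W(t,0,\xi)^{-1}$ is bounded as well by Liouville's formula. Since the contribution of the series after time $t$ is controlled by $\int_t^\infty\|\widetilde R(\tau,\xi)\|\,\mathrm d\tau\to0$, the Cauchy criterion gives the limit $\mathcal W(\infty,0,\xi)=\lim_{t\to\infty}\mathcal W(t,0,\xi)$. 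In parallel, $N(t,\xi)$ itself has a limit $N(\infty,\xi)=N(0,\xi)+\int_0^\infty\partial_t N(\theta,\xi)\,\mathrm d\theta$ because $\partial_t N(\cdot,\xi)\in L^1(\R_+)$ by the preceding lemma.

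Finally I would undo the substitutions. From $V(t,\xi)=E_0(t,\xi)\mathcal W(t,0,\xi)N^{-1}(0,\xi)\widehat U_0(\xi)$ and $\widehat U=N V$, expanding $E_0(t,\xi)=\sum_{j=1}^d\mathrm e^{\mathrm i t\vartheta_j(t,\xi)}\Pi_j$ with $\Pi_j$ the $j$-th coordinate projection, one obtains
\begin{equation}
  \widehat U(t,\xi)=\sum_{j=1}^d\mathrm e^{\mathrm i t\vartheta_j(t,\xi)}B_j(t,\xi)\widehat U_0(\xi),\qquad B_j(t,\xi)=N(t,\xi)\,\Pi_j\,\mathcal W(t,0,\xi)\,N^{-1}(0,\xi).
\end{equation}
The uniform bounds on $N$, $N^{-1}$ and $\mathcal W$ give $B_j\in L^\infty(\R_+\times\R^n)$, and letting $t\to\infty$ in each factor yields $B_j(t,\xi)={\boldsymbol\alpha}_j(\xi)+{\boldsymbol\varepsilon}_j(t,\xi)$ with ${\boldsymbol\alpha}_j(\xi)=N(\infty,\xi)\,\Pi_j\,\mathcal W(\infty,0,\xi)\,N^{-1}(0,\xi)$ and ${\boldsymbol\varepsilon}_j(t,\xi)\to0$.

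The one point needing care, and what I expect to be the main obstacle, is to upgrade ${\boldsymbol\varepsilon}_j(t,\xi)\to0$ from pointwise to \emph{locally uniform} in $\xi\ne0$: the pointwise version is the classical asymptotic integration statement, but hypothesis \eqref{eq:3.4:cond} only provides a \emph{uniform bound} on the $L^1$-norms, not uniform decay of the tails. On a compact set $K\subset\R^n\setminus\{0\}$, however, the functions $g_t(\xi)=\int_t^\infty\|\widetilde R(\tau,\xi)\|\,\mathrm d\tau$ (and likewise $\int_t^\infty\|\partial_t N(\tau,\xi)\|\,\mathrm d\tau$) are continuous --- the low regularity being confined to the $t$-variable, the symbols stay continuous in $\xi$ --- nonincreasing in $t$ and converge pointwise to $0$, so Dini's theorem makes the convergence uniform on $K$; feeding these uniform tail bounds back into the Peano--Baker series and into $N(\infty,\xi)-N(t,\xi)$ gives the required local uniformity.
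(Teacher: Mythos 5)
Your argument is correct and is essentially the route the paper itself takes: after the transformation $V=N^{-1}\widehat U$ one has $\D_t V=(\mathcal D+R)V$ with $R(\cdot,\xi)\in L^1(\R_+)$ uniformly in $\xi\ne0$, and the paper then simply invokes the Peano--Baker series machinery of Section~\ref{sec:3:SolvDiag} --- which is exactly what your conjugation by the unitary diagonal factor $E_0$ followed by the series for $\mathcal W(t,0,\xi)$ carries out, with $B_j(t,\xi)=N(t,\xi)\,\Pi_j\,\mathcal W(t,0,\xi)\,N^{-1}(0,\xi)$ the intended amplitudes. Your closing discussion of the locally uniform convergence is the right point to worry about; the only caveat is that the Dini argument needs continuity in $\xi$ of the tail integrals $\int_t^\infty\|\widetilde R(\tau,\xi)\|\,\mathrm d\tau$, which requires a locally integrable dominating function in $\tau$ rather than merely the uniform $L^1$ bound literally stated in the hypotheses --- a mild extra regularity in $\xi$ that the paper (and the reference it defers to) implicitly assumes.
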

%

If one has additional information on the behaviour of $\xi$-derivatives of the symbols $A_k(t,\xi)$, $k=1,2$, one can prove better estimates
and in particular also obtain symbolic type behaviour of the amplitudes $B_j(t,\xi)$. We give only one corresponding
statement. If $A_0(t,\xi)=0$ and $A_1(t,\xi)$ satisfies
\begin{equation}
    |\xi|^{|\alpha|-1}  \D_\xi^\alpha \partial_t A(\cdot,\xi) \in L^1(\R_+) 
\end{equation}
uniformly in $\xi\ne0$, then one can show that
\begin{equation}
  \| \D_\xi^\alpha B_j(t,\xi) \| \le C_\alpha, \qquad \xi\ne0,
\end{equation}
and  dispersive estimates based on Theorems~\ref{thm:3:dispEsp} and \ref{thm:3:dispEsp-2}  the (explicitly known) finite number of derivatives
needed there can be obtained by requiring a higher regularity of initial data. We refer to \cite{MR:2010, MR:2011}
for further details.

\section{Effective lower order perturbations}\label{sec:4}
If lower order terms are too large to be controlled, it becomes important to investigate the behaviour 
of solutions for bounded frequencies. We will restrict ourselves to situations where an asymptotic construction for $\xi\to0$ becomes important and provide some essential estimates for this.

\subsection{The diffusion phenomenon}\label{sec:4.1}
 The classical diffusion phenomenon gives an asymptotic
equivalence of damped wave equations and the heat equation. It was first observed for porous media type equations and its formulation for the damped wave equation is due to
Nishihara \cite{Nishihara:1997}, \cite{Ikehata:2003c} and independently Han--Milani
\cite{Milani:2001} in various formulations. The estimates of Nishihara were extended to
arbitrary dimensions by Narazaki \cite{Narazaki:2004} and provide $L^p$--$L^q$ type estimates
for differences of solutions to damped wave and heat equation. 

We will follow a different line of thought here and provide estimates in the flavour of 
Radu--Todorova--Yordanov \cite{RTY:2011}. They are only energy type estimates and therefore
easier to obtain. On the other hand, they are flexible enough to be formulated for abstract Cauchy problems in Hilbert spaces based on the continuous spectral calculus for self-adjoint operators.

Let in the following $u(t,x)$ be the solution to the Cauchy problem for the damped wave equation
\begin{equation}\label{eq:4:CP-damped}
   u_{tt}-\Delta u + u_t = 0,\qquad u(0,\cdot) = u_0, \quad u_t(0,\cdot)=u_1,
\end{equation}
and similarly $v(t,x)$ be the solution to the Cauchy problem for the heat equation
\begin{equation}\label{eq:4:CP-heat}
   v_t = \Delta v, \qquad v(0,\cdot) = v_0 = u_0+u_1
\end{equation} 
with related data. Then their solutions are related due to the following theorem. Note that in general 
\begin{equation}
  \| u(t,\cdot) \|_{L^2} \le C \big( \|u_0\|_{L^2} + \|u_1\|_{H^{-1}}\big),\qquad \|v(t,\cdot)\|_{L^2} \le C \|v_0\|_{L^2}
\end{equation}
are the best possible (operator-norm) estimates for solutions of both problems. This can be seen similar to the discussion in Section~\ref{sec:1:constCoeff} using explicit representations of Fourier multipliers. However, forming the difference of the solutions improves estimates by one order.
We denote by $\mathrm e^{t\Delta}$ the heat semigroup.

\begin{thm}\label{thm:4:diff-Phen}
Let $u(t,x)$ and $v(t,x)$ be solutions to \eqref{eq:4:CP-damped} and \eqref{eq:4:CP-heat}, respectively. Then
\begin{enumerate}
\item the difference of the solutions satisfies
\begin{multline}
\| u(t,\cdot) - v(t,\cdot) \|_{L^2} \lesssim t^{-1} \left( \| \mathrm e^{t\Delta/2} u_0\|_{L^2} + \| \mathrm e^{t\Delta/2} u_1\|_{L^2}  \right) \\ + \mathrm e^{-t/16} \left( \|u_0\|_{L^2} + \|u_1\|_{H^{-1}}\right)
\end{multline}
for all $t\ge 1$;
\item moreover, for all $k\in\mathbb N$ and 
$\alpha\in\mathbb N_0^n$ the higher order estimate
\begin{multline}
\| \D_t^k \D_x^\alpha\big( u(t,\cdot) - v(t,\cdot)\big) \|_{L^2} \lesssim t^{-1-k-|\alpha|/2} \left( \| \mathrm e^{t\Delta/2} u_0\|_{L^2} + \| \mathrm e^{t\Delta/2} u_1\|_{L^2}  \right) \\ + \mathrm e^{-t/16} \left( \|u_0\|_{H^{k+|\alpha|}} + \|u_1\|_{H^{k+|\alpha|-1}}\right)
\end{multline}
holds true for $t\ge 1$.
\end{enumerate}   
\end{thm}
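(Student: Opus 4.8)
The plan is to work on the Fourier side, use the explicit representations of both solutions, and split frequency space into a low‑frequency region $\{|\xi|\le\delta\}$ and a high‑frequency region $\{|\xi|>\delta\}$ for a fixed $\delta\in(0,1/2)$ to be chosen. After partial Fourier transform $\widehat u$ solves $\widehat u_{tt}+\widehat u_t+|\xi|^2\widehat u=0$, with characteristic roots $\lambda_\pm(\xi)=\tfrac12(-1\pm\sqrt{1-4|\xi|^2})$, so that $\widehat u(t,\xi)=c_+(\xi)\mathrm e^{\lambda_+(\xi)t}+c_-(\xi)\mathrm e^{\lambda_-(\xi)t}$ with $c_+=(\widehat u_1-\lambda_-\widehat u_0)/(\lambda_+-\lambda_-)$ and $c_-=(\lambda_+\widehat u_0-\widehat u_1)/(\lambda_+-\lambda_-)$, while $\widehat v(t,\xi)=\mathrm e^{-|\xi|^2t}\widehat v_0(\xi)$ with $\widehat v_0=\widehat u_0+\widehat u_1$. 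For $|\xi|\le\delta$ the roots are real and a short computation gives the elementary bounds
\[
 \lambda_+(\xi)+|\xi|^2\le 0,\qquad |\xi|^2\le-\lambda_+(\xi)\le 2|\xi|^2,\qquad -\bigl(\lambda_+(\xi)+|\xi|^2\bigr)\le 4|\xi|^4,
\]
together with $\lambda_-(\xi)\in[-1,-\tfrac12]$ and $\lambda_+(\xi)-\lambda_-(\xi)=\sqrt{1-4|\xi|^2}\ge\sqrt{1-4\delta^2}>0$. Using Vieta's relations $\lambda_++\lambda_-=-1$, $\lambda_+\lambda_-=|\xi|^2$ one obtains the key identity $c_+-\widehat v_0=-\lambda_+(\widehat u_0+2\widehat u_1)/(\lambda_+-\lambda_-)$, whence $|c_+-\widehat v_0|\lesssim|\xi|^2(|\widehat u_0|+|\widehat u_1|)$ and $|c_-|\lesssim|\widehat u_0|+|\widehat u_1|$ on $\{|\xi|\le\delta\}$.

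The low‑frequency part carries the main estimate. On $\{|\xi|\le\delta\}$ I would decompose
\[
 \widehat u-\widehat v=(c_+-\widehat v_0)\,\mathrm e^{\lambda_+t}+\widehat v_0\bigl(\mathrm e^{\lambda_+t}-\mathrm e^{-|\xi|^2t}\bigr)+c_-\,\mathrm e^{\lambda_-t}
\]
and estimate each term using only two elementary inequalities, $|\xi|^{2m}\mathrm e^{-|\xi|^2t/2}\le C_mt^{-m}$ for $m\ge0$ and $0\le1-\mathrm e^{-s}\le s$ for $s\ge0$. Since $\mathrm e^{\lambda_+t}\le\mathrm e^{-|\xi|^2t}$, the first term is $\lesssim|\xi|^2\mathrm e^{-|\xi|^2t/2}\mathrm e^{-|\xi|^2t/2}(|\widehat u_0|+|\widehat u_1|)\lesssim t^{-1}\mathrm e^{-|\xi|^2t/2}(|\widehat u_0|+|\widehat u_1|)$; writing $\mathrm e^{\lambda_+t}-\mathrm e^{-|\xi|^2t}=\mathrm e^{-|\xi|^2t}(\mathrm e^{(\lambda_++|\xi|^2)t}-1)$ and applying $1-\mathrm e^{-s}\le s$ with $s=-(\lambda_++|\xi|^2)t\le4|\xi|^4t$, the second term is $\lesssim|\xi|^4t\,\mathrm e^{-|\xi|^2t}(|\widehat u_0|+|\widehat u_1|)\lesssim t^{-1}\mathrm e^{-|\xi|^2t/2}(|\widehat u_0|+|\widehat u_1|)$; and $\mathrm e^{\lambda_-t}\le\mathrm e^{-t/2}$ makes the third term $\lesssim\mathrm e^{-t/2}(|\widehat u_0|+|\widehat u_1|)$. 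Taking $L^2$‑norms over $\{|\xi|\le\delta\}$, the first two contributions give $t^{-1}(\|\mathrm e^{t\Delta/2}u_0\|_{L^2}+\|\mathrm e^{t\Delta/2}u_1\|_{L^2})$ and the last one $\mathrm e^{-t/2}(\|u_0\|_{L^2}+\|u_1\|_{H^{-1}})$, since all Sobolev norms are comparable on bounded frequencies; the restriction $t\ge1$ only serves to keep the negative powers of $t$ harmless.

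On $\{|\xi|>\delta\}$ both solutions decay exponentially, so it suffices to exhibit a rate $\ge\tfrac1{16}$. For $v$ this is immediate from $\mathrm e^{-|\xi|^2t}\le\mathrm e^{-\delta^2t/2}\mathrm e^{-|\xi|^2t/2}$, which reproduces the $\mathrm e^{t\Delta/2}$‑terms of the right‑hand side. For $u$ the only delicate point is the collision $\lambda_+=\lambda_-=-\tfrac12$ at $|\xi|=\tfrac12$; this is handled by the representation $\widehat u=\mathrm e^{-t/2}\bigl[\widehat u_0(\cosh\mu t+\tfrac1{2\mu}\sinh\mu t)+\widehat u_1\,\mu^{-1}\sinh\mu t\bigr]$ with $\mu=\sqrt{\tfrac14-|\xi|^2}$, the functions $\cosh\mu t$ and $\mu^{-1}\sinh\mu t$ being entire in $\mu^2$: for $\delta<|\xi|\le1$ this is uniformly $\mathrm e^{-c(\delta)t}$ times the data (a possible extra factor $t$ being absorbed into the exponential), while for $|\xi|>1$ one has $|\mu|\approx|\xi|$ and $|\widehat u|\lesssim\mathrm e^{-t/2}(|\widehat u_0|+\langle\xi\rangle^{-1}|\widehat u_1|)$. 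Choosing $\delta$ close to $\tfrac14$ makes all these rates at least $\tfrac1{16}$ and yields the $\mathrm e^{-t/16}(\|u_0\|_{L^2}+\|u_1\|_{H^{-1}})$ term. The main obstacle is precisely the algebra of the low‑frequency step: one must check that the parabolic approximation $\widehat v_0\mathrm e^{-|\xi|^2t}$ approximates $c_+\mathrm e^{\lambda_+t}$ to order $|\xi|^2$ in the amplitude and order $|\xi|^4$ in the exponent, since exactly these orders, played against $|\xi|^{2m}\mathrm e^{-|\xi|^2t/2}\lesssim t^{-m}$, produce the gain of a full power of $t^{-1}$.

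Part (2) follows the identical decomposition after inserting the symbols $\xi^\alpha$ for $\D_x^\alpha$, $\lambda_\pm(\xi)^k$ for $\D_t^k$ acting on $\widehat u$, and $(-|\xi|^2)^k$ for $\D_t^k$ acting on $\widehat v$ (using $v_t=\Delta v$). The low‑frequency main term becomes $c_+\lambda_+^k-(-|\xi|^2)^k\widehat v_0=\lambda_+^k(c_+-\widehat v_0)+\widehat v_0(\lambda_+^k-(-|\xi|^2)^k)$, and the estimates $|c_+-\widehat v_0|\lesssim|\xi|^2(\cdots)$, $|\lambda_+^k|\lesssim|\xi|^{2k}$ and $|\lambda_+^k-(-|\xi|^2)^k|\lesssim|\xi|^{2k+2}$ (the latter from $|\lambda_++|\xi|^2|\lesssim|\xi|^4$) produce an overall factor $|\xi|^{|\alpha|+2k+2}$, respectively $|\xi|^{|\alpha|+2k+4}t$ for the exponent‑difference term, which against $|\xi|^{2m}\mathrm e^{-|\xi|^2t/2}\lesssim t^{-m}$ gives exactly $t^{-1-k-|\alpha|/2}$; the $\mathrm e^{\lambda_-t}$ term and the high‑frequency region contribute the stated $\mathrm e^{-t/16}(\|u_0\|_{H^{k+|\alpha|}}+\|u_1\|_{H^{k+|\alpha|-1}})$, using also $t^{-k-|\alpha|/2}\mathrm e^{-\delta^2t/4}\lesssim t^{-1-k-|\alpha|/2}$ for $t\ge1$ to handle the high‑frequency heat part. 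An alternative, and the one closer to the cited work of Radu--Todorova--Yordanov, is to observe that $w=u-v$ solves the forced damped wave equation $w_{tt}+w_t-\Delta w=-\Delta^2v$ with data $w(0)=-u_1$, $w_t(0)=u_1-\Delta v_0$, and to run weighted energy estimates (equivalently, the spectral calculus for $-\Delta$) for this problem; this avoids explicit solution formulas at the price of somewhat heavier energy bookkeeping.
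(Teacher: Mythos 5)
Your argument is correct and is essentially the paper's own proof: both work with the explicit Fourier multipliers, split into low and high frequencies, isolate the exponentially decaying branch $\mathrm e^{\lambda_- t}$, and compare the surviving branch with $\mathrm e^{-t|\xi|^2}\widehat v_0$ by controlling the amplitude discrepancy to order $|\xi|^2$ and the exponent discrepancy to order $|\xi|^4$, closing with $|1-\mathrm e^{-s}|\le s$ and $|\xi|^{2m}\mathrm e^{-t|\xi|^2/2}\lesssim t^{-m}$. Your decomposition $(c_+-\widehat v_0)\mathrm e^{\lambda_+t}+\widehat v_0(\mathrm e^{\lambda_+t}-\mathrm e^{-t|\xi|^2})+c_-\mathrm e^{\lambda_-t}$ is just a notational variant of the paper's splitting into the $(C_j(\xi)-1)$ terms and the $\mathrm e^{-tb_2|\xi|^4+\cdots}-1$ term, and your part (2) bookkeeping matches the paper's remark that each $x$-derivative gains $t^{-1/2}$ and each $t$-derivative gains $t^{-1}$.
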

\begin{proof} As the theorem extends the results of \cite{RTY:2011} we include a sketch of the proof. Fourier transform relates the Cauchy problems to initial value problems for ordinary 
differetial equations which can be solved explicitly. The solution of the parabolic problem
$\widehat v(t,\xi)$ is given by
\begin{equation}
   \widehat v(t,\xi) = \mathrm e^{-t|\xi|^2}\big( \widehat u_0(\xi) + \widehat u_1(\xi)\big),
\end{equation}
while the solution to the damped wave equation can be written as 
\begin{equation}
   \widehat u(t,\xi) = \mathrm e^{-t/2}         \big( \cos(t\sqrt{|\xi|^2 -1/4} ) \widehat u_0 + 
   \frac{\sin(t \sqrt{|\xi|^2- 1/4})}{\sqrt{|\xi|^2-1/4}} \widehat u_1(\xi)\big) 
\end{equation}
for $|\xi| >\frac12$ and similarly
\begin{equation}
   \widehat u(t,\xi) = \mathrm e^{-t/2}         \big( \cosh(t\sqrt{1/4-|\xi|^2 } ) \widehat u_0 + 
   \frac{\sinh(t \sqrt{1/4-|\xi|^2})}{\sqrt{1/4-|\xi|^2}} \widehat u_1(\xi)\big) 
\end{equation}
for $|\xi| < \frac12$. As we are concerned with polynomial estimates, we can neglect all
exponentially decaying terms (and collect them in the $\mathrm e^{-t/16}$ estimate later on).
This happens for $\widehat v(t,\xi)$ if $|\xi|\ge 1/4$, for $\widehat u(t,\xi)$ for $|\xi|>1/2$ 
and for the exponentially decaying terms defining $\cosh$ and $\sinh$. What we are left with 
are the terms
\begin{align}
   v(t,x) : \qquad & \mathrm e^{-t |\xi|^2} \big( \widehat u_0 (\xi) + \widehat u_1(\xi) ) \\
   u(t,x) : \qquad &  \mathrm e^{-t/2 + t \sqrt{1/4 - |\xi|^2}} \big( C_0(\xi) u_0(\xi) + C_1(\xi) u_1(\xi) 
   \big)\label{eq:4:u-lf-exp} 
\end{align}
locally near $\xi=0$ and with radially symmetric smooth functions $C_0(\xi)$ and $C_1(\xi)$ satisfying $C_0(0)=C_1(0)=1$. Our first observation concerns the exponent in \eqref{eq:4:u-lf-exp}. It satisfies
\begin{equation}
    \sqrt{1/4 - |\xi|^2}  = 1/2 - |\xi|^2 - \sum_{k=2}^\infty b_k |\xi|^{2k},\qquad b_k > 0,
\end{equation}
so that the main terms of the difference $\widehat u(t,\xi)-\widehat v(t,\xi)$ are a sum of terms of the form
\begin{align}
   \mathrm e^{-t|\xi|^2} \left( \mathrm e^{- t b_2 |\xi|^4 + t \mathcal O(|\xi|^6) } -1 \right) (\widehat u_0(\xi) + \widehat u_1(\xi)), \\
   \mathrm e^{-t |\xi|^2 + t \mathcal O(|\xi|^4) } \big(( C_0(\xi)-1) \widehat u_0(\xi) + ( C_1(\xi)-1) \widehat u_1(\xi) \big).
\end{align}
The first statement is proven, if we can bound these two terms by the ones on the right hand side; i.e., we are looking for uniform bounds for the multipliers
\begin{equation}
   t \mathrm e^{-t|\xi|^2/2 }  \left( \mathrm e^{- b_2 t |\xi|^4 + t\mathcal O(|\xi|^6)  } -1 \right) 
\end{equation}
and 
\begin{equation}
   t   \mathrm e^{-t |\xi|^2/2 + t\mathcal O(|\xi|^4) }( C_j(\xi)-1) ,\qquad j=0,1.
\end{equation} 
Using the elementary estimates $|\mathrm e^{-s}-1| \le s$ and $s\mathrm e^{-s} \lesssim1$ for $s\ge0$ we conclude that the first multiplier is bounded by
 $t^2 |\xi|^4 \exp(-t|\xi|^2 / 2 )\lesssim1$, while
the second multiplier can be estimated by $|\xi|^2 t \mathrm e^{-t|\xi|^2}\lesssim 1$ 
and the desired statement follows.

For the second statement we observe two things. First, when estimating $x$-derivatives this just gives additional factors of $\xi$ for small frequencies and the above estimates improve by corresponding $t^{-1/2}$ factors for each derivative. Similarly, when considering $t$-derivatives the representations get one additional factor of $|\xi|^2$ for small $\xi$ and each $t$-derivative
improves the estimate by a factor of $t^{-1}$.
\end{proof}

\subsection{Diagonalisation for small frequencies}\label{sec:4.2} 
Versions of the diffusion phenomenon can also be obtained in a variable coefficient setting.
We will restrict our consideration here mainly to $t$-dependent hyperbolic systems, for the sake
of brevity even differential hyperbolic systems of the form 
\begin{equation}\label{eq:4:CP}
   \D_t U = \sum_{k=1}^n A_k(t)\D_{x_k} U + \mathrm i B(t) U,\qquad U(t,\cdot) = U_0.
\end{equation}
Main difference to the considerations in Section~\ref{sec:3} is that we assume now that
$B(t)$ is {\em not} of lower order in the $\mathcal T$-hierarchy, i.e., we assume that
\begin{equation}
   A_k(t), B(t) \in \mathcal T\{0\} .
\end{equation}
We will make three main assumptions here;
\begin{description}
\item[(B1)] the matrices $A_k(t)$  are self-adjoint and $B(t)\ge0$;
\item[(B2)] the matrix $B(t)$ has $d$  eigenvalues 
$0=\delta_1(t) < \delta_2(t) <\dots < \delta_d(t)$
satisfying 
\begin{equation}
   \liminf_{t\to\infty}  |\delta_i(t) - \delta_j(t) | > 0,\qquad i\ne j;
\end{equation}
\item[(B3)] the matrices $A(t,\xi) = \sum_{k=1}^n A_k(t)\xi_k$ and $B(t)$ satisfy for all $v\in\mathbb C^d$
\begin{equation}
  \frac1c \| v\|^2 \le \sum_{j=1}^{d-1} \epsilon_j \| B(t) (A(t,\xi))^j v\|^2 \le c\|v\|^2,\qquad t\ge t_0,
\end{equation}
for any choice of numbers $\epsilon_0,\ldots,\epsilon_{d-1}>0$ and suitable constants $c$ and $t_0$ depending on them.
\end{description}
Assumption (B1) guarantees that the system is symmetric hyperbolic and (partially) dissipative.  Therefore,
the energy estimate 
\begin{equation} 
        \|U(t,\cdot)\|_{L^2}\le \|U_0\|_{L^2}
\end{equation}
is valid.  By Assumption (B2) we know that one mode is not dissipated, while Assumption (B3) will be used to show that
 the high frequency parts of solutions are still exponentially decaying. Inspired by Beauchard--Zuazua \cite{BZ:2011}, we will refer to (B3) as uniform Kalman rank 
 condition. If $A_k$ and $B$ are independent of $t$ it just means that
 \begin{equation}
    \rank\big ( B \big  |  A(\xi) B \big | \cdots \big |  A(\xi)^{d-1}B \big) = d,
 \end{equation}
 which is the classical Kalman rank condition arising in the control theory of ordinary differential systems. Under certain natural assumptions, this is equivalent to 
 the algebraic condition of Kawashima--Shizuta \cite{KS:1985}, but the latter are more complicated to rewrite uniformly depending on parameters.
 
 Our aim is 
 to understand the large-time behaviour of small frequencies here.

\begin{figure}
\input{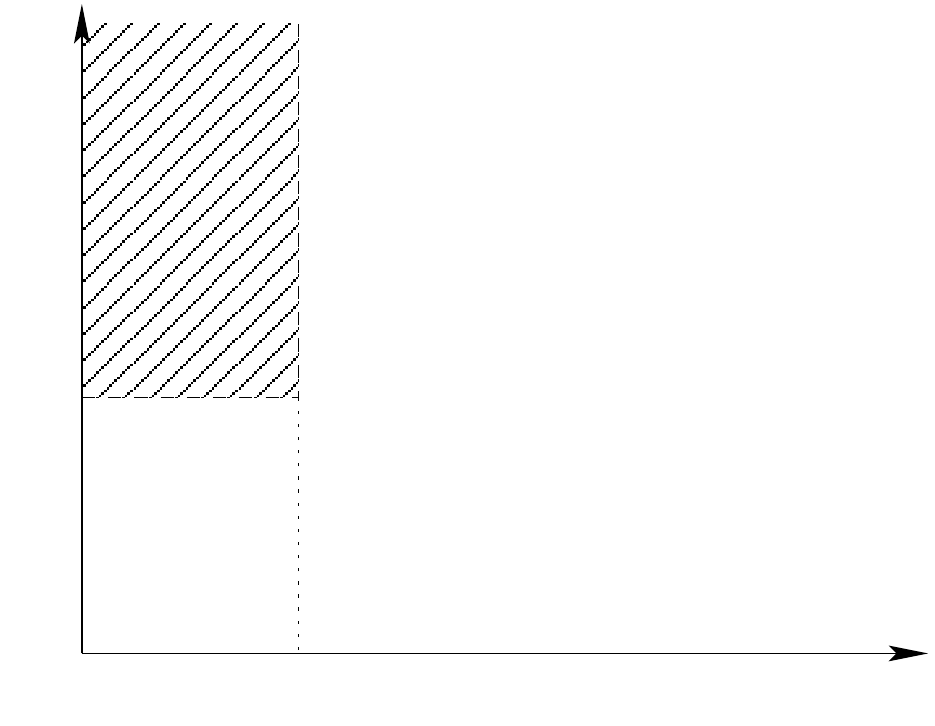_t}
\caption{Zones for the small-frequency diagonalisation}
\end{figure}

For small $\xi$ we use a diagonalisation procedure similar to the one from Section~\ref{sec:3:diag}. It works within the elliptic zone $\mathcal Z_{\rm ell} (c) =\{(t,\xi) : |\xi| \le c, t\ge c^{-1}\}$
collecting small frequencies for large times. All considerations will be done for $t\ge t_0$ with
$t_0$ sufficiently large. This ensures uniform diagonalisability of $B(t)$.

\subsubsection{Initial step} We denote by $M(t)$ a diagonaliser of $B(t)$ satisfying the uniform bounds $M(t), M^{-1}(t)\in\mathcal T\{0\}$. Denoting $V^{(0)}(t,\xi) = M^{-1}(t) \widehat U(t,\xi)$, we obtain the equivalent system
\begin{equation}
   \D_t V^{(0)} = \left( \mathcal D(t) + \sum_{k=1}^n M^{-1}(t) A_k(t) M(t) \xi_k +\big( \D_t M^{-1}(t)\big) M(t) \right) V^{(0)}
\end{equation}
with $\mathcal D(t) =\mathrm i \diag(\delta_1(t), \ldots, \delta_d(t))$. We denote the two non-diagonal
terms by
\begin{equation}
    R_1(t,\xi) = \sum_{k=1}^n M^{-1}(t) A_k(t) M(t) \xi_k + \big( \D_t M^{-1}(t)\big) M(t), 
\end{equation}
remarking in particular that the first term is linear in $\xi$ with coefficients in 
$\mathcal T\{0\}$, while
the second one is independent of $\xi$, but in the better class $\mathcal T\{1\}$. Both are small 
compared to the difference $\delta_i(t)-\delta_j(t)$ provided $|\xi|$ is small and $t$ is large.

In the sequel we will use the notation
\begin{equation}
     \mathcal P\{m\} = \left\{ p(t,\xi) =  \sum_{|\alpha|\le m} p_{\alpha}(t) \xi^\alpha : p_\alpha (t) \in \mathcal T\{m-|\alpha|\} \right\}
\end{equation}
for polynomials with coefficients in the $\mathcal T$-classes. By construction,
$R_1(t,\xi)\in\mathcal P\{1\}$.

\subsubsection{The diagonalisation hierarchy} We start with a system
\begin{equation}
   \D_t V^{(0)} = \big(\mathcal D(t)  +  R_1(t,\xi) \big) V^{(0)} 
\end{equation}
with $P_1(t,\xi) \in \mathcal P\{1\}$. Before setting up the complete hierarchy, we will discuss its first step. Similar to Section~\ref{sec:3:diag} we construct a matrix $N^{(1)}(t,\xi)\in{\mathcal P}\{1\}$ such that
\begin{multline}
    \big(\D_t -\mathcal D(t) - R_1(t,\xi)\big) (\mathrm I + N^{(1)}(t,\xi))
    \\- (\mathrm I + N^{(1)}(t,\xi))  \big(\D_t -\mathcal D(t,\xi) - F_1(t,\xi)\big) \in{\mathcal P}\{2\}
\end{multline}
holds true for some diagonal matrix $F_1(t,\xi)\in{\mathcal P}\{1\}$. 
Collecting all terms not belonging to the right class yields again conditions for the
matrices $N^{(1)}(t,\xi)$ and $F_1(t,\xi)$. Indeed, 
\begin{equation}
  [\mathcal D(t) , N^{(1)}(t,\xi) ] = - R_1(t,\xi) + F_1(t,\xi) 
\end{equation}
must be satisfied and, therefore, we have 
\begin{equation}
   F_1(t,\xi) = \diag R_1(t,\xi), \qquad \left( N^{(1)}(t,\xi) \right)_{i,j} = \frac{\big(R_1(t,\xi)\big)_{i,j}}{\delta_i(t)-\delta_j(t)}, 
\end{equation}
while we may again choose diagonal entries to be $(N^{(1)}(t,\xi))_{i,i}=0$. As desired, this implies
$N^{(1)}(t,\xi), F_1(t,\xi)\in{\mathcal P}\{1\}$. 

Recursively, we will construct matrices $N^{(k)}(t,\xi)\in \mathcal P\{k\}$ and $F^{(k)}(t,\xi)
\in \mathcal P\{k\}$ diagonal, such that for
\begin{equation}
   N_K(t,\xi) = \mathrm I + \sum_{k=1}^K N^{(k)}(t,\xi) ,\qquad
   F_{K}(t,\xi) = \sum_{k=1}^{K} F^{(k)}(t,\xi),
\end{equation}
the estimate
\begin{multline}
    B_K(t,\xi) = \big(\D_t -\mathcal D(t) - R_1(t,\xi)\big) N_K(t,\xi)
    \\-N_K(t,\xi)  \big(\D_t -\mathcal D(t) - F_K(t,\xi)\big) \in{\mathcal P}\{K+1\}
\end{multline}
is valid. 

We just did this for $K=1$, it remains to do the recursion $k\mapsto k+1$. Assume
$B_k(t,\xi)\in\mathcal P\{k+1\}$. The requirement to be met is that
\begin{equation}
    B_{k+1}(t,\xi) - B_k(t,\xi) =  -[\mathcal D(t), N^{(k+1)}(t,\xi)] + F^{(k+1)}(t,\xi)  \mod \mathcal P\{k+2\} 
\end{equation}
for diagonal $F^{(k+1)}(t,\xi)$, which yields $F^{(k+1)}(t,\xi) = -\diag B_k(t,\xi)$ together with
\begin{equation}
    \left( N^{(k+1)}(t,\xi) \right)_{i,j} = - \frac{\big(B_k(t,\xi)\big)_{i,j}}{\delta_i(t)-\delta_j(t)} .
\end{equation}
Again, the diagonal terms can be set to zero, $(N^{(k+1)}(t,\xi))_{i,i}=0$. 
It is evident that the construction implies $F^{(k+1)}(t,\xi),N^{(k+1)}(t,\xi)\in\mathcal P\{k+1\}$
together with $B_{k+1}(t,\xi) \in \mathcal P\{k+2\}$.

The matrices $N_k(t,\xi)\in\mathcal P\{0\}$ are invertible with inverse $N^{-1}_k(t,\xi)\in\mathcal P\{0\}$ if we restrict our consideration to a sufficiently small elliptic zone $\mathcal Z_{\rm ell}(c_k)$.
The result of the above consideration can be summarised in the following lemma.

\begin{lem} Assume (B1) and (B2) and let $k\in\mathbb N$, $k\ge 1$. 
Then there exists a constant $c_k$ and matrices $N_k(t,\xi)\in\mathcal P\{0\}$, diagonal $F_{k}(t,\xi)\in\mathcal P\{1\}$ and $R_{k+1}(t,\xi)\in\mathcal P\{k+1\}$ such that
\begin{equation}
   \big(\D_t- \mathcal D(t) - R_1(t,\xi)\big) N_k(t,\xi) = N_k(t,\xi)\big(\D_t -\mathcal D(t)-F_k(t,\xi)-R_{k+1}(t,\xi)\big) 
\end{equation} 
holds true within $\mathcal Z_{\rm ell}(c_k)$. The matrix $N_k(t,\xi)$ is uniformly invertible
within this zone.  
\end{lem}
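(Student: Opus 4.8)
The plan is to mimic the hyperbolic-zone construction of Lemma~\ref{lem:3:diagLem}, with the class $\mathcal S_*\{\cdot,\cdot\}$ replaced by the polynomial class $\mathcal P\{\cdot\}$ and the eigenvalue gaps $\lambda_i-\lambda_j$ replaced by the gaps $\delta_i(t)-\delta_j(t)$ of $B(t)$. First I would record the elementary calculus of the classes $\mathcal P\{m\}$: that $\mathcal P\{m\}\cdot\mathcal P\{m'\}\subseteq\mathcal P\{m+m'\}$, that $\D_t$ raises the index by one while $\D_\xi$ lowers it by one, that within $\mathcal Z_{\rm ell}(c)$ an element of $\mathcal P\{m\}$ with $m\ge 1$ obeys the $\mathcal P\{1\}$-estimates up to a constant, and --- crucially --- that $\big(\delta_i(t)-\delta_j(t)\big)^{-1}\in\mathcal T\{0\}$ for $i\ne j$. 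The last point is where assumption (B2) enters: since $B(t)\in\mathcal T\{0\}$ one first shows $\delta_j(t)\in\mathcal T\{0\}$ by the Vieta / characteristic-polynomial differentiation argument already used in the proof of Lemma~\ref{lem:3:SymbEstDiag1}.1, and then (B2) gives a uniform lower bound $|\delta_i(t)-\delta_j(t)|\ge c_0>0$ for $t\ge t_0$, so differentiating the identity $(\delta_i-\delta_j)(\delta_i-\delta_j)^{-1}=1$ propagates the $\mathcal T\{0\}$-estimates to the reciprocal on $t\ge t_0$. Consequently, dividing an off-diagonal entry of a matrix in $\mathcal P\{m\}$ by $\delta_i(t)-\delta_j(t)$ stays in $\mathcal P\{m\}$.

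With this calculus in hand the construction is the recursion already sketched in the text. The base case $K=1$ is done there: $F_1=\diag R_1\in\mathcal P\{1\}$ and $\big(N^{(1)}\big)_{i,j}=(R_1)_{i,j}/(\delta_i-\delta_j)\in\mathcal P\{1\}$ solve the commutator equation $[\mathcal D(t),N^{(1)}]=-R_1+F_1$, and expanding $B_1$ one checks that all surviving terms --- $\D_t N^{(1)}$, $R_1 N^{(1)}$ and $N^{(1)}F_1$ --- lie in $\mathcal P\{2\}$. For the step $k\mapsto k+1$ I would compute $B_{k+1}-B_k=\big(\D_t N^{(k+1)}\big)-[\mathcal D(t),N^{(k+1)}]-R_1N^{(k+1)}+N^{(k+1)}F_k+N_kF^{(k+1)}+N^{(k+1)}F^{(k+1)}$ and isolate the only two summands not a priori in $\mathcal P\{k+2\}$, namely $-[\mathcal D(t),N^{(k+1)}]$ and $F^{(k+1)}$; requiring their sum to equal $B_k$ modulo $\mathcal P\{k+2\}$ forces $F^{(k+1)}=-\diag B_k$ (diagonal, and in $\mathcal P\{k+1\}$ since $B_k\in\mathcal P\{k+1\}$) and the off-diagonal entries $\big(N^{(k+1)}\big)_{i,j}=-(B_k)_{i,j}/\big(\delta_i(t)-\delta_j(t)\big)\in\mathcal P\{k+1\}$, with the diagonal of $N^{(k+1)}$ set to zero; the product rule then gives $B_{k+1}\in\mathcal P\{k+2\}$, closing the induction.

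It remains to pass to the stated form and to verify invertibility of $N_k$. Since $N_k-\mathrm I=\sum_{j=1}^k N^{(j)}$ with $N^{(j)}\in\mathcal P\{j\}$, and within $\mathcal Z_{\rm ell}(c)$ --- where $|\xi|\le c$ and $(1+t)^{-1}\le c/(1+c)\le c$ --- every element of $\mathcal P\{m\}$ with $m\ge 1$ is of size $O(c^m)$ together with its derivatives, we get $\|N_k(t,\xi)-\mathrm I\|=O(c)$ on $\mathcal Z_{\rm ell}(c)$; choosing the zone constant $c_k$ small enough, the Neumann series converges and yields $N_k^{-1}(t,\xi)\in\mathcal P\{0\}$. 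Setting $R_{k+1}(t,\xi)=-N_k^{-1}(t,\xi)B_k(t,\xi)\in\mathcal P\{0\}\cdot\mathcal P\{k+1\}=\mathcal P\{k+1\}$ and multiplying the identity $B_k=(\D_t-\mathcal D-R_1)N_k-N_k(\D_t-\mathcal D-F_k)$ through by $N_k^{-1}$ from the left rearranges exactly into the claimed equation on $\mathcal Z_{\rm ell}(c_k)$, with $F_k=\sum_{j=1}^kF^{(j)}$ diagonal in $\mathcal P\{1\}$. The main obstacle is the symbolic calculus itself, and specifically the reciprocal estimate $(\delta_i-\delta_j)^{-1}\in\mathcal T\{0\}$: once that (together with $\delta_j\in\mathcal T\{0\}$) has been extracted from (B2), the rest is the same bookkeeping as in Section~\ref{sec:3:diag}.
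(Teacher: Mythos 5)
Your proof is correct and follows essentially the same route as the paper, which presents this construction (initial step, the $K=1$ case, the recursion $k\mapsto k+1$, and the shrinking of the zone constant for invertibility) in the two subsections preceding the lemma; your added care about why $(\delta_i(t)-\delta_j(t))^{-1}\in\mathcal T\{0\}$ for $t\ge t_0$, and about reading $\mathcal P\{m\}\hookrightarrow\mathcal P\{1\}$ in the sense of estimates on the elliptic zone, fills in exactly what the paper leaves implicit. The only blemish is a wording slip in the recursion step ("requiring their sum to equal $B_k$" should be $-B_k$ modulo $\mathcal P\{k+2\}$), but the formulas you then write for $F^{(k+1)}$ and $N^{(k+1)}$ are the correct ones and coincide with the paper's.
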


It is worth having a closer look at the upper-left corner entry of the matrix $F_k(t,\xi)$ and consequences for them based on Assumption (B3).
The above lemma implies that modulo $\mathcal P\{3\}$ the entry is of the form
\begin{equation}\label{eq:4:f1k}
  f_1^{(k)}(t,\xi) = \mathrm i \sum_{i,j=1}^d \alpha_{i,j}(t) \xi_i\xi_j + \sum\nolimits_{i=1}^d \beta_i(t) \xi_i +  \gamma(t)
   \mod \mathcal P\{3\}
\end{equation}
with $\alpha_{i,j}(t)\in \mathcal T\{0\}$, $\beta_i(t)\in\mathcal T\{0\}$ and $\gamma(t)\in\mathcal T\{1\}$. On the other hand, modulo $\mathcal O(t^{-1})$
the eigenvalues of $F_k(t,\xi)$ and of $A(t,\xi)$ coincide. Since (B3) is a spectral condition implying that $0$ is a local (quadratic) minimum of an eigenvalue branch contained in the complex upper half-plane, 
some terms in \eqref{eq:4:f1k} have to vanish. In particular we see that $\beta_i(t)$ has to decay, $\beta_i(t)\in\mathcal T\{1\}$, and also that the real part of the quadratic matrix $(\alpha_{i,j}(t))_{i,j}$ is {positive definite} modulo $\mathcal T\{1\}$. The latter is a direct consequence of the non-degeneracy of that minimum.

\begin{cor}\label{cor:sysDP:par-terms} Assume (B1), (B2) and B(3) and let $k\ge 1$. Then modulo $\mathcal P\{3\}$ the upper-left  corner entry of $F_k(t,\xi)$ satisfies
\begin{equation}
  f_1^{(k)}(t,\xi) = \mathrm i \xi^\top {\boldsymbol\alpha(t)} \xi + \boldsymbol\beta(t)^\top \xi + \gamma(t) 
   \mod \mathcal P\{3\}
\end{equation}
with $\boldsymbol\alpha(t)\in\mathcal T\{0\}\otimes\mathbb C^{n\times n}$ having positive definite real part uniform in $t\ge t_0$ for $t_0$ sufficiently large, $\boldsymbol\beta(t)\in\mathcal T\{1\}\otimes \mathbb C^n$ and $\gamma(t)\in\mathcal T\{1\}$. 
\end{cor}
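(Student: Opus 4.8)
The plan is to follow the two-step reasoning indicated before the statement: first extract the polynomial template of $f_1^{(k)}$ modulo $\mathcal P\{3\}$ directly from the diagonalisation hierarchy, and then identify $f_1^{(k)}$, up to an $\mathcal O(t^{-1})$ error on $\mathcal Z_{\rm ell}(c_k)$, with the eigenvalue branch of the full symbol of \eqref{eq:4:CP} through $\xi=0$, so that the spectral content of (B3) can be transcribed into the asserted properties of $\boldsymbol\alpha,\boldsymbol\beta,\gamma$.

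\emph{Step 1 (shape).} Since $F_k=\sum_{j=1}^k F^{(j)}$ with $F^{(j)}\in\mathcal P\{j\}$, and since on the bounded-frequency zone $\mathcal Z_{\rm ell}(c_k)$ the levels with $j\ge 3$ already belong to $\mathcal P\{3\}$ (there one may trade a power of $\xi$ for a constant, and the $\mathcal T$-classes are nested), only $[F^{(1)}]_{11}$ and $[F^{(2)}]_{11}$ survive modulo $\mathcal P\{3\}$. Reading off the $\mathcal P\{1\}$- and $\mathcal P\{2\}$-memberships produced by the diagonalisation lemma gives exactly the template \eqref{eq:4:f1k}, with \emph{a priori} $\gamma(t)\in\mathcal T\{1\}$, $\beta_i(t)\in\mathcal T\{0\}$ and, after factoring an $\mathrm i$ out of the quadratic part, $\alpha_{i,j}(t)\in\mathcal T\{0\}$. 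This is pure bookkeeping within the $\mathcal T$- and $\mathcal P$-calculus.

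\emph{Step 2 (eigenvalue identification).} From the operator identity of the diagonalisation lemma one has $N_k^{-1}(\mathcal D+R_1)N_k=\mathcal D+F_k+R_{k+1}+N_k^{-1}\D_t N_k$ on $\mathcal Z_{\rm ell}(c_k)$, and since $N_k=\mathrm I+(\text{a term in }\mathcal P\{1\})$ both $N_k^{-1}\D_t N_k$ and $R_{k+1}$ are $\mathcal O(t^{-1})$ there, with $R_{k+1}$ moreover of $\xi$-order $\ge 3$. Hence the $(1,1)$-entry of $\mathcal D+F_k$ (which is $f_1^{(k)}$, as $\delta_1\equiv0$) differs, after one further diagonalising conjugation, only by an $\mathcal O(t^{-1})$ term modulo $\mathcal P\{3\}$ from the true eigenvalue. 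Combining with $\mathcal D+R_1=M^{-1}\bigl(A(t,\xi)+\mathrm i B(t)\bigr)M+\mathcal O(t^{-1})$ shows that $f_1^{(k)}(t,\xi)$ coincides, modulo $\mathcal O(t^{-1})$ and uniformly on $\mathcal Z_{\rm ell}(c_k)$, with the eigenvalue $\mu_1(t,\xi)$ of $A(t,\xi)+\mathrm i B(t)$ perturbing $\mathrm i\delta_1(t)=0$; here (B2) keeps $0$ isolated from the remaining eigenvalues $\mathrm i\delta_j(t)$, so $\mu_1(t,\cdot)$ is smooth near $\xi=0$ with Taylor coefficients in the requisite $\mathcal T$-classes in $t$.

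\emph{Step 3 (consequences of (B3)), and the main obstacle.} By (B1) the matrix $A(t,\xi)+\mathrm i B(t)$ has Hermitian part $A(t,\xi)$ and nonnegative anti-Hermitian part $B(t)$, so $\Im\mu_1(t,\xi)\ge 0$ while $\mu_1(t,0)=0$; thus $\xi=0$ is a minimum of $\xi\mapsto\Im\mu_1(t,\xi)$, which forces the linear part of $\Im\mu_1$ to vanish and its quadratic part $\xi^\top(\Re\boldsymbol\alpha(t))\xi$ to be nonnegative. The uniform Kalman rank condition (B3) makes this minimum quantitatively non-degenerate: the Kawashima--Shizuta computation, in the parameter-dependent form used by Beauchard--Zuazua, relates the Hessian of $\Im\mu_1$ at $\xi=0$ two-sidedly to $\sum_j\epsilon_j\|B(t)A(t,\xi)^j v\|^2$, so (B3) upgrades nonnegativity to $\xi^\top(\Re\boldsymbol\alpha(t))\xi\gtrsim|\xi|^2$ uniformly for $t\ge t_0$, and the decay of the sub-quadratic coefficients $\boldsymbol\beta(t),\gamma(t)$ into $\mathcal T\{1\}$ follows by matching the remaining terms of $f_1^{(k)}$ with those of $\mu_1(t,\xi)$ under the constraint $\mu_1(t,0)\equiv0$. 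The hard point is precisely this transcription: (B3) is an \emph{algebraic} condition at frozen $(t,\xi)$, whereas one needs the conclusions with constants \emph{uniform} in $t$ and with correct decay bookkeeping for all $t$-derivatives, which requires running the perturbation expansion of $\mu_1$ with explicit remainder control over the whole elliptic zone (and a choice of the $\epsilon_j$ making the resulting constants uniform) rather than merely pointwise; everything else is symbol calculus.
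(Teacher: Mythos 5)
Your proposal follows essentially the same route as the paper's own (admittedly terse) justification: the polynomial template and its a priori $\mathcal T$-classes are read off from the diagonalisation hierarchy, the $(1,1)$-entry is identified modulo $\mathcal O(t^{-1})$ with the eigenvalue branch of the full symbol through $\xi=0$, and (B1)--(B3) are invoked as a spectral condition forcing a non-degenerate quadratic minimum of the imaginary part of that branch at $\xi=0$. The one caveat is that your derivation of $\boldsymbol\beta(t)\in\mathcal T\{1\}$ (``by matching under $\mu_1(t,0)\equiv 0$'') is no more detailed than the paper's one-line assertion --- the minimum argument controls only the imaginary part of the linear coefficient --- but this is precisely the level of detail at which the paper itself leaves the corollary.
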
 

Later on we will see that the essential information for deducing diffusion phenomena and related asymptotic properties is contained in the terms
described in this corollary. Following methods of \cite{Wirth:2009} it is possible to relax assumption (B2) to the weaker requirement that $0$ is simple eigenvalue of 
$B(t)$ for all $t$ and uniformly separated from the remaining part of the spectrum of $B(t)$. Then a block-diagonalisation scheme can be established which
separates the corresponding mode and proves analogues of the above statements. 

\subsection{Asymptotic integration and small frequency expansions}
We consider the transformed problem in $V^{(k)}(t,\xi) = N_k(t,\xi) V^{(0)}(t,\xi)$, 
\begin{equation}\label{eq:sysDP:eq:CPk}
  \mathrm D_t V^{(k)}(t,\xi) = \big( \mathcal D(t) + F_k(t,\xi) + R_{k+1} (t,\xi) \big) V^{(k)}(t,\xi), 
\end{equation}
and reformulate this as integral equation for its fundamental solution. Denoting it by
$\mathcal E_k(t,s,\xi)$, we know that it solves the above equation to matrix initial data $\mathcal E_k(s,s,\xi) = \mathrm I\in\mathbb C^{d\times d}$. 

Let $\Theta_k(t,s,\xi)$ be the fundamental solution to the diagonal system $\mathrm D_t - \mathcal D(t)-F_k(t,\xi)$. Then 
\begin{equation}
   \Theta_k(t,s,\xi) = \begin{pmatrix} \Xi_k(t,s,\xi) & 0 \\ 0 & \tilde\Theta_k(t,s,\xi) \end{pmatrix},
\qquad    \| \tilde\Theta_k(t,s,\xi) \| \lesssim \mathrm e^{-\tilde c ({t-s})} 
\end{equation}
holds true for $t\ge s$ uniformly within $\mathcal Z_{\rm ell}(\epsilon)$ for $\epsilon\le c_k$ sufficiently small.
Here, 
\begin{equation}
 \Xi_k(t,s,\xi) = \exp\left(\mathrm i \int_s^t f_1^{(k)}(\theta,\xi) \mathrm d\theta\right)
\end{equation}
gives (for $k=2$) the fundamental solution to a parabolic problem
and $\tilde \Theta_k(t,s,\xi)$ is exponentially decaying as the fundamental solution of a dissipative system.
Furthermore, the matrix $\mathcal E_k(t,s,\xi)$ satisfies the Volterra integral equation
\begin{equation}
   \mathcal E_k(t,s,\xi)  = \Theta_k(t,s,\xi) + \int_s^t \Theta_k(t,\theta,\xi) R_{k+1}(\theta,\xi) 
   \mathcal E_k(\theta,s,\xi) \mathrm d\theta.
\end{equation}
We solve this equation using the Neumann series
\begin{multline}\label{sysDP:eq:NeumannS}
  \mathcal E_k(t,s,\xi) = \Theta_k(t,s,\xi) + \sum_{\ell=1}^\infty \mathrm i^\ell
  \int_s^t \Theta_k(t,t_1,\xi) R_{k+1}(t_1,\xi)\int_s^{t_1} \cdots\\\cdots \int_s^{t_{\ell-1}} 
   \Theta_k(t_{\ell-1},t_\ell,\xi) R_{k+1}(t_\ell,\xi)\mathrm d t_\ell \cdots \mathrm d t_1.
\end{multline}
This series converges and can be estimated by 
\begin{equation}
    \| \mathcal E_k(t,s,\xi) \| \le \exp\left(\int_s^t \| R_{k+1}(\theta,\xi) \|\mathrm d\theta\right).
\end{equation}
Based on the estimates for the remainder term 
$R_{k+1}(t,\xi) \in\mathcal P\{k+1\}$, we even obtain uniform convergence 
within the smaller zone $\mathcal Z_{\rm ell}(c_k)\cap \{ t|\xi|^{(k+1)/2} \le \delta \}$
for any constant $\delta$  and $\| \mathcal E_k(t,s,\xi) - \Theta_k(t,s,\xi) \| \to 0$ as
$c_k\to0$ for fixed $\delta>0$ as soon as we choose $k\ge 1$. 

We can obtain a slightly better estimate based on the uniform invertibility of $\Xi_k(t,s,\xi)$
for bounded $t|\xi|^2$ in consequence of Corollary~\ref{cor:sysDP:par-terms}.

\begin{lem}\label{lem:sysDP:sol_est-1}
   Let $k\ge 2$ and $\delta>0$. Then the fundamental solution $\mathcal E_k(t,s,\xi)$ 
   satisfies the uniform bound
   \begin{equation}
      \| \mathcal E_k(t,s,\xi) \| \le C_k  | \Xi_k(t,s,\xi) |
      ,\qquad  t\ge s\ge t_0, \quad t|\xi|^2\le \delta,
   \end{equation}
   for some constant $C_k>0$ depending on $t_0$, $\delta$ and $k$.   
\end{lem}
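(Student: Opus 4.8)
The plan is to peel off the scalar parabolic factor $\Xi_k$ and reduce the claim to the uniform boundedness of a renormalised fundamental solution. Since $\Xi_k(t,s,\xi)=\exp\big(\mathrm i\int_s^t f_1^{(k)}(\theta,\xi)\,\mathrm d\theta\big)$ is a nonvanishing scalar, I would set $\mathcal E_k(t,s,\xi)=\Xi_k(t,s,\xi)\,\mathcal G_k(t,s,\xi)$; because $f_1^{(k)}\mathrm I$ is scalar it commutes with everything and leaves $R_{k+1}$ untouched, so a one-line computation shows that $\mathcal G_k=\Xi_k^{-1}\mathcal E_k$ is the fundamental solution of
\[
   \D_t \mathcal G_k=\big(\mathcal D(t)+F_k(t,\xi)-f_1^{(k)}(t,\xi)\mathrm I+R_{k+1}(t,\xi)\big)\mathcal G_k,\qquad \mathcal G_k(s,s,\xi)=\mathrm I .
\]
It then suffices to show $\|\mathcal G_k(t,s,\xi)\|\le C_k$ uniformly for $t\ge s\ge t_0$ and $t|\xi|^2\le\delta$ inside $\mathcal Z_{\rm ell}(c_k)$.

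The first and main step is to control the renormalised diagonal flow $\widehat\Theta_k:=\Xi_k^{-1}\Theta_k$, i.e.\ the fundamental solution of $\D_t-\big(\mathcal D+F_k-f_1^{(k)}\mathrm I\big)$. As $\mathcal D+F_k$ is diagonal, $\widehat\Theta_k=\diag\big(1,\,\Xi_k^{-1}\tilde\Theta_k\big)$, and its $j$-th diagonal entry for $j\ge2$ has modulus $\exp\!\big(-\int_s^t\big(\delta_j(\theta)+\Im(F_k)_{jj}(\theta,\xi)-\Im f_1^{(k)}(\theta,\xi)\big)\,\mathrm d\theta\big)$. I would bound the exponent from below using: (i) by (B2), $\delta_j(\theta)\ge\delta_0>0$ for $\theta\ge t_0$ and $j\ge2$; (ii) since $(F_k)_{jj}\in\mathcal P\{1\}$, $\big|\int_s^t\Im(F_k)_{jj}\,\mathrm d\theta\big|\le\varepsilon(t-s)+C\log(1+(t-s))+C$ with $\varepsilon$ as small as we like after shrinking $c_k$; and (iii) by Corollary~\ref{cor:sysDP:par-terms}, $\Im f_1^{(k)}=\Re\big(\xi^\top\boldsymbol\alpha(\theta)\xi\big)+O\big(|\xi|(1+\theta)^{-1}+(1+\theta)^{-1}+|\xi|^3\big)$ with $\Re\boldsymbol\alpha$ positive definite, so on $\{t|\xi|^2\le\delta\}$ the parabolic integral lies in $[0,C\delta]$ and the rest of $\int_s^t\Im f_1^{(k)}$ is bounded except for a single $O(\log(1+(t-s)))$ from the $(1+\theta)^{-1}$ term. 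Together these give that the exponent is $\ge(\delta_0-\varepsilon)(t-s)-C\log(1+(t-s))-C(1+\delta)$, bounded below uniformly in $t\ge s$; hence $\|\widehat\Theta_k(t,s,\xi)\|\le C$. The same estimates show $|\Xi_k(t,s,\xi)|$ is bounded above and below by positive constants on this region (the ``uniform invertibility of $\Xi_k$ for bounded $t|\xi|^2$''), and this is exactly where the hypothesis $k\ge2$ is used, ensuring via Corollary~\ref{cor:sysDP:par-terms} that $\Xi_k$ genuinely carries the parabolic term $\Re(\xi^\top\boldsymbol\alpha\xi)$.

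The second step is a Duhamel/Gronwall argument on $\mathcal G_k(t,s,\xi)=\widehat\Theta_k(t,s,\xi)+\int_s^t\widehat\Theta_k(t,\theta,\xi)\,R_{k+1}(\theta,\xi)\,\mathcal G_k(\theta,s,\xi)\,\mathrm d\theta$. With $\|\widehat\Theta_k(t,\theta,\xi)\|\le C$ from the first step, Gronwall's inequality gives $\|\mathcal G_k(t,s,\xi)\|\le C\exp\big(C\int_s^t\|R_{k+1}(\theta,\xi)\|\,\mathrm d\theta\big)$. Since $R_{k+1}\in\mathcal P\{k+1\}$, $\|R_{k+1}(\theta,\xi)\|\lesssim\sum_{|\alpha|\le k+1}(1+\theta)^{-(k+1-|\alpha|)}|\xi|^{|\alpha|}$, and a term-by-term check shows that for $k\ge2$ the $\theta$-integral of each summand is uniformly bounded on $\{t|\xi|^2\le\delta,\,|\xi|\le c_k\}$: the summands with $|\alpha|\le k-1$ have convergent $\theta$-integrals; the $|\alpha|=k$ summand contributes $|\xi|^k\log(1+(t-s))\lesssim\delta^{k/2}\log(1+t)/t^{k/2}$; and the $|\alpha|=k+1$ summand contributes $|\xi|^{k+1}(t-s)\le\delta|\xi|^{k-1}$. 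Hence $\|\mathcal G_k(t,s,\xi)\|\le C_k$, and multiplying through by the scalar $|\Xi_k(t,s,\xi)|$ yields $\|\mathcal E_k(t,s,\xi)\|\le C_k|\Xi_k(t,s,\xi)|$.

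The main obstacle is the second paragraph: one must show that the uniform spectral gap $\delta_j(t)\ge\delta_0$ supplied by (B2) absorbs the (only logarithmically growing) integral of the lower-order phase $\Im f_1^{(k)}$. The structural facts of Corollary~\ref{cor:sysDP:par-terms} --- positivity of $\Re\boldsymbol\alpha$ and decay of $\boldsymbol\beta$ --- are exactly what make this absorption work, and they also dictate the restriction $k\ge2$ and the precise shape $t|\xi|^2\le\delta$ of the admissible region; everything else is routine Gronwall/Neumann-series bookkeeping.
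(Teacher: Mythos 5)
Your proof is correct and follows essentially the same route as the paper's (very terse) argument: peel off the scalar factor $\Xi_k$, use the uniform bounds on the renormalised diagonal propagator coming from the spectral gap in (B2) and the structure of $f_1^{(k)}$ in Corollary~\ref{cor:sysDP:par-terms}, and close with a Neumann-series/Gronwall bound using the uniform integrability of $\|R_{k+1}\|$ on $\{t|\xi|^2\le\delta\}$ for $k\ge 2$. You merely supply the bookkeeping (the exponent estimates for $\widehat\Theta_k$ and the term-by-term check of $\int\|R_{k+1}\|$) that the paper leaves implicit.
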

\begin{proof}
 Multiplying \eqref{sysDP:eq:NeumannS} by $\Xi_k^{-1}(t,s,\xi)$ in combination with the uniform
 bounds on $\Theta_k(t,s,\xi)$ yields directly
 \begin{align*}
  \| \mathcal E_k(t,s,\xi) \|& \le | \Xi_k(t,s,\xi) | \left\| \Xi_k^{-1}(t,s,\xi)\Theta_k(t,s,\xi) 
 +\cdots \right\|\\&
  \le   | \Xi_k(t,s,\xi) | \exp\left(C' \int_s^t \|R_{k+1}(\theta,\xi)\|\mathrm d\theta \right),
\end{align*}
whenever $t|\xi|^2\le\delta$.  Furthermore, for $k\ge 2$ the remaining integral is uniformly 
bounded on this set.
\end{proof}

\subsection{Lyapunov functionals and parabolic type estimates}\label{sec:4.3}
In this section we will partly follow the considerations of Beauchard--Zuazua, \cite{BZ:2011}, and explain how condition (B3) of Section~\ref{sec:4.2} allows to derive parabolic type decay estimates for solutions to the Cauchy problem \eqref{eq:4:CP}. The construction in \cite{BZ:2011} was inspired by the Lyapunov functionals used by Villani \cite{Villani:2011}.

\begin{lem}\label{eq:sysDP:KalmEst}
Assume (B1), (B2), (B3). Then all solutions to \eqref{eq:4:CP} satisfy the point-wise estimate
\begin{equation}
   \| \widehat U(t,\xi) \|^2 \le C \mathrm e^{-\gamma t [\xi]^2 } \|\widehat U_0(\xi)\|^2,\qquad [\xi] = |\xi|/\langle \xi\rangle \simeq \min\{|\xi|,1\},
\end{equation}
in Fourier space with constants $C$ and $\gamma$ depending only on the coefficient matrices $A_k(t)$ and $B(t)$.
\end{lem}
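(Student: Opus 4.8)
The plan is to freeze the frequency and run a hypocoercivity argument of Villani type, in the form adapted to partially dissipative hyperbolic systems by Beauchard--Zuazua \cite{BZ:2011}, with all weights chosen to track the parabolic scale $[\xi]^2$. Fix $\xi\ne0$ and write $X(t)=\widehat U(t,\xi)$, so that \eqref{eq:4:CP} becomes the linear ODE $\partial_t X=(\mathrm i A(t,\xi)-B(t))X$ with $A(t,\xi)=\sum_k A_k(t)\xi_k$ self-adjoint by (B1) and $B(t)\ge0$. The basic energy identity $\frac{\mathrm d}{\mathrm dt}\|X(t)\|^2=-2\langle B(t)X(t),X(t)\rangle\le 0$ then holds; in particular $\|X(t)\|\le\|X(s)\|$ for $t\ge s$, which already settles the estimate on the range where $t[\xi]^2$ is bounded (there $\mathrm e^{-\gamma t[\xi]^2}\simeq 1$). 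Setting $\widehat A(t,\xi)=A(t,\xi)/|\xi|$ I would record that, since $A_k,B\in\mathcal T\{0\}$, the matrix $\widehat A(t,\xi)$, the matrix $B(t)$ and all their commutators are uniformly bounded with $t$-derivatives of size $O((1+t)^{-1})$, and that (B2)--(B3) --- being homogeneous of degree $0$ after this rescaling --- amount to a uniform Kalman-rank condition: $\sum_{j=1}^{d-1}\epsilon_j\|B(t)\widehat A(t,\xi)^j v\|^2\gtrsim\|v\|^2$ for $t\ge t_0$, all $v\in\mathbb C^d$ and all $\xi\ne0$.

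Next I would introduce the Lyapunov functional. Following the cascade construction of \cite{BZ:2011} (inspired by \cite{Villani:2011}), set $\Lambda_0=B$ and define $\Lambda_k$ recursively as a suitably symmetrised commutator of $\Lambda_{k-1}$ with $\widehat A(t,\xi)$, so that $\Lambda_k$ is uniformly bounded with $\partial_t\Lambda_k=O((1+t)^{-1})$, and put
\[
  \mathcal L(t,\xi)=\|X(t)\|^2+\sum_{k=1}^{d-1}\epsilon_k\,\psi_k(\xi)\,\Im\,\langle\Lambda_{k-1}(t,\xi)X(t),\,\Lambda_k(t,\xi)X(t)\rangle,
\]
where $\psi_k(\xi)$ is an appropriate power of $[\xi]$ (so that $0<\psi_k(\xi)\le 1$) and $0<\epsilon_{d-1}\ll\cdots\ll\epsilon_1\ll1$ are constants fixed at the end. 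Since every pairing is $O(\|X\|^2)$ uniformly in $(t,\xi)$ and each prefactor $\epsilon_k\psi_k(\xi)$ is at most $\epsilon_k$, choosing the $\epsilon_k$ small gives the equivalence $\tfrac12\|X(t)\|^2\le\mathcal L(t,\xi)\le\tfrac32\|X(t)\|^2$.

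The heart of the proof is the differential inequality for $\mathcal L$. Differentiating along the flow, the $\|X\|^2$ term yields $-2\langle BX,X\rangle$, covering the level $j=0$; each correction term, by the commutator structure of the $\Lambda_k$, produces from the transport part $\mathrm i A(t,\xi)$ a coercive contribution comparable to $-c\,\epsilon_k[\xi]^2\|\Lambda_k X\|^2$ together with couplings to the neighbouring levels $k\mp1$ absorbed by Cauchy--Schwarz thanks to the scale separation $\epsilon_k\ll\epsilon_{k-1}$; the dissipative part $B(t)$ and the commutator remainders contribute terms of size $[\xi]^2\|X\|\sum_j\|\Lambda_j X\|$, again controlled by Cauchy--Schwarz; and the genuinely new, time-dependent terms --- in which a $\partial_t$ falls on $\widehat A$ or on $\Lambda_k$ --- are bounded by $C\,\epsilon_k\psi_k(\xi)(1+t)^{-1}\|X\|^2$. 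Summing over $k$ and invoking the uniform Kalman condition to collapse $\sum_k\epsilon_k\|\Lambda_k X\|^2+\langle BX,X\rangle\gtrsim\|X\|^2$, one arrives at
\[
  \frac{\mathrm d}{\mathrm dt}\mathcal L(t,\xi)\le(-c'\,[\xi]^2+E(t,\xi))\,\mathcal L(t,\xi),\qquad t\ge t_0,
\]
with $c'>0$ and $0\le E(t,\xi)\lesssim(1+t)^{-1}$ depending only on the matrices $A_k(\cdot),B(\cdot)$. Then I would fix the threshold $t_1=t_1(\xi)$ beyond which $E(t,\xi)\le\tfrac12 c'[\xi]^2$ (on the complementary, bounded range of $t[\xi]^2$ the assertion is the trivial energy bound), so that for $t\ge t_1$ Gr\"onwall's inequality gives $\mathcal L(t,\xi)\le\mathcal L(t_1,\xi)\,\mathrm e^{-\frac12 c'[\xi]^2(t-t_1)}$; combining this with $\mathcal L(t_1,\xi)\simeq\|X(t_1)\|^2\le\|X(0)\|^2$ and with $[\xi]^2 t_1$ bounded yields $\|\widehat U(t,\xi)\|^2\lesssim\mathrm e^{-\gamma t[\xi]^2}\|\widehat U_0(\xi)\|^2$ with $\gamma=\tfrac12 c'$ and a constant depending only on the coefficient matrices. (On the sub-regime $t|\xi|^2\le\delta$ one may alternatively quote Lemma~\ref{lem:sysDP:sol_est-1} together with Corollary~\ref{cor:sysDP:par-terms}.)

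The main obstacle is precisely the middle step: converting the algebraic condition (B3) into the coercive differential inequality. This needs the commutator vectors $\Lambda_k$, the weights $\psi_k(\xi)$ (as the correct powers of $[\xi]$) and the cascade of smallness parameters $\epsilon_{d-1}\ll\cdots\ll\epsilon_1$ to be chosen so that all cross-terms created by the non-commuting pair $A(t,\xi)$, $B(t)$ telescope in the right direction and the Kalman rank condition can be applied after summation, while the time-dependence is kept subordinate. It is exactly here that the $\mathcal T\{0\}$-regularity of the coefficients enters --- through $\partial_t A_k,\partial_t B=O((1+t)^{-1})$ --- and it is the reason the parabolic decay is only claimed in the rescaled variable $t[\xi]^2$ (equivalently, why the estimate is vacuous until $t\gtrsim[\xi]^{-2}$).
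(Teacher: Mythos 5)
Your argument is correct and follows essentially the same route as the paper: both proofs adapt the Beauchard--Zuazua hypocoercivity functional from \cite{BZ:2011}, observe that the only genuinely new terms come from $\partial_t$ falling on the coefficient matrices, bound those by $\mathcal O(t^{-1})$ times the (weighted) energy, and absorb them into the coercive term once $t$ exceeds a $\xi$-dependent threshold. The one place you genuinely diverge is the complementary region where the differential inequality is not yet coercive: the paper closes that case ($t|\xi|\lesssim 1$) by treating $A(t,\xi)$ as a perturbation of $B(t)$ and invoking the low-frequency diagonalisation together with Lemma~\ref{lem:sysDP:sol_est-1}, whereas you simply note that (B1) gives $\partial_t\|\widehat U(t,\xi)\|^2=-2\langle B(t)\widehat U,\widehat U\rangle\le 0$, so the claimed bound is trivially true (with a harmless constant) wherever $t[\xi]^2$ stays bounded --- and that region contains the whole set where the Lyapunov inequality fails. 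Your closing step is more elementary and self-contained, since it avoids any appeal to the machinery of the small-frequency diagonalisation; the paper's choice is presumably dictated by the fact that this machinery is developed anyway for the diffusion phenomenon later in the section. Either way the constants depend only on the coefficient matrices, as required.
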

\begin{proof}[Sketch of proof]
The proof follows essentially \cite[Sec. 2.2]{BZ:2011}, we will only explain the major steps and necessary modifications to incorporate the time-dependence of 
matrices. As the problem is $L^2$-well-posed and dissipative, it suffices to prove the statement only for $t\ge t_0$ for sufficiently large $t_0$.

For a still to be specified selection $\epsilon=(\epsilon_0,\ldots, \epsilon_{d-1})$ of positive reals, $\epsilon_j>0$, we consider the Lyapunov functional 
\begin{multline}
   \mathbb L_\epsilon[\widehat U;t,\xi] = \| \widehat U(t,\xi) \|^2 +\\ \min\{|\xi|,|\xi|^{-1}\} \, \sum_{j=1}^{d-1} \epsilon_j \Im \langle B(t) A(t,\frac{\xi}{|\xi|}) ^{j-1} \widehat U(t,\xi) , B(t) A(t,\frac{\xi}{|\xi|})^{j} \widehat U(t,\xi) \rangle.
\end{multline}
The uniform Kalman rank condition (B3) implies that for suitable choices of the parameters $\epsilon$ and for $t\ge t_0$ the two-sided estimate
\begin{equation}
  \frac14 \|\widehat U(t,\xi)\|^2  \le  \mathbb L_\epsilon[\widehat U;t,\xi] \le 4 \|\widehat U(t,\xi\|^2
\end{equation}
holds true. Therefore, all we have to do is to prove the desired estimate for $ \mathbb L_\epsilon[\widehat U;t,\xi] $ which follows from
\begin{equation}
   \partial_t  \mathbb L_\epsilon[\widehat U;t,\xi] + \gamma [\xi]^2  \mathbb L_\epsilon[\widehat U;t,\xi] \le 0
\end{equation}
for suitable $\gamma$ and suitably chosen family $\epsilon$.

Formally differentiating $\mathbb L_\epsilon[\widehat U;t,\xi]$ with respect to $t$ yields the terms considered by \cite[Sec. 2.2]{BZ:2011} together with further
terms containing derivatives of the coefficient matrices. The latter ones are bounded by 
\begin{equation}
 \mathcal O(t^{-1} ) \min\{|\xi|,|\xi|^{-1}\}  \|\widehat U(t,\xi)\|^2,
\end{equation}
which is dominated by  $\gamma [\xi]^2  \mathbb L_\epsilon[\widehat U;t,\xi]$ whenever $t|\xi|\gtrsim1$ such that choosing $\gamma$ smaller yields the desired bound.
It remains to consider $t|\xi|\lesssim1$. Here $A(t,\xi)$ can be treated as small perturbation of $B(t)$ and the diagonalisation scheme and Lemma~\ref{lem:sysDP:sol_est-1} yields the corresponding bound.
\end{proof}

We will draw a consequence from this statement. It is obtained in combination with H\"older inequality and 
the boundedness properties of Fourier transform.

\begin{cor}
Assume (B1), (B2), (B3). Then all solutions to  \eqref{eq:4:CP} satisfy 
\begin{equation}
   \| U(t,\cdot)\|_{L^q} \le C (1+t)^{-\frac n2(\frac1p-\frac1q)} \| U_0\|_{L^p_r}
\end{equation}
for all $1\le p\le 2\le q\le \infty$ and with $r\ge n(1/p-1/q)$.
\end{cor}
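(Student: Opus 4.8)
The plan is to deduce the $L^p$--$L^q$ estimate from the pointwise frequency-space bound of Lemma~\ref{eq:sysDP:KalmEst} by the standard Fourier-multiplier argument, treating small and large frequencies separately. Write the solution operator as the Fourier multiplier $U_0\mapsto U(t,\cdot)=\mathscr F^{-1}[\mathcal E(t,0,\xi)\widehat U_0(\xi)]$, where $\mathcal E(t,0,\xi)$ is the fundamental solution of \eqref{eq:4:CP}; since Lemma~\ref{eq:sysDP:KalmEst} holds for arbitrary data, it gives the operator-norm bound $\|\mathcal E(t,0,\xi)\|\le C\mathrm e^{-\gamma t[\xi]^2/2}$ with $[\xi]\simeq\min\{|\xi|,1\}$. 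Fix a smooth cut-off and split $U=U_{\mathrm{lo}}+U_{\mathrm{hi}}$ according to $|\xi|\le1$ and $|\xi|\ge1$.

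For $U_{\mathrm{lo}}$ one has $[\xi]\simeq|\xi|$ and $\langle\xi\rangle\simeq1$, so the symbol is dominated by the heat-kernel symbol $C\mathrm e^{-\gamma t|\xi|^2/4}$ and the estimate is the classical $\mathrm e^{t\Delta}$ bound: by the Hausdorff--Young inequality (for $2\le q\le\infty$), then H\"older with exponent $m$ defined by $1/m=1/p-1/q$, then Hausdorff--Young again (for $1\le p\le2$),
\[
 \|U_{\mathrm{lo}}(t,\cdot)\|_{L^q}\le\big\|\mathrm e^{-\gamma t|\xi|^2/4}\chi_{\mathrm{lo}}\big\|_{L^m}\,\|\widehat U_0\|_{L^{p'}}\lesssim(1+t)^{-\frac n2(\frac1p-\frac1q)}\|U_0\|_{L^p},
\]
using the Gaussian scaling $\int_{|\xi|\le1}\mathrm e^{-cmt|\xi|^2}\mathrm d\xi\lesssim(1+t)^{-n/2}$; no derivatives of $U_0$ enter this part.

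For $U_{\mathrm{hi}}$ the situation is easier in $t$ but subtler in regularity: since $[\xi]^2\simeq1$ there, $\|\mathcal E(t,0,\xi)\chi_{\mathrm{hi}}\|\le C\mathrm e^{-\gamma t/4}$, so this piece decays exponentially, faster than the asserted polynomial rate. Writing $U_0=\langle\D_x\rangle^{-r}g$ with $g=\langle\D_x\rangle^rU_0$, $\|g\|_{L^p}=\|U_0\|_{L^p_r}$, and factoring $\mathcal E\chi_{\mathrm{hi}}\widehat U_0=(\mathcal E\chi_{\mathrm{hi}}\langle\xi\rangle^{-r})(\langle\xi\rangle^r\widehat U_0)$, the same Hausdorff--Young/H\"older chain gives $\|U_{\mathrm{hi}}(t,\cdot)\|_{L^q}\lesssim\mathrm e^{-\gamma t/4}\,\|\langle\xi\rangle^{-r}\chi_{\mathrm{hi}}\|_{L^m}\,\|U_0\|_{L^p_r}$, and $\langle\xi\rangle^{-r}\in L^m(\{|\xi|\ge1\})$ exactly when $rm>n$, i.e. $r>n(\tfrac1p-\tfrac1q)$. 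For such $r$, combining with the low-frequency bound and absorbing $\mathrm e^{-\gamma t/4}\lesssim(1+t)^{-\frac n2(\frac1p-\frac1q)}$ proves the corollary.

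The remaining endpoint $r=n(\tfrac1p-\tfrac1q)$, and the extreme Lebesgue indices $p=1$ or $q=\infty$, is the main obstacle: there $\langle\xi\rangle^{-r}$ just fails to lie in $L^m$ at infinity and the pointwise bound of Lemma~\ref{eq:sysDP:KalmEst} no longer suffices on its own. To close this gap one uses that for $|\xi|\ge1$ the zeroth-order term $\mathrm iB(t)$ is a genuine lower-order perturbation of the principal symbol $\sum_kA_k(t)\xi_k$, so the high-frequency part can be diagonalised as in Section~\ref{sec:3:diag}; together with the exponential decay supplied by Lemma~\ref{eq:sysDP:KalmEst} (itself a consequence of (B3)), this yields a Mikhlin-type multiplier with exponentially decaying seminorms, hence $U_0\mapsto U_{\mathrm{hi}}(t,\cdot)$ is bounded on $L^p$ with norm $\lesssim\mathrm e^{-\gamma t/8}$ for $1<p<\infty$; composing with the Sobolev embedding $L^p_{n(1/p-1/q)}\hookrightarrow L^q$ (for $1<p\le q<\infty$, and at the extreme indices using that the Bessel kernel $\mathscr F^{-1}\langle\xi\rangle^{-n}$ lies in $L^1$) gives the claim with exponential decay, again absorbed into $(1+t)^{-\frac n2(\frac1p-\frac1q)}$. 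Away from this endpoint the whole argument is routine.
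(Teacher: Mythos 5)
Your proposal is correct and follows essentially the same route as the paper: split into $|\xi|\le1$ and $|\xi|\ge1$, obtain the polynomial rate from the low-frequency Gaussian bound of Lemma~\ref{eq:sysDP:KalmEst} via the Hausdorff--Young/H\"older chain, and absorb the exponentially decaying high-frequency part using the imposed regularity $r\ge n(\tfrac1p-\tfrac1q)$. The only divergence is at the high-frequency endpoint, where the paper simply combines the pointwise bound of the lemma (via Plancherel on $L^2$-based Sobolev spaces) with Sobolev embedding, so the Mikhlin-multiplier/diagonalisation machinery you invoke there is heavier than what is actually needed.
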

\begin{proof}
For $|\xi|\gtrsim 1$ the previous lemma in combination with Sobolev embedding theorem yields exponential decay under the imposed regularity. Therefore, it is enough to
consider bounded $\xi$. Then the estimate is elementary,
\begin{align*}
   \| U(t,\cdot) \|_q \le \|\widehat U(t,\cdot)\|_{q'} \le \| \mathrm e^{-\gamma t|\cdot|^2} \|_{r} \|\widehat U_0\|_{p'} \le C (1+t)^{-\frac n2(\frac1p-\frac1q)} \|U_0\|_p 
\end{align*}
provided $\supp \widehat U_0(\xi) \subset \{ \xi : |\xi|\le 1\}$ and $\frac1 r=\frac1{q'}-\frac1{p'}=
\frac1p-\frac1q$.
\end{proof}

\subsection{A diffusion phenomenon for partially dissipative hyperbolic systems}
Now we will combine the estimates of the previous section with slightly improved results 
obtained from the low-frequency diagonalisation. First we construct a parabolic reference 
problem, whose fundamental solution is given by $\Xi_2(t,s,\xi)$ and afterwards we will explain why and in what sense solutions are asymptotically equivalent.

Following Corollary~\ref{cor:sysDP:par-terms} it is reasonable to consider the `parabolic' problem\footnote{Note, that $\Re\boldsymbol\alpha(t)> 0$ in the sense of self-adjoint matrices. We will understand parabolicity in this sense.}
\begin{equation}\label{sysDP:eq:par} 
   \partial_t  w = \nabla \cdot \boldsymbol \alpha(t) \nabla w + \boldsymbol\beta (t) \cdot\nabla w + 
 \mathrm i  \gamma(t) w,\qquad w(t_0)=w_0,
\end{equation}
for a scalar-valued unknown function $w_0$. To relate the both problems, we observe that
the first row of $\mathcal E_k(t,s,0)$ tends to a limit as $t\to\infty$.  This is just a consequence of the integrability of $R_{k+1}(t,0)$ for $k\ge2$. We use this to define
\begin{equation}
   W_k(s) = \lim_{t\to\infty} e_1^\top \mathcal E_k(t,s,0).
\end{equation}
It is easy to see that $W_k(s)=W_2(s)$ for all $k$.

\begin{lem}\label{sysDP:lem:solEst-2} 
The fundamental solution $\mathcal E_k(t,s,\xi)$, $k$ sufficiently large, satisfies the estimate
\begin{equation}
   \| \mathcal E_k(t,s,\xi) - \Xi_k(t,s,\xi) e_1 W_2(s) \| \le C_k (1+t)^{-\frac12},\qquad t\ge s\ge t_0,
\end{equation}
uniformly on $|\xi|\le 1$.
\end{lem}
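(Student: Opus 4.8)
The plan is to decompose the fundamental solution according to the block structure of the diagonalised system and then quantify how the first (parabolic) mode converges while the remaining modes decay exponentially. First I would write, using the notation of Section~\ref{sec:4.2},
\begin{equation}
\mathcal E_k(t,s,\xi) = \Theta_k(t,s,\xi) + \big(\mathcal E_k(t,s,\xi) - \Theta_k(t,s,\xi)\big),
\end{equation}
and recall that $\Theta_k$ is block-diagonal with blocks $\Xi_k(t,s,\xi)$ (scalar, parabolic) and $\tilde\Theta_k(t,s,\xi)$ satisfying $\|\tilde\Theta_k(t,s,\xi)\|\lesssim \mathrm e^{-\tilde c(t-s)}$ within a sufficiently small zone $\mathcal Z_{\rm ell}(\epsilon)$. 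The exponentially decaying lower block is harmless, so the whole problem reduces to comparing the top-left scalar entry $(\mathcal E_k)_{11}$ with $\Xi_k(t,s,\xi)W_2(s)$ and checking that the remaining first-row entries $(\mathcal E_k)_{1j}$, $j\ge 2$, are $O((1+t)^{-1/2})$ uniformly on $|\xi|\le 1$.

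For the off-diagonal first-row entries I would use the Neumann series \eqref{sysDP:eq:NeumannS}: the zeroth term vanishes off the diagonal, and in each higher term at least one factor is either a $\tilde\Theta_k$-block (exponentially small in the relevant time increment) or a power of $R_{k+1}(t,\xi)\in\mathcal P\{k+1\}$, which for $k\ge 2$ contributes a uniformly integrable factor. Combined with Lemma~\ref{lem:sysDP:sol_est-1}, which bounds $\|\mathcal E_k(t,s,\xi)\|\le C_k|\Xi_k(t,s,\xi)|$ on $t|\xi|^2\le\delta$, this gives $|(\mathcal E_k)_{1j}(t,s,\xi)|\lesssim |\Xi_k(t,s,\xi)|\,\int_s^t\|R_{k+1}(\theta,\xi)\|\,\mathrm d\theta$ on the parabolic region, and by Corollary~\ref{cor:sysDP:par-terms} one has $|\Xi_k(t,s,\xi)|\lesssim \mathrm e^{-c t|\xi|^2}$ there, which is $\lesssim (1+t|\xi|^2)^{-1/2}$; on the complementary region $t|\xi|^2\ge\delta$ the decay $|\Xi_k|\lesssim \mathrm e^{-ct|\xi|^2}\lesssim (1+t)^{-1/2}$ is immediate. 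For the diagonal entry I would write $(\mathcal E_k)_{11}(t,s,\xi)=\Xi_k(t,s,\xi)\big(\Xi_k^{-1}\mathcal E_k\big)_{11}(t,s,\xi)$ and show that $\big(\Xi_k^{-1}\mathcal E_k\big)_{11}(t,s,\xi)\to W_2(s)$ as $t\to\infty$, with the error $O((1+t)^{-1/2})$ after multiplication by $|\Xi_k|$; the limit exists because $R_{k+1}(\cdot,0)$ is integrable for $k\ge2$ (this is exactly how $W_k(s)$ was defined), and the rate comes from estimating the difference $W_2(s)-\big(\Xi_k^{-1}\mathcal E_k\big)_{11}(t,s,\xi)$ by $\int_t^\infty\|R_{k+1}(\theta,\xi)\|\,\mathrm d\theta$ plus a term measuring $|\Xi_k(t,s,\xi)-\Xi_k(t,s,0)|$, both controlled using $\mathcal P\{k+1\}$-bounds and the Lipschitz dependence of $f_1^{(k)}$ on $\xi$.

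The main obstacle I expect is the uniformity in $\xi$ across the transition $t|\xi|^2\sim 1$: for $t|\xi|^2$ bounded one exploits uniform invertibility of $\Xi_k$ (Lemma~\ref{lem:sysDP:sol_est-1}) together with the convergence of the Neumann series, while for $t|\xi|^2$ large one must instead lean on the Gaussian decay $\mathrm e^{-c t|\xi|^2}$ of $\Xi_k$ itself to absorb both $\mathcal E_k$ and the reference term; matching the two regimes so that the $(1+t)^{-1/2}$ bound holds with a single constant on all of $|\xi|\le 1$ requires care, and the positive-definiteness of $\Re\boldsymbol\alpha(t)$ from Corollary~\ref{cor:sysDP:par-terms} is what makes this possible. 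The remaining steps — the exponential estimate on the lower block and the Neumann-series bookkeeping — are routine given the symbol calculus already established.
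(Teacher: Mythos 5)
Your overall architecture (compare the first row of $\mathcal E_k$ with $\Xi_k$ times a limit obtained from the Neumann series, treat the lower block as exponentially decaying, and split according to the size of $t|\xi|^2$) matches the paper's proof in the inner region $t|\xi|^2\le\delta$. There, the paper also multiplies the Neumann series by $\Xi_k^{-1}$, uses $\|R_{k+1}(\theta,\xi)\|=\mathcal O(\theta^{-3/2})$ for $k\ge 2$ to get the tail estimate $\int_t^{t_\xi}\|R_{k+1}\|\,\mathrm d\theta\lesssim t^{-1/2}$, and controls $W_k(s,\xi)-W_k(s)$ by $\mathcal O(|\xi|)$ via comparing the series at $\xi$ and at $0$. (One small structural point: $W_2(s)$ is a full row vector, so the object to compare with the first row of $\mathcal E_k$ is $\Xi_k(t,s,\xi)W_2(s,\xi)$ in all $d$ entries, not just the $(1,1)$ entry; your split into ``diagonal entry converges, off-diagonal entries vanish'' is not quite the right bookkeeping, though the same estimates repair it.)

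The genuine gap is in the outer region. Your claim that for $t|\xi|^2\ge\delta$ one has $|\Xi_k(t,s,\xi)|\lesssim \mathrm e^{-ct|\xi|^2}\lesssim (1+t)^{-1/2}$ is false: if $t|\xi|^2$ is merely bounded below by the constant $\delta$, then $\mathrm e^{-ct|\xi|^2}$ is only bounded by the constant $\mathrm e^{-c\delta}$, and Gaussian decay yields a power of $t$ only when $t|\xi|^2\gtrsim\log t$. Moreover Lemma~\ref{lem:sysDP:sol_est-1} gives no control of $\mathcal E_k$ itself outside $t|\xi|^2\le\delta$, so you cannot ``absorb $\mathcal E_k$'' by the decay of $\Xi_k$ there. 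The paper resolves this by a three-zone decomposition: for $t|\xi|^2\ge\delta\log t$ it bounds $\mathcal E_k$ directly by $\mathrm e^{-\gamma t|\xi|^2}$ using the Lyapunov-functional estimate of Lemma~\ref{eq:sysDP:KalmEst} (which your proposal never invokes) and bounds $\Xi_k$ by parabolicity, obtaining $t^{-\tilde\gamma\delta}\le t^{-1/2}$ after fixing $\delta$ large; and in the remaining logarithmic gap $\delta\le t|\xi|^2\le\delta\log t$ it runs the asymptotic-integration argument again, using that for $k$ large the remainder $R_{k+1}$ decays like $t^{-k-1+\epsilon}$ while the polynomial growth of $\Xi_k^{-1}$ is independent of $k$. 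This last step is precisely where the hypothesis ``$k$ sufficiently large'' enters, and it is absent from your proposal; without it the estimate in the intermediate zone does not close.
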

\begin{proof}
We make use of a constant $\delta>0$, to be fixed later on, to decompose the extended phase space into several zones. 

\paragraph{1} If $t|\xi|^{2} \ge \delta \log t$ with $\delta$ chosen large enough, both terms 
can be estimated separately by $\exp(-\tilde\gamma t|\xi|^2)$ for some constant $\gamma$.
This follows for the first one by  Lemma~ \ref{eq:sysDP:KalmEst} and for the second
by the parabolicity of \eqref{sysDP:eq:par}  in consequence of 
Corollary~\ref{cor:sysDP:par-terms}.  But then
\begin{equation}
\mathrm e^{-\tilde\gamma t|\xi|^2} \le \mathrm e^{-\tilde \gamma\delta \log t} = t^{-\tilde \gamma\delta} \lesssim t^{-\frac12},\qquad \tilde\gamma\delta\ge\frac12.
\end{equation} 

\paragraph{2}
If  $t|\xi|^2\le \delta$ for some $\delta$, we use the results from the asymptotic integration of the diagonalised system. First, we claim that $\Xi_k^{-1}(t,t_0,\xi)e_1^\top\mathcal E_k(t,t_0,\xi)$
converges locally uniform in $\xi$ as $t\to t_\xi$ for $t_\xi |\xi|^2 = \delta$. To see this, we use
the Neumann series representation \eqref{sysDP:eq:NeumannS} multiplied by the uniformly bounded $\Xi_k^{-1}(t,s,\xi)$ combined with Cauchy criterion. We denote the resulting limit
as $W_k(s,\xi)$ and observe that it coincides with $W_k(s)$ for $\xi=0$,
\begin{equation}
   \| W_k(s,\xi) - W_k(s) \| \lesssim |\xi|,
\end{equation}
and satisfies
\begin{equation}
  \| e_1^\top \mathcal E_k(t,s,\xi) - \Xi_k(t,s,\xi) W_k(s,\xi) \| \lesssim t^{-1}.
\end{equation}
The first of these estimates follows as uniform limit for estimates of the difference
$\Xi_k^{-1}(t,s,0)e_1^\top\mathcal E_k(t,s,0)-\Xi_k^{-1}(t,s,\xi)e_1^\top\mathcal E_k(t,s,\xi)$. 
Indeed, using the Neumann series we see that the first terms are equal, the second terms
are reduced to the estimate $\|R_{k+1}(t,0)-R_{k+1}(t,\xi)\| \lesssim |\xi| (t^{-k} + |\xi|^k)$
following directly from the definition of the $\mathcal P\{k+1\}$-classes and, therefore, 
\begin{multline}
   \|\Xi_k^{-1}(t,s,0)e_1^\top\mathcal E_k(t,s,0)-\Xi_k^{-1}(t,s,\xi)e_1^\top\mathcal E_k(t,s,\xi)\|\\
   \lesssim |\xi| \int_s^t (\theta^{-k} + |\xi|^k) \mathrm d\theta  + 
   ...
\end{multline}
and the right-hand side is uniformly bounded by $|\xi|$. Taking limits proves the estimate.
The second estimate is similar. Again using the Neumann series we see that this difference
can be estimated by
\begin{multline}
   \| \Xi_k^{-1}(t,s,\xi)  e_1^\top \mathcal E_k(t,s,\xi) - W_k(s,\xi) \| 
 \\  \le \int_t^{t_\xi} \| R_{k+1}(\tau,\xi)\| \exp\left(\int_{t_0}^{t_\xi} \|R_{k+1}(\theta,\xi) \|\mathrm d\theta
   \right)\mathrm d\tau \lesssim t^{-\frac12} 
\end{multline}
due to $\|R_{k+1}(t,\xi)\|=\mathcal O(t^{-\frac32})$ for $k\ge 2$ and $t|\xi|^2\le \delta$.

Combining both of the above estimates and using that the other rows in $\mathcal E_k$ are
exponentially decaying we get
\begin{multline}
    \|  \mathcal E_k(t,s,\xi) - \Xi_k(t,s,\xi) e_1 W_k(s) \|
      \le   \|  \mathcal E_k(t,s,\xi) - \Xi_k(t,s,\xi) e_1 W_k(s,\xi) \| \\+ \|  \Xi_k(t,s,\xi) e_1 W_k(s,\xi)- \Xi_k(t,s,\xi) e_1 W_k(s)\| \\ \lesssim t^{-\frac12} + \mathrm e^{-\tilde \gamma t |\xi|^2}|\xi| 
     \lesssim t^{-\frac12}.
\end{multline}

\paragraph{3} 
It remains to consider the logarithmic gap between both parts, i.e., $\delta \le t|\xi|^2 \le \delta\log t$. Here we use that for $k$ sufficiently large the remainder term $R_{k+1}(t,\xi)$ decays as
$t^{-k-1+\epsilon}$, while the polynomial growth rate of $\Xi^{-1}_k(t,\xi)$ is independent of 
$k$ for large $k$. Choosing $k$ large enough, the Neumann series argument gives
\begin{multline}
  \| \Xi_k^{-1}(t,s,\xi) e_1^\top \mathcal E_k(t,s,\xi) - \tilde W_k(s,\xi) \| 
  \lesssim \int_t^{\tilde t_\xi} \| \Xi_k^{-1}(\theta,s,\xi) R_{k+1}(\theta,\xi) \| \mathrm d\theta  
  \lesssim t^{-\frac12}
\end{multline}
with $\tilde t_\xi$ defined by $\tilde t_\xi |\xi|^2 = \delta \log \tilde t_\xi$ and 
$\tilde W_k(s,\xi) = \lim_{t\to\tilde t_\xi} \Xi_k^{-1} (t,s,\xi) e_1^\top \mathcal E_k(t,s,\xi)$. The
existence of the latter limit follows for large $k$ and again $\tilde W_k(s,\xi) - W_k(s)$
coincides up to order $\mathcal O(|\xi|)$.
\end{proof}

To obtain a statement in terms of the original equation, we introduce
$K(t,\xi) = M(t)N_2(t,\xi) e_1$. By definition we have 
$K(t,\xi) \in \mathcal P\{2\}$. We define further $w_0 = W U_0$ in such a way that we cancel 
the main term of the solution within $\mathcal Z_{\rm ell}(c_k)\cap \{ t|\xi| \le \delta \}$, i.e., 
we define
\begin{equation}\label{eq:4:init-data-DP}
   \widehat w_0 = W_2(t_0) N_2^{-1}(t_0,\xi) M^{-1}(t_0) \mathcal E(t_0,0,\xi) \chi(\xi) \widehat U_0
\end{equation}
with $\chi(\xi)\in C^\infty_0(\mathbb R^n)$, $\chi(\xi)=1$ near $\xi=0$ and $\mathrm{supp}\,\chi\subset B_{c_2}(0)$. Then the estimate of Lemma~\ref{sysDP:lem:solEst-2} implies the following statement. The logarithmic term is caused by comparing $\Xi_k(t,s,\xi)$ with $\Xi_2(t,s,\xi)$.

\begin{cor} Let $U(t,x)$ be solution to \eqref{eq:4:CP}. The the solution $w(t,x)$ to \eqref{sysDP:eq:par} with data given by \eqref{eq:4:init-data-DP} satisfies
\begin{equation}
\| U(t,\cdot) -  K(t,\D) 
w(t,\cdot) \|_2   \le 
    C' (1+t)^{-\frac12} \log(\mathrm e+ t).
\end{equation}
\end{cor}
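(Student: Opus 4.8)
The plan is to split the initial data dyadically in frequency and treat the low and high parts by entirely different arguments, and to prove the estimate for $t\ge t_0$ (the constant $C'$ absorbing the behaviour near $t_0$). Since the operator in \eqref{eq:4:CP} acts pointwise in $\xi$ after a Fourier transform, writing $\widehat U(t,\xi)=\chi(\xi)\widehat U(t,\xi)+(1-\chi(\xi))\widehat U(t,\xi)$ costs nothing, and $K(t,\D)w$ is automatically band-limited to $\supp\chi$ because $\widehat w(t,\xi)=\Xi_2(t,t_0,\xi)\widehat w_0(\xi)$ with $\widehat w_0$ carrying the cut-off. On $\supp(1-\chi)$ we have $|\xi|\ge\varepsilon_0>0$, hence $[\xi]^2\ge\varepsilon_0^2/(1+\varepsilon_0^2)>0$, so Lemma~\ref{eq:sysDP:KalmEst} gives $\|(1-\chi(\xi))\widehat U(t,\xi)\|\le C\mathrm e^{-ct}\|\widehat U_0(\xi)\|$ for a fixed $c>0$; integrating in $\xi$ yields $\|(1-\chi(\D))U(t,\cdot)\|_2\lesssim\mathrm e^{-ct}\|U_0\|_2$, which is negligible against $(1+t)^{-1/2}\log(\mathrm e+t)$. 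As $K(t,\D)w$ has no content there, it remains to estimate $\|\chi(\D)U(t,\cdot)-K(t,\D)w(t,\cdot)\|_2$.

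For the low-frequency part I would fix a large $k$, take $\chi$ supported in $B_{c_k}(0)$ and $t_0\ge c_k^{-1}$ (only finitely many $\xi$-derivatives of $R_{k+1}$ enter, so a single such $k$ suffices), and unwind the diagonalisation chain of Section~\ref{sec:4.2}. From $V^{(0)}=M^{-1}\widehat U$, $V^{(k)}=N_k^{-1}V^{(0)}$, $V^{(k)}(t,\xi)=\mathcal E_k(t,t_0,\xi)V^{(k)}(t_0,\xi)$ and $\widehat U(t_0,\xi)=\mathcal E(t_0,0,\xi)\widehat U_0(\xi)$ one obtains, for $t\ge t_0$ and within $\mathcal Z_{\rm ell}(c_k)$,
\[
  \chi(\xi)\widehat U(t,\xi)=M(t)N_k(t,\xi)\,\mathcal E_k(t,t_0,\xi)\,\chi(\xi)N_k^{-1}(t_0,\xi)M^{-1}(t_0)\mathcal E(t_0,0,\xi)\widehat U_0(\xi).
\]
Then I would invoke Lemma~\ref{sysDP:lem:solEst-2}, which replaces $\mathcal E_k(t,t_0,\xi)$ by the rank-one matrix $\Xi_k(t,t_0,\xi)\,e_1W_2(t_0)$ with an error $O((1+t)^{-1/2})$ uniform on $|\xi|\le1$; since the surrounding matrices $M(t)N_k(t,\xi)$, $N_k^{-1}(t_0,\xi)$, $M^{-1}(t_0)$, $\mathcal E(t_0,0,\xi)$ are uniformly bounded on $\supp\chi$ and the $\xi$-integration is over a bounded set, this error costs only $O((1+t)^{-1/2})\|U_0\|_2$ in $L^2$. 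The definitions $K(t,\xi)=M(t)N_2(t,\xi)e_1$ and \eqref{eq:4:init-data-DP} are arranged precisely so that $\widehat{K(t,\D)w}(t,\xi)=M(t)N_2(t,\xi)\,\Xi_2(t,t_0,\xi)\,e_1W_2(t_0)\,N_2^{-1}(t_0,\xi)M^{-1}(t_0)\mathcal E(t_0,0,\xi)\chi(\xi)\widehat U_0(\xi)$, so the surviving main term of $\chi(\xi)\widehat U(t,\xi)$ differs from $\widehat{K(t,\D)w}(t,\xi)$ only by (i) replacing $N_k$ by $N_2$ at times $t$ and $t_0$, and (ii) replacing $\Xi_k$ by $\Xi_2$.

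Discrepancy (i) is harmless: $N_k-N_2=\sum_{j=3}^kN^{(j)}\in\mathcal P\{3\}$ vanishes to first order in $\xi$ apart from a term of size $(1+t)^{-3}$, so after multiplication by the parabolic factor $|\Xi_k(t,t_0,\xi)|\lesssim\mathrm e^{-\tilde ct|\xi|^2}$ (resp. $|\Xi_2|\lesssim\mathrm e^{-\tilde ct|\xi|^2}$ when the mismatch sits at $t_0$), and using $|\xi|\,\mathrm e^{-\tilde ct|\xi|^2}\lesssim t^{-1/2}$ together with $\|\mathrm e^{-\tilde ct|\cdot|^2}\|_{L^2(|\xi|\le c_k)}\lesssim t^{-n/4}$, it contributes $O(t^{-1/2})$ in $L^2$. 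Discrepancy (ii) is where the logarithm originates: writing $\Xi_k-\Xi_2=\Xi_2\big(\exp(\mathrm i\int_{t_0}^t(f_1^{(k)}-f_1^{(2)})\,\mathrm d\theta)-1\big)$ and invoking Corollary~\ref{cor:sysDP:par-terms}, by which $f_1^{(k)}$ and $f_1^{(2)}$ coincide modulo $\mathcal P\{3\}$, the only non-integrable piece of the phase difference is the term quadratic in $\xi$ with coefficient in $\mathcal T\{1\}$, whose $t$-integral is $O(|\xi|^2\log(\mathrm e+t))$; multiplying by $|\Xi_2|\lesssim\mathrm e^{-\tilde ct|\xi|^2}$ and maximising over $|\xi|\le c_k$ produces exactly the $(1+t)^{-1/2}\log(\mathrm e+t)$ loss claimed.

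The genuine difficulty, I expect, is making discrepancy (ii) rigorous: one cannot bound $\Xi_k^{-1}$ globally, so — exactly as in the proof of Lemma~\ref{sysDP:lem:solEst-2} — the comparison must be run separately on the three regions $t|\xi|^2\ge\delta\log t$ (where both $\Xi_k$ and $\Xi_2$ are super-polynomially small), $t|\xi|^2\le\delta$ (the genuine parabolic regime, in which Lemma~\ref{lem:sysDP:sol_est-1} and the uniform invertibility of $\Xi_k$ are available), and the logarithmic gap $\delta\le t|\xi|^2\le\delta\log t$ (where one trades the super-polynomial decay of $R_{k+1}$, available only for large $k$, against the $k$-independent polynomial growth of $\Xi_k^{-1}$). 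Matching the limiting amplitudes $W_2(t_0)$, $W_k(t_0,\xi)$ and $\tilde W_k(t_0,\xi)$ across these regions — they differ from one another only at order $O(|\xi|)$ — is what ultimately forces the logarithmic factor and requires the most careful bookkeeping.
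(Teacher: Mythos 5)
Your proposal is correct and follows exactly the route the paper intends: the paper derives the corollary in one line from Lemma~\ref{sysDP:lem:solEst-2} plus the remark that the logarithm arises from comparing $\Xi_k$ with $\Xi_2$, and your argument (exponential decay of the high-frequency part via Lemma~\ref{eq:sysDP:KalmEst}, unwinding the diagonalisation, substituting $\Xi_k(t,t_0,\xi)e_1W_2(t_0)$ for $\mathcal E_k(t,t_0,\xi)$, and tracking the $N_k$ versus $N_2$ and $\Xi_k$ versus $\Xi_2$ discrepancies) supplies precisely the details the paper omits. The only quibble is quantitative: $|\xi|^2\log(\mathrm e+t)\,\mathrm e^{-\tilde c t|\xi|^2}=O(t^{-1}\log(\mathrm e+t))$ rather than ``exactly'' $t^{-1/2}\log(\mathrm e+t)$ --- the $t^{-1/2}$ is inherited from Lemma~\ref{sysDP:lem:solEst-2} and only the logarithmic factor from the $\Xi$-comparison --- but this discrepancy is in the favourable direction.
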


As only the upper-left corner of the diagonalised problem is of interest, Assumption (B2) can be relaxed and it is sufficient to guarantee block-diagonalis\-ability. We will not go into detail here, but remark that it is sufficient to have that the eigenvalue $0$ is uniformly separated from the remaining spectrum of the family $B(t)$. Then the diagonalisation scheme can be modified based on the method of \cite{Wirth:2009} and an analogous result obtained.

%

\section{Examples and counter-examples}
Both, in Section~\ref{sec:3} and Section~ \ref{sec:4} we made symbol like assumptions on coefficients, e.g., we considered hyperbolic systems
\begin{equation}
    \D_t U = \sum_{k=1}^n A_k(t) \D_{x_k} U  
\end{equation}
for coefficient matrices $A_k(t)\in\mathcal T\{0\}$, meaning that derivatives of the coefficients
are controlled by 
\begin{equation}
  \|  \D_t^\ell A_k(t) \|  \le C_\ell \left(\frac1{1+t}\right)^{\ell}.
\end{equation}
We will use this section to show that assumptions controlling the amount of oscillations in the
time-behaviour are in fact necessary in order to control the large-time behaviour of the energy.
Yagdjian pointed out in \cite{Yagdjian:2001} the deteriorating effect time-periodic propagation 
speeds might have on energy estimates and more generally on the global existence of 
small data solutions for nonlinear wave models. The key idea behind these is based on 
Floquet's theory for periodic ordinary differential equations and in particular Borg's theorem.

We will recall these first in connection with a very simple model and then give an idea how these 
can be used to prove sharpness of estimates and sharpness of assumptions on coefficients for wave models.

\subsection{Parametric resonance phenomena}
We will restrict ourselves to the simple model
\begin{equation}\label{eq:5:CP1}
   u_{tt} - a^2(t)\Delta u = 0,\qquad u(0,\cdot)=u_0,\quad u_t(0,\cdot) = u_1,
\end{equation}
of a wave equation with variable propagation speed. Basic assumption will be that
$a^2(t)$ is positive, smooth, periodic
\begin{equation}
  a(t+1) = a(t),
\end{equation}
and non-constant. Using a partial Fourier transform with respect to the $x$-variable, this is 
equivalent to Hill's equation
\begin{equation}\label{eq:Hill}
   \widehat u_{tt} + |\xi|^2 a^2(t) \widehat u = 0
\end{equation}
with spectral parameter $|\xi|^2$ or, equivalently, the first order system
\begin{equation}\label{eq:5:CP1-sys}
  \D_t \widehat U  = \begin{pmatrix} 0 & |\xi| \\ a^2(t) |\xi|& 0 \end{pmatrix} \widehat U
\end{equation}
for $\widehat U=(|\xi|\widehat u, \D_t u)^\top$. Its fundamental matrix is again denoted as
$\mathcal E(t,s,\xi)$. Due to periodicity, it is of interest to consider the monodromy matrix
$\mathcal M(\xi)=\mathcal E(1,0,\xi)$. Since 
the trace of the matrix in \eqref{eq:5:CP1-sys} is zero and hence
$\det \mathcal M(\xi) = 1$, it has either two non-zero eigenvalues of the form $\exp(\pm \kappa(\xi))$ or is a Jordan matrix to the eigenvalue $1$. The 
number $\kappa(\xi)$ is called the Floquet exponent of \eqref{eq:Hill}. Its importance stems 
from the following lemma. It can be found in a similar form in \cite{WW:1902}.

\begin{lem}[Floquet-Lemma]\label{lem:5:Floquet}
Assume that $\mathcal M(\xi)$ is diagonalisable with Floquet exponent $\kappa(\xi)$. Then
\eqref{eq:Hill} has a fundamental system of solutions of the form
\begin{equation}\label{eq:5:Floquet-sol}
    \mathrm e^{\pm t \kappa(\xi)} f_\pm(t,\xi) 
\end{equation}
for non-vanishing $1$-periodic functions $f_\pm(t,\xi)$ depending analytically on $\xi$.
\end{lem}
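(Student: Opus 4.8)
The plan is to reduce \eqref{eq:Hill} to the first-order system \eqref{eq:5:CP1-sys} and exploit the periodicity of its coefficient matrix, following the classical Floquet argument. First I would fix $\xi$ and let $\mathcal E(t,0,\xi)$ denote the fundamental matrix of \eqref{eq:5:CP1-sys} with $\mathcal E(0,0,\xi)=\mathrm I$, and set $\mathcal M(\xi)=\mathcal E(1,0,\xi)$. The key structural observation is that, because the coefficient matrix in \eqref{eq:5:CP1-sys} is $1$-periodic in $t$, the matrix $\widehat U\mapsto \mathcal E(t+1,0,\xi)\mathcal M^{-1}(\xi)$ solves the same ODE with the same initial value at $t=0$, so by uniqueness $\mathcal E(t+1,0,\xi)=\mathcal E(t,0,\xi)\mathcal M(\xi)$ for all $t$. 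Thus the whole evolution is encoded in iterates of the monodromy matrix.

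Next, since $\mathcal M(\xi)$ is assumed diagonalisable, write $\mathcal M(\xi)=S(\xi)\diag(\mathrm e^{\kappa(\xi)},\mathrm e^{-\kappa(\xi)})S^{-1}(\xi)$, where the eigenvalues are $\mathrm e^{\pm\kappa(\xi)}$ by $\det\mathcal M(\xi)=1$ (trace-free coefficient matrix, Liouville's formula). Choose a matrix logarithm: pick a constant matrix $B(\xi)=S(\xi)\diag(\kappa(\xi),-\kappa(\xi))S^{-1}(\xi)$ with $\exp(B(\xi))=\mathcal M(\xi)$. Then define
\begin{equation}
   P(t,\xi) = \mathcal E(t,0,\xi)\exp(-tB(\xi)).
\end{equation}
A direct check using $\mathcal E(t+1,0,\xi)=\mathcal E(t,0,\xi)\mathcal M(\xi)$ and $\exp(-(t+1)B(\xi))=\exp(-B(\xi))\exp(-tB(\xi))=\mathcal M^{-1}(\xi)\exp(-tB(\xi))$ shows $P(t+1,\xi)=P(t,\xi)$, i.e.\ $P$ is $1$-periodic in $t$; it is also invertible for each $t$ (product of invertible matrices) and analytic in $\xi$ wherever $\mathcal M(\xi)$ and its eigendecomposition are. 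The columns of $\mathcal E(t,0,\xi)=P(t,\xi)\exp(tB(\xi))$, read through the eigenbasis $S(\xi)$, then give a fundamental system of the first-order system of the form $\mathrm e^{\pm t\kappa(\xi)}g_\pm(t,\xi)$ with $g_\pm$ the $1$-periodic, non-vanishing columns of $P(t,\xi)S(\xi)$; passing back to the scalar equation \eqref{eq:Hill} via the first component $|\xi|\widehat u$ yields the stated form \eqref{eq:5:Floquet-sol}, with $f_\pm(t,\xi)$ the (rescaled) first entries, which are $1$-periodic and non-vanishing because they are components of the invertible periodic matrix $P$.

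The main obstacle is the analytic dependence on $\xi$ and, relatedly, the well-definedness of the eigendecomposition and of the matrix logarithm $B(\xi)$. Analyticity of $\mathcal E(t,0,\xi)$ in $\xi$ is standard (analytic dependence of ODE solutions on parameters, since the coefficient matrix is polynomial in $\xi$), hence $\mathcal M(\xi)$ is analytic; but extracting $\kappa(\xi)$, $S(\xi)$ and $B(\xi)$ analytically requires that the two eigenvalues $\mathrm e^{\pm\kappa(\xi)}$ stay distinct, i.e.\ one works locally in $\xi$ on the set where $\mathcal M(\xi)$ is diagonalisable with simple eigenvalues, which is exactly the hypothesis of the lemma. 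On that set the eigenprojections depend analytically on $\mathcal M(\xi)$ (Riesz projection / holomorphic functional calculus), a branch of $\kappa$ can be chosen analytically, and $\log$ is holomorphic near the eigenvalues, so $B(\xi)$ and $P(t,\xi)$ inherit analyticity; the non-vanishing of $f_\pm$ is then immediate from invertibility of $P(t,\xi)$. I would state the construction locally in $\xi$ and remark that this suffices for the applications (e.g.\ Borg's theorem and the resulting instability statements), citing \cite{WW:1902} for the scalar Hill-equation form.
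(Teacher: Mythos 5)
Your proposal is correct and follows essentially the same route as the paper: reduce to the periodic first-order system, use $\mathcal E(t+1,0,\xi)=\mathcal E(t,0,\xi)\mathcal M(\xi)$, take the logarithm of the diagonalised monodromy matrix, and form the $1$-periodic factor $P(t,\xi)=\mathcal E(t,0,\xi)\exp(-tB(\xi))$, which is exactly the paper's $F(t,\xi)=\mathcal E(t,0,\xi)N(\xi)\exp(-tK(\xi))N^{-1}(\xi)$. Your additional care about the analytic $\xi$-dependence (simple eigenvalues, Riesz projections, local choice of the branch of $\kappa$) is a welcome refinement of a point the paper passes over.
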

\begin{proof} We will give a proof based on the system \eqref{eq:5:CP1-sys}. The same 
method applies to arbitrary periodic systems of differential equations.
By assumption we find an invertible matrix $N(\xi)$ 
such that
\begin{equation}\label{eq:5:5.8}
  \mathcal M(\xi) N (\xi) = N (\xi) \diag(\mathrm e^{\kappa(\xi)},-\mathrm e^{\kappa(\xi)})
  =  N(\xi)\exp\left(  K(\xi)\right)
\end{equation}
holds true with $K(\xi) = \diag(\kappa(\xi),-\kappa(\xi))$. We use the fundamental matrix to define
the function
\begin{equation}
  F(t,\xi)=  \mathcal E(t,0,\xi) N(\xi) \exp\left(-t K(\xi)\right) N^{-1}(\xi).
\end{equation}
Then a simple calculation shows that the periodicity implies 
$\mathcal E(t+1,1,\xi) =\mathcal E(t,0,\xi)$ and, therefore,  we obtain 
\begin{equation}
  F(t+1,\xi) = \mathcal E(t,0,\xi)\mathcal M(\xi) N(\xi) \mathrm e^{-K(\xi)} \mathrm e^{-t K(\xi)} N^{-1}(\xi) =F(t,\xi)
\end{equation}
based on \eqref{eq:5:5.8}.
Furthermore, by construction we see that any solution of the system \eqref{eq:5:CP1-sys} is of the form
\begin{equation}
  \widehat U(t,\xi) = \mathcal E(t,0,\xi) \widehat U_0 =  F(t,\xi) \exp\left(-t K(\xi)\right)  \widehat U_0.
\end{equation}
The statement of Lemma~\ref{lem:5:Floquet} follows by looking at individual entries of this matrix
in combination with the definition of $\widehat U$.
\end{proof}

Since $a(t)$ was assumed to be real, the above representation implies a symmetry for
the Floquet exponents. Either, they are both imaginary or they are both real. If they are imaginary,
all solutions to \eqref{eq:Hill} remain bounded, while for real Floquet exponents an exponential
dichotomy appears. Looking at the original equation \eqref{eq:5:CP1} this means that for
appropriately chosen initial data, solutions will have an exponentially increasing energy. 

For us the following statement is of interest. In the present form it is due to Colombini--Spagnolo
\cite{ColSpan:1984}, for more detailed results on Hill's equation see Magnus--Winkler 
\cite{MaWi:1966}.

\begin{lem}[Borg's theorem]\label{lem:5:borg}
Assume the coefficient $a(t)$ is locally integrable, $1$-periodic and non-constant. 
Then there exists an open interval $\mathcal J\subset \mathbb R_+$, such that for
all $|\xi|\in \mathcal J$ the Floquet exponent satisfies $\kappa(\xi)>0$.
\end{lem}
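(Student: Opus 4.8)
The plan is to translate the statement into the classical language of Hill's equation and then invoke Borg's theorem on the non-degeneracy of instability intervals. I would fix $\xi\neq0$, put $\lambda=|\xi|^2>0$, and regard \eqref{eq:Hill} as Hill's equation $\widehat u_{tt}+\lambda\,a^2(t)\,\widehat u=0$ with spectral parameter $\lambda$, equivalently the first order system \eqref{eq:5:CP1-sys} with monodromy matrix $\mathcal M(\xi)=\mathcal E(1,0,\xi)$. As already noted in the excerpt, the vanishing trace of the coefficient matrix forces $\det\mathcal M(\xi)=1$, so the eigenvalues of $\mathcal M(\xi)$ are $\mu_{\pm}$ with $\mu_{+}\mu_{-}=1$ and
\begin{equation}
 \mu_{\pm}=\tfrac{D(\lambda)\pm\sqrt{D(\lambda)^2-4}}{2},\qquad D(\lambda):=\operatorname{tr}\mathcal M(\xi).
\end{equation}
By Lemma~\ref{lem:5:Floquet}, $\mathcal M(\xi)$ having an eigenvalue of modulus different from $1$ — that is, $|D(\lambda)|>2$ — is exactly the situation of a non-trivial Floquet exponent, i.e.\ $\kappa(\xi)>0$. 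So it suffices to exhibit an open set of $\lambda>0$ on which $|D(\lambda)|>2$ and then let $\mathcal J$ be the corresponding set of $|\xi|=\sqrt\lambda$.

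Next I would recall the structure of the real-analytic function $\lambda\mapsto D(\lambda)$: the stability set $\{|D|\le2\}$ is a locally finite union of compact bands, and its complement consists of the instability intervals (spectral gaps). Since $D(0)=2$ and $\partial_\lambda D(0)=-\int_0^1 a^2(t)\,dt<0$, the first band is $[0,\mu_0]$ with $\mu_0>0$, so every non-trivial \emph{bounded} gap automatically lies in $\lambda>0$. Hence the assertion reduces to: if $a$ is non-constant, then \eqref{eq:Hill} has at least one non-empty bounded instability interval.

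This last statement is precisely Borg's theorem in the low-regularity form due to Colombini--Spagnolo \cite{ColSpan:1984} (see also Magnus--Winkler \cite{MaWi:1966} for the classical case, and \cite{Yagdjian:2001} for its use in wave models), and I regard it as the heart of the matter and the main obstacle. I would either quote it directly or sketch the mechanism: for constant $a\equiv c$ one computes $D(\lambda)=2\cos(c\sqrt\lambda)$, so every bounded gap is closed; for smooth non-constant $a$ one passes via the Liouville transformation $\tau=\int_0^t a$, $w=a^{1/2}\widehat u$ — which conjugates the monodromy, hence leaves $D$ unchanged because $a$ is $1$-periodic — to the normal form $\ddot w+(\lambda-Q(\tau))w=0$ with $Q$ periodic and $Q\equiv0$ if and only if $a$ is constant, and then a parametric-resonance analysis of $D$ near the band edges $c\sqrt\lambda\in\pi\mathbb Z$ shows that the gap attached to a non-vanishing Fourier mode of $Q$ is a genuine open interval; alternatively one argues structurally that closedness of all gaps would make the product of the two Floquet solutions a bounded, entire-in-$\lambda$, periodic solution of the third-order discriminant equation, forcing $Q$, hence $a$, to be constant. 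The case of merely locally integrable $a$ is reduced to the smooth one by approximation together with the continuity of $D(\lambda)$ in the $L^1([0,1])$-norm of $a$, exactly as carried out in \cite{ColSpan:1984}.

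Finally, choosing a non-trivial bounded gap $(\lambda_1,\lambda_2)\subset\mathbb R_+$ from the previous step and setting $\mathcal J=(\sqrt{\lambda_1},\sqrt{\lambda_2})$ gives an open interval with $\kappa(\xi)>0$ for every $|\xi|\in\mathcal J$, which is the claim.
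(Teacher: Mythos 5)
Your proposal is correct and is essentially the paper's own treatment: the paper states this lemma without proof, deferring entirely to Colombini--Spagnolo \cite{ColSpan:1984} (and Magnus--Winkler \cite{MaWi:1966}), which is exactly where you place the heart of the argument. Your surrounding bookkeeping — the discriminant $D(\lambda)$ with $\lambda=|\xi|^2$, the equivalence $\kappa(\xi)>0\Leftrightarrow|D(\lambda)|>2$, and the observation that $D(0)=2$ with $\partial_\lambda D(0)=-\int_0^1 a^2<0$ so that every nontrivial gap lies in $\lambda>0$ — is accurate and correctly reduces the lemma to the cited form of Borg's theorem.
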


In combination, the above lemmata imply that we can find initial data such that the solution
is exponentially increasing. Note, that no assumption on the size of the coefficient was made, we only used that $a(t)$ is not constant.

\begin{cor}
There exist inital data $u_0,u_1\in\mathscr S(\mathbb R^n)$ such that the solution
$u(t,x)$ to \eqref{eq:5:CP1} satisfies
\begin{equation}
    \liminf_{t\to\infty} \frac{\log \mathbb E(u;t)}t > 0.
\end{equation}
\end{cor}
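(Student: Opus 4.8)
The strategy is to combine Borg's theorem, which produces a frequency shell on which the Floquet exponent is a genuine positive number, with the Floquet representation of Lemma~\ref{lem:5:Floquet}, which then lets us prepare data exciting only the exponentially growing mode; the energy lower bound is then read off from Plancherel. First I would rewrite the energy in terms of the fundamental matrix $\mathcal E(t,s,\xi)$ of the system \eqref{eq:5:CP1-sys}: with $\widehat U=(|\xi|\widehat u,\D_t\widehat u)^\top$ one has $\mathbb E(u;t)=\frac12\int(|\xi|^2|\widehat u(t,\xi)|^2+|\widehat u_t(t,\xi)|^2)\,\mathrm d\xi=\frac12\|\widehat U(t,\cdot)\|_{L^2}^2$, so it suffices to exhibit Schwartz data for which $\|\widehat U(t,\cdot)\|_{L^2}$ grows exponentially in $t$. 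By Lemma~\ref{lem:5:borg} there is an open interval $\mathcal J\subset\mathbb R_+$ on which the Floquet exponent is real and $\kappa(\xi)>0$; on this shell the monodromy matrix $\mathcal M(\xi)=\mathcal E(1,0,\xi)$ has the two distinct real eigenvalues $\mathrm e^{\pm\kappa(\xi)}$ and is thus diagonalisable, so Lemma~\ref{lem:5:Floquet} is applicable for $|\xi|\in\mathcal J$.

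Next I would construct the data. Fix compact subintervals $\mathcal J''\Subset\mathcal J'\Subset\mathcal J$. Since the eigenvalue $\mathrm e^{\kappa(\xi)}$ of $\mathcal M(\xi)$ is simple on $\mathcal J$ and every matrix in \eqref{eq:5:CP1-sys} depends on $\xi$ only through $|\xi|$, the associated eigenline depends analytically on $|\xi|$ there, and being one-dimensional over the contractible set $\mathcal J$ it admits a smooth non-vanishing section $v(|\xi|)$. Pick $\phi\in C_0^\infty(\mathbb R_+)$ with $\supp\phi\subset\mathcal J'$ and $\phi\equiv1$ on $\mathcal J''$, and set $\widehat U_0(\xi)=\phi(|\xi|)\,v(|\xi|)$; this lies in $C_0^\infty(\mathbb R^n;\mathbb C^2)$, and the corresponding $\widehat u_0=|\xi|^{-1}(\widehat U_0)_1$, $\widehat u_1=\mathrm i(\widehat U_0)_2$ (legitimate because $|\xi|$ stays away from $0$ on $\supp\phi$) define $u_0,u_1\in\mathscr S(\mathbb R^n)$.

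Finally I would extract the growth rate. Using $\mathcal E(t+1,0,\xi)=\mathcal E(t,0,\xi)\mathcal M(\xi)$ (periodicity, exactly as in the proof of Lemma~\ref{lem:5:Floquet}) together with $\mathcal M(\xi)v(|\xi|)=\mathrm e^{\kappa(\xi)}v(|\xi|)$, the solution $\widehat U(t,\xi)=\mathcal E(t,0,\xi)\widehat U_0(\xi)$ obeys $\widehat U(t+1,\xi)=\mathrm e^{\kappa(\xi)}\widehat U(t,\xi)$, so $g(t,\xi):=\mathrm e^{-t\kappa(\xi)}\widehat U(t,\xi)$ is $1$-periodic in $t$, jointly continuous (inherited from smoothness of $\mathcal E(t,0,\xi)$ in $(t,\xi)$ and continuity of $v$ and $\kappa$), and $g(0,\xi)=v(|\xi|)\ne0$ on $\mathcal J''$. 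By compactness of $[0,1]\times\{|\xi|\in\mathcal J''\}$ and periodicity there is $c_0>0$ with $\|g(t,\xi)\|\ge c_0$ for all $t\ge0$ and $|\xi|\in\mathcal J''$. Writing $\kappa_0:=\min_{|\xi|\in\mathcal J''}\kappa(\xi)>0$ (finite and positive, since $\kappa$ is continuous on $\mathcal J$ and positive there by Lemma~\ref{lem:5:borg}), we get
\[
  \mathbb E(u;t)=\tfrac12\|\widehat U(t,\cdot)\|_{L^2}^2\ \ge\ \tfrac12\int_{|\xi|\in\mathcal J''}\mathrm e^{2t\kappa(\xi)}\|g(t,\xi)\|^2\,\mathrm d\xi\ \ge\ \tfrac{c_0^2}{2}\,\bigl|\{\xi:|\xi|\in\mathcal J''\}\bigr|\,\mathrm e^{2t\kappa_0},
\]
whence $\liminf_{t\to\infty}\log\mathbb E(u;t)/t\ge 2\kappa_0>0$.

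The only point that genuinely deserves care is the passage from the pointwise statement $\kappa(\xi)>0$ of Borg's theorem to a \emph{uniform} positive rate $\kappa_0$ on a set of positive measure: this is what upgrades mere unboundedness of the energy to a genuine positive exponential rate, and it relies just on continuity of $\kappa$ on $\mathcal J$ together with choosing the compact subinterval $\mathcal J''$. Everything else (the smooth choice of the eigenvector section $v$, joint continuity of $g$, and the recovery of Schwartz data from $\widehat U_0$) is routine.
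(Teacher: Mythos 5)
Your proof is correct and follows exactly the route the paper intends, which it leaves implicit with the remark that the corollary follows ``in combination'' from Lemmata~\ref{lem:5:Floquet} and \ref{lem:5:borg}: you prepare data on a frequency shell inside the instability interval, aligned with the expanding eigenvector of the monodromy matrix, and read off the exponential lower bound via Plancherel. The points you add beyond the paper's one-line justification --- passing to a compact subinterval to get a uniform $\kappa_0>0$ and using invertibility of $\mathcal E(t,0,\xi)$ plus periodicity to bound $\|g\|$ below --- are precisely the details needed to make the combination rigorous.
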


\subsection{Construction of coefficients and initial data}
We stay with the model \eqref{eq:5:CP1}, 
\begin{equation}\label{eq:5:CP1-a}
   u_{tt} - a^2(t)\Delta u = 0,\qquad u(0,\cdot)=u_0,\quad u_t(0,\cdot) = u_1,
\end{equation}
but drop the periodicity assumption.  The above sketched instability mechanism will be used to construct coefficient functions and data
for prescribed energetic behaviour.

To be precise, let $\tau_k\nearrow\infty$, $\delta_k$, $\eta_k$ and $n_k$ be 
sequences such that
\begin{align}
    \tau_{k+1} > \tau_k + \delta_k,\qquad \eta_k\le 1, \qquad n_k\in\mathbb N_{>0}.
\end{align}
The sequences $\tau_k$ and $\delta_k$ define disjoint intervals $\mathcal I_k=[\tau_k,\tau_k+\delta_k]$.
We will let our coefficient oscillate at least $n_k$ times on the interval $\mathcal I_k$. 
Let therefore $\phi\in C_0^\infty(\mathbb R)$ be supported in the interval $(0,1)$
and non-vanishing with $|\phi(t)|<1$ and denote by $b(t)$ its periodisation,
$b(t) = \phi(t-\lfloor t\rfloor)$.  Then we consider
\begin{equation}
  a(t) = \begin{cases} 
  1+  \eta_k b\big(\frac{n_k}{\delta_k} (t-\tau_k)\big) , \qquad &t\in \mathcal I_k ,\quad k=1,2,\ldots\\
  1,\qquad & \text{otherwise}.
  \end{cases}
\end{equation}
The function $a(t)$ is bounded,
\begin{equation}
   0< a(t) < 2,
\end{equation}
and satisfies the estimate
\begin{equation}\label{eq:5:a-est}
  | \D_t^\ell a(t) | \le C_\ell \eta_k \big(\frac{n_k}{\delta_k}\big)^\ell  ,\qquad t\in\mathcal I_k,
\end{equation}
the constants $C_\ell$ are independent of $k$.

The freedom in the choice of sequences can be used to construct examples of coefficient functions in order to prove sharpness of energy
estimates. For instance, we obtain $a(t)\in\mathcal T\{0\}$
for the choice $\tau_k \sim k$, $\delta_k \sim k/2$, $\eta_k=1$, and $n_k=1$ while 
increasing $n_k$ to $n_k=\lceil k^{1-\alpha}\rceil$, $\alpha\in(0,1)$, yields the weaker estimate 
\begin{equation}
   |\D_t^\ell a(t)|\le C_\ell \left(\frac1{1+t}\right)^{\alpha\ell}.
\end{equation}

\begin{figure}
\input{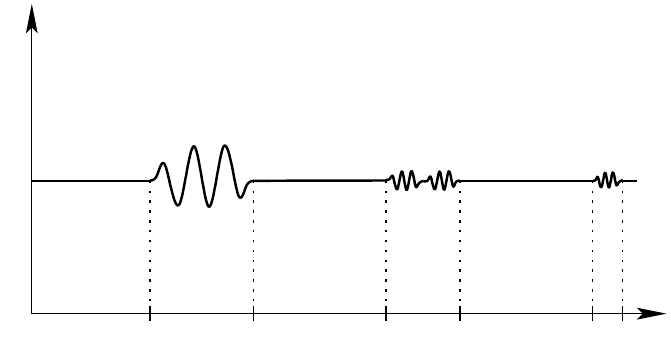_t}
\caption{Sketch of the constructed coefficient $a(t)$.}
\end{figure}

If we want to show lower bounds on the energy or the sharpness of energy estimates, we have to construct initial data. One particularly simple idea is to use a sequence of initial data
$u_{0,k}, u_{1,k}\in\mathscr S(\mathbb R^n)$ which leads to an exponential increase
of the energy within the interval $\mathcal I_k$ and still satisfies good estimates in the earlier
intervals $\mathcal I_{k'}$, $k'<k$. The latter can be achieved by adjusting the sequences
in the definition of the coefficient function. 
This idea was employed in \cite{Hirosawa:2007}, \cite{HW:2009} to discuss the sharpness of the results.

The following statement is entirely in terms of the defining sequences and provides such a lower bound on the energy behaviour of solutions to \eqref{eq:5:CP1-a}. We use $\eta_k=1$ for simplicity in order to reduce the statement to Borg's theorem in the form of Lemma~\ref{lem:5:borg}.

\begin{lem}
Assume $a(t)$ is defined in terms of the above sequences $\tau_k$, $\delta_k$
and $n_k$. Then there exists a sequences of initial data
$u_{0,k}$, $u_{1,k}$ with normalised initial energy $\mathbb E(u_k;0)= 1$ such that the solution to \eqref{eq:5:CP1-a} satisfies
\begin{equation}
   \log \mathbb E(u_k, \tau_{k}+\delta_k)   \ge 2\kappa n_k - 2 c \sum_{\ell=1}^{k-1} n_\ell 
\end{equation}
with $\kappa>0$ small and $c= \sup_s \frac{|b'(s)|}{b(s)}>0$.
\end{lem}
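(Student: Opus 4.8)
The plan is to track the solution on each oscillation interval $\mathcal I_k=[\tau_k,\tau_k+\delta_k]$ via the fundamental matrix $\mathcal E(t,s,\xi)$ of the system \eqref{eq:5:CP1-sys} and to exploit the parametric resonance mechanism (Floquet--Lemma~\ref{lem:5:Floquet} together with Borg's theorem, Lemma~\ref{lem:5:borg}) on the interval where the coefficient is genuinely periodic. First I would fix a frequency $|\xi|\in\mathcal J$, where $\mathcal J\subset\mathbb R_+$ is the open interval supplied by Borg's theorem applied to the $1$-periodic, non-constant profile $1+b(s)$; note that by construction $\eta_k=1$, so on $\mathcal I_k$ the rescaled coefficient $t\mapsto a(\tau_k+\frac{\delta_k}{n_k}s)=1+b(s)$ is exactly this profile. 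A change of variables $s=\frac{n_k}{\delta_k}(t-\tau_k)$ turns Hill's equation \eqref{eq:Hill} on $\mathcal I_k$ into Hill's equation with spectral parameter $\left(\frac{\delta_k}{n_k}\right)^2|\xi|^2$, i.e. a slowed-down copy of the periodic equation; choosing the normalisation so that $\frac{\delta_k}{n_k}|\xi|$ lands in $\mathcal J$ (this only restricts which $|\xi|$ we pick, and for each $k$ a nonempty set survives), Borg's theorem gives a positive Floquet exponent $\kappa=\kappa(|\xi|)>0$ for the one-period monodromy, hence the product of $n_k$ monodromy matrices contributes a growth factor $\ge c_0\,\mathrm e^{\kappa n_k}$ in operator norm on a suitable one-dimensional subspace (the unstable Floquet direction).

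Next I would choose the data. For each $k$ I pick $|\xi_k|$ in the admissible frequency window, take $\widehat u_{0,k},\widehat u_{1,k}$ supported in a thin shell around $|\xi|=|\xi_k|$ so that $\widehat U_k(\tau_k,\cdot)$ points (essentially) along the unstable Floquet direction of the monodromy on $\mathcal I_k$, and normalise so that $\mathbb E(u_k;0)=1$. Since outside the intervals $\mathcal I_j$ the coefficient is identically $1$, the propagator there is the free wave propagator, which is an isometry on the energy space; and on the earlier intervals $\mathcal I_j$, $j<k$, the propagator is bounded in energy norm by $\exp\!\big(c\int_{\mathcal I_j}\frac{|a'(t)|}{a(t)}\,\mathrm dt\big)$ — this is the standard energy estimate obtained from differentiating the hyperbolic energy and using Gronwall, with $c=\sup_s \frac{|b'(s)|}{b(s)}$ and $\int_{\mathcal I_j}\frac{|a'|}{a}\,\mathrm dt = n_j\int_0^1\frac{|b'(s)|}{1+b(s)}\,\mathrm ds\le c\,n_j$ after the change of variables. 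Multiplying the growth on $\mathcal I_k$ by the (possibly unfavourable) bounds on $\mathcal I_1,\dots,\mathcal I_{k-1}$ gives
\begin{equation}
   \mathbb E(u_k;\tau_k+\delta_k)\;\ge\; c_0^2\,\exp\!\Big(2\kappa n_k - 2c\sum_{\ell=1}^{k-1} n_\ell\Big),
\end{equation}
and taking logarithms (absorbing the harmless constant $c_0$) yields the claimed bound.

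The main obstacle I expect is the careful matching between the abstract Floquet picture and the concrete shell-supported data: one must verify that a genuine initial vector can be found whose image under the $n_k$-fold monodromy really realises the exponential growth (rather than accidentally lying near the stable direction), and that the small spread in $|\xi|$ needed to make $u_{0,k},u_{1,k}$ Schwartz functions does not destroy this — this is where the analytic dependence of $f_\pm(t,\xi)$ on $\xi$ in Lemma~\ref{lem:5:Floquet} and the openness of $\mathcal J$ in Lemma~\ref{lem:5:borg} are used. A secondary technical point is bookkeeping the constants uniformly in $k$: the free propagation between intervals is exactly norm-preserving, so the only losses are on $\mathcal I_1,\dots,\mathcal I_{k-1}$, and the per-interval loss must be shown to be at most $\mathrm e^{c n_\ell}$ with the stated $c$, which follows by the elementary energy inequality applied after rescaling each $\mathcal I_\ell$ to $n_\ell$ unit periods. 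Everything else is routine Gronwall-type estimation and change of variables.
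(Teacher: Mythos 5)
Your proposal is correct and follows essentially the same route as the paper: Borg's theorem supplies an unstable frequency window for the reference periodic Hill equation, the rescaling $s=\tfrac{n_k}{\delta_k}(t-\tau_k)$ transfers an exponentially growing Floquet eigensolution (with exponent $\kappa(\xi)\ge\kappa>0$ on its $\xi$-support inside $\mathcal J$) to the interval $\mathcal I_k$, and the losses on the earlier intervals are controlled by the elementary Gronwall energy inequality while propagation outside $\bigcup_\ell\mathcal I_\ell$ conserves energy. The paper sidesteps your worry about alignment with the unstable direction by directly rescaling the eigensolution $\widehat v(t,\xi)$ itself (so equality $\mathbb E(u_k,\tau_k+\delta_k)=\mathrm e^{2n_k\kappa}\mathbb E(u_k,\tau_k)$ holds with no extra constant), but this is the same argument.
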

\begin{proof} We first outline the main strategy.
We choose the sequence $u_k$ in such a way that $\widehat u_k(\tau_k,\xi)$ restricted to
the interval $\mathcal I_k$ is an exponentially increasing (eigen-) solution to the periodic
problem on that interval. To relate different $k$, we note that all appearing periodic problems are rescaled versions of each other and we use the same instability interval $\mathcal J$ for all of them.
We follow these solutions backward to the initial line and adjust them in such a way that they are
normalised in initial energy. 

To be precise, let $\mathcal J$ be an interval of instability of the periodic problem
\begin{equation}
    \widehat v_{tt} + |\xi|^2 (1+b(t))^2 \widehat v  = 0.
\end{equation}
Let further $\widehat v(t,\xi)$ be an exponentially increasing eigensolution of the form \eqref{eq:5:Floquet-sol}  supported  
inside the interval $\mathcal J$ such that on $\supp\widehat v$ the Floquet exponent satisfies
$\kappa(\xi) \ge \kappa >0$ for some positive constant $\kappa$. We can rescale $\widehat v$
to 
\begin{equation} \label{eq:5:5.20}
 \widehat u_k(t,\xi) = \mu_k   \widehat v( \frac{n_k}{\delta_k}  (t-\tau_k) , \frac{\delta_k}{n_k} \xi ), \qquad \eta_k\in\mathbb R_+,
\end{equation}
such that $u_k(t,x)$ solves \eqref{eq:5:CP1-a} on $\mathcal I_k$. We will use $\mu_k$ to normalise solutions. Equation \eqref{eq:5:5.20} implies
\begin{equation}\label{eq:5:5.22}
   \mathbb E(u_k, \tau_k+\delta_k) = \mathrm e^{2n_k\kappa} \mathbb E(u_k,\tau_k).
\end{equation}
Outside of $\bigcup_\ell \mathcal I_\ell$ the energy is conserved, while it might decrease or increase on the intervals $\mathcal I_\ell$. Therefore we obtain the lower bound
by looking at the worst type behaviour as estimated by Gronwall inequality from 
\begin{multline}
\left|    \partial_t \int \big( |u_t|^2 + a^2(t) |\nabla u|^2 \big)\mathrm d x \right| = 2\frac{|a'(t)|}{a(t)} \int a^2(t) |\nabla u|^2 \mathrm d x  \\ \le 2 \frac{|a'(t)|}{a(t)}  \int \big( |u_t|^2 + a^2(t) |\nabla u|^2 \big)\mathrm d x,
\end{multline}
i.e., $\mathbb E(u_k, \tau_\ell+\delta_\ell) \ge \mathrm e^{-2 \delta_\ell c_\ell}  \mathbb E(u_k, \tau_\ell)$
with $c_\ell = \sup_{t\in\mathcal I_\ell}\frac{|a'(t)|}{a(t)} = \frac{n_\ell}{\delta_\ell} \sup_s \frac{ |b'(s)|}{b(s)}$. In combination this yields
\begin{equation}\label{eq:5:5.24a}
    \log \mathbb E(u_k,\tau_k) - \log \mathbb E(u_k,0) \ge - 2  c \sum_{\ell=1}^{k-1} n_\ell
\end{equation}
and the statement follows by combining \eqref{eq:5:5.24a} with \eqref{eq:5:5.22}.
\end{proof}

When using this construction one has to pay attention to some particular facts.
First, the bad behaviour of solutions is localised to $|\xi|\approx n_k / \delta_k\to 0$ on $\mathcal I_k = [\tau_k,\tau_k+\delta_k]\to \infty$. In view of \eqref{eq:5:a-est} this is related to the decomposition of the phase space into zones and the bad increase in energy happens close to the boundary of the hyperbolic zone. Second, the choice of sequences does matter. Even if it can not be seen in the statement of the previous lemma, one wants to construct coefficient functions
violating conditions as closely as possible in order to draw interesting conclusions.

We draw one consequence from the above lemma. When considering \eqref{eq:5:CP1-a} with
$a(t)\in\mathcal T\{0\}$, then solutions satisfy the a global generalised energy conservation property in the sense that
\begin{equation}\label{eq:5:5.24}
    \mathbb E(u;t) \approx \mathbb E(u;0)
\end{equation} 
uniform in $t$ and with constants depending only on the coefficient function $a(t)$. This follows from Theorem~\ref{thm:3:3.6} and was originally proven in Reissig--Smith \cite{RS:2005}. If we use the sequences $\tau_k=\sigma^k$, $\delta_k=\sigma^{k-1}$ and $n_k=\lceil \sigma^{q k}\rceil$ for some $q>0$ and a given parameter $\sigma>0$, we obtain a coefficient $a(t)$ satisfying the estimate
\begin{equation}
   |\D_t^\ell a(t) | _{t\in\mathcal I_k} \le C_\ell \left( \frac{\lceil \sigma^{qk}\rceil}{\sigma^k} \right)^\ell\approx C_\ell \left( \frac{1}{1+t}\right)^{(1-q)\ell}.
\end{equation}
The solution to this problem can not satisfy \eqref{eq:5:5.24}, as
\begin{equation}
  \kappa \sigma^{qk} - c  \sum_{\ell=1}^{k-1}  \sigma^{q\ell}  = \kappa \sigma^{qk} - c\frac{\sigma^{qk}-1}{\sigma^q-1} \to \infty ,\qquad k\to\infty,
\end{equation}
provided $\sigma$ is chosen large enough in comparison to $\kappa$ and $c$.

A slight modification of the argument allows to show that the assumption $a(t)\in\mathcal T_\nu\{0\}$ for some $\nu>0$ is also not sufficient to deduce the estimate \eqref{eq:5:5.24}.

\section{Related topics}

Most of the result presented here were based on diagonalisation procedures in order to deduce
asymptotic information on the representations of solutions. This is natural and has a long history in the study of hyperbolic equations and coupled systems.
For diagonalisation schemes in broader sense
and their application we also refer to \cite{Jachmann:2010}. Some more applications
are discussed there too. 

Our main concern was the derivation of energy and dispersive type estimates describing the asymptotic behaviour of solutions to hyperbolic equations. As it is impossible to cover all
directions appearing there, we refer also to the expository article 
\cite{Wirth:2010} for a discussion on current 
results about energy type estimates for wave models with bounded coefficients. 
Wave models with unbounded coefficients (meaning polynomially increasing propagation speed  
or even exponentially increasing propagation speed) have been extensively studied by 
Reissig \cite{Reissig:1997b} and Reissig--Yagdjian, \cite{RY:2000},  \cite{Yagdjian:2000a}, \cite{RY:2000b}, \cite{RY:2000c}, see also Galstian \cite{Galstian:2003}.

There is an interesting duality to be observed here. Wave models with increasing coefficients and
their large-time behaviour are intimately connected to the well-posedness  issues of weakly 
hyperbolic equations. In a similar way, models with bounded coefficients are related to
well-posedness statements of equations with $\log$-Lipschitz coefficients and their microlocal
analysis. See, e.g.,  the results of Kubo--Reissig \cite{KR:2004} or Kinoshita--Reissig \cite{KR:2005} where the key ingredient of the consideration is a diagonalisation scheme for pseudo-differential versions of the symbol classes introduced in Section~\ref{sec:3:diag} applied locally in time.

The considerations of the generalised energy conservation property in Section~\ref{sec:3:SolvDiag} were inspired by the work of Hirosawa, \cite{Hirosawa:2007} and later
joint work with him, \cite{HW:2008}, \cite{HW:2009}.  Main question arising there is to what 
extent symbolic conditions on coefficients can be weakened without loosing 
uniform  bounds on the energy. As the presented results are sharp, weaker estimates for derivatives have to be compensated by additional stabilisation conditions.
There has been recent work in this direction by D'Abbicco--Reissig \cite{dAR:2011} 
for $2\times 2$ hyperbolic systems generalising results of \cite{HW:2009} and also ongoing joint research of the second author with Hirosawa.


\end{document}